\newcommand{\comment}[1]{}
\newcounter{rek}
\title[Commutators of singular integral operators]{Endpoint estimates for commutators of singular integrals related to Schr\"odinger operators}         
\author{Luong Dang Ky}
\keywords{Schr\"odinger operator,  commutator,  Hardy space, Calder\'on-Zygmund operator,  Riesz transforms, $BMO$,  atom}
\subjclass[2010]{Primary: 42B35, 35J10  \quad Secondary:  42B20}
\date{}
\begin{document}
 \maketitle

\begin{abstract}
Let $L= -\Delta+ V$ be a Schr\"odinger operator on $\mathbb R^d$, $d\geq 3$, where $V$ is a nonnegative potential, $V\ne 0$, and belongs to the reverse H\"older class $RH_{d/2}$. In this paper, we study the commutators $[b,T]$ for $T$ in a class $\mathcal K_L$ of sublinear operators containing the fundamental operators in harmonic analysis related to $L$. More precisely, when $T\in \mathcal K_L$, we prove that  there exists a bounded subbilinear operator $\mathfrak R= \mathfrak R_T: H^1_L(\mathbb R^d)\times BMO(\mathbb R^d)\to L^1(\mathbb R^d)$  such that 
\begin{equation}\label{abstract 1}
|T(\mathfrak S(f,b))|- \mathfrak R(f,b)\leq |[b,T](f)|\leq \mathfrak R(f,b) + |T(\mathfrak S(f,b))|,
\end{equation}
where $\mathfrak S$ is a bounded bilinear operator from $H^1_L(\mathbb R^d)\times BMO(\mathbb R^d)$ into $L^1(\mathbb R^d)$  which does not depend on $T$. The subbilinear decomposition (\ref{abstract 1}) allows us to explain why commutators with the fundamental operators are of weak type $(H^1_L,L^1)$, and when a commutator $[b,T]$ is of strong type $(H^1_L,L^1)$.

Also, we discuss the $H^1_L$-estimates for commutators of the Riesz transforms associated with the Schr\"odinger operator $L$.

\end{abstract}

\font\Sym= msam10 
\def\SYM#1{\hbox{\Sym #1}}
\newcommand{\bdw}{\prt\Gw\xspace}
\tableofcontents
\medskip

\newtheorem{theorem}{Theorem}[section]
\newtheorem{lemma}{Lemma}[section]
\newtheorem{proposition}{Proposition}[section]
\newtheorem{remark}{Remark}[section]
\newtheorem{corollary}{Corollary}[section]
\newtheorem{definition}{Definition}[section]
\newtheorem{example}{Example}[section]
\numberwithin{equation}{section}
\newtheorem{Theorem}{Theorem}[section]
\newtheorem{Lemma}{Lemma}[section]
\newtheorem{Proposition}{Proposition}[section]
\newtheorem{Remark}{Remark}[section]
\newtheorem{Corollary}{Corollary}[section]
\newtheorem{Definition}{Definition}[section]
\newtheorem{Example}{Example}[section]
\newtheorem{Question}{Question}
\newtheorem*{Questiono}{Open question}
\newtheorem*{theorema}{Theorem A}

\section{Introduction}

Given a function $b$ locally integrable on $\mathbb R^d$, and a (classical) Calder\'on-Zygmund operator $T$, we consider the linear commutator $[b, T]$ defined for smooth, compactly supported functions $f$ by
$$[b, T](f)=bT(f) - T(bf).$$
A classical result of  Coifman,  Rochberg and  Weiss (see \cite{CRW}), states that the commutator $[b,T]$ is continuous on $L^p(\mathbb R^d)$ for $1 <p<\infty$, when $b\in BMO(\mathbb R^d)$. Unlike the theory of (classical) Calder\'on-Zygmund operators, the proof of this result does not rely on a weak type $(1, 1)$ estimate for $[b, T]$. Instead, an endpoint theory was provided for this operator. A general overview about these facts can be found for instance in \cite{Ky2}.

Let $L= -\Delta+ V$ be a Schr\"odinger operator on $\mathbb R^d$, $d\geq 3$, where $V$ is a nonnegative potential, $V\ne 0$, and belongs to the reverse H\"older class $RH_{d/2}$. We recall that a nonnegative locally integrable function $V$  belongs to the reverse H\"older class $RH_q$, $1<q<\infty$, if  there exists $C>0$ such that
$$\Big(\frac{1}{|B|}\int_B( V(x))^q dx\Big)^{1/q}\leq \frac{C}{|B|}\int_B V(x) dx$$
holds for every balls $B$ in $\mathbb R^d$. In \cite{DZ}, Dziuba\'nski and Zienkiewicz introduced the Hardy space $H^1_L(\mathbb R^d)$ as the set of functions $f\in L^1(\mathbb R^d)$ such that $\|f\|_{H^1_L}:=\|\mathcal M_Lf\|_{L^1}<\infty$, where $\mathcal M_L f(x): = \sup_{t>0}|e^{-tL}f(x)|$. There, they characterized $H^1_L(\mathbb R^d)$ in terms of atomic decomposition and in terms of the Riesz transforms associated with $L$, $R_j= \partial_{x_j}L^{-1/2}$, $j=1,...,d$. In the recent years, there is an increasing interest on the study of commutators of singular integral operators related to Schr\"odinger operators, see for example \cite{BHS2, Bu, GLP, LP2, Ta, TB, YaYa}.

In the present paper, we consider commutators of singular integral operators $T$ related to the Schr\"odinger operator $L$. Here $T$ is in the class $\mathcal K_L$ of all sublinear operators $T$, bounded from $H^1_L(\mathbb R^d)$ into $L^1(\mathbb R^d)$ and  satisfying for any $b\in BMO(\mathbb R^d)$ and $a$ a {\sl generalized atom} related to the ball $B$ (see Definition \ref{the definition for generalized atoms}), we have
$$\|(b-b_B)Ta\|_{L^1}\leq C \|b\|_{BMO},$$
where $b_B$ denotes the average of $b$ on $B$ and $C>0$ is a constant independent of $b,a$. The class $\mathcal K_L$ contains the fundamental operators (we refer the reader to \cite{Ky2} for the classical case $L=-\Delta$) related to the Schr\"odinger operator $L$: the Riesz transforms $R_j$, $L$-Calder\'on-Zygmund operators  (so-called Schr\"odinger-Calder\'on-Zygmund operators), $L$-maximal operators, $L$-square operators, etc... (see Section \ref{the class K_L}). It should be pointed out that, by the work of Shen \cite{Sh} and Definition \ref{definition of Schrodinger-Calderon-Zygmund operators} (see  Remark \ref{remark for Schrodinger-CZO}), one only can conclude that the Riesz transforms $R_j$ are Schr\"odinger-Calder\'on-Zygmund operators  whenever $V\in RH_d$. In this work, we consider all potentials $V$ which belong to the reverse H\"older class $RH_{d/2}$.

 Although Schr\"odinger-Calder\'on-Zygmund operators  map $H^1_L(\mathbb R^d)$ into $L^1(\mathbb R^d)$ (see Proposition \ref{Schrodinger-Calderon-Zygmund operators}), it was observed in \cite{LP2} that, when $b\in BMO(\mathbb R^d)$, the commutators $[b,R_j]$ do not map, in general, $H^1_L(\mathbb R^d)$ into $L^1(\mathbb R^d)$. In the classical setting, P\'erez showed in \cite{Pe} that if $H^1(\mathbb R^d)$ is replaced by a suitable atomic subspace $H^1_b(\mathbb R^d)$ then commutators of the classical Calder\'on-Zygmund operators are continuous from  $H^1_b(\mathbb R^d)$ into $L^1(\mathbb R^d)$. Recall that (see \cite{Pe}) a function $a$ is a $b$-atom if 

i) supp $a\subset Q$ for some cube $Q$,

ii) $\|a\|_{L^\infty}\leq |Q|^{-1}$,

iii) $\int_{\mathbb R^d} a(x)dx=\int_{\mathbb R^d} a(x)b(x)dx= 0$.\\
The space $H^1_b(\mathbb R^d)$ consists of the subspace of $L^1(\mathbb R^d)$ of functions $f$ which can be written as $f=\sum_{j=1}^\infty \lambda_j a_j$ where $a_j$ are $b$-atoms, and $\lambda_j$ are complex numbers with $\sum_{j=1}^\infty |\lambda_j|<\infty$. Thus, when $b\in BMO(\mathbb R^d)$, it is natural  to ask for subspaces of $H^1_L(\mathbb R^d)$ such that all commutators  of Schr\"odinger-Calder\'on-Zygmund operators and the Riesz transforms map continuously these spaces into $L^1(\mathbb R^d)$. 

In this paper, we are interested in the following two questions.

\begin{Question}\label{question for the largest subspace}
 For $b\in BMO(\mathbb R^d)$. Find {\sl the largest subspace $\mathcal H^1_{L,b}(\mathbb R^d)$ of $H^1_L(\mathbb R^d)$} such that all commutators  of Schr\"odinger-Calder\'on-Zygmund operators and the Riesz transforms are bounded from $\mathcal H^1_{L,b}(\mathbb R^d)$  into $L^1(\mathbb R^d)$.
\end{Question}

\begin{Question}\label{question for the largest subspace and H^1_L}
 Characterize the functions $b$ in $BMO(\mathbb R^d)$ so that $\mathcal H^1_{L,b}(\mathbb R^d)\equiv H^1_L(\mathbb R^d)$.
\end{Question}

Let $X$ be a Banach space. We say that an operator $T: X\to L^1(\mathbb R^d)$ is a sublinear operator if for all $f,g\in X$ and $\alpha,\beta\in \mathbb C$, we have
$$|T(\alpha f+\beta g)(x)|\leq |\alpha||Tf(x)|+ |\beta||Tg(x)|.$$
Obviously, a linear operator $T: X\to L^1(\mathbb R^d)$ is a sublinear operator. We also say that an operator $\mathfrak T:H^1_L(\mathbb R^d)\times BMO(\mathbb R^d)\to L^1(\mathbb R^d)$ is a subbilinear operator if for every $(f,g)\in H^1_L(\mathbb R^d)\times BMO(\mathbb R^d)$, the operators $\mathfrak T(f,\cdot): BMO(\mathbb R^d)\to L^1(\mathbb R^d)$ and $\mathfrak T(\cdot,g): H^1_L(\mathbb R^d)\to L^1(\mathbb R^d)$ are sublinear operators.

To answer {\sl Question \ref{question for the largest subspace}} and {\sl Question \ref{question for the largest subspace and H^1_L}}, we study commutators of sublinear operators in $\mathcal K_L$. More precisely, when $T\in \mathcal K_L$ is a sublinear operator, we prove (see Theorem \ref{subbilinear decomposition for commutators}) that  there exists a bounded subbilinear operator $\mathfrak R= \mathfrak R_T: H^1_L(\mathbb R^d)\times BMO(\mathbb R^d)\to L^1(\mathbb R^d)$ so that for all $(f,b)\in H^1_L(\mathbb R^d)\times BMO(\mathbb R^d)$, 
\begin{equation}\label{introduction, subbilinear decomposition}
|T(\mathfrak S(f,b))| - \mathfrak R(f,b)\leq |[b,T](f)|\leq \mathfrak R(f,b) + |T(\mathfrak S(f,b))|,
\end{equation}
where $\mathfrak S$ is a bounded bilinear operator from $H^1_L(\mathbb R^d)\times BMO(\mathbb R^d)$ into $L^1(\mathbb R^d)$  which does not depend on $T$ (see Proposition \ref{the bilinear operator}). When $T\in \mathcal K_L$ is a linear operator, we prove (see Theorem \ref{bilinear decomposition for commutators}) that there exists a bounded bilinear operator $\mathfrak R= \mathfrak R_T: H^1_L(\mathbb R^d)\times BMO(\mathbb R^d)\to L^1(\mathbb R^d)$ such that for all $(f,b)\in H^1_L(\mathbb R^d)\times BMO(\mathbb R^d)$, 
\begin{equation}\label{introduction, Bilinear decomposition}
[b,T](f)= \mathfrak R(f,b) + T(\mathfrak S(f,b)).
\end{equation}

The decompositions (\ref{introduction, subbilinear decomposition})  and (\ref{introduction, Bilinear decomposition}) give a general overview and {\sl  explains why} almost commutators of the fundamental operators are of weak type $(H^1_L,L^1)$, and when a commutator $[b,T]$ is of strong type $(H^1_L,L^1)$.

Let $b$ be a function in $BMO(\mathbb R^d)$. We assume that $b$ non-constant, otherwise $[b,T]=0$. We define the space $\mathcal H^1_{L,b}(\mathbb R^d)$ as the set of all $f$ in $H^1_L(\mathbb R^d)$ such that  $[b,\mathcal M_L](f)(x)= \mathcal M_L(b(x)f(\cdot)- b(\cdot)f(\cdot))(x)$ belongs to $L^1(\mathbb R^d)$, and the norm on  $\mathcal H^1_{L,b}(\mathbb R^d)$ is defined by $\|f\|_{\mathcal H^1_{L,b}}= \|f\|_{H^1_L}\|b\|_{BMO}+ \|[b,\mathcal M_L](f)\|_{L^1}$. Then, using the subbilinear decomposition (\ref{introduction, subbilinear decomposition}), we  prove that  all commutators of Schr\"odinger-Calder\'on-Zygmund operators and the Riesz transforms are bounded from $\mathcal H^1_{L,b}(\mathbb R^d)$ into $L^1(\mathbb R^d)$. Furthermore, $\mathcal H^1_{L,b}(\mathbb R^d)$ {\sl is the largest space having this property}, and $\mathcal H^1_{L,b}(\mathbb R^d)\equiv H^1_L(\mathbb R^d)$ if and only if $b\in BMO^{\log}_L(\mathbb R^d)$ (see Theorem \ref{the largest subspace}), that is,
$$\|b\|_{BMO_L^{\log}}= \sup\limits_{B(x,r)}\left(\log\Big(e+ \frac{\rho(x)}{r}\Big) \frac{1}{|B(x,r)|}\int_{B(x,r)} |b(y)- b_{B(x,r)}|dy\right)< \infty,$$
where $\rho(x)= \sup\{r>0: \frac{1}{r^{d-2}}\int_{B(x,r)} V(y)dy\leq 1\}$. This space $BMO^{\log}_L(\mathbb R^d)$ arises naturally in the characterization of pointwise multipliers for $BMO_L(\mathbb R^d)$, the dual space of $H^1_L(\mathbb R^d)$, see  \cite{BCFST, MSTZ}.

The above answers {\sl Question \ref{question for the largest subspace}} and {\sl Question \ref{question for the largest subspace and H^1_L}}. As another interesting application of the subbilinear decomposition (\ref{introduction, subbilinear decomposition}), we find subspaces of $H^1_L(\mathbb R^d)$ which {\sl do not depend} on $b\in BMO(\mathbb R^d)$ and $T\in \mathcal K_L$, such that  $[b,T]$ maps continuously these spaces into $L^1(\mathbb R^d)$ (see Section \ref{some applications}). For instance, when $L=-\Delta+1$, Theorem \ref{Hardy-Sobolev spaces} state that for every $b\in BMO(\mathbb R^d)$ and $T\in \mathcal K_L$, the commutator $[b,T]$ is bounded from $H^{1,1}_L(\mathbb R^d)$ into $L^1(\mathbb R^d)$. Here $H^{1,1}_L(\mathbb R^d)$ is the (inhomogeneous) Hardy-Sobolev space considered by Hofmann,  Mayboroda and  McIntosh in \cite{HMM}, defined as the set of functions $f$ in $H^1_L(\mathbb R^d)$ such that $\partial_{x_1}f,...,\partial_{x_d}f \in H^1_L(\mathbb R^d)$ with the norm 
$$\|f\|_{H^{1,1}_L}= \|f\|_{H^1_L}+ \sum_{j=1}^d \|\partial_{x_j}f\|_{H^1_L}.$$

Recently, similarly to the classical result of Coifman-Rochberg-Weiss,  Gou et al. proved in \cite{GLP} that the commutators $[b,R_j]$ are bounded on $L^p(\mathbb R^d)$ whenever $b\in BMO(\mathbb R^d)$ and $1<p<\frac{dq}{d-q}$ where $V\in RH_q$ for some $d/2<q<d$. Later, in \cite{BHS2}, Bongioanni et al. generalized this result by showing that the space $BMO(\mathbb R^d)$ can be replaced by a larger space $BMO_{L,\infty}(\mathbb R^d)=\cup_{\theta\geq 0} BMO_{L,\theta}(\mathbb R^d)$, where $BMO_{L,\theta}(\mathbb R^d)$ is the space of locally integrable functions $f$ satisfying
$$\|f\|_{BMO_{L,\theta}}= \sup\limits_{B(x,r)}\left(\frac{1}{\Big(1+\frac{r}{\rho(x)}\Big)^\theta} \frac{1}{|B(x,r)|}\int_{B(x,r)} |f(y)- f_{B(x,r)}|dy\right)< \infty.$$

 Let $R_j^*$ be the adjoint operators of $R_j$. Bongioanni et al. established in \cite{BHS1} that the operators $R_j^*$ are bounded on $BMO_L(\mathbb R^d)$, and thus from $L^\infty(\mathbb R^d)$ into $BMO_L(\mathbb R^d)$. Therefore, it is natural to ask for a class of functions $b$ so that  the commutators $[b, R_j^*]$ map continuously  $L^\infty(\mathbb R^d)$ into $BMO_L(\mathbb R^d)$. In \cite{BHS2}, the authors found  such a class of functions. More precisely, they proved in \cite{BHS2} that the commutators $[b, R_j^*]$ map continuously  $L^\infty(\mathbb R^d)$ into $BMO_L(\mathbb R^d)$ whenever $b\in BMO^{\log}_{L,\infty}(\mathbb R^d)=\cup_{\theta\geq 0}BMO^{\log}_{L,\theta}(\mathbb R^d)$. Here $BMO^{\log}_{L,\theta}(\mathbb R^d)$ is the space of functions $f\in L^1_{\rm loc}(\mathbb R^d)$ such that
$$\|f\|_{BMO_{L,\theta}^{\log}}= \sup\limits_{B(x,r)}\left(\frac{\log\Big(e+ \frac{\rho(x)}{r}\Big)}{\Big(1+\frac{r}{\rho(x)}\Big)^\theta} \frac{1}{|B(x,r)|}\int_{B(x,r)} |f(y)- f_{B(x,r)}|dy\right)< \infty.$$

A natural question arises: can one replace the space $L^\infty(\mathbb R^d)$ by $BMO_L(\mathbb R^d)$? 

\begin{Question}\label{question for BMO-estimates}
Are the commutators $[b, R_j^*]$, $j=1,...,d$,  bounded on $BMO_L(\mathbb R^d)$ whenever $b\in BMO^{\log}_{L,\infty}(\mathbb R^d)$?
\end{Question}

Motivated by this question, we study the $H^1_L$-estimates for commutators of the Riesz transforms. More precisely, given $b\in BMO_{L,\infty}(\mathbb R^d)$,  we prove that the commutators $[b,R_j]$ are bounded on $H^1_L(\mathbb R^d)$  if and only if $b$ belongs to $BMO^{\log}_{L,\infty}(\mathbb R^d)$  (see Theorem \ref{Hardy estimates for Riesz transforms}). Furthermore, if $b\in BMO^{\log}_{L,\theta}(\mathbb R^d)$   for some $\theta\geq 0$, then there exists a constant $C>1$, independent of $b$, such that 
$$C^{-1}\|b\|_{BMO^{\log}_{L,\theta}}\leq \|b\|_{BMO_{L,\theta}}+ \sum_{j=1}^d \|[b,R_j]\|_{H^1_L\to H^1_L}\leq C \|b\|_{BMO^{\log}_{L,\theta}}.$$
As a consequence, we get the positive answer for {\sl Question \ref{question for BMO-estimates}}.

Now, an open question is the following:

\begin{Questiono}\label{the last question}
Find the set of all functions $b$  such that the commutators $[b,R_j]$, $j=1,...,d$, are bounded on $H^1_L(\mathbb R^d)$.
\end{Questiono}

Let us emphasize the three main purposes of this paper. First, we prove the two decomposition theorems: the subbilinear decomposition (\ref{introduction, subbilinear decomposition}) and the bilinear decomposition (\ref{introduction, Bilinear decomposition}). Second, we characterize  functions $b$ in $BMO_{L,\infty}(\mathbb R^d)$ so that the commutators of the Riesz transforms are bounded on $H^1_L(\mathbb R^d)$, which answers {\sl Question \ref{question for BMO-estimates}}. Finally, we find  {\sl the largest subspace $\mathcal H^1_{L,b}(\mathbb R^d)$ of $H^1_L(\mathbb R^d)$} such that all commutators  of Schr\"odinger-Calder\'on-Zygmund operators and the Riesz transforms are bounded from $\mathcal H^1_{L,b}(\mathbb R^d)$  into $L^1(\mathbb R^d)$. Besides, we find also the characterization of functions $b\in BMO(\mathbb R^d)$ so that $\mathcal H^1_{L,b}(\mathbb R^d)\equiv H^1_L(\mathbb R^d)$, which answer {\sl Question \ref{question for the largest subspace}} and {\sl Question \ref{question for the largest subspace and H^1_L}}. Especially, we show that there exist subspaces of $H^1_L(\mathbb R^d)$ which {\sl do not depend} on $b\in BMO(\mathbb R^d)$ and $T\in \mathcal K_L$, such that  $[b,T]$ maps continuously these spaces into $L^1(\mathbb R^d)$, see Section 7.

This paper is organized as follows. In Section 2, we present some notations and preliminaries about Hardy spaces, new atoms, $BMO$ type spaces and Schr\"odinger-Calder\'on-Zygmund operators. In Section 3, we state the main results: two decomposition theorems (Theorem \ref{subbilinear decomposition for commutators} and Theorem \ref{bilinear decomposition for commutators}), Hardy estimates for commutators of Schr\"odinger-Calder\'on-Zygmund operators and the commutators of the Riesz transforms (Theorem \ref{Hardy estimates for CZO} and Theorem \ref{Hardy estimates for Riesz transforms}). In Section 4, we give some examples of fundamental operators related to $L$ which are in the class $\mathcal K_L$. Section 5 is devoted to the proofs of the main theorems. Section 6 is devoted to the proofs of the key lemmas. Finally, in Section 7, we give some examples of subspaces of $H^1_L(\mathbb R^d)$ such that all commutators $[b,T]$, $T\in\mathcal K_L$, map continuously these spaces into $L^1(\mathbb R^d)$.

Throughout the whole paper, $C$ denotes a positive geometric constant which is independent of the main parameters, but may change from line to line.  The symbol $f\approx g$ means that $f$ is equivalent to $g$ (i.e. $C^{-1}f\leq g\leq C f$). In $\mathbb R^d$, we denote by $B=B(x,r)$ an open ball with center $x$ and radius $r>0$, and $t B(x, r): = B(x, tr)$ whenever $t>0$. For any measurable set $E$, we denote by $\chi_E$ its characteristic function, by $|E|$ its  Lebesgue measure, and by $E^c$ the set $\mathbb R^d\setminus E$.  

{\bf Acknowledgements.} The author would like to thank  Aline Bonami,  Sandrine Grellier and Fr\'ed\'eric Bernicot for many helpful suggestions and discussions.  He would also like to thank  Sandrine Grellier for many helpful suggestions,  her carefully reading and revision of the manuscript. The author is deeply indebted to them.

\section{Some preliminaries and notations}\label{Some preliminaries and notations}

In this paper, we consider the Schr\"odinger differential operator
$$L= -\Delta+ V$$
 on $\mathbb R^d$, $d\geq 3$, where $V$ is a nonnegative potential, $V\ne 0$. As in the works of Dziuba\'nski et al \cite{DGMTZ, DZ}, we always assume that $V$ belongs to the reverse H\"older class $RH_{d/2}$. Recall that a nonnegative locally integrable function $V$ is said to belong to a reverse H\"older class $RH_q$, $1<q<\infty$, if  there exists $C>0$ such that
$$\Big(\frac{1}{|B|}\int_B (V(x))^q dx\Big)^{1/q}\leq \frac{C}{|B|}\int_B V(x) dx$$
holds for every balls $B$ in $\mathbb R^d$. By H\"older inequality, $RH_{q_1}\subset RH_{q_2}$ if $q_1\geq q_2>1$. For $q>1$, it is well-known that $V\in RH_q$ implies $V\in RH_{q+\varepsilon}$ for some $\varepsilon>0$ (see \cite{Ge}). Moreover, $V(y)dy$ is a doubling measure, namely for any ball $B(x,r)$ we have
\begin{equation}\label{doubling measure}
\int_{B(x,2r)}V(y)dy\leq C_0 \int_{B(x,r)} V(y)dy.
\end{equation}

Let $\{T_t\}_{t>0}$ be the semigroup generated by $L$ and $T_t(x,y)$ be their kernels. Namely,
$$T_t f(x)=e^{-t L}f(x)=\int_{\mathbb R^d} T_t(x,y)f(y)dy,\quad f\in L^2(\mathbb R^d),\quad t>0.$$

We say that a function $f\in L^2(\mathbb R^d)$ belongs to the space $\mathbb H^1_L(\mathbb R^d)$ if 
$$\|f\|_{\mathbb H^1_L}:= \|\mathcal M_L f\|_{L^1}<\infty,$$
where $\mathcal M_L f(x):= \sup_{t>0}|T_t f(x)|$ for all $x\in \mathbb R^d$. The space $H^1_L(\mathbb R^d)$ is then defined as the completion of $\mathbb H^1_L(\mathbb R^d)$ with respect to this norm.

In \cite{DGMTZ} it was shown that the dual of $H^1_{L}(\mathbb R^d)$ can be identified with the space $BMO_L(\mathbb R^d)$ which consists of all functions $f\in BMO(\mathbb R^d)$ with
$$\|f\|_{BMO_L} := \|f\|_{BMO}+\sup_{\rho(x)\leq r}\frac{1}{|B(x,r)|}\int_{B(x,r)}|f(y)|dy<\infty,$$
where $\rho$ is  the auxiliary function defined as in \cite{Sh}, that is,
\begin{equation}
\rho(x)= \sup\Big\{r>0: \frac{1}{r^{d-2}}\int_{B(x,r)} V(y)dy\leq 1\Big\},
\end{equation}
$x\in \mathbb R^d$. Clearly, $0<\rho(x)<\infty$ for all $x\in \mathbb R^d$, and thus $\mathbb R^d=\bigcup_{n\in\mathbb Z}\mathcal B_n$, where the sets $\mathcal B_n$ are defined by
\begin{equation}
\mathcal B_n= \{x\in \mathbb R^d: 2^{-(n+1)/2}< \rho(x)\leq 2^{-n/2}\}.
\end{equation}

The following proposition plays an important role in our study.

\begin{Proposition}
 [see \cite{Sh}, Lemma 1.4] \label{Shen, Lemma 1.4}
There exist two constants $\kappa>1$ and $k_0\geq 1$ such that for all $x,y\in\mathbb R^d$,
$$\kappa^{-1}\rho(x) \Big(1+ \frac{|x-y|}{\rho(x)}\Big)^{-k_0}\leq \rho(y)\leq \kappa \rho(x) \Big(1+ \frac{|x-y|}{\rho(x)}\Big)^{\frac{k_0}{k_0+1}}.$$
\end{Proposition}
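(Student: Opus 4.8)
The plan is to reproduce Shen's argument. Throughout write $\psi_x(r):=r^{-(d-2)}\int_{B(x,r)}V(y)\,dy$, so that $\rho(x)=\sup\{r>0:\psi_x(r)\le 1\}$. Since $r\mapsto\int_{B(x,r)}V$ is continuous and $0<\rho(x)<\infty$, a short argument with the definition of the supremum shows $\psi_x(\rho(x))=1$ for every $x$; I would record this first.

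The two ingredients are polynomial lower and upper bounds for how $\psi_x$ can vary in $r$. Since $V\in RH_{d/2}$, the self-improvement property recalled above gives $V\in RH_q$ for some $q>d/2$; applying the reverse Hölder inequality on $B(x,R)$ together with Hölder's inequality yields, for $0<r\le R$,
$$\psi_x(r)\le C\Big(\frac{r}{R}\Big)^{\mu}\psi_x(R),\qquad \mu:=2-d/q\in(0,2),$$
and hence the \emph{lower growth} $\psi_x(R)/\psi_x(r)\ge C^{-1}(R/r)^{\mu}$ for $0<r\le R$. Iterating the doubling inequality \eqref{doubling measure} gives, in the other direction, the \emph{upper growth} $\psi_x(R)/\psi_x(r)\le (R/r)^{\nu}$ for $0<r\le R$, with $\nu:=\log_2 C_0-(d-2)$; comparing the two as $R/r\to\infty$ forces $\nu\ge\mu>0$.

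For the right-hand inequality I would argue as follows. We may assume $\rho(y)\ge\rho(x)$, since otherwise it is trivial; put $D:=|x-y|$. Applying the lower growth at the point $x$ between the scales $\rho(x)$ and $\rho(y)+D$ (permissible since $\rho(y)+D\ge\rho(x)$) and using $\psi_x(\rho(x))=1$ gives a lower bound $\psi_x(\rho(y)+D)\ge C^{-1}\big((\rho(y)+D)/\rho(x)\big)^{\mu}$. On the other hand, from $B(x,\rho(y)+D)\subset B(y,\rho(y)+2D)$ and the upper growth at $y$ between $\rho(y)$ and $\rho(y)+2D$ one gets $\psi_x(\rho(y)+D)\le 2^{d-2}\big(1+2D/\rho(y)\big)^{\nu}$. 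Comparing these two estimates, and splitting into the cases $D\le\rho(y)$ (where one directly reads off $\rho(y)\le C\rho(x)$) and $D>\rho(y)$ (where one first bounds $\rho(y)+D\ge 2\sqrt{\rho(y)D}$ and then isolates $\rho(y)$), one obtains, for all $x,y$,
$$\rho(y)\le C\,\rho(x)\Big(1+\frac{D}{\rho(x)}\Big)^{\beta},\qquad \beta:=\frac{\nu-\mu/2}{\nu+\mu/2}\in[\tfrac13,1).$$
For the left-hand inequality I would interchange the roles of $x$ and $y$ in the bound just proved, obtaining $\rho(x)\le C\rho(y)(1+D/\rho(y))^{\beta}$, and solve this for $\rho(y)$ (again distinguishing $D\le\rho(y)$ from $D>\rho(y)$); this gives $\rho(y)\ge\kappa^{-1}\rho(x)(1+D/\rho(x))^{-k_0}$. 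Taking $k_0:=\max\{1,\beta/(1-\beta)\}\ge 1$ makes $k_0/(k_0+1)\ge\beta$, so both displayed inequalities hold with this $k_0$ and a sufficiently large $\kappa>1$ after absorbing the constants.

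The conceptual heart is the comparison, at the single scale $\rho(y)+|x-y|$, of the lower bound for $\psi_x$ coming from the reverse Hölder condition against the upper bound coming from the doubling property; the rest is bookkeeping. The delicate points I would watch are the identity $\psi_x(\rho(x))=1$ (which uses the finiteness of $\rho(x)$ and the continuity of $r\mapsto\int_{B(x,r)}V$) and the inequality $\nu\ge\mu$ that guarantees $\beta<1$; and the genuinely tedious part is keeping track of the constants through the two case distinctions.
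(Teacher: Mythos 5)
The paper does not prove this proposition; it is quoted verbatim from Shen \cite{Sh}, Lemma 1.4. Your argument is a correct reconstruction of Shen's original proof (the identity $\psi_x(\rho(x))=1$, the two growth estimates from the reverse H\"older and doubling properties, and the comparison at the scale $\rho(y)+|x-y|$ with the two case distinctions all check out, and the exponents $\beta$ and $k_0=\max\{1,\beta/(1-\beta)\}$ are consistent with the stated form of the inequalities), so it takes essentially the same approach as the cited source.
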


{\sl Throughout the whole paper}, we denote by $\mathcal C_L$ the $L$-constant 
\begin{equation}\label{technique constant}
\mathcal C_L= 8. 9^{k_0}\kappa
\end{equation}
where $k_0$ and $\kappa$ are defined as in Proposition \ref{Shen, Lemma 1.4}.

Given $1<q\leq \infty$. Following Dziuba\'nski and Zienkiewicz \cite{DZ}, a function $a$ is called a $(H^1_L,q)$-atom related to the ball $ B(x_0,r)$ if $r\leq  \mathcal C_L \rho(x_0) $ and

i) supp $a\subset B(x_0,r)$,

ii) $\|a\|_{L^q}\leq |B(x_0,r)|^{1/q-1}$,

iii) if $r\leq \frac{1}{\mathcal C_L}\rho(x_0)$ then $\int_{\mathbb R^d}a(x)dx=0$.

A function $a$ is called a classical $(H^1,q)$-atom related to the ball $B= B(x_0,r)$ if it satisfies (i), (ii) and $\int_{\mathbb R^d} a(x)dx=0$.

The following atomic characterization of $H^1_L(\mathbb R^d)$ is due to  \cite{DZ}.

\begin{Theorem}[see \cite{DZ}, Theorem 1.5]\label{DZ, Theorem 1.5}
Let $1<q\leq \infty$. A function $f$ is in $H^1_L(\mathbb R^d)$ if and only if it can be written as $f=\sum_j \lambda_j a_j$, where $a_j$ are $(H^1_L,q)$-atoms and $\sum_j |\lambda_j|<\infty$. Moreover, 
$$\|f\|_{H^1_L}\approx \inf\left\{\sum_j |\lambda_j|: f=\sum_j \lambda_j a_j\right\}.$$
\end{Theorem}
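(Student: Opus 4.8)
Here is how I would attack the two implications; they are of quite different natures, so I would treat them separately, after the standard reduction that it suffices to prove the norm equivalence for $f$ in the dense subspace $\mathbb H^1_L(\mathbb R^d)=H^1_L(\mathbb R^d)\cap L^2(\mathbb R^d)$. The general statement then follows by a routine limiting argument, using the embedding $H^1_L(\mathbb R^d)\hookrightarrow L^1(\mathbb R^d)$, itself immediate from $|f|\le \mathcal M_L f$ a.e.

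\textbf{Atoms belong to $H^1_L$.} First I would prove the uniform bound $\|\mathcal M_L a\|_{L^1}\le C$ for every $(H^1_L,q)$-atom $a$, with $C$ depending only on $d,q$ and $V$. The two inputs are: (i) the pointwise domination $0\le T_t(x,y)\le C\,t^{-d/2}e^{-|x-y|^2/(ct)}$, valid since $V\ge 0$ (the semigroup is dominated by the Gauss--Weierstrass semigroup), together with its regularity companion $|T_t(x,y)-T_t(x,y')|\le C(|y-y'|/\sqrt t)^{\delta}t^{-d/2}e^{-|x-y|^2/(ct)}$ for $|y-y'|\le\sqrt t$; and (ii) the extra decay $0\le T_t(x,y)\le C_N\,t^{-d/2}e^{-|x-y|^2/(ct)}\big(1+\sqrt t/\rho(x)\big)^{-N}$ for every $N$, a consequence of $V\in RH_{d/2}$ and the properties of $\rho$ recorded in Proposition \ref{Shen, Lemma 1.4}. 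Given $a$ associated with $B=B(x_0,r)$, I would split $\int \mathcal M_L a = \int_{2\mathcal C_L B}+\int_{(2\mathcal C_L B)^c}$: on $2\mathcal C_L B$ use the $L^q$-boundedness of $\mathcal M_L$ (immediate from (i)), Hölder's inequality and $\|a\|_{L^q}\le |B|^{1/q-1}$; on the far region distinguish the two cases of the atom definition. If $r\le \rho(x_0)/\mathcal C_L$ then $a$ has vanishing mean and one gains from the regularity estimate in (i); if $\rho(x_0)/\mathcal C_L<r\le \mathcal C_L\rho(x_0)$ there is no cancellation, but now $r\approx\rho(x_0)$, so one spends the factor $(1+\sqrt t/\rho(x))^{-N}$ from (ii) to make the $x$- and $t$-integrals converge. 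Summing the pieces gives $\|f\|_{H^1_L}\le C\sum_j|\lambda_j|$ for any atomic representation.

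\textbf{Every $f\in H^1_L$ has an atomic decomposition.} This is the substantial half, and the plan is to localise at the scale $\rho$. Using the slow variation of $\rho$ in Proposition \ref{Shen, Lemma 1.4}, I would build a countable covering $\{B_k:=B(x_k,\rho(x_k))\}$ of $\mathbb R^d$ with bounded overlap and a subordinate partition of unity $\{\psi_k\}$, $\sum_k\psi_k\equiv 1$, $\operatorname{supp}\psi_k\subset B_k$, $\|\nabla\psi_k\|_\infty\lesssim\rho(x_k)^{-1}$, and write $f=\sum_k \psi_k f$. Each $g_k:=\psi_k f$ is supported in $B_k$, and I would then: (1) show $g_k$ lies in a local Hardy space $h^1(B_k)$ of Goldberg type (equivalently, $\sup_{0<t<\rho(x_k)^2}|e^{t\Delta}g_k|\in L^1$), with the quantitative estimate $\sum_k \|g_k\|_{h^1(B_k)}\le C\,\|\mathcal M_L f\|_{L^1}$; (2) invoke the atomic decomposition for $h^1$, by which $g_k$ is a sum of atoms that are either supported in balls of radius $\le\rho(x_k)$ with vanishing mean, or supported in a ball of radius $\approx\rho(x_k)$ with no moment condition. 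By the slow variation of $\rho$ these are, up to the fixed dilation constant $\mathcal C_L=8\cdot 9^{k_0}\kappa$, precisely $(H^1_L,q)$-atoms; concatenating over $k$ gives the decomposition of $f$ with $\sum_j|\lambda_j|\lesssim\|\mathcal M_L f\|_{L^1}$.

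\textbf{Main obstacle.} The crux is step (1): comparing the global maximal function $\mathcal M_L f$ with the localised, $(-\Delta)$-based maximal functions of the pieces $g_k$, then summing. Two mechanisms must be controlled. First, on $2B_k$ and for times $t\le\rho(x_k)^2$, the Duhamel expansion $e^{-tL}=e^{t\Delta}-\int_0^t e^{(t-s)\Delta}V e^{-sL}\,ds$ shows that $e^{-tL}$ differs from $e^{t\Delta}$ by a controlled amount, and one must prove, using $V\in RH_{d/2}$ and the defining property $\frac1{r^{d-2}}\int_{B(x_k,r)}V\le 1$ for $r\le\rho(x_k)$, that this perturbation contributes an $L^1$ error dominated by $\mathcal M_L f$ and summable in $k$; this is where the reverse Hölder hypothesis is genuinely used. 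Second, the cutoff error $[\psi_k,e^{-tL}]f$ must be absorbed, again via the Gaussian off-diagonal decay in (i), the bounded overlap of $\{B_k\}$, and the doubling property \eqref{doubling measure} of $V(y)\,dy$ wherever $V$-weighted quantities appear. Once step (1) is established, step (2) and the passage back from $\mathbb H^1_L(\mathbb R^d)$ to $H^1_L(\mathbb R^d)$ are routine.
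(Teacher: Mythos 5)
The paper does not prove this theorem: it is imported verbatim from Dziuba\'nski--Zienkiewicz \cite{DZ} (their Theorem~1.5), so there is no in-paper argument to compare against. Your sketch follows essentially the same route as the original proof in \cite{DZ}: the ``atoms are in $H^1_L$'' direction via Gaussian domination plus the extra $(1+\sqrt t/\rho)^{-N}$ decay for non-cancellative atoms, and the converse via a covering at scale $\rho$ with a subordinate partition of unity, reduction of each piece to a Goldberg-type local Hardy space through a Duhamel comparison of $e^{-tL}$ with $e^{t\Delta}$, and the local atomic decomposition --- these are precisely the ingredients the present paper quotes from \cite{DZ} as Lemmas \ref{DZ, Lemma 2.3}, \ref{DZ, Lemma 2.5} and \ref{DZ, 4.7} and Theorem~4.5 of \cite{DZ}. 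I see no gap in the outline beyond the (acknowledged) analytic work in your step~(1), which is where the reverse H\"older hypothesis enters in \cite{DZ} as well.
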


Note that a classical $(H^1,q)$-atom is not a $(H^1_L,q)$-atom in general. In fact, there exists a constant $C>0$ such that if $f$ is a classical $(H^1,q)$-atom, then it can be written as $f=\sum_{j=1}^n \lambda_j a_j$, for some $n\in\mathbb Z^+$, where $a_j$ are $(H^1_L,q)$-atoms and $\sum_{j=1}^n |\lambda_j|\leq C$, see for example \cite{YZ2}. In this work, we need a variant of the definition of atoms for $H^1_L(\mathbb R^d)$ which include classical $(H^1,q)$-atoms and $(H^1_L,q)$-atoms. This kind of atoms have been used in the work of Chang, Dafni and Stein \cite{CDS, Da1}.

\begin{Definition}\label{the definition for generalized atoms}
Given $1<q\leq \infty$ and $\varepsilon>0$. A function $a$ is called a generalized $(H^1_L,q,\varepsilon)$-atom related to the ball $ B(x_0,r)$ if 

i) supp $a\subset B(x_0,r)$,

ii) $\|a\|_{L^q}\leq |B(x_0,r)|^{1/q-1}$,

iii) $|\int_{\mathbb R^d}a(x)dx|\leq \Big(\frac{r}{\rho(x_0)}\Big)^\varepsilon$.
\end{Definition}

The  space $\mathbb H^{1,q,\varepsilon}_{L, at}(\mathbb R^d)$  is defined to be set of all functions $f$ in $L^1(\mathbb R^d)$ which can be written as $f= \sum_{j=1}^\infty \lambda_j a_j$ where the $a_j$ are generalized $(H^1_L,q,\varepsilon)$-atoms and the $\lambda_j$ are complex numbers such that $\sum_{j=1}^\infty |\lambda_j|<\infty$. As usual, the norm on $\mathbb H^{1,q,\varepsilon}_{L, at}(\mathbb R^d)$ is defined by
$$\|f\|_{\mathbb H^{1,q,\varepsilon}_{L, at}} = \inf\Big\{\sum_{j=1}^\infty |\lambda_j|: f= \sum_{j=1}^\infty \lambda_j a_j\Big\}.$$

 The  space $\mathbb H^{1,q,\varepsilon}_{L, \rm fin}(\mathbb R^d)$ is defined to be set of all $f= \sum_{j=1}^k \lambda_j a_j$, where the $a_j$ are generalized $( H^1_L, q,\varepsilon)$-atoms. Then, the norm of $f$ in $\mathbb H^{1,q,\varepsilon}_{L, \rm fin}(\mathbb R^d)$ is defined by
$$\|f\|_{ \mathbb H^{1,q,\varepsilon}_{L, \rm fin}}= \inf\Big\{\sum_{j=1}^k |\lambda_j|:  f= \sum_{j=1}^k \lambda_j a_j\Big\}.$$

\begin{Remark}\label{atoms}
Let $1<q\leq \infty$ and $\varepsilon>0$. Then, a classical $(H^1,q)$-atom is a generalized $(H^1_L,q,\varepsilon)$-atom related to the same ball, and a $(H^1_L,q)$-atom is  ${\mathcal C_L}^\varepsilon$ times a generalized $(H^1_L,q,\varepsilon)$-atom related to the same ball.

Throughout the whole paper, we always use generalized $(H^1_L,q,\varepsilon)$-atoms except in the proof of Theorem \ref{Hardy estimates for Riesz transforms}. More precisely, in order to prove Theorem \ref{Hardy estimates for Riesz transforms}, we need to use $(H^1_L,q)$-atoms from Dziuba\'nski and Zienkiewicz (see above).
\end{Remark}

The following  gives a characterization of $H^1_L(\mathbb R^n)$ in terms of generalized  atoms.

\begin{Theorem}\label{generalized Hardy spaces}
Let $1<q\leq \infty$ and $\varepsilon>0$. Then, $\mathbb H^{1,q,\varepsilon}_{L, at}(\mathbb R^d)= H^1_L(\mathbb R^d)$ and the norms are equivalent.
\end{Theorem}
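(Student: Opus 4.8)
The plan is to prove the two inclusions $\mathbb H^{1,q,\varepsilon}_{L,at}(\mathbb R^d)\hookrightarrow H^1_L(\mathbb R^d)$ and $H^1_L(\mathbb R^d)\hookrightarrow \mathbb H^{1,q,\varepsilon}_{L,at}(\mathbb R^d)$ separately, with only the first one requiring real work. For the reverse inclusion $H^1_L\hookrightarrow \mathbb H^{1,q,\varepsilon}_{L,at}$, I would simply invoke Theorem \ref{DZ, Theorem 1.5}: any $f\in H^1_L$ decomposes as $\sum_j\lambda_j a_j$ with $(H^1_L,q)$-atoms $a_j$ and $\sum_j|\lambda_j|\approx\|f\|_{H^1_L}$, and by Remark \ref{atoms} each $(H^1_L,q)$-atom equals $\mathcal C_L^\varepsilon$ times a generalized $(H^1_L,q,\varepsilon)$-atom related to the same ball. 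Hence $f\in\mathbb H^{1,q,\varepsilon}_{L,at}$ with $\|f\|_{\mathbb H^{1,q,\varepsilon}_{L,at}}\leq \mathcal C_L^\varepsilon\,C\,\|f\|_{H^1_L}$.

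The substantive direction is $\mathbb H^{1,q,\varepsilon}_{L,at}\hookrightarrow H^1_L$, and by the triangle inequality and density it suffices to show there is a uniform constant $C$ so that every generalized $(H^1_L,q,\varepsilon)$-atom $a$ related to a ball $B=B(x_0,r)$ satisfies $\|a\|_{H^1_L}\leq C$. I would split into two regimes according to the size of $r$ relative to $\rho(x_0)$. \textbf{Case 1: $r\geq \rho(x_0)$ (the ``large ball'' case).} Here I estimate $\|a\|_{H^1_L}=\|\mathcal M_L a\|_{L^1}$ directly. Since $\{T_t\}$ is the semigroup of $L=-\Delta+V$ with $V\geq 0$, the heat kernel $T_t(x,y)$ is dominated by the classical Gaussian kernel, so $\mathcal M_L a(x)\lesssim M a(x)$ where $M$ is a suitable maximal operator (the Hardy–Littlewood maximal function applied through the Gaussian bound). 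Then $\int_{2B}\mathcal M_L a\lesssim |B|^{1/q'}\|M a\|_{L^q}\lesssim |B|^{1/q'}\|a\|_{L^q}\lesssim 1$ using size condition (ii), while on $(2B)^c$ one uses the Gaussian decay of the kernel together with the restriction $r\geq\rho(x_0)$ to gain the needed decay: the Gaussian bound gives $|T_t a(x)|\lesssim \|a\|_{L^1}\, t^{-d/2} e^{-c|x-x_0|^2/t}$ for $x\in(2B)^c$, and because $V\neq 0$ and $V\in RH_{d/2}$ one has the extra decay of the Schrödinger heat semigroup at scales $\gtrsim\rho(x_0)$ (a Fefferman–Phong / Shen type estimate), which makes $\int_{(2B)^c}\mathcal M_L a\lesssim 1$ without needing any cancellation of $a$. \textbf{Case 2: $r<\rho(x_0)$ (the ``small ball'' case).} Write $a=(a-c\,\chi_B/|B|)+c\,\chi_B/|B|$ where $c=\int a$. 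The first piece has mean zero, support in $B$, and $L^q$ norm $\lesssim |B|^{1/q-1}$ (using $|c|\leq (r/\rho(x_0))^\varepsilon\leq 1$), hence up to a bounded multiple it is a classical $(H^1,q)$-atom, which by the remark following Theorem \ref{DZ, Theorem 1.5} (the fact from \cite{YZ2}) has $H^1_L$ norm $\lesssim 1$. For the second piece, $\|c\,\chi_B/|B|\|_{H^1_L}\lesssim |c|\,\| \chi_B/|B|\|_{H^1_L}\lesssim (r/\rho(x_0))^\varepsilon\,\| \chi_B/|B|\|_{H^1_L}$; since $\chi_B/|B|$ is (a multiple of) a function supported on a ball of radius $r<\rho(x_0)<\mathcal C_L\rho(x_0)$ satisfying the size bound but with no cancellation, the classical estimate degrades logarithmically — one has $\|\chi_B/|B|\|_{H^1_L}\lesssim \log(e+\rho(x_0)/r)$ — and the gain $(r/\rho(x_0))^\varepsilon$ beats the logarithm, so this piece is also $O(1)$.

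Assembling the two cases gives $\|a\|_{H^1_L}\leq C$ uniformly over all generalized $(H^1_L,q,\varepsilon)$-atoms, hence for $f=\sum_j\lambda_j a_j\in\mathbb H^{1,q,\varepsilon}_{L,at}$ we get $\|f\|_{H^1_L}\leq C\sum_j|\lambda_j|$, and taking the infimum yields $\|f\|_{H^1_L}\leq C\|f\|_{\mathbb H^{1,q,\varepsilon}_{L,at}}$. Combined with the reverse inequality above, this proves $\mathbb H^{1,q,\varepsilon}_{L,at}(\mathbb R^d)=H^1_L(\mathbb R^d)$ with equivalent norms. I expect the main obstacle to be the quantitative control in Case 2 of the non-cancellative ``bump'' $\chi_B/|B|$ on a small ball: one must show its $H^1_L$ norm grows at most like $\log(e+\rho(x_0)/r)$, which requires careful use of Proposition \ref{Shen, Lemma 1.4} and the comparison between $e^{-tL}$ and the free heat semigroup at scales below $\rho(x_0)$, and then verify that the factor $(r/\rho(x_0))^\varepsilon$ from the generalized-atom cancellation condition (iii) precisely absorbs this logarithmic loss; the $(2B)^c$ estimate in Case 1 is a close second in difficulty, as it needs the Schrödinger-specific decay of the semigroup rather than just Gaussian bounds.
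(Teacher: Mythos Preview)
Your argument is correct, but the paper proceeds by a different and more direct route. Rather than splitting according to whether $r\gtrless\rho(x_0)$, the paper shows the single uniform bound $\|\mathcal M_L a\|_{L^1}\leq C$ for every generalized atom by the standard subtract-and-add trick on $(2B)^c$: for $x\notin 2B$,
\[
|T_t a(x)|\leq\Big|\int_B\bigl(T_t(x,y)-T_t(x,x_0)\bigr)a(y)\,dy\Big|+|T_t(x,x_0)|\,\Big|\int_B a\Big|,
\]
and then invokes the H\"older regularity estimate of the Schr\"odinger heat kernel (Lemma~\ref{LP1, Lemma 2}, due to Li--Peng) for the first term, obtaining $\lesssim r^{\sigma_0}/|x-x_0|^{d+\sigma_0}$, while for the second term the pointwise kernel bound $|T_t(x,x_0)|\lesssim |x-x_0|^{-d}(1+|x-x_0|/\rho(x_0))^{-N}$ from \cite{DZ} combined with the generalized cancellation $|\int a|\leq(r/\rho(x_0))^\varepsilon$ yields $\lesssim r^\varepsilon/|x-x_0|^{d+\varepsilon}$. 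Both decay integrably over $(2B)^c$, with no case analysis needed.

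Your approach trades the single regularity lemma for a case split plus two separate mechanisms: the Schr\"odinger-specific far-field decay when $r\geq\rho(x_0)$, and in the small-ball case the decomposition into a classical mean-zero atom plus a bump $c\,\chi_B/|B|$ whose $H^1_L$ norm you bound by $\log(e+\rho(x_0)/r)$ and then absorb with the $(r/\rho(x_0))^\varepsilon$ factor. This works, and has the mild advantage of using only pointwise Gaussian-type bounds on $T_t(x,y)$ rather than its H\"older continuity in $y$; on the other hand it is longer, and the logarithmic estimate on the bump --- which you correctly flag as the crux --- is precisely the computation the paper's method sidesteps by never separating the constant part of $a$ from the rest.
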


In order to prove Theorem \ref{generalized Hardy spaces}, we need the following lemma.

\begin{Lemma}[see \cite{LP1}, Lemma 2]\label{LP1, Lemma 2}
Let $V\in RH_{d/2}$. Then, there exists $\sigma_0>0$ depends only on $L$, such that for every $|y-z|<|x-y|/2$ and $t>0$, we have
$$|T_t(x,y)- T_t(x,z)|\leq C\Big(\frac{|y-z|}{\sqrt t}\Big)^{\sigma_0}t^{-\frac{d}{2}}e^{-\frac{|x-y|^2}{t}}\leq C\frac{|y-z|^{\sigma_0}}{|x-y|^{d+\sigma_0}}.$$
\end{Lemma}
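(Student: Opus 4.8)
The second inequality will follow from the first by a one-line computation: since $\big(|y-z|/\sqrt t\big)^{\sigma_0}t^{-d/2}e^{-|x-y|^2/t}=|y-z|^{\sigma_0}t^{-(d+\sigma_0)/2}e^{-|x-y|^2/t}$ and $\sup_{t>0}t^{-(d+\sigma_0)/2}e^{-|x-y|^2/t}=C_{d,\sigma_0}|x-y|^{-(d+\sigma_0)}$, the second bound is just the first one optimized in $t$. So the real content is the first, Gaussian-with-H\"older-gain estimate; I will treat the numerical constant in the exponent as free (any small $c>0$ in place of $1$ will do, and is what is actually true).

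My plan for the first estimate starts from two standard facts about $T_t=e^{-tL}$, $L=-\Delta+V$, $0\le V\in RH_{d/2}$: the Gaussian upper bound $0\le T_t(x,y)\le C t^{-d/2}e^{-|x-y|^2/(Ct)}$, immediate from $V\ge0$ and domination by the free heat semigroup; and its $\rho$-refined version $|T_t(x,y)|\le C_N t^{-d/2}e^{-|x-y|^2/(Ct)}\big(1+\tfrac{\sqrt t}{\rho(x)}+\tfrac{\sqrt t}{\rho(y)}\big)^{-N}$, valid for every $N$ when $V\in RH_{d/2}$ (see \cite{DZ, DGMTZ}). The decisive input is interior parabolic regularity for $\partial_s-\Delta+V$: by Gehring's lemma $V\in RH_{d/2+\delta}\subset L^{d/2+\delta}_{\mathrm{loc}}$ for some $\delta>0$, so the De Giorgi--Nash--Moser theory applies, and its scale-invariant H\"older estimate on a parabolic cylinder $(s_0-r^2,s_0)\times B(w_0,r)$ has a universal constant and exponent $\sigma_0\in(0,1)$ provided $r^{2-d}\int_{B(w_0,r)}V\lesssim1$, i.e.\ (by $RH_{d/2}$ and the definition of $\rho$) provided $r\lesssim\rho(w_0)$.

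With these in hand I would fix $x$ and look at $v(s,w):=T_s(x,w)=T_s(w,x)$, which solves $\partial_s v+(-\Delta_w+V)v=0$ in $(0,\infty)\times\mathbb R^d$ and is smooth there. Put $r:=\tfrac14\min\{\sqrt t,\rho(y)\}$. If $|y-z|\ge r$ (in particular whenever $|y-z|\gtrsim\sqrt t$), the bound follows from the triangle inequality $|T_t(x,y)-T_t(x,z)|\le|T_t(x,y)|+|T_t(x,z)|$ together with the upper bounds above, using $|x-z|\ge\tfrac12|x-y|$ and Proposition \ref{Shen, Lemma 1.4} to compare $\rho(y)$ and $\rho(z)$ when $\sqrt t>\rho(y)$; any smallness of $|y-z|/\sqrt t$ is then compensated by the extra $\rho$-decay of the kernel. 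If $|y-z|<r$, I apply the parabolic H\"older estimate on $(t-r^2,t)\times B\big(\tfrac{y+z}{2},r\big)$ --- legitimate since $r\lesssim\rho$ keeps the constant universal --- to get $|v(t,y)-v(t,z)|\le C\big(|y-z|/r\big)^{\sigma_0}\sup_{(t-r^2,t)\times B((y+z)/2,r)}|v|$, and bound the supremum by $C t^{-d/2}e^{-c|x-y|^2/t}$ when $r=\tfrac14\sqrt t$, and by $C_N t^{-d/2}e^{-c|x-y|^2/t}\big(\sqrt t/\rho(y)\big)^{-N}$ when $r=\tfrac14\rho(y)<\tfrac14\sqrt t$. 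In the latter sub-case the factor $\big(\rho(y)/\sqrt t\big)^N$ exactly swallows the loss $\big(\sqrt t/\rho(y)\big)^{\sigma_0}$ incurred by H\"older-estimating at scale $\rho(y)$ instead of $\sqrt t$: the $|y-z|$-powers cancel and one needs only $N>\sigma_0$.

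The hard part is precisely this transition at scale $\rho$: De Giorgi--Nash--Moser regularity for $-\Delta+V$ is scale-invariant only up to the critical radius $\rho$, so when $\sqrt t\gg\rho(y)$ one cannot run the H\"older estimate at the natural scale $\sqrt t$ and must trade scale against the exponential $\rho$-decay of $T_t$ --- which is why both the refined upper bound and Shen's comparison (Proposition \ref{Shen, Lemma 1.4}) are needed, and why $\sigma_0$ depends on $L$ and is in general strictly less than $1$. If one prefers not to use De Giorgi--Nash--Moser as a black box, the small-scale H\"older regularity can instead be extracted from the Duhamel identity $T_t(x,y)=p_t(x-y)-\int_0^t\!\!\int p_{t-s}(x-w)V(w)T_s(w,y)\,dw\,ds$ by a fixed-point argument in a weighted norm --- the free part $p_t$ supplying exponent $1$ and the $V$-integral converging, with a possible loss down to $\sigma_0$, thanks to $V\in L^{d/2+\delta}_{\mathrm{loc}}$ --- after which the passage from small to all $t$ is made via $T_t=T_{t/2}T_{t/2}$ and the $\rho$-refined bounds.
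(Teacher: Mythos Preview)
The paper does not prove this lemma; it simply cites it as Lemma~2 of \cite{LP1}. So there is no ``paper's own proof'' to compare against --- only your sketch stands here.

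Your sketch is correct and follows the standard route to such H\"older-in-space estimates for Schr\"odinger heat kernels (this is essentially how the result is obtained in \cite{LP1} and the earlier literature it draws on, e.g.\ Kurata and \cite{DZ}). The key points are all in place: the reduction of the second inequality to the first by optimizing in $t$; the dichotomy at scale $r\sim\min\{\sqrt t,\rho(y)\}$; the use of De Giorgi--Nash--Moser parabolic H\"older regularity at sub-critical scales $r\lesssim\rho$ (where the condition $r^{2-d}\!\int_B V\lesssim 1$ makes the constants universal, since $V\in RH_{d/2+\delta}$); and, crucially, the trade-off in the regime $\sqrt t\gg\rho(y)$, where the loss $(\sqrt t/\rho(y))^{\sigma_0}$ from estimating at scale $\rho(y)$ is absorbed by the extra decay $(\rho(y)/\sqrt t)^N$ supplied by the $\rho$-refined Gaussian bound of \cite{DZ}. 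Your remark that the exponent in $e^{-|x-y|^2/t}$ should really carry a small constant $c>0$ is also accurate --- the statement in the paper is written with $c=1$ for notational convenience, as is common in this literature.

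One small point worth tightening if you write this out in full: in the ``large gap'' case $|y-z|\ge r=\tfrac14\rho(y)<\tfrac14\sqrt t$, you also need the refined bound at the point $z$, not just at $y$; since $|y-z|<|x-y|/2$ gives no a priori control of $|y-z|$ relative to $\rho(y)$, you must invoke Proposition~\ref{Shen, Lemma 1.4} to compare $\rho(z)$ with $\rho(y)$ (as you indicate) and then check that the resulting power of $(1+|y-z|/\rho(y))$ is harmless after absorbing it into the Gaussian. This is routine but should be made explicit.
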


\begin{proof}[Proof of Theorem \ref{generalized Hardy spaces}]
As $\mathcal M_L$ is a sublinear operator, by Remark \ref{atoms} and  Theorem \ref{DZ, Theorem 1.5}, it is sufficient to show that
\begin{equation}\label{generalized atom 1}
\|\mathcal M_L(a)\|_{L^1}\leq C
\end{equation}
for all generalized $(H^1_L,q,\varepsilon)$-atom $a$  related to the ball $B=B(x_0,r)$.

Indeed, from the $L^q$-boundedness of the classical Hardy-Littlewood maximal operator $\mathcal M$, the estimate $\mathcal M_L(a)\leq C \mathcal M(a)$ and H\"older inequality, 
\begin{equation}\label{generalized atom 2}
\|\mathcal M_L(a)\|_{L^1(2B)}\leq C\|\mathcal M(a)\|_{L^1(2B)}\leq C|2B|^{1/q'}\|\mathcal M(a)\|_{L^q}\leq C,
\end{equation}
where $1/q'+ 1/q=1$.  Let $x\notin 2B$ and $t>0$, Lemma \ref{LP1, Lemma 2} and (3.5) of \cite{DZ} give
\begin{eqnarray*}
|T_t(a)(x)|&=&\Big|\int_{\mathbb R^d} T_t(x,y) a(y)dy\Big|\\
&\leq& \Big|\int_{B}(T_t(x,y)- T_t(x,x_0))a(y)dy\Big|+ |T_t(x,x_0)|\Big|\int_B a(y)dy\Big|\\
&\leq& C \frac{r^{\sigma_0}}{|x-x_0|^{d+\sigma_0}} + C \frac{r^{\varepsilon}}{|x-x_0|^{d+\varepsilon}}.
\end{eqnarray*}
Therefore,
\begin{align}\label{generalized atom 3}
\|\mathcal M_L(a)\|_{L^1((2B)^c)}&= \|\sup\limits_{t>0}|T_t(a)|\|_{L^1((2B)^c)}\nonumber\\
&\leq C \int_{(2B)^c}\frac{r^{\sigma_0}}{|x-x_0|^{d+\sigma_0}}dx + C \int_{(2B)^c}\frac{r^\varepsilon}{|x-x_0|^{d+ \varepsilon}}dx\nonumber\\
&\leq C.
\end{align}

Then, (\ref{generalized atom 1}) follows from (\ref{generalized atom 2}) and (\ref{generalized atom 3}).
\end{proof}

By Theorem \ref{generalized Hardy spaces}, the following can be seen as a direct consequence of  Proposition 3.2 of \cite{YZ2} and remark \ref{atoms}.

\begin{Proposition}\label{boundedness through generalized atoms}
Let $1<q<\infty$, $\varepsilon>0$ and $\mathcal X$ be a Banach space. Suppose that $T: \mathbb H^{1,q,\varepsilon}_{L,\rm fin}(\mathbb R^d) \to \mathcal X$ is a sublinear operator with 
$$\sup\{\|Ta\|_{\mathcal X}: a \;\mbox{is a generalized}\; (H^1_L,q,\varepsilon)-atom\}<\infty.$$
Then, $T$ can be extended to a bounded sublinear operator $\widetilde T$ from $H^1_L(\mathbb R^d)$ into $\mathcal X$, moreover,
$$\|\widetilde T\|_{H^1_L\to \mathcal X}\leq C\sup\{\|Ta\|_{\mathcal X}: a \;\mbox{is a generalized}\; (H^1_L,q,\varepsilon)-atom\}.$$
\end{Proposition}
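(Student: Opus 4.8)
The plan is to reduce the claimed boundedness of $T$ on $H^1_L(\mathbb R^d)$ to a uniform bound on generalized atoms together with a density/approximation argument, exactly along the lines of the finite-atom-norm technique of \cite{YZ2}. First I would invoke Theorem \ref{generalized Hardy spaces} to identify $H^1_L(\mathbb R^d)$ with $\mathbb H^{1,q,\varepsilon}_{L,at}(\mathbb R^d)$ and note that $\mathbb H^{1,q,\varepsilon}_{L,\rm fin}(\mathbb R^d)$ is a dense subspace, since any $f=\sum_{j}\lambda_j a_j\in H^1_L$ with $\sum_j|\lambda_j|<\infty$ is approximated in norm by its partial sums $\sum_{j=1}^k\lambda_j a_j\in\mathbb H^{1,q,\varepsilon}_{L,\rm fin}$. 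Setting $M:=\sup\{\|Ta\|_{\mathcal X}: a\text{ a generalized }(H^1_L,q,\varepsilon)\text{-atom}\}<\infty$, for $f=\sum_{j=1}^k\lambda_j a_j\in\mathbb H^{1,q,\varepsilon}_{L,\rm fin}$ sublinearity of $T$ gives $\|Tf\|_{\mathcal X}\leq\sum_{j=1}^k|\lambda_j|\,\|Ta_j\|_{\mathcal X}\leq M\sum_{j=1}^k|\lambda_j|$, and taking the infimum over all finite representations yields $\|Tf\|_{\mathcal X}\leq M\|f\|_{\mathbb H^{1,q,\varepsilon}_{L,\rm fin}}$.

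The next step is to replace the finite-atom norm $\|\cdot\|_{\mathbb H^{1,q,\varepsilon}_{L,\rm fin}}$ by the genuine $H^1_L$-norm. This is the point where one must be a little careful: a priori $\|f\|_{\mathbb H^{1,q,\varepsilon}_{L,\rm fin}}$ could be strictly larger than $\|f\|_{H^1_L}\approx\|f\|_{\mathbb H^{1,q,\varepsilon}_{L,at}}$. Here I would appeal to the finite-atomic-norm equivalence result (Proposition 3.2 of \cite{YZ2}, as the excerpt already indicates), which asserts precisely that on $\mathbb H^{1,q,\varepsilon}_{L,\rm fin}(\mathbb R^d)$ one has $\|f\|_{\mathbb H^{1,q,\varepsilon}_{L,\rm fin}}\approx\|f\|_{H^1_L}$ for $1<q<\infty$. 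Combining this with the previous display gives $\|Tf\|_{\mathcal X}\leq CM\|f\|_{H^1_L}$ for all $f$ in the dense subspace $\mathbb H^{1,q,\varepsilon}_{L,\rm fin}(\mathbb R^d)$.

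Finally I would extend $T$ by density: the bounded sublinear map $T:\mathbb H^{1,q,\varepsilon}_{L,\rm fin}\to\mathcal X$ extends uniquely to a map $\widetilde T:H^1_L(\mathbb R^d)\to\mathcal X$ with the same bound, defining $\widetilde T f=\lim_n Tf_n$ for any sequence $f_n\to f$ in $H^1_L$ with $f_n\in\mathbb H^{1,q,\varepsilon}_{L,\rm fin}$; the limit exists because $\{Tf_n\}$ is Cauchy in $\mathcal X$ (by sublinearity, $\|Tf_n-Tf_m\|_{\mathcal X}\leq\|T(f_n-f_m)\|_{\mathcal X}\leq CM\|f_n-f_m\|_{H^1_L}$, using that $f_n-f_m$ is again a finite combination of generalized atoms), and is independent of the approximating sequence. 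One checks $\widetilde T$ inherits sublinearity by passing to the limit in the inequality $|T(\alpha f_n+\beta g_n)|\leq|\alpha||Tf_n|+|\beta||Tg_n|$, and $\|\widetilde T\|_{H^1_L\to\mathcal X}\leq CM$ as claimed. The main obstacle — and really the only nontrivial ingredient — is the finite-atomic-norm equivalence quoted from \cite{YZ2}, together with checking that the restriction $1<q<\infty$ (rather than $q=\infty$) is exactly what makes that equivalence available; everything else is the standard bounded-linear-extension argument adapted to sublinear operators.
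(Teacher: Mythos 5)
Your proposal is correct and takes essentially the same route as the paper, whose entire proof is the one-line remark that the statement is a direct consequence of Proposition 3.2 of \cite{YZ2} together with Theorem \ref{generalized Hardy spaces} and Remark \ref{atoms}. You have simply unpacked that citation — uniform bound on atoms, finite-atomic-norm equivalence (valid precisely because $1<q<\infty$, which you rightly flag as the one nontrivial ingredient, transferred from $(H^1_L,q)$-atoms to generalized atoms via Remark \ref{atoms}), and extension by density — so there is nothing to add.
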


Now, we turn to explain the new $BMO$ type spaces introduced by Bongioanni, Harboure and Salinas in \cite{BHS2}. Here and in what follows $f_B:= \frac{1}{|B|}\int_B f(x)dx$ and 
\begin{equation}
MO(f,B):= \frac{1}{|B|}\int_B |f(y)- f_B|dy.
\end{equation}

For $\theta\geq 0$, following \cite{BHS2}, we denote by $BMO_{L,\theta}(\mathbb R^d)$ the set of all locally integrable functions $f$ such that
$$\|f\|_{BMO_{L,\theta}}=\sup\limits_{B(x,r)}\left(\frac{1}{\Big(1+ \frac{r}{\rho(x)}\Big)^\theta}MO(f, B(x,r))\right)< \infty,$$
and $BMO^{\rm log}_{L,\theta}(\mathbb R^d)$ the set of all locally integrable  functions $f$ such that
$$\|f\|_{BMO^{\rm log}_{L,\theta}}=\sup\limits_{B(x,r)}\left(\frac{\log\Big(e+ \frac{\rho(x)}{r}\Big)}{\Big(1+ \frac{r}{\rho(x)}\Big)^\theta}MO(g, B(x,r))\right)< \infty.$$
 When $\theta=0$, we write $BMO^{\rm log}_{L}(\mathbb R^d)$ instead of $BMO^{\rm log}_{L,0}(\mathbb R^d)$. We next define 
$$BMO_{L,\infty}(\mathbb R^d)= \bigcup_{\theta\geq 0}BMO_{L,\theta}(\mathbb R^d)$$
and
$$ BMO^{\rm log}_{L,\infty}(\mathbb R^d)= \bigcup_{\theta\geq 0}BMO^{\rm log}_{L,\theta}(\mathbb R^d).$$

Observe that  $BMO_{L,0}(\mathbb R^d)$ is just the classical $BMO(\mathbb R^d)$ space. Moreover, for any $0\leq \theta\leq \theta'\leq \infty$, we have
\begin{equation}\label{BMO of BHS}
BMO_{L,\theta}(\mathbb R^d)\subset BMO_{L,\theta'}(\mathbb R^d), \quad BMO^{\rm log}_{L,\theta}(\mathbb R^d)\subset BMO^{\rm log}_{L,\theta'}(\mathbb R^d)
\end{equation}
and
\begin{equation}\label{BMO^log of BHS}
BMO^{\rm log}_{L,\theta}(\mathbb R^d)= BMO_{L,\theta}(\mathbb R^d) \cap BMO^{\rm log}_{L,\infty}(\mathbb R^d).
\end{equation}

\begin{Remark}\label{remark on new BMO}
The inclusions in  (\ref{BMO of BHS}) are strict in general. In particular:

i) The space $BMO_{L,\infty}(\mathbb R^d)$ is in general larger than the space $BMO(\mathbb R^d)$.  Indeed, when $V(x)\equiv |x|^2$, it is easy to check that the functions $b_j(x)= |x_j|^2$, $j=1,...,d$, belong to $BMO_{L,\infty}(\mathbb R^d)$ but not to $BMO(\mathbb R^d)$.

ii) The space $BMO^{\rm log}_{L,\infty}(\mathbb R^d)$ is in general larger than the space $BMO_L^{\log}(\mathbb R^d)$. Indeed, when $V(x)\equiv 1$, it is easy to check that the functions $b_j(x)= |x_j|$, $j=1,...,d$, belong to $BMO^{\rm log}_{L,\infty}(\mathbb R^d)$ but not to $BMO_L^{\log}(\mathbb R^d)$.
\end{Remark}

Next, let us recall the notation of Schr\"odinger-Calder\'on-Zygmund operators.

 Let $\delta\in (0,1]$. According to \cite{MSTZ}, a continuous function $K:\mathbb R^d\times \mathbb R^d\setminus\{(x,x):x\in \mathbb R^d\}\to\mathbb C$ is said to be a $(\delta,L)$-Calder\'on-Zygmund singular integral kernel  if for each $N>0$,
\begin{equation}\label{Calderon-Zygmund 1}
|K(x,y)|\leq \frac{C(N)}{|x-y|^d}\Big(1+ \frac{|x-y|}{\rho(x)}\Big)^{-N}
\end{equation}
for all  $x\ne y$, and
\begin{equation}\label{Calderon-Zygmund 2}
 |K(x,y)-K(x',y)|+|K(y,x)-K(y,x')|\leq C\frac{|x-x'|^\delta}{|x-y|^{d+\delta}}
\end{equation}
for all $2|x-x'|\leq |x-y|$.

As usual, we denote by $C^\infty_c(\mathbb R^d)$  the space of all $C^\infty$-functions with compact support, by $\mathcal S(\mathbb R^d)$  the Schwartz space on $\mathbb R^d$. 

\begin{Definition}\label{definition of Schrodinger-Calderon-Zygmund operators}
A linear operator $T:\mathcal S(\mathbb R^d)\to\mathcal S'(\mathbb R^d)$ is said to be a $(\delta,L)$-Calder\'on-Zygmund operator  if $T$ can be extended to a bounded operator on $L^2(\mathbb R^d)$ and if there exists a $(\delta,L)$-Calder\'on-Zygmund singular integral kernel  $K$ such that for all $f\in C^\infty_c(\mathbb R^d)$ and all $x\notin$ supp $f$, we have
$$Tf(x)=\int_{\mathbb R^d}K(x,y)f(y)dy.$$

\end{Definition}

An operator $T$ is said to be a Schr\"odinger-Calder\'on-Zygmund  operator associated with $L$ (or $L$-Calder\'on-Zygmund operator) if it is a $(\delta,L)$-Calder\'on-Zygmund operator for some $\delta\in (0,1]$. We say that $T$ satisfies the condition $T^*1=0$ if there are $q\in (1,\infty]$ and $\varepsilon>0$ so that $\int_{\mathbb R^d} Ta(x)dx=0$ holds  for every  generalized $(H^1_L,q,\varepsilon)$-atoms $a$.

\begin{Remark}\label{remark for Schrodinger-CZO}
i) Using Proposition \ref{Shen, Lemma 1.4}, Inequality (\ref{Calderon-Zygmund 1}) is equivalent to
$$|K(x,y)|\leq \frac{C(N)}{|x-y|^d}\Big(1+ \frac{|x-y|}{\rho(y)}\Big)^{-N}$$
for all  $x\ne y$.

ii) By Theorem 0.8 of \cite{Sh} and Theorem 1.1 of \cite{SY}, we see that the Riesz transforms $R_j$ are $L$-Calder\'on-Zygmund operators satisfying $R_j^*1=0$ whenever $V\in RH_d$.

iii) If $T$ is a $L$-Calder\'on-Zygmund operator then it is also a classical Calder\'on-Zygmund operator, and thus $T$ is bounded on $L^p(\mathbb R^d)$ for $1<p<\infty$ and bounded from $L^1(\mathbb R^d)$ into $L^{1,\infty}(\mathbb R^d)$.
\end{Remark}

\section{Statement of the results}

Recall that $\mathcal K_L$ is the set of all sublinear operators $T$ bounded from $H^1_L(\mathbb R^d)$ into $L^1(\mathbb R^d)$ and that there are $q\in (1,\infty]$ and $\varepsilon>0$ such that
$$\|(b-b_B)Ta\|_{L^1}\leq C \|b\|_{BMO}$$
for all $b\in BMO(\mathbb R^d)$, any  generalized $(H^1_L, q, \varepsilon)$-atom  $a$ related to the ball $B$,  where  $C>0$ is a constant independent of $b,a$.

\subsection{Two decomposition theorems}

Let  $b$  be a locally integrable function and $T\in \mathcal K_L$. As usual, the (sublinear) commutator $[b,T]$ of the operator $T$ is defined by $[b,T](f)(x):= T\Big((b(x)- b(\cdot))f(\cdot)\Big)(x)$. 

\begin{Theorem}[Subbilinear decomposition]\label{subbilinear decomposition for commutators}
Let $T\in \mathcal K_L$. There exists a bounded subbilinear operator $\mathfrak R= \mathfrak R_T: H^1_L(\mathbb R^d)\times BMO(\mathbb R^d)\to L^1(\mathbb R^d)$ such that for all $(f,b)\in H^1_L(\mathbb R^d)\times BMO(\mathbb R^d)$, we have
$$|T(\mathfrak S(f,b))|- \mathfrak R(f,b)\leq |[b, T](f)|\leq \mathfrak R(f,b) + |T(\mathfrak S(f,b))|,$$
where $\mathfrak S$ is a bounded bilinear operator from $H^1_L(\mathbb R^d)\times BMO(\mathbb R^d)$ into $L^1(\mathbb R^d)$  which does not depend on $T$. 
\end{Theorem}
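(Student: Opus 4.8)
The strategy is to make everything explicit on atoms and then pass to the limit using Proposition~\ref{boundedness through generalized atoms}. First I would fix $q\in(1,\infty]$ and $\varepsilon>0$ as in the definition of $\mathcal K_L$, and construct the bilinear operator $\mathfrak S$ directly: for $f=\sum_j\lambda_j a_j$ a (generalized) atomic decomposition of $f\in H^1_L$ with $a_j$ supported in a ball $B_j$, set
$$\mathfrak S(f,b):=\sum_j \lambda_j\,(b-b_{B_j})\,a_j .$$
The point is that this series converges in $L^1$ and $\|\mathfrak S(f,b)\|_{L^1}\le C\|f\|_{H^1_L}\|b\|_{BMO}$; this is essentially the classical estimate $\|(b-b_B)a\|_{L^1}\le C\|b\|_{BMO}$ for a single atom (H\"older plus the John--Nirenberg inequality), summed against $\sum_j|\lambda_j|$. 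One must check that $\mathfrak S$ is well defined independently of the chosen atomic decomposition; the cleanest way is to first define it on $\mathbb H^{1,q,\varepsilon}_{L,\mathrm{fin}}$ and invoke the extension machinery, noting that $\mathfrak S(\cdot,b)$ is (sub)linear and bounded on finite atomic sums. I expect this is the step stated as Proposition~\ref{the bilinear operator}, so I would simply cite it.

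Next, the algebraic identity. On a finite atomic sum $f=\sum_j\lambda_j a_j$ one has, for each $j$,
$$(b-b(\cdot))a_j=(b-b_{B_j})a_j-(b(\cdot)-b_{B_j})a_j,$$
hence formally
$$[b,T](f)=\sum_j\lambda_j\,T\big((b-b_{B_j})a_j\big)-\sum_j\lambda_j\,T\big((b(\cdot)-b_{B_j})a_j\big).$$
By sublinearity of $T$ this yields the pointwise two-sided bound
$$\big|\,|[b,T](f)|-|T(\mathfrak S(f,b))|\,\big|\le \mathfrak R(f,b),\qquad \mathfrak R(f,b):=\sum_j|\lambda_j|\,\big|T\big((b-b_{B_j})a_j\big)\big|,$$
wait — more precisely I would define $\mathfrak R(f,b)$ as the $L^1$-convergent sum $\sum_j|\lambda_j|\,|(b-b_{B_j})Ta_j|$, which is the ``error term'': note the crucial switch, $T((b(\cdot)-b_{B_j})a_j)=bT a_j - T(b a_j)$ reorganized so that the relevant quantity on each atom is $(b-b_{B_j})Ta_j$, and \emph{this} is exactly what the defining inequality of $\mathcal K_L$ controls: $\|(b-b_{B_j})Ta_j\|_{L^1}\le C\|b\|_{BMO}$. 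Summing gives $\|\mathfrak R(f,b)\|_{L^1}\le C\sum_j|\lambda_j|\,\|b\|_{BMO}\le C\|f\|_{H^1_L}\|b\|_{BMO}$, so $\mathfrak R$ is bounded; subbilinearity in each variable is immediate from the construction (it is built from $|\lambda_j|$ and sublinear pieces). The two-sided inequality in the theorem is then just the triangle inequality $\big|\,|X|-|Y|\,\big|\le|X-Y|$ applied to $X=[b,T](f)$, $Y=T(\mathfrak S(f,b))$, together with the identification of $X-Y$ (up to sign and the $|\lambda_j|$) with the $\mathfrak R$-sum.

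The main obstacle is the \emph{well-definedness and independence of decomposition}: $[b,T](f)$ is intrinsically defined, but $\mathfrak S(f,b)$, $\mathfrak R(f,b)$ and the identity above are written via a particular atomic decomposition of $f$. To make the theorem rigorous one must argue that the estimates, being uniform over all atomic decompositions, survive the passage from $\mathbb H^{1,q,\varepsilon}_{L,\mathrm{fin}}$ to all of $H^1_L$ — i.e. one proves the inequality first for $f$ a finite linear combination of generalized atoms (where all sums are finite and $T$ applies termwise without convergence issues), obtains bounds with constants independent of $f$ and of the decomposition, and then extends by density using Proposition~\ref{boundedness through generalized atoms} (applied to the relevant sublinear maps in the $f$ variable, with $b$ frozen) together with Theorem~\ref{generalized Hardy spaces}. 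A secondary technical point is justifying that $T\big((b-b_{B_j})a_j\big)$ makes sense and the series $\sum_j\lambda_j T\big((b-b_{B_j})a_j\big)$ converges in $L^1$, which follows from boundedness of $T:H^1_L\to L^1$ applied to $(b-b_{B_j})a_j$ (after normalizing it as a multiple of an $H^1_L$ function, or directly via the $\mathcal K_L$ hypothesis which bounds its $T$-image in $L^1$). Once these convergence/consistency issues are handled, the rest is the elementary manipulation sketched above.
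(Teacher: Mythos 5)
There is a genuine gap, and it is precisely the one your proposal flags as ``the main obstacle'' but does not resolve. Your $\mathfrak S(f,b):=\sum_j\lambda_j(b-b_{B_j})a_j$ and $\mathfrak R(f,b):=\sum_j|\lambda_j|\,|(b-b_{B_j})Ta_j|$ are functions of a \emph{chosen atomic decomposition} of $f$, not of $f$ itself: the same $f$ admits many decompositions (and even a single function $a$ can be an atom for many different balls $B$, so the assignment $a\mapsto(b-b_B)a$ is already ambiguous). Consequently these formulas do not define operators on $\mathbb H^{1,q,\varepsilon}_{L,\mathrm{fin}}(\mathbb R^d)$, and Proposition \ref{boundedness through generalized atoms} cannot be invoked: that extension principle requires a sublinear operator \emph{already well defined} on the finite atomic space with uniform bounds on atoms, and in any case it only produces sublinear extensions, whereas the theorem demands that $\mathfrak S$ be \emph{bilinear} (in particular linear in $f$) and independent of $T$. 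Fixing a decomposition for each $f$ by some selection destroys linearity in $f$, since the selected decomposition of $f_1+f_2$ need not relate to those of $f_1$ and $f_2$. This is exactly the pitfall the paper itself warns about (after Theorem \ref{An atomic Hardy space H^1_b}, citing \cite{Bo} and \cite{Ky2}) in connection with the Tang--Bi argument: uniform estimates on atoms, organized decomposition by decomposition, do not by themselves yield a bounded operator, let alone a canonical bilinear one. Note also that the bilinearity and $T$-independence of $\mathfrak S$ are not cosmetic: they are what make the applications in Section \ref{some applications} work (e.g.\ Lemma \ref{lemma for the largest subspace} needs the \emph{same} $\mathfrak S$ to appear in the decompositions for $\mathcal M_L$ and for all the $R_j$ simultaneously).

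The paper's proof avoids the issue entirely by constructing $\mathfrak S$ from canonical, decomposition-free objects: $\mathfrak S(f,b)=-\Pi(\mathfrak H(f),b)$, where $\mathfrak H$ is the explicit linear operator $\mathfrak H(f)=\sum_{n,k}(\psi_{n,k}f-\varphi_{2^{-n/2}}*(\psi_{n,k}f))$, bounded from $H^1_L(\mathbb R^d)$ into $H^1(\mathbb R^d)$ (Lemma \ref{Hardy estimates for local Riesz transforms}), and $\Pi$ is the wavelet paraproduct of Proposition \ref{Bo, Gre and K}, which is genuinely bilinear because the wavelet coefficients $\langle\cdot,\psi_I^\sigma\rangle$ are linear. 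One then splits $f=(f-\mathfrak H(f))+\mathfrak H(f)$, controls $\mathcal U(f,b)=[b,T](f-\mathfrak H(f))$ by Lemma \ref{extend to H^1_L} (this is where the $\mathcal K_L$ hypothesis and the atomic estimates enter, but only to bound an error term, not to define $\mathfrak S$), and applies the already-established classical subbilinear decomposition of \cite{Ky2} to $\mathfrak H(f)\in H^1(\mathbb R^d)$. Your atom-by-atom algebra (splitting $b(x)-b(y)$ through $b_{B_j}$ and using the defining inequality of $\mathcal K_L$) is essentially the content of the proof of Lemma \ref{extend to H^1_L}, where it is legitimate because there it is used only to produce $L^1$ \emph{norm bounds}, not to define the operators $\mathfrak S$ and $\mathfrak R$ appearing in the statement.
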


Using Theorem \ref{subbilinear decomposition for commutators}, we obtain immediately the following result. 

\begin{Proposition}\label{weak type}
Let $T\in \mathcal K_L$ so that $T$ is of weak type $(1,1)$. Then, the subbilinear operator $\mathfrak T(f,g)= [g,T](f)$ maps continuously $H^1_L(\mathbb R^d)\times BMO(\mathbb R^d)$  into $L^{1,\infty}(\mathbb R^d)$.
\end{Proposition}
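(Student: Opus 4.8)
The plan is to deduce Proposition \ref{weak type} directly from the subbilinear decomposition of Theorem \ref{subbilinear decomposition for commutators} together with the boundedness properties already established. First I would fix $(f,b)\in H^1_L(\mathbb R^d)\times BMO(\mathbb R^d)$ and apply the right-hand inequality in Theorem \ref{subbilinear decomposition for commutators}, namely
$$|[b,T](f)|\leq \mathfrak R(f,b)+|T(\mathfrak S(f,b))|.$$
Since weak-$L^1$ is a quasi-normed space, it suffices to control the weak-$L^1$ norm of each of the two terms on the right separately (using the quasi-triangle inequality $\|u+v\|_{L^{1,\infty}}\leq 2(\|u\|_{L^{1,\infty}}+\|v\|_{L^{1,\infty}})$, or more precisely the sublevel-set splitting $\{|u+v|>2\lambda\}\subset\{|u|>\lambda\}\cup\{|v|>\lambda\}$).

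For the first term, $\mathfrak R=\mathfrak R_T$ is a bounded subbilinear operator into $L^1(\mathbb R^d)$, and $L^1(\mathbb R^d)\hookrightarrow L^{1,\infty}(\mathbb R^d)$ continuously; hence
$$\|\mathfrak R(f,b)\|_{L^{1,\infty}}\leq\|\mathfrak R(f,b)\|_{L^1}\leq C\,\|f\|_{H^1_L}\|b\|_{BMO}.$$
For the second term, I would use that $\mathfrak S$ is a bounded bilinear operator from $H^1_L(\mathbb R^d)\times BMO(\mathbb R^d)$ into $L^1(\mathbb R^d)$, so $g:=\mathfrak S(f,b)\in L^1(\mathbb R^d)$ with $\|g\|_{L^1}\leq C\|f\|_{H^1_L}\|b\|_{BMO}$, and then apply the hypothesis that $T$ is of weak type $(1,1)$, which gives
$$\|T(\mathfrak S(f,b))\|_{L^{1,\infty}}\leq C\,\|\mathfrak S(f,b)\|_{L^1}\leq C\,\|f\|_{H^1_L}\|b\|_{BMO}.$$
Combining the two estimates yields $\|[b,T](f)\|_{L^{1,\infty}}\leq C\,\|f\|_{H^1_L}\|b\|_{BMO}$, which is exactly the asserted continuity of the subbilinear map $\mathfrak T(f,g)=[g,T](f)$ from $H^1_L(\mathbb R^d)\times BMO(\mathbb R^d)$ into $L^{1,\infty}(\mathbb R^d)$; the sublinearity of $\mathfrak T$ in each variable is inherited from the sublinearity of $T$ and the linearity of the commutator bracket in $b$ and $f$.

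There is essentially no serious obstacle here: the proposition is a formal corollary, and the only mild care needed is that $L^{1,\infty}$ is not normed, so one should phrase the splitting of the two pieces via distribution-function inequalities rather than a genuine triangle inequality — but this is entirely routine. One should also note that since $\mathfrak S$ does not depend on $T$, the constant $C$ depends only on the $H^1_L\to L^1$ bounds for $\mathfrak R_T$, the weak $(1,1)$ bound for $T$, and the universal bound for $\mathfrak S$, which is consistent with the statement. In writing this up I would keep it to a few lines, citing Theorem \ref{subbilinear decomposition for commutators} and Proposition \ref{the bilinear operator} for the properties of $\mathfrak S$.
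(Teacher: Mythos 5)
Your argument is correct and is precisely the proof the paper has in mind: the paper simply says the proposition follows "immediately" from Theorem \ref{subbilinear decomposition for commutators}, and your write-up fills in the obvious details (bounding $\mathfrak R(f,b)$ via $L^1\hookrightarrow L^{1,\infty}$ and $T(\mathfrak S(f,b))$ via the weak $(1,1)$ hypothesis applied to $\mathfrak S(f,b)\in L^1$, then combining with the quasi-triangle inequality). Nothing further is needed.
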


As the Riesz transforms $R_j= \partial_{x_j}L^{-1/2}$ are of weak type $(1,1)$ (see \cite{Li}), the following can be seen as a consequence of Proposition \ref{weak type} (see also \cite{LP2}).

\begin{Corollary}[see \cite{LP2}, Theorem 4.1]
Let $b\in BMO(\mathbb R^d)$. Then, the commutators $[b, R_j]$ are bounded from $H^1_L(\mathbb R^d)$  into $L^{1,\infty}(\mathbb R^d)$.
\end{Corollary}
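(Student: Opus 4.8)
The plan is to obtain the Corollary as an immediate application of Proposition \ref{weak type}. Since that proposition shows $(f,g)\mapsto [g,T](f)$ maps $H^1_L(\mathbb R^d)\times BMO(\mathbb R^d)$ into $L^{1,\infty}(\mathbb R^d)$ for every $T\in\mathcal K_L$ that is of weak type $(1,1)$, it suffices to verify these two hypotheses for $T=R_j$ and then specialize to a fixed $g=b\in BMO(\mathbb R^d)$. So the real content is to check that each Riesz transform $R_j$ belongs to $\mathcal K_L$ and is of weak type $(1,1)$.

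For the weak type $(1,1)$ estimate I would simply quote the result of \cite{Li}; this is the step that genuinely uses $d\ge 3$ and $V\in RH_{d/2}$, since, as recalled in Remark \ref{remark for Schrodinger-CZO}, $R_j$ is only known to be an $L$-Calder\'on--Zygmund operator (hence weak $(1,1)$ by classical theory) under the stronger hypothesis $V\in RH_d$. To see that $R_j\in\mathcal K_L$, note first that $R_j\colon H^1_L(\mathbb R^d)\to L^1(\mathbb R^d)$ is bounded, which is part of the Riesz-transform characterization of $H^1_L$ in \cite{DZ}. The substantive point is the uniform bound $\|(b-b_B)R_ja\|_{L^1}\le C\|b\|_{BMO}$ for $b\in BMO(\mathbb R^d)$ and $a$ a generalized $(H^1_L,q,\varepsilon)$-atom associated with a ball $B=B(x_0,r)$. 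Here I would split the integral over $2B$ and over $(2B)^c$. On $2B$ one uses H\"older's inequality, the $L^q$-boundedness of $R_j$ for $1<q<\infty$, the atom normalization $\|a\|_{L^q}\le|B|^{1/q-1}$, and the John--Nirenberg inequality in the form $\big(|2B|^{-1}\int_{2B}|b-b_B|^{q'}\big)^{1/q'}\le C\|b\|_{BMO}$, giving a bound $O(\|b\|_{BMO})$. On $(2B)^c$ one writes $R_ja(x)=\int_B\big(K_j(x,y)-K_j(x,x_0)\big)a(y)\,dy+K_j(x,x_0)\int_B a(y)\,dy$ and inserts the available size and smoothness bounds for the Riesz kernel $K_j$ together with the approximate cancellation (iii) of Definition \ref{the definition for generalized atoms}, to get pointwise decay of $|R_ja(x)|$ in $|x-x_0|$; one then integrates against $|b(x)-b_B|$ over the annuli $2^{k+1}B\setminus 2^kB$, using $|2^kB|^{-1}\int_{2^kB}|b-b_B|\le Ck\|b\|_{BMO}$, so that the geometric decay in $k$ dominates the linear growth of the $BMO$ averages and the series converges to $O(\|b\|_{BMO})$. (This is precisely the verification carried out in Section \ref{the class K_L}, which I would cite rather than reproduce.)

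With $R_j\in\mathcal K_L$ and $R_j$ of weak type $(1,1)$ in hand, Proposition \ref{weak type} yields $\big\|[b,R_j](f)\big\|_{L^{1,\infty}}\le C\|b\|_{BMO}\|f\|_{H^1_L}$ for all $f\in H^1_L(\mathbb R^d)$, which is the assertion. I expect the only nontrivial step to be the off-diagonal part of the atomic estimate placing $R_j$ in $\mathcal K_L$: because $V$ is merely in $RH_{d/2}$ one cannot invoke the clean Calder\'on--Zygmund kernel bounds valid when $V\in RH_d$, so one must exploit the weaker kernel estimates for $R_j$ (from \cite{Sh, SY, Li}) carefully, in tandem with the approximate vanishing moment of generalized atoms, to absorb the logarithmic growth of $b$ over dilated balls.
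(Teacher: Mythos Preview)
Your proposal is correct and follows essentially the same approach as the paper: the Corollary is stated there as an immediate consequence of Proposition~\ref{weak type}, using that $R_j\in\mathcal K_L$ (Proposition~\ref{the Riesz transforms and the class K}) and that $R_j$ is of weak type $(1,1)$ by \cite{Li}. One small slip: the $L^q$-boundedness of $R_j$ does not hold for all $1<q<\infty$ when $V\in RH_{d/2}$, only for $1<q\le d/2$ (see \cite{Sh}, Theorem~0.5, and Lemma~\ref{Riesz-atom}); this does not affect the argument since you only need some $q>1$.
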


When  $T$ is linear and belongs to $\mathcal K_L$, we obtain the bilinear decomposition for the linear commutator $[b,T]$ of $f$,  $[b,T](f)= bT(f)- T(bf)$, instead of the subbilinear decomposition as stated in Theorem \ref{subbilinear decomposition for commutators}.

\begin{Theorem}[Bilinear decomposition]\label{bilinear decomposition for commutators}
Let $T$ be a linear operator in $\mathcal K_L$. Then, there exists a bounded bilinear operator $\mathfrak R= \mathfrak R_T: H^1_L(\mathbb R^d)\times BMO(\mathbb R^d)\to L^1(\mathbb R^d)$ such that for all $(f,b)\in H^1_L(\mathbb R^d)\times BMO(\mathbb R^d)$, we have
$$[b, T](f)= \mathfrak R(f,b) + T(\mathfrak S(f,b)),$$
where $\mathfrak S$ is as in Theorem \ref{subbilinear decomposition for commutators}.
\end{Theorem}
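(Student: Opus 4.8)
The plan is to start from the subbilinear decomposition of Theorem 3.1 and examine its proof, which must produce the operator $\mathfrak R$ and the operator $\mathfrak S$ through an explicit construction; for a linear $T$ the absolute values in the estimate can be removed if one tracks signs carefully. Concretely, I expect the operator $\mathfrak S$ to arise by writing $f=\sum_j \lambda_j a_j$ as a sum of generalized $(H^1_L,q,\varepsilon)$-atoms $a_j$ associated to balls $B_j$, and then, for each atom $a_j$, splitting
$$(b-b(\cdot))a_j = (b-b_{B_j})a_j - (b(\cdot)-b_{B_j})a_j,$$
so that the first piece, multiplied by $\mathcal M_L a_j$ type quantities, is absorbed into $\mathfrak R$, while the second piece, $(b(\cdot)-b_{B_j})a_j$, when summed over $j$ with appropriate normalizing constants, defines $\mathfrak S(f,b)$. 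Thus $\mathfrak S(f,b) := \sum_j \lambda_j (b-b_{B_j})a_j$ (suitably renormalized), which by Theorem 2.3 and the $BMO$ control on $(b-b_{B_j})a_j$ is a bounded bilinear map $H^1_L\times BMO\to L^1$ not depending on $T$. This is exactly the construction behind Proposition 3.3, to which the statement refers.

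Granting that $\mathfrak S$ is fixed as above, the key step is the identity: for $T$ linear,
$$[b,T](f) = bT(f) - T(bf) = \sum_j \lambda_j\big(bT(a_j) - T(ba_j)\big),$$
and for each atom we write $bT(a_j) - T(ba_j) = (b-b_{B_j})T(a_j) - T\big((b-b_{B_j})a_j\big)$. Summing, $[b,T](f) = \mathfrak R(f,b) + T(\mathfrak S(f,b))$ where $\mathfrak R(f,b) := \sum_j \lambda_j (b-b_{B_j})T(a_j)$ (again with the renormalization making the $a_j$ genuine generalized atoms). The defining property of $\mathcal K_L$, namely $\|(b-b_B)Ta\|_{L^1}\leq C\|b\|_{BMO}$ for generalized atoms $a$, gives $\|\mathfrak R(f,b)\|_{L^1}\leq C\sum_j|\lambda_j|\,\|b\|_{BMO}$, and taking the infimum over atomic decompositions (Theorem 2.3 / Theorem 2.8) yields $\|\mathfrak R(f,b)\|_{L^1}\leq C\|f\|_{H^1_L}\|b\|_{BMO}$; bilinearity of $\mathfrak R$ in $(f,b)$ follows since the construction is linear in each atomic decomposition and one checks independence of the chosen decomposition on a suitable dense/finite-atom class via Proposition 2.7. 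The linearity of $T$ is what lets $T(\mathfrak S(f,b)) = \sum_j \lambda_j T((b-b_{B_j})a_j)$ commute with the (a priori infinite) sum, which in turn requires an $L^1$-convergence argument: $\sum_j |\lambda_j|\,\|(b-b_{B_j})a_j\|_{L^1}<\infty$ and $T$ bounded $H^1_L\to L^1$.

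The main obstacle I anticipate is not the algebraic identity but the analytic bookkeeping: showing that $\mathfrak R$ and $\mathfrak S$ are \emph{well-defined} (independent of the atomic decomposition of $f$) and genuinely \emph{bilinear}, rather than merely subbilinear. This is a standard but delicate point — one typically first defines the operators on the finite-atomic space $\mathbb H^{1,q,\varepsilon}_{L,\mathrm{fin}}(\mathbb R^d)$, proves the bound there via the $\mathcal K_L$ hypothesis, verifies additivity $\mathfrak R(f_1+f_2,b)=\mathfrak R(f_1,b)+\mathfrak R(f_2,b)$ on finite sums (here one needs that if $\sum\lambda_ja_j=0$ in $L^1$ then $\sum\lambda_j(b-b_{B_j})Ta_j=0$, which uses linearity of $T$ and an $L^1$-density/approximation argument), and then extends to all of $H^1_L$ by density and the uniform bound, invoking Proposition 2.7. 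The subtlety is that the decomposition map $f\mapsto\{(\lambda_j,a_j)\}$ is not itself linear, so the linearity of $\mathfrak R$ and $\mathfrak S$ must be extracted \emph{a posteriori} from the identity $[b,T](f)=\mathfrak R(f,b)+T(\mathfrak S(f,b))$ together with the linearity of $[b,T]$ and of $T$ — once $\mathfrak S$ is known to be bilinear (Proposition 3.3) and $T$ linear, bilinearity of $\mathfrak R = [b,T](\cdot) - T(\mathfrak S(\cdot,\cdot))$ is automatic. So in the end the proof reduces to (i) quoting Proposition 3.3 for $\mathfrak S$, (ii) performing the per-atom algebraic split, (iii) checking $L^1$-convergence to legitimize $T(\sum\cdot)=\sum T(\cdot)$, and (iv) reading off boundedness of $\mathfrak R$ from the $\mathcal K_L$ estimate.
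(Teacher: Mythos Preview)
Your proposal misidentifies the operator $\mathfrak S$, and this is a genuine gap rather than a cosmetic issue. In the paper, $\mathfrak S$ is \emph{not} the atomic-decomposition-based map $f\mapsto \sum_j \lambda_j(b-b_{B_j})a_j$; it is the explicit bilinear operator $\mathfrak S(f,g) = -\Pi(\mathfrak H(f),g)$, where $\Pi$ is a wavelet paraproduct (bounded $H^1\times BMO\to L^1$) and $\mathfrak H:H^1_L\to H^1$ is the linear transfer operator $\mathfrak H(f)=\sum_{n,k}(\psi_{n,k}f-\varphi_{2^{-n/2}}*(\psi_{n,k}f))$. Because $\Pi$ and $\mathfrak H$ are each manifestly linear/bilinear by construction, $\mathfrak S$ is genuinely bilinear with no dependence on any choice of atomic decomposition. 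Your proposed $\mathfrak S$, by contrast, is decomposition-dependent --- if $f$ is a single atom on $B$ you get $(b-b_B)f$, but a different splitting of the same $f$ gives a different answer --- and the circular argument you sketch (``once $\mathfrak S$ is bilinear by Proposition~3.3, bilinearity of $\mathfrak R$ follows'') does not close, since your $\mathfrak S$ is not the one in Proposition~3.3.

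The paper's actual proof of the theorem is architecturally different from yours. It writes $[b,T](f)=[b,T](f-\mathfrak H(f))+[b,T](\mathfrak H(f))$. The second term lives in $H^1\times BMO$ (since $\mathfrak H(f)\in H^1$), and there the author invokes the already-established classical bilinear decomposition (Theorem~3.2 of \cite{Ky2}): $[b,T](g)=\mathcal W(g,b)+T(-\Pi(g,b))$ for $g\in H^1$. The first term $\mathcal U(f,b):=[b,T](f-\mathfrak H(f))$ is shown (Lemma~5.2) to be bounded bilinear directly --- and it is \emph{inside that lemma} that an atomic argument of the flavour you describe appears, but applied to the mollified local pieces $\varphi_{2^{-n/2}}*a_j^{n,k}$, which are explicitly verified to be constant multiples of generalized $(H^1_L,q,\varepsilon)$-atoms. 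One then sets $\mathfrak R(f,b)=\mathcal U(f,b)+\mathcal W(\mathfrak H(f),b)$, which is bilinear as a sum of bilinear maps, and the identity $[b,T](f)=\mathfrak R(f,b)+T(\mathfrak S(f,b))$ follows. The well-definedness and bilinearity issues you correctly flag as ``the main obstacle'' are thus sidestepped entirely by building $\mathfrak S$ and $\mathfrak R$ out of operators that are linear from the start, rather than trying to extract linearity a posteriori from an atomic representation.
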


\subsection{Hardy estimates for linear commutators}

Our first main result of this subsection is the following theorem.

\begin{Theorem}\label{Hardy estimates for CZO}
i)  Let  $b\in BMO_L^{\rm log}(\mathbb R^d)$ and $T$ be a $L$-Calder\'on-Zygmund operator satisfying $T^*1=0$.  Then, the linear commutator $[b,T]$ is bounded on $H^1_L(\mathbb R^d)$.

ii) When $V\in RH_{d}$, the converse holds. Namely, if $b\in BMO(\mathbb R^d)$ and $[b,T]$ is bounded on $H^1_L(\mathbb R^d)$ for every $L$-Calder\'on-Zygmund operator $T$ satisfying $T^*1=0$, then $b\in BMO_L^{\rm log}(\mathbb R^d)$. Furthermore,
$$\|b\|_{BMO_L^{\rm log}}\approx \|b\|_{BMO}+ \sum_{j=1}^d \|[b, R_j]\|_{H^1_L\to H^1_L}.$$
\end{Theorem}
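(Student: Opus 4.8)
\textbf{Proof proposal for Theorem \ref{Hardy estimates for CZO}.}

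The plan is to reduce everything to the two decomposition theorems (Theorem \ref{subbilinear decomposition for commutators} and Theorem \ref{bilinear decomposition for commutators}) together with the known mapping properties of the bilinear operator $\mathfrak S$ and a characterization of $BMO_L^{\rm log}$ in terms of its action on generalized atoms. For part (i), start from the bilinear decomposition $[b,T](f)=\mathfrak R(f,b)+T(\mathfrak S(f,b))$ of Theorem \ref{bilinear decomposition for commutators}, valid since a $(\delta,L)$-Calder\'on-Zygmund operator satisfying $T^*1=0$ lies in $\mathcal K_L$ (this should be one of the examples from Section \ref{the class K_L}; I would cite it). The term $\mathfrak R(f,b)$ already lands in $L^1$, but we need the full commutator in $H^1_L$, not just $L^1$, so the argument must be refined: I would instead work directly with generalized $(H^1_L,q,\varepsilon)$-atoms $a$ related to a ball $B=B(x_0,r)$ and show $\|[b,T]a\|_{H^1_L}\le C\|b\|_{BMO_L^{\rm log}}$, then invoke Proposition \ref{boundedness through generalized atoms}. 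Split $[b,T]a=(b-b_B)Ta-T((b-b_B)a)$. For the second piece, $(b-b_B)a$ is (a multiple of $\|b\|_{BMO}\log(e+\rho(x_0)/r)$ times) a generalized atom up to the usual $L^q$ normalization via the John–Nirenberg inequality and the logarithmic growth controlled by the $BMO_L^{\rm log}$ norm on the scale $r\lesssim\rho(x_0)$; since $T$ maps $H^1_L\to H^1_L$ here (again an example in $\mathcal K_L$, using $T^*1=0$ for the cancellation), this term is handled. The first piece $(b-b_B)Ta$ is where the $H^1_L$ (not just $L^1$) bound is genuinely needed: one estimates $\mathcal M_L((b-b_B)Ta)$ by splitting $\mathbb R^d$ into $2B$ and $(2B)^c$, using the size/smoothness kernel bounds \eqref{Calderon-Zygmund 1}–\eqref{Calderon-Zygmund 2}, the local $L^q$ estimate on $Ta$ and Hölder, and on the far region exploiting the $\rho$-decay in \eqref{Calderon-Zygmund 1} together with the $BMO$ growth of $b-b_B$ over dyadic annuli — here the $\log(e+\rho(x_0)/r)$ factor in $\|b\|_{BMO_L^{\rm log}}$ is exactly what absorbs the logarithmic divergence $\sum_k k\,2^{-k\varepsilon}$-type sums. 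I expect this far-region estimate, and the bookkeeping to land in $H^1_L$ rather than $L^1$, to be the main obstacle.

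For part (ii), assume $V\in RH_d$, so by Remark \ref{remark for Schrodinger-CZO}(ii) the Riesz transforms $R_j$ are $L$-Calder\'on-Zygmund operators with $R_j^*1=0$; hence the hypothesis applies to each $R_j$, giving $\sum_j\|[b,R_j]\|_{H^1_L\to H^1_L}<\infty$. The goal is to deduce $b\in BMO_L^{\rm log}$ with the stated norm equivalence. The strategy is the classical lower-bound argument of Coifman–Rochberg–Weiss adapted to the Schrödinger setting: fix a ball $B=B(x_0,r)$ with $r\le\rho(x_0)$ (for $r\gtrsim\rho(x_0)$ the $\log$ factor is bounded and $BMO_L^{\rm log}$ reduces to $BMO$, which follows from $H^1_L$-boundedness plus the known $BMO_L=(H^1_L)^*$ duality), and test the commutator on a suitably normalized atom supported near $B$. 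Using the fact that for some direction $j$ the kernel of $R_j$ does not change sign and is comparable to $|x-x_0|^{-d}$ on a sector away from $B$, one produces from $[b,R_j]a\in H^1_L\subset L^1$ a lower bound of the form $\log(e+\rho(x_0)/r)\cdot MO(b,B)\lesssim \|b\|_{H^1_L\to H^1_L\text{-norms}}$; summing over $j$ kills the sign/sector issue. Taking the supremum over all balls yields $\|b\|_{BMO_L^{\rm log}}\lesssim\|b\|_{BMO}+\sum_j\|[b,R_j]\|_{H^1_L\to H^1_L}$, and the reverse inequality $\|b\|_{BMO}+\sum_j\|[b,R_j]\|_{H^1_L\to H^1_L}\lesssim\|b\|_{BMO_L^{\rm log}}$ is exactly part (i) applied to $T=R_j$ (plus the trivial $\|b\|_{BMO}\le\|b\|_{BMO_L^{\rm log}}$). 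The delicate point here is extracting the logarithmic factor from the $L^1$-norm of $[b,R_j]a$: one must choose the atom and the region of integration so that the smallness of $\int a$ (only $(r/\rho(x_0))^\varepsilon$, not zero, for generalized atoms) does not destroy the lower bound — this is why I would use genuine $(H^1_L,q)$-atoms of Dziuba\'nski–Zienkiewicz with exact cancellation at scale $r\le\rho(x_0)/\mathcal C_L$, as Remark \ref{atoms} anticipates, and treat the boundary scale $\rho(x_0)/\mathcal C_L<r\le\rho(x_0)$ separately. Assembling the two inequalities gives the claimed $\approx$, completing the proof.
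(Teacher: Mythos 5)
Your skeleton for part (i) — reduce to generalized atoms, split $[b,T]a=(b-b_B)Ta-T((b-b_B)a)$, handle the second piece via $T:H^1_L\to H^1$ (which is Lemma \ref{technical lemma for Hardy estimates for CZO}(i), using $T^*1=0$), and conclude by Proposition \ref{boundedness through generalized atoms} — is exactly the paper's. But two steps you gloss over are genuine gaps. First, $(b-b_B)a$ is \emph{not} a multiple of order $\|b\|_{BMO}\log(e+\rho(x_0)/r)$ of a generalized atom: writing $(b-b_B)a=\lambda\tilde a$ with $\tilde a$ a generalized $(H^1_L,q,\varepsilon)$-atom forces $|\int(b-b_B)a|\le\lambda\,(r/\rho(x_0))^\varepsilon$, and since $|\int(b-b_B)a|$ is typically of size $MO(b,B)$ with no smallness in $r/\rho(x_0)$, $\lambda$ would have to be polynomially large in $\rho(x_0)/r$. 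The paper instead proves $\|(b-b_B)a\|_{H^1_L}\le C\|b\|_{BMO_L^{\log}}$ and $\|(b-b_B)Ta\|_{H^1_L}\le C\|b\|_{BMO_L^{\log}}$ (estimates (\ref{Hardy estimates for CZO 1})--(\ref{Hardy estimates for CZO 2})) by duality with $VMO_L$ (Theorem \ref{Ky3}): pairing with $\phi\in C^\infty_c$, using $|\phi_B|\lesssim\log(e+\rho(x_0)/r)\|\phi\|_{BMO_L}$, and reducing everything to $\log(e+\rho(x_0)/r)\|(b-b_B)Ta\|_{L^1}\le C\|b\|_{BMO_L^{\log}}$, i.e.\ (\ref{Riesz-molecule 0}). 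Second, that reduced estimate is not a matter of absorbing a convergent $\sum_k k2^{-k\varepsilon}$ tail: the crux is Lemma \ref{log-generalized BHS}, a weighted John--Nirenberg self-improvement showing that the $L^{q}$ mean oscillation of $b$ over $2^kB$ is bounded by $Ck\,\|b\|_{BMO_L^{\log}}/\log\bigl(e+(\rho(x_0)/2^kr)^{k_0+1}\bigr)$, so that the logarithm survives the passage from $L^1$ to $L^q$ oscillation and from $B$ to $2^kB$. Without it the dyadic-annulus sum only yields $\log(e+\rho(x_0)/r)\,\|b\|_{BMO}$, which is \emph{not} dominated by $\|b\|_{BMO_L^{\log}}$. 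The direct estimate of $\mathcal M_L((b-b_B)Ta)$ that you defer as ``the main obstacle'' is precisely the content you would need to supply; the paper circumvents it entirely via the $VMO_L$ duality.

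For part (ii) your route diverges from the paper's and, as described, does not go through. You propose a Coifman--Rochberg--Weiss lower-bound argument resting on the kernel of $R_j$ having a fixed sign and being comparable to $|x-x_0|^{-d}$ on a sector throughout the annulus $r<|x-x_0|<\rho(x_0)$; for Schr\"odinger Riesz transforms only \emph{upper} bounds on $K_j$ are available (Lemma \ref{LP2, Lemma 2.6}), no such pointwise lower bound is known or used anywhere in the paper, and it is exactly from integrating such a lower bound that you hope to extract the factor $\log(e+\rho(x_0)/r)$. The paper instead deduces (ii) from Theorem \ref{Hardy estimates for Riesz transforms}, whose converse is a duality argument: take $a=(2|B|)^{-1}(f-f_B)\chi_B$ with $f=\operatorname{sign}(b-b_B)$, show $(b-b_B)a\in H^1_L$ with $\|(b-b_B)a\|_{H^1_L}\lesssim\|b\|_{BMO}+\sum_j\|[b,R_j]\|_{H^1_L\to H^1_L}$ via the Riesz-transform characterization of $H^1_L$ and the hypothesis, and then pair against the explicit truncated-logarithm function $g_{x_0,r}$ with $\|g_{x_0,r}\|_{BMO_L}\le C$; the factor $\log(\rho(x_0)/r)$ comes from the value of $g_{x_0,r}$ on $B$, not from any kernel positivity. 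Your reverse inequality (part (i) applied to $T=R_j$, plus $\|b\|_{BMO}\le\|b\|_{BMO_L^{\log}}$) is correct and matches the paper.
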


Next result concerns the $H^1_L$-estimates for commutators of the Riesz transforms.

\begin{Theorem}\label{Hardy estimates for Riesz transforms}
Let $b\in BMO_{L,\infty}(\mathbb R^d)$. Then, the commutators $[b,R_j]$, $j=1,...,d$, are bounded on $H^1_L(\mathbb R^d)$ if and only if $b\in BMO^{\rm log}_{L,\infty}(\mathbb R^d)$. Furthermore, if $b\in BMO^{\rm log}_{L,\theta}(\mathbb R^d)$ for some $\theta\geq 0$, we have
$$\|b\|_{BMO^{\rm log}_{L,\theta}}\approx \|b\|_{BMO_{L,\theta}}+ \sum_{j=1}^d \|[b,R_j]\|_{H^1_L\to H^1_L}.$$

Remark that the above constants depend on $\theta$.
\end{Theorem}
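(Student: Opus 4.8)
\textbf{Proof plan for Theorem \ref{Hardy estimates for Riesz transforms}.}
The plan is to deduce the $H^1_L$-boundedness of $[b,R_j]$ from the subbilinear decomposition (Theorem \ref{subbilinear decomposition for commutators}) applied to a suitable operator in $\mathcal K_L$, combined with a careful analysis of the two building blocks $\mathfrak S(f,b)$ and $\mathfrak R(f,b)$ when $b$ lies in $BMO_{L,\infty}$ rather than just $BMO$. First I would observe that, since the Riesz transforms $R_j$ are $L$-Calder\'on-Zygmund operators (at least when $V\in RH_d$; in general one uses the estimates of Shen together with Lemma \ref{LP1, Lemma 2}), they lie in $\mathcal K_L$, so the commutator satisfies the pointwise sandwich
\[
|R_j(\mathfrak S(f,b))| - \mathfrak R(f,b) \leq |[b,R_j](f)| \leq \mathfrak R(f,b) + |R_j(\mathfrak S(f,b))|.
\]
The hard part is that here $b$ is only in $BMO_{L,\infty}$, so Theorems \ref{subbilinear decomposition for commutators}–\ref{bilinear decomposition for commutators} do not apply verbatim; one must re-examine their proofs (and the construction of $\mathfrak S$, $\mathfrak R$) and track where the extra factor $(1+r/\rho(x))^\theta$ enters. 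My expectation is that $\mathfrak R(f,b)$ stays in $L^1$ with a bound involving $\|b\|_{BMO_{L,\theta}}$ (this only uses the $BMO_{L,\theta}$ oscillation estimate applied to atoms, which are supported in balls of radius $\lesssim \rho(x_0)$, so the factor $(1+r/\rho)^\theta$ is harmless), whereas the term $R_j(\mathfrak S(f,b))$ is the genuinely delicate one.

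The core of the argument is therefore to understand when $\mathfrak S(f,b) \in H^1_L$ (equivalently, when $R_j(\mathfrak S(f,b))\in L^1$ uniformly in $j$, using that the $R_j$ together with the identity control the $H^1_L$-norm via the Riesz-transform characterization of Dziuba\'nski–Zienkiewicz). Unwinding the definition of $\mathfrak S$, on each $(H^1_L,q)$-atom $a$ related to $B(x_0,r)$ the relevant piece is essentially $(b - b_{B(x_0,r)})a$, and the question reduces to an atomic estimate: one must show that $(b-b_B)a$, suitably normalized, is a bounded multiple of an $H^1_L$-atom (or a sum of such) precisely when $b\in BMO^{\log}_{L,\infty}$. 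Here the logarithmic gain is essential: the non-cancellation of $(b-b_B)a$ forces one to borrow an $(H^1_L,q)$-atom of maximal radius $\sim \rho(x_0)$, and the $L^q$-mass of $(b-b_B)a$ on the annuli $2^{k}B$ grows like $k$ (John–Nirenberg), so summability of the atomic decomposition of $\mathfrak S(f,b)$ requires exactly a factor $\log(e+\rho(x_0)/r)$ to be finite when tested against $b$ — this is the mechanism that makes $BMO^{\log}_{L,\theta}$ appear. For the converse direction I would argue by contradiction: assuming $[b,R_j]$ bounded on $H^1_L$ for all $j$, I would plug in normalized atoms $a$ concentrated on small balls $B(x_0,r)$ with $r \ll \rho(x_0)$, use that $\|[b,R_j]a\|_{H^1_L}\gtrsim \|[b,R_j]a\|_{L^1}$ and the lower bound in the sandwich to extract $\|R_j(\mathfrak S(a,b))\|_{L^1}\lesssim 1$, and then test against suitable dual functions (or use the pointwise lower bounds for the Riesz kernels away from the diagonal, as in Shen and in the classical Coifman–Rochberg–Weiss lower bound) to recover $\log(e+\rho(x_0)/r)\,MO(b,B(x_0,r))\lesssim 1$, i.e. $b\in BMO^{\log}_{L,\infty}$; keeping track of constants in this scheme gives the quantitative two-sided estimate $\|b\|_{BMO^{\log}_{L,\theta}}\approx \|b\|_{BMO_{L,\theta}} + \sum_j \|[b,R_j]\|_{H^1_L\to H^1_L}$.

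Two technical points deserve care and are, I expect, where most of the work lies. The first, as flagged in Remark \ref{atoms}, is that for this theorem one should work with genuine $(H^1_L,q)$-atoms rather than generalized atoms: the subtle cancellation bookkeeping in the atomic decomposition of $\mathfrak S(f,b)$ needs the sharp dichotomy "either $r\sim\rho(x_0)$ (no moment condition) or $r\ll\rho(x_0)$ (exact cancellation)", and the generalized-atom framework blurs this. The second is the interaction between the oscillation of $b$ and the auxiliary function $\rho$: one repeatedly needs that $\rho$ is slowly varying (Proposition \ref{Shen, Lemma 1.4}) to pass between $\rho(x)$ and $\rho(x_0)$ on the annuli $2^kB$, and to control how $(1+r/\rho)^{-\theta}$-type weights behave under dilation of balls; the extra polynomial factor forces all estimates to be done with explicit dependence on $\theta$, which is why the statement explicitly warns that the constants depend on $\theta$. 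Once these are in place, assembling the pieces — boundedness of $\mathfrak R$ into $L^1$ with the $\|b\|_{BMO_{L,\theta}}$ bound, boundedness of $\mathfrak S$ into $H^1_L$ with the $\|b\|_{BMO^{\log}_{L,\theta}}$ bound, boundedness of $R_j$ on $H^1_L\to L^1$, and the sandwich inequality — yields the sufficiency, and the contradiction argument yields the necessity.
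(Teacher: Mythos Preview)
There is a genuine gap in your sufficiency argument. The subbilinear (or bilinear) decomposition of Theorems \ref{subbilinear decomposition for commutators}--\ref{bilinear decomposition for commutators} only produces \emph{pointwise} control of $|[b,R_j](f)|$, hence only $L^1$ estimates. Even if you succeed in upgrading $\mathfrak R$ to be bounded from $H^1_L\times BMO_{L,\theta}$ into $L^1$ and in showing $\mathfrak S(f,b)\in H^1_L$, assembling the pieces via the sandwich (and $R_j:H^1_L\to L^1$) gives you $\|[b,R_j](f)\|_{L^1}\leq C\|f\|_{H^1_L}$, not $\|[b,R_j](f)\|_{H^1_L}\leq C\|f\|_{H^1_L}$. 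To land in $H^1_L$ you would need $\mathcal M_L\big([b,R_j](f)\big)\in L^1$ (or all $R_k\big([b,R_j](f)\big)\in L^1$), and nothing in your scheme controls this: in the bilinear identity $[b,R_j](f)=\mathfrak R_j(f,b)+R_j(\mathfrak S(f,b))$, the operator $\mathfrak R_j$ is built only to take values in $L^1$, and there is no reason for $\mathfrak R_j(f,b)$ to belong to $H^1_L$.

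The paper bypasses the decomposition entirely here and argues directly on $(H^1_L,d/2)$-atoms $a$ via the splitting $[b,R_j](a)=(b-b_B)R_j(a)-R_j\big((b-b_B)a\big)$. Each piece is placed in $H^1_L$ by duality against $BMO_L$: one pairs $(b-b_B)R_j(a)$ with an arbitrary $\phi\in C_c^\infty$, strips off $(\phi-\phi_B)$ using a two-weight estimate (Lemma \ref{technical lemma}), and is left with showing $\log\big(e+\rho(x_0)/r\big)\|(b-b_B)R_j(a)\|_{L^1}\leq C\|b\|_{BMO^{\log}_{L,\theta}}$; this is where the sharp molecule bounds for $R_j(a)$ with extra decay $(1+2^kr/\rho(x_0))^{-N}$ (Lemma \ref{Riesz-atom}, coming from Lemma \ref{LP2, Lemma 2.6}) and the weighted John--Nirenberg inequality of Lemma \ref{log-generalized BHS} are essential. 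For the converse, rather than kernel lower bounds, the paper uses the Riesz-transform characterization of $H^1_L$ to conclude $\|(b-b_B)a\|_{H^1_L}\lesssim \|b\|_{BMO_{L,\theta}}+\sum_j\|[b,R_j]\|_{H^1_L\to H^1_L}$ for every atom $a$, and then \emph{tests} this against the explicit function $g_{x_0,r}(x)=\log(\rho(x_0)/\max\{r,|x-x_0|\})\,\chi_{B(x_0,\rho(x_0))}$, which has $\|g_{x_0,r}\|_{BMO_L}\leq C$, choosing $a=(2|B|)^{-1}(\mathrm{sign}(b-b_B)-(\mathrm{sign}(b-b_B))_B)\chi_B$; this pairing produces exactly $\log\big(e+\rho(x_0)/r\big)\,MO(b,B)$ and yields the $BMO^{\log}_{L,\theta}$ bound. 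Your contradiction/CRW-lower-bound idea is a different route and would need substantial work; the duality test function approach is both cleaner and what actually closes the argument.
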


Note that $BMO_L^{\rm log}(\mathbb R^d)$ is in general proper subset of $BMO^{\rm log}_{L,\infty}(\mathbb R^d)$ (see Remark \ref{remark on new BMO}). When $V\in RH_{d}$, although the Riesz transforms $R_j$ are $L$-Calder\'on-Zygmund operators satisfying $R_j^*1=0$,  Theorem \ref{Hardy estimates for Riesz transforms} cannot be deduced from Theorem \ref{Hardy estimates for CZO}.

As a consequence of Theorem \ref{Hardy estimates for Riesz transforms}, we obtain the following interesting result.

\begin{Corollary}
Let $b\in BMO(\mathbb R^d)$. Then,  $b$ belongs to $LMO(\mathbb R^d)$ if and only if the vector-valued commutator $[b,\nabla( -\Delta + 1)^{-1/2}]$ maps continuously $h^1(\mathbb R^d)$ into $h^1(\mathbb R^d, \mathbb R^d)= (h^1(\mathbb R^d),..., h^1(\mathbb R^d))$. Furthermore,
$$\|b\|_{LMO}\approx \|b\|_{BMO}+ \|[b,\nabla( -\Delta + 1)^{-1/2}]\|_{h^1(\mathbb R^d) \to h^1(\mathbb R^d, \mathbb R^d)}.$$
\end{Corollary}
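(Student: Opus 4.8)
The plan is to deduce the corollary from Theorem \ref{Hardy estimates for Riesz transforms} applied with the special potential $V\equiv 1$, in which case $L=-\Delta+1$, the auxiliary function $\rho$ is the constant function $\rho(x)\equiv 1$ (since $\frac{1}{r^{d-2}}\int_{B(x,r)}1\,dy\leq 1$ is equivalent to $r\lesssim 1$), and the Riesz transforms $R_j=\partial_{x_j}(-\Delta+1)^{-1/2}$ are the components of $\nabla(-\Delta+1)^{-1/2}$. The first step is to identify the relevant function spaces: when $\rho\equiv 1$, the space $H^1_L(\mathbb R^d)$ coincides with the local Hardy space $h^1(\mathbb R^d)$ of Goldberg, and the $BMO$-type spaces collapse as follows: $BMO_{L,\theta}(\mathbb R^d)$ becomes the classical $BMO(\mathbb R^d)$ for \emph{every} $\theta\geq 0$ (because $1+r/\rho(x)=1+r$ is bounded on the unit scale and comparable to $1$ on small balls, while on large balls $MO(b,B)\leq 2\|b\|_{BMO}$), hence $BMO_{L,\infty}(\mathbb R^d)=BMO(\mathbb R^d)$; likewise $BMO^{\mathrm{log}}_{L,\theta}(\mathbb R^d)$ becomes, for every $\theta\geq0$, the space $LMO(\mathbb R^d)$ of functions with $\sup_{B(x,r)}\log(e+1/r)\,MO(b,B(x,r))<\infty$, so $BMO^{\mathrm{log}}_{L,\infty}(\mathbb R^d)=LMO(\mathbb R^d)$.

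Once these identifications are in place, the corollary is essentially a restatement of Theorem \ref{Hardy estimates for Riesz transforms}. Concretely, I would argue: since $b\in BMO(\mathbb R^d)=BMO_{L,\infty}(\mathbb R^d)$, Theorem \ref{Hardy estimates for Riesz transforms} gives that the commutators $[b,R_j]$ are bounded on $H^1_L(\mathbb R^d)=h^1(\mathbb R^d)$ for all $j=1,\dots,d$ if and only if $b\in BMO^{\mathrm{log}}_{L,\infty}(\mathbb R^d)=LMO(\mathbb R^d)$; and boundedness of all the scalar commutators $[b,R_j]:h^1\to h^1$ is plainly equivalent to boundedness of the vector-valued commutator $[b,\nabla(-\Delta+1)^{-1/2}]:h^1(\mathbb R^d)\to h^1(\mathbb R^d,\mathbb R^d)$, with comparable norms (the vector-valued norm is equivalent to $\sum_{j=1}^d$ of the scalar norms). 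For the quantitative statement, since $b\in LMO(\mathbb R^d)$ means precisely $b\in BMO^{\mathrm{log}}_{L,\theta}(\mathbb R^d)$ for, say, $\theta=0$, the second assertion of Theorem \ref{Hardy estimates for Riesz transforms} yields
$$\|b\|_{LMO}=\|b\|_{BMO^{\mathrm{log}}_{L,0}}\approx \|b\|_{BMO_{L,0}}+\sum_{j=1}^d\|[b,R_j]\|_{H^1_L\to H^1_L}=\|b\|_{BMO}+\sum_{j=1}^d\|[b,R_j]\|_{h^1\to h^1},$$
and the last sum is comparable to $\|[b,\nabla(-\Delta+1)^{-1/2}]\|_{h^1(\mathbb R^d)\to h^1(\mathbb R^d,\mathbb R^d)}$.

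The main point requiring care — really the only nontrivial step — is verifying the space identifications for $V\equiv 1$: that $H^1_L=h^1$ (this is standard and can be cited, e.g. it follows from the characterization of $H^1_L$ via the heat maximal function of $-\Delta+1$, which is Goldberg's local Hardy space), and that the scale of $BMO_{L,\theta}$ and $BMO^{\mathrm{log}}_{L,\theta}$ spaces all collapse to $BMO$ and $LMO$ respectively when $\rho\equiv 1$. The latter is an elementary computation: on balls with $r\leq 1$ the weight factors $\big(1+r/\rho(x)\big)^{-\theta}$ and $\log(e+\rho(x)/r)$ are comparable to $1$ and to $\log(e+1/r)$; on balls with $r>1$ one uses that $MO(b,B)\leq 2\|b\|_{BMO}$ together with $\log(e+\rho(x)/r)=\log(e+1/r)\leq 1$ and $\big(1+r\big)^{-\theta}\leq 1$ to see these large balls contribute nothing beyond $\|b\|_{BMO}$. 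I expect no genuine obstacle here, only the bookkeeping of constants depending on $\theta$ (which Theorem \ref{Hardy estimates for Riesz transforms} already warns about, but is harmless since we only use a fixed $\theta=0$).
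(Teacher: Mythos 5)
Your overall strategy is the same as the paper's (the corollary is stated there precisely as a consequence of Theorem \ref{Hardy estimates for Riesz transforms} with $V\equiv 1$), and your conclusion is reachable, but your key intermediate claim is false: when $\rho$ is constant it is \emph{not} true that $BMO_{L,\theta}(\mathbb R^d)=BMO(\mathbb R^d)$ and $BMO^{\log}_{L,\theta}(\mathbb R^d)=LMO(\mathbb R^d)$ for every $\theta\geq 0$, hence $BMO_{L,\infty}\neq BMO$ and $BMO^{\log}_{L,\infty}\neq LMO$. The paper's own Remark \ref{remark on new BMO}(ii) records the counterexample for exactly this potential: for $V\equiv 1$ the functions $b_j(x)=|x_j|$ lie in $BMO^{\log}_{L,\infty}(\mathbb R^d)$ (indeed $MO(b_j,B)\lesssim r$, so $(1+r)^{-\theta}\log(e+1/r)MO(b_j,B)$ is bounded for $\theta\geq 1$) but not in $BMO(\mathbb R^d)$, hence not in $LMO(\mathbb R^d)$. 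Your verification of the collapse on large balls invokes $MO(b,B)\leq 2\|b\|_{BMO}$, which presupposes $b\in BMO$; this only shows $BMO\cap BMO_{L,\theta}=BMO$, not the set equality you assert. The step of your argument that actually breaks is the ``only if'' direction: from boundedness of the commutators the theorem gives $b\in BMO^{\log}_{L,\infty}$, and you cannot conclude $b\in LMO$ by the purported equality $BMO^{\log}_{L,\infty}=LMO$.

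The repair is short and uses only what is in the paper. The identifications that \emph{are} correct for constant $\rho$ are the ones at level $\theta=0$: $BMO_{L,0}=BMO$ by definition, and $BMO^{\log}_{L,0}=BMO^{\log}_L=LMO$ because $\log(e+\rho(x)/r)\approx\log(e+1/r)$ uniformly in $x$. For the ``if'' direction, $b\in LMO=BMO^{\log}_{L,0}\subset BMO^{\log}_{L,\infty}$ and $b\in BMO\subset BMO_{L,\infty}$, so the theorem applies. For the ``only if'' direction, the theorem gives $b\in BMO^{\log}_{L,\infty}$, and then the hypothesis $b\in BMO=BMO_{L,0}$ together with identity (\ref{BMO^log of BHS}), namely $BMO^{\log}_{L,\theta}=BMO_{L,\theta}\cap BMO^{\log}_{L,\infty}$ applied with $\theta=0$, yields $b\in BMO^{\log}_{L,0}=LMO$. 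The norm equivalence then follows from the ``furthermore'' part of the theorem with $\theta=0$, exactly as you wrote in your display, together with the elementary equivalence between the vector-valued operator norm and the sum of the scalar ones. The identification $H^1_{-\Delta+1}(\mathbb R^d)=h^1(\mathbb R^d)$ is standard and fine to cite.
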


Here $h^1(\mathbb R^d)$ is the local Hardy space of D. Goldberg (see \cite{Go}), and $LMO(\mathbb R^d)$ is the space of all locally integrable functions $f$ such that
$$\|f\|_{LMO}:= \sup\limits_{B(x,r)} \left(\log\Big(e+ \frac{1}{r}\Big) MO(f, B(x,r))\right) <\infty.$$

 It should be pointed out that $LMO$ type spaces  appear naturally when studying the boundedness  of  Hankel operators on the Hardy spaces $H^1(\mathbb T^d)$ and $H^1(\mathbb B^d)$ (where $\mathbb B^d$ is the unit ball in $\mathbb C^d$ and $\mathbb T^d=\partial \mathbb B^d$),  characterizations of pointwise multipliers for $BMO$ type spaces, endpoint estimates for commutators of singular integrals operators and their applications to PDEs, see for example \cite{BGS, BB, Ja, JPS, Ky2, PV, Ste, SS}.

\section{Some fundamental operators and the class $\mathcal K_L$}\label{the class K_L}

The purpose of this section is to give some examples of fundamental operators related to $L$ which are in the class $\mathcal K_L$.

\subsection{The Schr\"odinger-Calder\'on-Zygmund operators}

\begin{Proposition}\label{Schrodinger-Calderon-Zygmund operators}
Let $T$ be any $L$-Calder\'on-Zygmund operator. Then, $T$ belongs to the class $\mathcal K_L$.
\end{Proposition}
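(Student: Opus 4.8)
The plan is to verify that any $L$-Calderón-Zygmund operator $T$ satisfies the two defining properties of $\mathcal K_L$: boundedness from $H^1_L(\mathbb R^d)$ into $L^1(\mathbb R^d)$, and the estimate $\|(b-b_B)Ta\|_{L^1}\le C\|b\|_{BMO}$ for every $b\in BMO(\mathbb R^d)$ and every generalized $(H^1_L,q,\varepsilon)$-atom $a$. For the first property, by Proposition \ref{boundedness through generalized atoms} and Theorem \ref{generalized Hardy spaces} it suffices to check $\|Ta\|_{L^1}\le C$ uniformly over generalized atoms $a$ related to a ball $B=B(x_0,r)$. I would split $\int_{\mathbb R^d}|Ta|$ into the part over $2B$, handled by the $L^2$-boundedness of $T$, Hölder's inequality, and the normalization $\|a\|_{L^q}\le|B|^{1/q-1}$; and the part over $(2B)^c$, handled via the kernel representation $Ta(x)=\int_B K(x,y)a(y)\,dy$. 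On $(2B)^c$ one writes $Ta(x)=\int_B(K(x,y)-K(x,x_0))a(y)\,dy + K(x,x_0)\int_B a(y)\,dy$; the first term is controlled by the regularity estimate \eqref{Calderon-Zygmund 2}, giving an integrable tail $\lesssim r^\delta|x-x_0|^{-d-\delta}$, and the second by the size estimate \eqref{Calderon-Zygmund 1} together with the cancellation condition (iii), $|\int_B a|\le (r/\rho(x_0))^\varepsilon$, which pairs with the decay factor $(1+|x-x_0|/\rho(x_0))^{-N}$ (choosing $N$ large) to yield another integrable tail. This is the same mechanism as in the proof of Theorem \ref{generalized Hardy spaces}.

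For the second property, fix $b\in BMO$, normalize $\|b\|_{BMO}=1$, and again reduce to a single generalized atom $a$ on $B=B(x_0,r)$. I would decompose $\int|(b-b_B)Ta| = \int_{2B} + \int_{(2B)^c}$. Over $2B$: apply Hölder with exponents $q$ and $q'$, bound $\|(b-b_B)\chi_{2B}\|_{L^{q'}}$ using the John–Nirenberg inequality (so $\lesssim |B|^{1/q'}\|b\|_{BMO}$), and bound $\|Ta\|_{L^q}\lesssim\|a\|_{L^q}\le|B|^{1/q-1}$ by the $L^q$-boundedness of $T$ (Remark \ref{remark for Schrodinger-CZO}(iii)); the product is $\lesssim 1$. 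Over $(2B)^c$: use the pointwise bound on $|Ta(x)|\lesssim r^\delta|x-x_0|^{-d-\delta} + (r/\rho(x_0))^\varepsilon|x-x_0|^{-d-\varepsilon}$ obtained above, multiply by $|b(x)-b_B|$, and integrate over the annuli $2^{k+1}B\setminus 2^kB$, $k\ge1$. On each annulus, $\int_{2^{k+1}B}|b-b_B|\lesssim 2^{(k+1)d}|B|\,(k+1)\|b\|_{BMO}$ by the standard telescoping estimate for averages of $BMO$ functions over dilated balls, while $|x-x_0|^{-d-\delta}\approx (2^k r)^{-d-\delta}$; summing the resulting geometric-with-linear-factor series $\sum_k (k+1)2^{-k\delta}$ (and likewise with $\varepsilon$ in place of $\delta$) converges, giving the bound $\lesssim 1$.

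The main obstacle—though it is really just bookkeeping rather than a genuine difficulty—is the treatment of the far part $\int_{(2B)^c}|(b-b_B)||Ta|$, because one must simultaneously handle the unbounded growth of $b-b_B$ over dilated balls and the two different decay rates ($\delta$ from kernel smoothness and $\varepsilon$ from the atom's imperfect cancellation). The point to be careful about is that the cancellation of the generalized atom is only approximate, so unlike for a genuine $(H^1_L,q)$-atom one cannot discard the $K(x,x_0)\int_B a$ term; instead one relies crucially on the Schrödinger decay factor $(1+|x-x_0|/\rho(x_0))^{-N}$ in \eqref{Calderon-Zygmund 1}, and on the fact that $r\le\mathcal C_L\rho(x_0)$ is \emph{not} assumed here — so the argument must work with the factor $(r/\rho(x_0))^\varepsilon$ possibly large, which is why one keeps it paired against the $N$-decay and chooses $N>\varepsilon+d+1$ (or similar) to win. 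Everything else is a routine Calderón-Zygmund annular decomposition combined with John–Nirenberg, and the uniform bounds then transfer to all of $H^1_L$ via Proposition \ref{boundedness through generalized atoms}.
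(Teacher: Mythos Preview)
Your approach is essentially the same as the paper's: reduce to a single generalized atom via Proposition~\ref{boundedness through generalized atoms}, split into $2B$ and $(2B)^c$, use $L^q$-boundedness on the near part and the kernel decomposition $Ta(x)=\int_B(K(x,y)-K(x,x_0))a(y)\,dy + K(x,x_0)\int_B a$ on the far part, then sum over dyadic annuli with the John--Nirenberg bound $\int_{2^{k+1}B}|b-b_B|\lesssim (k+1)|2^{k+1}B|\,\|b\|_{BMO}$. The paper packages the annular estimate as a separate lemma (Lemma~\ref{molecule for CZO}) and fixes $q=2$, $\varepsilon=\delta$, but the substance is identical.

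One small correction: the pointwise bound you wrote for the second piece, $(r/\rho(x_0))^\varepsilon|x-x_0|^{-d-\varepsilon}$, is not quite right and, taken literally, would leave a stray factor $\rho(x_0)^{-\varepsilon}$ after summing. The clean way (which the paper uses, and which also dispels your worry about needing $N$ large) is to take $N=\varepsilon$ in \eqref{Calderon-Zygmund 1} and observe
\[
\Big(\frac{r}{\rho(x_0)}\Big)^{\varepsilon}\Big(1+\frac{|x-x_0|}{\rho(x_0)}\Big)^{-\varepsilon}
=\frac{r^\varepsilon}{(\rho(x_0)+|x-x_0|)^\varepsilon}
\le \frac{r^\varepsilon}{|x-x_0|^\varepsilon},
\]
so the second piece is bounded by $C\,r^\varepsilon|x-x_0|^{-d-\varepsilon}$ uniformly, with no assumption on the size of $r/\rho(x_0)$. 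With this fix your argument goes through exactly as written.
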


\begin{Proposition}\label{the Riesz transforms and the class K}
The Riesz transforms $R_j$ are in the class $\mathcal K_L$.
\end{Proposition}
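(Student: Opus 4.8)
\textbf{Proof proposal for Proposition \ref{the Riesz transforms and the class K}.}

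The plan is to verify the two defining properties of the class $\mathcal K_L$ for $T = R_j$: first, boundedness from $H^1_L(\mathbb R^d)$ into $L^1(\mathbb R^d)$; and second, the estimate $\|(b-b_B)R_j a\|_{L^1}\leq C\|b\|_{BMO}$ for all $b\in BMO(\mathbb R^d)$ and all generalized $(H^1_L,q,\varepsilon)$-atoms $a$ (for a suitable choice of $q$ and $\varepsilon$). For the first property, I would recall that the $H^1_L$–$L^1$ boundedness of the Riesz transforms is precisely the content of the characterization of $H^1_L$ by Dziuba\'nski and Zienkiewicz \cite{DZ}, so this requires essentially nothing new. The substance of the proposition is therefore the second property, and since we only assume $V\in RH_{d/2}$ (so that the $R_j$ need not be $L$-Calder\'on-Zygmund operators in the sense of Definition \ref{definition of Schrodinger-Calderon-Zygmund operators}; cf. Remark \ref{remark for Schrodinger-CZO}(ii)), we cannot simply quote Proposition \ref{Schrodinger-Calderon-Zygmund operators}. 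Instead I would work directly with the kernel estimates for $R_j$ available under the weaker hypothesis $V\in RH_{d/2}$.

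Fix a generalized $(H^1_L,q,\varepsilon)$-atom $a$ supported in $B = B(x_0,r)$ with $r\leq \mathcal C_L\rho(x_0)$ (and handle the complementary case $r$ large separately if needed, using Proposition \ref{Shen, Lemma 1.4}). Split the integral $\|(b-b_B)R_j a\|_{L^1}$ into the local part over $2B$ and the global part over $(2B)^c$. On $2B$: write $(b-b_B)R_j a = (b - b_{2B})R_j a + (b_{2B} - b_B)R_j a$, apply H\"older with exponents $q'$ and $q$, use the $L^q$ John–Nirenberg estimate $\|(b-b_{2B})\chi_{2B}\|_{L^{q'}}\lesssim |2B|^{1/q'}\|b\|_{BMO}$ together with the $L^q$-boundedness of $R_j$ and $\|a\|_{L^q}\leq |B|^{1/q-1}$; the term with $|b_{2B}-b_B|\lesssim \|b\|_{BMO}$ is controlled by $\|R_j a\|_{L^1(2B)}\lesssim |2B|^{1/q'}\|R_j a\|_{L^q}\lesssim 1$. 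On $(2B)^c$: use the pointwise bound $|b(x)-b_B|\lesssim \big(1+\log\frac{|x-x_0|}{r}\big)\|b\|_{BMO}$ for $x$ with $|x-x_0|>2r$, and the kernel estimates for $R_j$. Here one writes $R_j a(x) = \int_B (K_j(x,y)-K_j(x,x_0))a(y)\,dy + K_j(x,x_0)\int_B a(y)\,dy$, and invokes the H\"older regularity of $K_j$ off the diagonal, $|K_j(x,y)-K_j(x,x_0)|\lesssim r^{\delta}/|x-x_0|^{d+\delta}$, together with the decay $|K_j(x,x_0)|\lesssim |x-x_0|^{-d}\big(1+|x-x_0|/\rho(x_0)\big)^{-N}$ (valid for $R_j$ when $V\in RH_{d/2}$; such bounds are in \cite{Sh} and, in the form needed here, one may cite the smoothness estimate in Lemma \ref{LP1, Lemma 2} style results for the Riesz kernel). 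The moment bound $|\int_B a|\leq (r/\rho(x_0))^\varepsilon$ makes the second piece summable against the logarithmic weight provided $N$ is chosen large relative to $\varepsilon$; the first piece is summable because $\int_{|x-x_0|>2r} \frac{r^\delta}{|x-x_0|^{d+\delta}}\big(1+\log\frac{|x-x_0|}{r}\big)\,dx \lesssim 1$.

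The main obstacle I anticipate is the global estimate over $(2B)^c$, and specifically making precise which kernel bounds for the Riesz transforms $R_j$ are genuinely available under the hypothesis $V\in RH_{d/2}$ only. Under $V\in RH_d$ one has the full $(\delta,L)$-Calder\'on-Zygmund kernel estimates, but under $V\in RH_{d/2}$ one has weaker (and in particular only $L^p$-bounded for a restricted range of $p$) control; one must be careful to choose the auxiliary exponent $q$ in the definition of the generalized atom inside the admissible range dictated by the reverse H\"older exponent, and to use only the smoothness/decay estimates that remain valid. The logarithmic growth of $b-b_B$ versus the polynomial-in-$\rho(x_0)/|x-x_0|$ decay of the kernel is exactly the mechanism that closes the estimate, and choosing $\varepsilon$ small and $N$ large completes the argument; once the admissible kernel bounds are pinned down, the remaining computations are routine.
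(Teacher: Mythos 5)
Your overall architecture (local/global splitting, subtraction of $K_j(x,x_0)$, the moment bound $\bigl|\int_B a\bigr|\leq (r/\rho(x_0))^\varepsilon$ played against the polynomial decay in $\rho(x_0)/|x-x_0|$, logarithmic growth of $b-b_B$ absorbed by the kernel decay) is the same as the paper's, which reduces the proposition to the $H^1_L\to L^1$ boundedness of $R_j$ plus an annulus-by-annulus estimate of $R_ja$ (Lemma \ref{technical lemma}, built on Lemma \ref{Riesz-atom}). But there is a genuine gap at exactly the point you flag as your "main obstacle" and then do not resolve: the smoothness bound $|K_j(x,y)-K_j(x,x_0)|\lesssim r^{\delta}/|x-x_0|^{d+\delta}$ that your global estimate rests on is precisely what is \emph{not} available when one only assumes $V\in RH_{d/2}$ --- it is the $V\in RH_d$ situation in which $R_j$ becomes an $L$-Calder\'on--Zygmund operator (Remark \ref{remark for Schrodinger-CZO}(ii)), and in that case the proposition would already follow from Proposition \ref{Schrodinger-Calderon-Zygmund operators}. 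Under $V\in RH_{d/2}$ the correct kernel estimate (Lemma \ref{LP2, Lemma 2.6}, due to Tang--Bi) carries the extra factor $\int_{B(x,|x-y|)}\frac{V(z)}{|x-z|^{d-1}}dz+\frac{1}{|x-y|}$, and the first summand is not pointwise $O(|x-y|^{-1})$; it can only be controlled in $L^q$ average over the dyadic annuli $2^{k+1}B\setminus 2^kB$ with $q\leq d/2$, using $V\in RH_{d/2}$, the doubling of $V(y)dy$, and the growth bound for $\int_{B(x_0,2^{k+2}r)}V$. That computation --- estimate (\ref{molecule}) inside the proof of Lemma \ref{Riesz-atom}, yielding $\|R_j(a)\|_{L^q(2^{k+1}B\setminus 2^kB)}\leq C(N)(1+2^kr/\rho(x_0))^{-N}2^{-kc_0}|2^kB|^{1/q-1}$ --- is the technical core of the proposition, and your proposal replaces it with a pointwise bound that fails under the stated hypothesis. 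Saying "use only the estimates that remain valid" does not close this; the valid estimates force a structurally different (averaged, annulus-wise) argument.

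A secondary, repairable slip: $|b(x)-b_B|\lesssim\bigl(1+\log\frac{|x-x_0|}{r}\bigr)\|b\|_{BMO}$ is not a pointwise inequality for $BMO$ functions; it must be used in the integrated form $\|b-b_B\|_{L^{q'}(2^{k+1}B)}\lesssim (k+1)|2^{k+1}B|^{1/q'}\|b\|_{BMO}$ (Lemma \ref{fundamental estimates for BMO}), paired by H\"older with the $L^q$-annulus bound on $R_ja$. Once the kernel input is corrected to the Tang--Bi form, this pairing is forced anyway, so fixing the first gap fixes this one too.
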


The proof of Proposition \ref{the Riesz transforms and the class K}  follows directly from Lemma \ref{technical lemma} and the fact that the Riesz transforms $R_j$ are bounded from $H^1_L(\mathbb R^d)$ into $L^1(\mathbb R^d)$.

To prove Proposition \ref{Schrodinger-Calderon-Zygmund operators}, we need the following two lemmas.

\begin{Lemma}\label{fundamental estimates for BMO}
Let $1\leq q<\infty$. Then, there exists a constant $C>0$ such that  for every ball $B$, $f\in BMO(\mathbb R^d)$ and $k\in\mathbb Z^+$,
$$\Big(\frac{1}{|2^k B|}\int_{2^k B} |f(y)- f_B|^q dy\Big)^{1/q}\leq C k \|f\|_{BMO}.$$
\end{Lemma}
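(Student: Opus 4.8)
This is a classical $BMO$ estimate, essentially the John--Nirenberg lemma combined with a telescoping argument over dyadic dilates of the ball.

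\medskip

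\noindent\textbf{Proof plan.} The plan is to estimate $|f_{2^kB} - f_B|$ first, then use the triangle inequality to reduce to an $L^q$-average of $f$ against its own mean on $2^kB$, which is controlled by $\|f\|_{BMO}$ via John--Nirenberg.

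\medskip

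\noindent First I would split
$$\Big(\frac{1}{|2^kB|}\int_{2^kB}|f(y)-f_B|^q\,dy\Big)^{1/q}\leq \Big(\frac{1}{|2^kB|}\int_{2^kB}|f(y)-f_{2^kB}|^q\,dy\Big)^{1/q} + |f_{2^kB}-f_B|.$$
The first term on the right is $\leq C\|f\|_{BMO}$ by the John--Nirenberg inequality (which gives that the $L^q$ oscillation over any ball is comparable, up to a dimensional constant depending on $q$, to the $L^1$ oscillation, hence to $\|f\|_{BMO}$). For the second term, I would telescope through the chain of balls $B\subset 2B\subset 2^2B\subset\cdots\subset 2^kB$, writing
$$|f_{2^kB}-f_B|\leq \sum_{j=0}^{k-1}|f_{2^{j+1}B}-f_{2^jB}|,$$
and for each consecutive pair, since $2^jB\subset 2^{j+1}B$ with $|2^{j+1}B| = 2^d|2^jB|$,
$$|f_{2^{j+1}B}-f_{2^jB}| = \Big|\frac{1}{|2^jB|}\int_{2^jB}(f(y)-f_{2^{j+1}B})\,dy\Big|\leq \frac{2^d}{|2^{j+1}B|}\int_{2^{j+1}B}|f(y)-f_{2^{j+1}B}|\,dy\leq 2^d\|f\|_{BMO}.$$
Summing over $j=0,\dots,k-1$ gives $|f_{2^kB}-f_B|\leq 2^d k\,\|f\|_{BMO}$, and combining with the first term yields the claimed bound $Ck\|f\|_{BMO}$.

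\medskip

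\noindent There is no real obstacle here; the only point requiring a standard external input is the equivalence of $L^q$ and $L^1$ oscillations coming from John--Nirenberg, and the constant $C$ that results depends on $q$ and $d$ but not on $B$, $f$, or $k$, as required. One should just be slightly careful that the factor $k$ (rather than, say, $\log(2^k)$ up to constants — the same thing) is exactly what the telescoping produces, and that it is linear in $k$, which matches the statement.
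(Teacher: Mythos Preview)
Your proof is correct and is exactly the standard argument the paper has in mind: the paper does not give a detailed proof of this lemma, stating only that it ``follows directly from the classical John--Nirenberg inequality,'' and the telescoping-plus-John--Nirenberg argument you wrote out is precisely the one used verbatim in the paper's proof of the more general Lemma~\ref{log-generalized BHS}.
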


The proof of Lemma \ref{fundamental estimates for BMO} follows directly from the classical John-Nirenberg inequality. See also Lemma \ref{BHS, Lemma 1} below.

\begin{Lemma}\label{molecule for CZO}
Let  $1<q\leq \infty$ and $\varepsilon>0$. Assume that $T$ is a $(\delta,L)$-Calder\'on-Zygmund operator and $a$ is a generalized $(H^1_L, q,\varepsilon)$-atom related to the ball  $B= B(x_0,r)$. Then,
$$\|Ta\|_{L^q(2^{k+1}B\setminus 2^k B)}\leq C 2^{-k\delta_0}|2^k B|^{1/q-1}$$
for all $k=1,2,...$, where $\delta_0=\min\{\varepsilon, \delta\}$.
\end{Lemma}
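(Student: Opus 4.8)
The plan is to estimate $\|Ta\|_{L^q(2^{k+1}B\setminus 2^kB)}$ by exploiting that, for $x$ far from $B$, one has the integral representation $Ta(x)=\int_{\mathbb R^d}K(x,y)a(y)\,dy$, so the size and smoothness of the kernel $K$ together with the cancellation of $a$ can be used. First I would fix $k\geq 1$ and $x\in 2^{k+1}B\setminus 2^kB$; then $|x-x_0|\approx 2^k r$ and, since $\operatorname{supp}a\subset B$ and $x\notin\operatorname{supp}a$, I can split
$$Ta(x)=\int_B\bigl(K(x,y)-K(x,x_0)\bigr)a(y)\,dy+K(x,x_0)\int_B a(y)\,dy=:\mathrm{I}(x)+\mathrm{II}(x).$$
For $\mathrm{I}(x)$, since $2|y-x_0|\leq 2r\leq 2^{k-1}r\leq|x-x_0|$ for $k\geq 2$ (and the case $k=1$ is absorbed into the $L^q$-boundedness of $T$ as in \eqref{generalized atom 2}), the regularity estimate \eqref{Calderon-Zygmund 2} gives $|K(x,y)-K(x,x_0)|\leq C\,r^\delta/|x-x_0|^{d+\delta}\leq C\,r^\delta/(2^kr)^{d+\delta}$; combined with $\|a\|_{L^1}\leq C$ (from (ii) and Hölder), this yields $|\mathrm{I}(x)|\leq C\,2^{-k\delta}(2^kr)^{-d}$. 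For $\mathrm{II}(x)$, the size estimate \eqref{Calderon-Zygmund 1} with, say, $N=\varepsilon$ gives $|K(x,x_0)|\leq C|x-x_0|^{-d}(1+|x-x_0|/\rho(x_0))^{-\varepsilon}$, and here I would use Proposition \ref{Shen, Lemma 1.4} only as needed, or rather note $|x-x_0|\approx 2^kr$; together with the cancellation condition (iii), $|\int_B a|\leq (r/\rho(x_0))^\varepsilon$, this gives $|\mathrm{II}(x)|\leq C(2^kr)^{-d}(r/\rho(x_0))^\varepsilon(1+2^kr/\rho(x_0))^{-\varepsilon}\leq C\,2^{-k\varepsilon}(2^kr)^{-d}$, since $(r/\rho(x_0))^\varepsilon(1+2^kr/\rho(x_0))^{-\varepsilon}\leq (r/(2^kr))^\varepsilon=2^{-k\varepsilon}$.

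Combining, for $x\in 2^{k+1}B\setminus 2^kB$ we get the pointwise bound $|Ta(x)|\leq C\,2^{-k\delta_0}(2^kr)^{-d}$ with $\delta_0=\min\{\varepsilon,\delta\}$. Then I would integrate: raising to the $q$-th power and integrating over $2^{k+1}B\setminus 2^kB$, whose measure is $\leq C(2^kr)^d=C|2^kB|$, gives
$$\|Ta\|_{L^q(2^{k+1}B\setminus 2^kB)}\leq C\,2^{-k\delta_0}(2^kr)^{-d}\,|2^kB|^{1/q}=C\,2^{-k\delta_0}|2^kB|^{1/q-1},$$
which is the claimed estimate. The case $q=\infty$ is the same pointwise bound read directly as an $L^\infty$ estimate.

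The main obstacle I anticipate is the bookkeeping at small $k$ (here $k=1$, where $y\in B$ and $x\in 2^2B\setminus 2B$ need not satisfy $2|y-x_0|\leq|x-x_0|$ with room to spare), which is handled separately by the $L^q\to L^q$ boundedness of $T$ as already used in \eqref{generalized atom 2}; everything else is a routine combination of the two defining kernel estimates \eqref{Calderon-Zygmund 1}–\eqref{Calderon-Zygmund 2} with the three properties of a generalized atom and the elementary inequality $(r/\rho(x_0))^\varepsilon(1+2^kr/\rho(x_0))^{-\varepsilon}\leq 2^{-k\varepsilon}$, which is what forces the appearance of $\delta_0=\min\{\varepsilon,\delta\}$ rather than $\delta$ alone.
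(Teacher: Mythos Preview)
Your proof is correct and follows essentially the same route as the paper: split $Ta(x)=\int_B\bigl(K(x,y)-K(x,x_0)\bigr)a(y)\,dy + K(x,x_0)\int_B a$, bound the first piece by the kernel regularity \eqref{Calderon-Zygmund 2} and the second by the size estimate \eqref{Calderon-Zygmund 1} together with the atom's moment bound, then integrate the resulting pointwise estimate $|Ta(x)|\leq C\,r^{\delta_0}/|x-x_0|^{d+\delta_0}$ over the annulus. The only difference is cosmetic: the paper treats all $k\geq 1$ uniformly (observing that $x\in 2^{k+1}B\setminus 2^kB$ already gives $|x-x_0|\geq 2r$, so the smoothness condition holds up to harmless constants), whereas you single out $k=1$ and invoke the $L^q$-boundedness of $T$; this detour is unnecessary but not wrong.
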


 \begin{proof}
Let $x\in 2^{k+1}B\setminus 2^k B$, so that $|x-x_0|\geq 2r$. Since $T$ is a $(\delta,L)$-Calder\'on-Zygmund operator, we get
\begin{eqnarray*}
|Ta(x)|&\leq& \Big|\int_{B}(K(x,y)- K(x,x_0))a(y) dy\Big|+ |K(x,x_0)|\Big|\int_{\mathbb R^d} a(y)dy\Big|\\
&\leq& C \int_B \frac{|y-x_0|^\delta}{|x-x_0|^{d+\delta}}|a(y)|dy+ C \frac{1}{|x-x_0|^d}\Big(1+ \frac{|x-x_0|}{\rho(x_0)}\Big)^{-\varepsilon}\Big(\frac{r}{\rho(x_0)}\Big)^\varepsilon\\
&\leq& C \frac{r^\delta}{|x-x_0|^{d+\delta}} + C \frac{r^\varepsilon}{|x-x_0|^{d+ \varepsilon}}\leq  C \frac{r^{\delta_0}}{|x-x_0|^{d+\delta_0}}.
\end{eqnarray*}
Consequently,
$$\|Ta\|_{L^q(2^{k+1}B\setminus 2^k B)}\leq C \frac{r^{\delta_0}}{(2^k r)^{d+\delta_0}}|2^{k+1}B|^{1/q}\leq C 2^{-k\delta_0}|2^k B|^{1/q-1}.$$

\end{proof}

\begin{proof}[Proof of Proposition \ref{Schrodinger-Calderon-Zygmund operators}]
Assume that $T$ is a $(\delta,L)$-Calder\'on-Zygmund for some $\delta\in (0,1]$. Let us first  verify that $T$ is bounded from $H^1_L(\mathbb R^d)$ into $L^1(\mathbb R^d)$. By Proposition \ref{boundedness through generalized atoms}, it is sufficient to show that
$$\|Ta\|_{L^1}\leq C$$
for all generalized $(H^1_L, 2,\delta)$-atom $a$ related to the ball $B$. Indeed, from the $L^2$-boundedness of $T$ and Lemma \ref{molecule for CZO}, we obtain that
\begin{eqnarray*}
\|Ta\|_{L^1}&=& \|Ta\|_{L^1(2B)}+ \sum_{k=1}^\infty \|Ta\|_{L^1(2^{k+1}B\setminus 2^k B)}\\
&\leq& C |2B|^{1/2}\|T\|_{L^2\to L^2} \|a\|_{L^2}+ C \sum_{k=1}^\infty |2^{k+1}B|^{1/2} 2^{-k\delta}|2^k B|^{-1/2}\\
&\leq& C.
\end{eqnarray*}

Let us next establish that 
$$\|(f- f_B) Ta\|_{L^1}\leq C \|f\|_{BMO}$$
for all $f\in BMO(\mathbb R^d)$, any  generalized $(H^1_L, 2, \delta)$-atom  $a$ related to the ball $B= B(x_0,r)$. Indeed, by H\"older inequality, Lemma \ref{fundamental estimates for BMO} and Lemma \ref{molecule for CZO}, we get
\begin{eqnarray*}
&& \|(f- f_B) Ta\|_{L^1}\\
 &=& \|(f- f_B)Ta\|_{L^1(2B)} + \sum_{k\geq 1} \|(f-f_B)Ta\|_{L^1(2^{k+1}B\setminus 2^k B)}\\
&\leq& \|(f- f_{ B})\chi_{2 B}\|_{L^{2}}\|T\|_{L^2\to L^2}\|a\|_{L^2} + \sum_{k\geq 1} \|f-f_{B}\|_{L^{2}(2^{k+1}B)}  \|Ta\|_{L^2(2^{k+1}B\setminus 2^k B)}\\
&\leq& C \|f\|_{BMO}+ \sum_{k\geq 1} C(k+1)\|f\|_{BMO}|2^{k+1}B|^{1/2} 2^{-k\delta}|2^k B|^{-1/2}\\
&\leq& C \|f\|_{BMO},
\end{eqnarray*}
which ends the proof.

\end{proof}

\subsection{The  $L$-maximal operators}

Recall that $\{T_t\}_{t>0}$ is heat semigroup generated by $L$ and $T_t(x,y)$ are their kernels. Namely,

$$T_t f(x)=e^{-t L}f(x)=\int_{\mathbb R^d} T_t(x,y)f(y)dy,\quad f\in L^2(\mathbb R^d),\quad t>0.$$

Then the "heat" maximal operator is defined by
$$\mathcal M_L f(x)= \sup_{t>0}|T_t f(x)|,$$
and the "Poisson" maximal operator is defined by
$$\mathcal M_L^P f(x)= \sup_{t>0}|P_t f(x)|,$$
where 
$$P_t f(x)= e^{-t\sqrt L}f(x)= \frac{t}{2\sqrt \pi}\int_0^\infty \frac{e^{-\frac{t^2}{4u}}}{u^{\frac{3}{2}}}T_u f(x) du.$$

\begin{Proposition}\label{the maximal operator and the class K}
The "heat" maximal operator $\mathcal M_L$ is in the class $\mathcal K_L$.
\end{Proposition}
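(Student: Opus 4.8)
The plan is to check, one by one, the three conditions defining membership in $\mathcal K_L$ for the operator $T=\mathcal M_L$: sublinearity, boundedness from $H^1_L(\mathbb R^d)$ into $L^1(\mathbb R^d)$, and the weighted estimate $\|(b-b_B)\mathcal M_L a\|_{L^1}\le C\|b\|_{BMO}$ over generalized atoms. The first is immediate, since each $T_t$ is linear and $\mathcal M_L f=\sup_{t>0}|T_tf|$, so $|\mathcal M_L(\alpha f+\beta g)|\le|\alpha|\,\mathcal M_L f+|\beta|\,\mathcal M_L g$. The second is essentially free: by definition $\|\mathcal M_L f\|_{L^1}=\|f\|_{\mathbb H^1_L}$, so $\mathcal M_L$ extends from $\mathbb H^1_L(\mathbb R^d)$ to a bounded (indeed norm-one) operator from $H^1_L(\mathbb R^d)$ into $L^1(\mathbb R^d)$; alternatively one invokes Proposition \ref{boundedness through generalized atoms} together with the bound $\|\mathcal M_L a\|_{L^1}\le C$ on generalized atoms already established inside the proof of Theorem \ref{generalized Hardy spaces}. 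Thus the only substantial point is the weighted estimate.

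For that, I would mimic, almost word for word, the proof of Proposition \ref{Schrodinger-Calderon-Zygmund operators}, with $\mathcal M_L$ playing the role of the $(\delta,L)$-Calder\'on--Zygmund operator. Fix $\varepsilon>0$ and let $a$ be a generalized $(H^1_L,2,\varepsilon)$-atom associated with $B=B(x_0,r)$. Two pointwise facts about $\mathcal M_L a$ are needed. First, since $V\ge 0$ the heat kernel of $L$ is dominated by the free Gaussian kernel, $0\le T_t(x,y)\le(4\pi t)^{-d/2}e^{-|x-y|^2/(4t)}$, whence $\mathcal M_L g\le C\mathcal M g$ for every $g$ (with $\mathcal M$ the Hardy--Littlewood maximal operator); combined with the $L^2$-boundedness of $\mathcal M$ and size condition (ii) this gives $\|\mathcal M_L a\|_{L^2}\le C\|\mathcal M a\|_{L^2}\le C\|a\|_{L^2}\le C|B|^{-1/2}$. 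Second, the very computation carried out in the proof of Theorem \ref{generalized Hardy spaces} --- Lemma \ref{LP1, Lemma 2} for the smooth part of the kernel, the moment bound (iii) together with estimate (3.5) of \cite{DZ} for the remaining part --- shows that, \emph{uniformly in $t>0$}, for every $x\notin 2B$,
\[
|T_ta(x)|\le C\frac{r^{\sigma_0}}{|x-x_0|^{d+\sigma_0}}+C\frac{r^{\varepsilon}}{|x-x_0|^{d+\varepsilon}}\le C\frac{r^{\delta_0}}{|x-x_0|^{d+\delta_0}},\qquad \delta_0:=\min\{\sigma_0,\varepsilon\},
\]
where the last inequality uses $r/|x-x_0|\le 1/2$. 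Taking $\sup_{t>0}$ and restricting to the annulus $2^{k+1}B\setminus 2^kB$ yields the exact analogue of Lemma \ref{molecule for CZO}, namely $\|\mathcal M_L a\|_{L^2(2^{k+1}B\setminus 2^kB)}\le C\,2^{-k\delta_0}|2^kB|^{-1/2}$ for all $k\ge 1$.

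With these two ingredients in hand the argument concludes just as at the end of the proof of Proposition \ref{Schrodinger-Calderon-Zygmund operators}: split $\mathbb R^d=2B\cup\bigcup_{k\ge1}(2^{k+1}B\setminus 2^kB)$, apply H\"older's inequality on each piece with exponents $(2,2)$, and control the $BMO$ factors by the John--Nirenberg estimate of Lemma \ref{fundamental estimates for BMO} (so that $\|(b-b_B)\chi_{2^{k+1}B}\|_{L^2}\le C(k+1)\|b\|_{BMO}|2^{k+1}B|^{1/2}$). This produces
\[
\|(b-b_B)\mathcal M_L a\|_{L^1}\le C\|b\|_{BMO}+\sum_{k\ge1}C(k+1)\,2^{-k\delta_0}\|b\|_{BMO}\le C\|b\|_{BMO},
\]
since $\sum_{k\ge1}(k+1)2^{-k\delta_0}<\infty$, and hence $\mathcal M_L\in\mathcal K_L$. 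I do not anticipate any genuine obstacle: the only point requiring care is that all kernel bounds must be uniform in $t$ before the supremum is taken --- which is precisely why the $t$-free estimates of Lemma \ref{LP1, Lemma 2} and (3.5) of \cite{DZ} are the right tools rather than, say, an operator-norm bound on a single $T_t$ --- and that the $RH_{d/2}$ hypothesis enters only through those already-established kernel estimates and through the domination $\mathcal M_L\lesssim\mathcal M$.
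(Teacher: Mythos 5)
Your proof is correct and follows essentially the same route as the paper: the same pointwise decay bound $\mathcal M_L a(x)\leq C\,r^{\delta_0}|x-x_0|^{-d-\delta_0}$ off $2B$ (obtained from Lemma \ref{LP1, Lemma 2} together with the moment condition, exactly as in the proof of Theorem \ref{generalized Hardy spaces}), H\"older plus the domination $\mathcal M_L\leq C\mathcal M$ on $2B$, and John--Nirenberg for the $BMO$ factor. The only cosmetic difference is that the paper finishes the outer estimate by quoting the Fefferman--Stein inequality $\int_{|x-x_0|\geq 2r}|b(x)-b_B|\,r^{\sigma_0}|x-x_0|^{-d-\sigma_0}\,dx\leq C\|b\|_{BMO}$ from \cite{FS}, whereas you re-derive it by summing over dyadic annuli.
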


\begin{Proposition}\label{the Poisson maximal operator and the class K}
The "Poisson" maximal operator $\mathcal M^P_L$ is in the class $\mathcal K_L$.
\end{Proposition}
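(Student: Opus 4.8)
The plan is to show that $\mathcal M_L^P$ belongs to $\mathcal K_L$ by the same two-step strategy already used for $\mathcal M_L$ in Proposition \ref{the maximal operator and the class K}: first, that $\mathcal M_L^P$ is bounded from $H^1_L(\mathbb R^d)$ into $L^1(\mathbb R^d)$, and second, that $\|(b-b_B)\mathcal M_L^P a\|_{L^1}\leq C\|b\|_{BMO}$ for every generalized $(H^1_L,q,\varepsilon)$-atom $a$. Both reductions are legitimate because, by Proposition \ref{boundedness through generalized atoms}, it suffices to test on generalized atoms, and $\mathcal M_L^P$ is sublinear.

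The key technical input is a pointwise kernel estimate for the Poisson maximal function analogous to Lemma \ref{LP1, Lemma 2}. Writing $P_t(x,y)=\frac{t}{2\sqrt\pi}\int_0^\infty \frac{e^{-t^2/(4u)}}{u^{3/2}}T_u(x,y)\,du$, I would integrate the heat-kernel bounds against the subordination weight to obtain, for $|y-z|<|x-y|/2$,
\begin{equation}\label{poisson kernel smoothness}
|P_t(x,y)-P_t(x,z)|\leq C\frac{|y-z|^{\sigma_0}}{(|x-y|+t)^{d+\sigma_0}}\leq C\frac{|y-z|^{\sigma_0}}{|x-y|^{d+\sigma_0}},
\end{equation}
together with the size bound $|P_t(x,y)|\leq C(|x-y|+t)^{-d}\leq C|x-y|^{-d}$; the subordination integral of $u^{-d/2}e^{-c|x-y|^2/u}$ and of $(|y-z|/\sqrt u)^{\sigma_0}u^{-d/2}e^{-c|x-y|^2/u}$ against $t u^{-3/2}e^{-t^2/(4u)}\,du$ is a routine computation giving exactly these Poisson-type decays. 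Granting this, for $x\notin 2B$ with $B=B(x_0,r)$ one estimates
$$|P_t a(x)|\leq \Big|\int_B (P_t(x,y)-P_t(x,x_0))a(y)\,dy\Big|+|P_t(x,x_0)|\Big|\int_{\mathbb R^d}a(y)\,dy\Big|\leq C\frac{r^{\sigma_0}}{|x-x_0|^{d+\sigma_0}}+C\frac{r^\varepsilon}{|x-x_0|^{d+\varepsilon}},$$
uniformly in $t$, so $\mathcal M_L^P a(x)\leq C r^{\delta_0}|x-x_0|^{-d-\delta_0}$ on $(2B)^c$ with $\delta_0=\min\{\sigma_0,\varepsilon\}$; this is precisely the decay appearing in Lemma \ref{molecule for CZO}. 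On $2B$ one uses the trivial pointwise domination $\mathcal M_L^P a(x)\leq C\mathcal M a(x)$ by the Hardy–Littlewood maximal operator (which follows from the size bound on $P_t$ and a standard approximate-identity argument), the $L^q$-boundedness of $\mathcal M$, and Hölder's inequality to control $\|\mathcal M_L^P a\|_{L^1(2B)}$ and, with Lemma \ref{fundamental estimates for BMO}, $\|(b-b_B)\mathcal M_L^P a\|_{L^1(2B)}$. Summing the annular pieces $2^{k+1}B\setminus 2^k B$ with the extra factor $(k+1)$ from Lemma \ref{fundamental estimates for BMO} against the geometric decay $2^{-k\delta_0}$ closes both estimates, exactly as in the proof of Proposition \ref{Schrodinger-Calderon-Zygmund operators}.

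The main obstacle is establishing \eqref{poisson kernel smoothness} and the size bound with the correct Poisson-type decay in $|x-y|+t$; this is where care is needed, since one must split the subordination integral at $u\sim (|x-y|+t)^2$ (or use the elementary inequality $e^{-t^2/(4u)}e^{-c|x-y|^2/u}\lesssim (1+|x-y|^2/u+t^2/u)^{-M}$ for any $M$) to convert Gaussian decay in $|x-y|^2/u$ plus the weight's decay in $t^2/u$ into polynomial decay in $|x-y|+t$. Everything downstream — the $L^1$-boundedness via Proposition \ref{boundedness through generalized atoms} and the weighted commutator estimate via Lemmas \ref{fundamental estimates for BMO} and \ref{molecule for CZO} — then runs identically to the Schrödinger–Calderón–Zygmund case and to Proposition \ref{the maximal operator and the class K}, so I would present it compactly, referring back to those proofs rather than repeating the annular summation in full.
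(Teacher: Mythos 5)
The paper does not actually write out a proof of this proposition (it explicitly leaves the Poisson case to the reader), so the comparison is with the natural intended argument. Your two-step strategy is the right one and mirrors the printed proof for $\mathcal M_L$, but as written there is one genuine gap, in the treatment of the non-cancellation term. For $x\notin 2B$ you bound $|P_t(x,x_0)|\,\big|\int_{\mathbb R^d} a\big|$ by $C r^\varepsilon/|x-x_0|^{d+\varepsilon}$ using only the size bound $|P_t(x,y)|\le C(|x-y|+t)^{-d}\le C|x-y|^{-d}$ together with $\big|\int_{\mathbb R^d} a\big|\le (r/\rho(x_0))^\varepsilon$. That product is $C r^\varepsilon \rho(x_0)^{-\varepsilon}|x-x_0|^{-d}$, which exceeds your claimed bound by the unbounded factor $(|x-x_0|/\rho(x_0))^\varepsilon$, and $|x-x_0|^{-d}$ is not integrable over $(2B)^c$, so the term cannot be salvaged afterwards. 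The heat-kernel proof at exactly this point invokes (3.5) of \cite{DZ}, i.e. the extra decay $(1+\sqrt u/\rho(x)+\sqrt u/\rho(y))^{-N}$ in the Gaussian bound for $T_u(x,y)$, which is what converts $\rho(x_0)^{-\varepsilon}$ into $|x-x_0|^{-\varepsilon}$. You must subordinate that full estimate, not just the plain Gaussian upper bound, to obtain $|P_t(x,x_0)|\le C\rho(x_0)^{\varepsilon}|x-x_0|^{-d-\varepsilon}$; once that is in place, the rest of your argument (the smoothness estimate via subordinating Lemma \ref{LP1, Lemma 2}, the local piece via $\mathcal M$, and the annular summation) goes through.

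You should also note that the entire kernel computation can be bypassed. Since the subordination weight $\frac{t}{2\sqrt\pi}u^{-3/2}e^{-t^2/(4u)}$ is a probability density in $u$, one has the pointwise domination $\mathcal M^P_L f\le \mathcal M_L f$, hence $\|\mathcal M^P_L f\|_{L^1}\le\|\mathcal M_L f\|_{L^1}=\|f\|_{H^1_L}$ and $\|(b-b_B)\mathcal M^P_L a\|_{L^1}\le\|(b-b_B)\mathcal M_L a\|_{L^1}\le C\|b\|_{BMO}$ directly from the proof of Proposition \ref{the maximal operator and the class K}. This two-line reduction is presumably why the paper omits the Poisson case, and it repairs the gap above without any new estimates.
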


Here we just give the proof of Proposition \ref{the maximal operator and the class K}. For the one of  Proposition \ref{the Poisson maximal operator and the class K}, we leave the details to the interested reader.

\begin{proof}[Proof of Proposition \ref{the maximal operator and the class K}]
Obviously, $\mathcal M_L$ is bounded from $H^1_L(\mathbb R^d)$ into $L^1(\mathbb R^d)$. 

Now, let us prove that
$$\|(f- f_B)\mathcal M_L(a)\|_{L^1}\leq C \|f\|_{BMO}$$
for all $f\in BMO(\mathbb R^d)$, any  generalized $(H^1_L, 2, \sigma_0)$-atom  $a$ related to the ball $B= B(x_0,r)$,  where the constant $\sigma_0>0$ is as in Lemma \ref{LP1, Lemma 2}. Indeed,  by the proof of Theorem \ref{generalized Hardy spaces},  for every $x\notin 2B$,
$$\mathcal M_L(a)(x)\leq C \frac{r^{\sigma_0}}{|x-x_0|^{d+\sigma_0}}.$$
Therefore, using Lemma \ref{fundamental estimates for BMO}, the $L^2$-boundedness of the classical Hardy-Littlewood maximal operator $\mathcal M$ and the estimate $\mathcal M_L(a)\leq C \mathcal M(a)$, we obtain that
\begin{eqnarray*}
&&\|(f- f_B)\mathcal M_L(a)\|_{L^1} \\
&=& \|(f- f_B)\mathcal M_L(a)\|_{L^1(2B)} + \|(f- f_B)\mathcal M_L(a)\|_{L^1((2B)^c)}\\
&\leq& C \|f- f_B\|_{L^2(2B)}\|\mathcal M(a)\|_{L^2} +  C \int_{|x-x_0|\geq 2r} |f(x)- f_{B(x_0,r)}| \frac{r^{\sigma_0}}{|x-x_0|^{d+\sigma_0}} dx\\
&\leq& C \|f\|_{BMO},
\end{eqnarray*}
where we have used the following classical inequality
$$\int_{|x-x_0|\geq 2r} |f(x)- f_{B(x_0,r)}| \frac{r^{\sigma_0}}{|x-x_0|^{d+\sigma_0}} dx\leq C \|f\|_{BMO},$$
which proof can be found in \cite{FS}. This completes the proof of Proposition \ref{the maximal operator and the class K}.

\end{proof}

\subsection{The $L$-square functions}

Recall (see \cite{DGMTZ}) that the $L$-square funcfions $\mathfrak g$ and $\mathcal G$ are defined by
$$\mathfrak g(f)(x)= \left(\int_0^\infty |t\partial_t T_t(f)(x)|^2 \frac{dt}{t}\right)^{1/2}$$
and
$$\mathcal G(f)(x) = \left(\int_0^\infty \int_{|x-y|<t}|t\partial_t T_t(f)(y)|^2 \frac{dy dt}{t^{d+1}}\right)^{1/2}.$$

\begin{Proposition}\label{the square function and the class K}
The $L$-square function $\mathfrak g$ is in the class $\mathcal K_L$.
\end{Proposition}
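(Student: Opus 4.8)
The plan is to verify the two defining properties of the class $\mathcal K_L$: that $\mathfrak g$ is bounded from $H^1_L(\mathbb R^d)$ into $L^1(\mathbb R^d)$, and that $\|(f-f_B)\mathfrak g(a)\|_{L^1}\leq C\|f\|_{BMO}$ for all $f\in BMO(\mathbb R^d)$ and all generalized $(H^1_L,q,\varepsilon)$-atoms $a$ with a suitable choice of $q$ and $\varepsilon$. Since Dziuba\'nski et al.\ \cite{DGMTZ} already established the $H^1_L\to L^1$ boundedness of $\mathfrak g$ (indeed $\mathfrak g$ characterizes $H^1_L$), the first property comes for free, and the whole proof follows the template already used for Proposition \ref{the maximal operator and the class K}: prove the pointwise decay estimate $\mathfrak g(a)(x)\leq C r^{\sigma_1}/|x-x_0|^{d+\sigma_1}$ for $x\notin 2B$ and some $\sigma_1>0$, then split the $L^1$ norm of $(f-f_B)\mathfrak g(a)$ over $2B$ and over the annuli $2^{k+1}B\setminus 2^kB$.

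First I would record the kernel estimates needed: the heat kernel bound $|T_t(x,y)|\leq Ct^{-d/2}e^{-|x-y|^2/(ct)}$, its H\"older continuity in $y$ from Lemma \ref{LP1, Lemma 2}, and the time-derivative bound $|t\partial_t T_t(x,y)|\leq Ct^{-d/2}e^{-|x-y|^2/(ct)}$ (which follows from analyticity of the semigroup on $L^2$ and Gaussian bounds, or directly from \cite{DGMTZ}); one also needs the cancellation/regularity estimate $|t\partial_t T_t(x,y)-t\partial_t T_t(x,z)|\leq C(|y-z|/\sqrt t)^{\sigma_0}t^{-d/2}e^{-|x-y|^2/(ct)}$, which is obtained exactly as in Lemma \ref{LP1, Lemma 2}. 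Taking $q=2$ and $\varepsilon=\sigma_0$, for $x\notin 2B$ I would write, using that $a$ is supported in $B$ and $|\int a|\leq (r/\rho(x_0))^{\sigma_0}$,
\[
|t\partial_t T_t(a)(x)|\leq \Big|\int_B\big(t\partial_t T_t(x,y)-t\partial_t T_t(x,x_0)\big)a(y)\,dy\Big|+|t\partial_t T_t(x,x_0)|\Big|\int_B a\Big|,
\]
bounding the first term by $C(r/\sqrt t)^{\sigma_0}t^{-d/2}e^{-|x-x_0|^2/(ct)}$ and the second by $C t^{-d/2}e^{-|x-x_0|^2/(ct)}(r/\rho(x_0))^{\sigma_0}$. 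Squaring, integrating $dt/t$ over $(0,\infty)$, and using $\int_0^\infty t^{-d/2-\sigma_0}e^{-2|x-x_0|^2/(ct)}\,dt/t\approx |x-x_0|^{-d-\sigma_0}$ (and similarly for the other term with $r^{\sigma_0}\to\rho(x_0)^{-\sigma_0}$, which is dominated since $r\leq\mathcal C_L\rho(x_0)$ on the relevant balls, or absorbed into a constant since $|x-x_0|\gtrsim r$), yields $\mathfrak g(a)(x)\leq C r^{\sigma_0}/|x-x_0|^{d+\sigma_0}$.

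Given the pointwise decay, the rest is routine. On $2B$, H\"older's inequality and the $L^2$-boundedness of $\mathfrak g$ (again from \cite{DGMTZ}) give $\|(f-f_B)\mathfrak g(a)\|_{L^1(2B)}\leq \|f-f_B\|_{L^2(2B)}\|\mathfrak g(a)\|_{L^2}\leq C\|f\|_{BMO}|2B|^{1/2}\|a\|_{L^2}\leq C\|f\|_{BMO}$. Away from $2B$, $\|(f-f_B)\mathfrak g(a)\|_{L^1((2B)^c)}\leq C\int_{|x-x_0|\geq 2r}|f(x)-f_B|\,r^{\sigma_0}|x-x_0|^{-d-\sigma_0}\,dx\leq C\|f\|_{BMO}$ by the classical inequality quoted in the proof of Proposition \ref{the maximal operator and the class K} (or by summing Lemma \ref{fundamental estimates for BMO} over the annuli). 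I expect the only genuine point requiring care — the "hard part" — to be the verification of the time-derivative kernel estimates with the right Gaussian decay and H\"older exponent $\sigma_0$, so as to be able to run the argument with a single fixed $\varepsilon=\sigma_0$; once those are in hand from \cite{DGMTZ, LP1}, the proof is a direct transcription of the maximal-operator case. The treatment of $\mathcal G$ (and of the Poisson analogues) is entirely similar, replacing the pointwise bound by integration over the cone $|x-y|<t$, which only affects the elementary integral computations.
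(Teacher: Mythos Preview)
Your overall strategy is the same as the paper's, and the first term (involving the kernel difference $t\partial_t T_t(x,y)-t\partial_t T_t(x,x_0)$) is handled correctly. The gap is in your treatment of the second term, the one carrying $|\!\int a|$.

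You bound this term by
\[
C\,t^{-d/2}e^{-|x-x_0|^2/(ct)}\Big(\frac{r}{\rho(x_0)}\Big)^{\sigma_0},
\]
i.e.\ you use only the plain Gaussian pointwise bound on $t\partial_t T_t$. After squaring and integrating $dt/t$, this term contributes to $\mathfrak g(a)(x)$ a quantity of order $(r/\rho(x_0))^{\sigma_0}|x-x_0|^{-d}$, \emph{not} $r^{\sigma_0}|x-x_0|^{-d-\sigma_0}$. Neither of your proposed fixes works: the assertion ``$r\leq \mathcal C_L\rho(x_0)$ on the relevant balls'' is false for generalized $(H^1_L,q,\varepsilon)$-atoms (Definition~\ref{the definition for generalized atoms} imposes no size restriction on $r$), and even granting $(r/\rho(x_0))^{\sigma_0}\leq C$ you would be left with the non-integrable tail $|x-x_0|^{-d}$, so that $\int_{(2B)^c}|f-f_B|\,\mathfrak g(a)\,dx$ diverges (the annular sums $\sum_k k$ do not converge).

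The missing ingredient is the \emph{extra $\rho$-decay} in the time-derivative kernel, namely (Proposition~4(a) of \cite{DGMTZ}):
\[
|t\partial_t T_t(x,x_0)|\leq C\,t^{-d/2}e^{-c|x-x_0|^2/t}\Big(1+\frac{\sqrt t}{\rho(x)}+\frac{\sqrt t}{\rho(x_0)}\Big)^{-\delta}.
\]
Combining the factor $(1+\sqrt t/\rho(x_0))^{-\delta}$ with $(r/\rho(x_0))^\delta$ yields $\leq (r/\sqrt t)^\delta$, so the second term is absorbed into the first and one gets the clean bound $|t\partial_t T_t(a)(x)|\leq C(r/\sqrt t)^\delta t^{-d/2}e^{-c'|x-x_0|^2/t}$, whence $\mathfrak g(a)(x)\leq C r^\delta/|x-x_0|^{d+\delta}$ off $2B$. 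This is exactly what the paper does (see Lemma~\ref{ kernel estimates for square function} and the surrounding computation), taking $\varepsilon=\delta$ with $\delta$ the H\"older exponent from \cite{DGMTZ}, not the $\sigma_0$ of Lemma~\ref{LP1, Lemma 2}. Once you insert this sharper bound, the rest of your argument goes through unchanged.
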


\begin{Proposition}\label{the grand square function and the class K}
The $L$-square function $\mathcal G$ is in the class $\mathcal K_L$.
\end{Proposition}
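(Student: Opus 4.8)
The plan is to follow the same template used for the ``heat'' maximal operator $\mathcal M_L$ and for the vertical square function $\mathfrak g$, replacing the $L^2$ estimates by the corresponding known estimates for the area integral $\mathcal G$. Since $\mathcal G$ is bounded from $H^1_L(\mathbb R^d)$ into $L^1(\mathbb R^d)$ (this is part of the characterization of $H^1_L$ via square functions in \cite{DGMTZ}), it remains only to check the second defining property of $\mathcal K_L$, namely that there exist $q\in(1,\infty]$ and $\varepsilon>0$ such that
$$\|(f-f_B)\mathcal G(a)\|_{L^1}\leq C\|f\|_{BMO}$$
for every $f\in BMO(\mathbb R^d)$ and every generalized $(H^1_L,q,\varepsilon)$-atom $a$ related to a ball $B=B(x_0,r)$. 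I would take $q=2$ and $\varepsilon=\sigma_0$, with $\sigma_0$ the H\"older exponent of the heat kernel from Lemma \ref{LP1, Lemma 2}.

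First I would split $\mathbb R^d = 2B \cup \bigcup_{k\geq 1}(2^{k+1}B\setminus 2^k B)$ and estimate $\|(f-f_B)\mathcal G(a)\|_{L^1}$ on each piece. On $2B$, I would use H\"older's inequality together with the $L^2$-boundedness of $\mathcal G$ (which follows from spectral calculus, since $\mathcal G$ is an $L^2$-isometry up to a constant) and $\|a\|_{L^2}\leq |B|^{-1/2}$, exactly as in the proof of Proposition \ref{the maximal operator and the class K}; this contributes $\lesssim \|f-f_B\|_{L^2(2B)}|2B|^{-1/2}|B|^{1/2}\lesssim \|f\|_{BMO}$. On the far annuli, the key is a pointwise decay estimate of the form
$$\mathcal G(a)(x)\leq C\,\frac{r^{\sigma_0}}{|x-x_0|^{d+\sigma_0}}\qquad\text{for } x\notin 2B,$$
after which H\"older on $2^{k+1}B\setminus 2^k B$, Lemma \ref{fundamental estimates for BMO}, and summing a geometric series in $2^{-k\sigma_0}$ finishes exactly as before. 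Actually, the cleanest route is to bound $\mathcal G(a)$ pointwise by $\|Ta\|$-type quantities and recycle Lemma \ref{molecule for CZO}: I expect to be able to produce the annular bound
$$\|\mathcal G(a)\|_{L^2(2^{k+1}B\setminus 2^k B)}\leq C\,2^{-k\sigma_0}|2^kB|^{1/2-1},$$
which plugs directly into the BMO argument.

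The main obstacle is establishing the off-diagonal pointwise (or annular $L^2$) decay for the area integral $\mathcal G(a)(x)$ when $x\notin 2B$. Unlike the vertical function $\mathfrak g$, for $\mathcal G$ one integrates $t\partial_t T_t(a)(y)$ over the cone $|x-y|<t$, so when $x$ is far from $\mathrm{supp}\,a$ one must control the contribution of cones that reach back to $B$; this forces a splitting of the $t$-integral according to whether $t\lesssim |x-x_0|$ or $t\gtrsim |x-x_0|$, using the gradient-in-$t$ heat kernel bounds together with Lemma \ref{LP1, Lemma 2} (for the cancellation $\int a$, which is only almost zero, of size $(r/\rho(x_0))^{\sigma_0}$, whence the $\varepsilon=\sigma_0$ choice) and the Gaussian decay $e^{-c|x-y|^2/t}$ to absorb the cone geometry. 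This is a standard but slightly delicate computation of Fefferman--Stein type; once it is in place, the rest is identical to the proof of Proposition \ref{the maximal operator and the class K}, and I would simply write ``the details are left to the interested reader,'' consistent with the treatment of $\mathcal M^P_L$ and $\mathfrak g$ elsewhere in this section.
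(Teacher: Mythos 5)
The paper gives no proof of this proposition at all---it proves the analogous statement for the vertical square function $\mathfrak g$ (Proposition \ref{the square function and the class K}) and explicitly leaves $\mathcal G$ to the reader---and your outline is precisely the intended adaptation of that argument: the $H^1_L\to L^1$ boundedness from \cite{DGMTZ}, plus the off-diagonal decay $\mathcal G(a)(x)\lesssim r^{\delta}|x-x_0|^{-d-\delta}$ for $x\notin 2B$ obtained by splitting the cone integral at $t\approx|x-x_0|$ and using the near-cancellation $|\int a|\le (r/\rho(x_0))^{\varepsilon}$, followed by the standard Fefferman--Stein/H\"older argument on annuli. The one correction is that the kernel regularity you need is that of $t\partial_t T_t(x,y)$ (Lemma \ref{ kernel estimates for square function}, with exponent $\delta$ from Proposition 4 of \cite{DGMTZ}), not Lemma \ref{LP1, Lemma 2} (exponent $\sigma_0$, which concerns $T_t$ itself), so the natural choice is $\varepsilon=\delta$ exactly as in the proof for $\mathfrak g$; likewise Lemma \ref{molecule for CZO} concerns Calder\'on--Zygmund kernels and cannot simply be recycled here, but these are cosmetic adjustments rather than gaps.
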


Here we just give the proof for Proposition \ref{the square function and the class K}. For the one of  Proposition \ref{the grand square function and the class K}, we leave the details to the interested reader.

 In order to prove Proposition \ref{the square function and the class K}, we need the following lemma.

\begin{Lemma}\label{ kernel estimates for square function}
There exists a constant $C>0$ such that 
\begin{equation}\label{ kernel estimates for square function 1}
|t\partial_t T_t(x,y+h)- t\partial_t T_t(x,y)|\leq C \Big(\frac{|h|}{\sqrt t}\Big)^\delta t^{-d/2} e^{-\frac{c}{4}\frac{|x-y|^2}{t}},
\end{equation}
for all $|h|< \frac{|x-y|}{2}$, $0<t$. Here and in the proof of Proposition \ref{the square function and the class K}, the  constants $\delta,c \in (0,1)$ are as in Proposition 4 of \cite{DGMTZ}. 
\end{Lemma}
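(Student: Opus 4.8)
\textbf{Proof proposal for Lemma \ref{ kernel estimates for square function}.}

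The plan is to derive the Lipschitz-in-$y$ estimate \eqref{ kernel estimates for square function 1} for the kernel $t\partial_t T_t(x,y)$ from the corresponding pointwise bounds already available for the heat kernel $T_t(x,y)$ and its time derivative. Recall from Proposition 4 of \cite{DGMTZ} that there are constants $\delta,c\in(0,1)$ for which the kernel $t\partial_t T_t(x,y)$ enjoys both a Gaussian upper bound $|t\partial_t T_t(x,y)|\leq C t^{-d/2}e^{-c|x-y|^2/t}$ and a H\"older estimate in the second variable of order $\delta$, namely $|t\partial_t T_t(x,y+h)-t\partial_t T_t(x,y)|\leq C(|h|/\sqrt t)^\delta t^{-d/2}e^{-c|x-y|^2/t}$ valid when $|h|<|x-y|/2$ (or, in some formulations, when $2|h|<|x-y|$ together with $|h|\le \sqrt t$). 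First I would quote this H\"older bound directly; this already gives exactly the displayed inequality except that the exponent $c$ in the Gaussian on the right must be replaced by $c/4$, which is harmless since $e^{-c|x-y|^2/t}\le e^{-(c/4)|x-y|^2/t}$ and we may absorb the loss.

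The one genuine point to address is the regime $|h|>\sqrt t$: here the ``$(|h|/\sqrt t)^\delta$'' gain is larger than $1$, so we cannot simply invoke the H\"older estimate as stated if its hypothesis requires $|h|\le \sqrt t$. In that case I would instead bound the difference by the triangle inequality, $|t\partial_t T_t(x,y+h)-t\partial_t T_t(x,y)|\leq |t\partial_t T_t(x,y+h)|+|t\partial_t T_t(x,y)|$, and apply the Gaussian upper bound to each term. Since $|h|<|x-y|/2$ we have $|x-(y+h)|\geq |x-y|-|h|\geq |x-y|/2$, hence both terms are controlled by $C t^{-d/2}e^{-c|x-y|^2/(4t)}$. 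Finally, using $1\le (|h|/\sqrt t)^\delta$ in this regime, we may freely multiply the right-hand side by $(|h|/\sqrt t)^\delta$ to bring it into the required form. Combining the two regimes yields \eqref{ kernel estimates for square function 1} with the stated constant $c/4$; I would also record that if one prefers a fully $t$-free conclusion one can further estimate $(|h|/\sqrt t)^\delta t^{-d/2}e^{-c|x-y|^2/(4t)}\leq C |h|^\delta |x-y|^{-d-\delta}$ by the elementary inequality $s^{-a}e^{-b/s}\le C_a b^{-a}$, exactly as in Lemma \ref{LP1, Lemma 2}.

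The main obstacle is essentially bookkeeping: matching the precise hypotheses (``$|h|<|x-y|/2$'' versus a possible additional ``$|h|\le\sqrt t$'') under which the H\"older regularity of $t\partial_t T_t$ is stated in \cite{DGMTZ}, and splitting into the two scaling regimes so that the factor $(|h|/\sqrt t)^\delta$ is correctly accounted for in each. No new analytic input beyond the kernel bounds of \cite{DGMTZ} is needed; the reduction of the Gaussian constant from $c$ to $c/4$ is precisely the slack that makes the case split work.
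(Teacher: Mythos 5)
Your proposal is correct and follows essentially the same route as the paper: the case $|h|\le\sqrt t$ is read off from the H\"older estimate (b) in Proposition 4 of \cite{DGMTZ}, while for $\sqrt t<|h|<|x-y|/2$ one uses the triangle inequality with the Gaussian bound (a), the observation $|x-y-h|\ge|x-y|/2$ (which accounts for the loss from $c$ to $c/4$), and the fact that $(|h|/\sqrt t)^\delta\ge 1$ in that regime. The only difference is cosmetic: the paper does not record the final $t$-free bound $C|h|^\delta|x-y|^{-d-\delta}$ inside this lemma, deferring that kind of estimate to the proof of Proposition \ref{the square function and the class K}.
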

\begin{proof}
One only needs to consider the case $\sqrt t < |h|< \frac{|x-y|}{2}$. Otherwise, (\ref{ kernel estimates for square function 1}) follows directly from $(b)$ in Proposition 4 of \cite{DGMTZ}.

For $\sqrt t < |h|< \frac{|x-y|}{2}$. By $(a)$  in Proposition 4 of \cite{DGMTZ}, we get
\begin{eqnarray*}
|t\partial_t T_t(x,y+h)- t\partial_t T_t(x,y)| &\leq& C t^{-d/2} e^{-c\frac{|x-y-h|^2}{t}}+ C t^{-d/2} e^{-c\frac{|x-y|^2}{t}}\\
&\leq& C \Big(\frac{|h|}{\sqrt t}\Big)^\delta t^{-d/2} e^{-\frac{c}{4}\frac{|x-y|^2}{t}}.
\end{eqnarray*}

\end{proof}

\begin{proof}[Proof of Proposition \ref{the square function and the class K}]
The $(H^1_L-L^1)$ type boundedness of $\mathfrak g$ is well-known, see for example \cite{DGMTZ, HLMMY}. Let us now show that
$$\|(f- f_B) \mathfrak g(a)\|_{L^1}\leq C \|f\|_{BMO}$$
for all $f\in BMO(\mathbb R^d)$, any  generalized $(H^1_L, 2, \delta)$-atom  $a$ related to the ball $B= B(x_0,r)$. Indeed, it follows from Lemma \ref{ kernel estimates for square function} and $(a)$  in Proposition 4 of \cite{DGMTZ} that for every $t>0$,  $x\notin 2B$, 
\begin{eqnarray*}
&&|t\partial_t T_t(a)(x)| \\
&=& \Big| \int_{B} (t\partial_t T_t(x,y)-  t\partial_t T_t(x,x_0)) a(y)dy + t\partial_t T_t(x,x_0) \int_B a(y) dy\Big|\\
&\leq& C \Big(\frac{r}{\sqrt t}\Big)^\delta t^{-d/2} e^{-\frac{c}{4}\frac{|x- x_0|^2}{t}}\|a\|_{L^1} + C t^{-d/2} e^{-c\frac{|x- x_0|^2}{t}}\Big( 1+ \frac{\sqrt t}{\rho(x)} + \frac{\sqrt t}{\rho(x_0)}\Big)^{-\delta}\Big(\frac{r}{\rho(x_0)}\Big)^\delta\\
&\leq& C \Big(\frac{r}{\sqrt t}\Big)^\delta  t^{-d/2} e^{-\frac{c}{4}\frac{|x- x_0|^2}{t}}.
\end{eqnarray*}
Therefore, as $0<\delta<1$, using the estimate $e^{-\frac{c}{2}\frac{|x- x_0|^2}{t}}\leq C(c,d) (\frac{t}{|x-x_0|^2})^{d+2}$,
\begin{eqnarray*}
\mathfrak g(a)(x) &\leq& C \left\{ \int_0^\infty \Big(\frac{r^2}{ t}\Big)^\delta t^{-d} e^{-\frac{c}{2}\frac{|x- x_0|^2}{t}}\frac{dt}{t}\right\}^{1/2} \\
&\leq& C \left\{\int_0^{|x- x_0|^2} \Big(\frac{r^2}{ t}\Big)^\delta t^{-d} \Big(\frac{t}{|x-x_0|^2}\Big)^{d+2} \frac{dt}{t}+  \int_{|x-x_0|^2}^\infty \Big(\frac{r^2}{ t}\Big)^\delta t^{-d} \frac{dt}{t}\right\}^{1/2}\\
&\leq& C \frac{r^{\delta}}{|x-x_0|^{d+\delta}}.
\end{eqnarray*}
Therefore, the $L^2$-boundedness of $\mathfrak g$ and Lemma \ref{fundamental estimates for BMO} yield
\begin{eqnarray*}
&& \|(f- f_B) \mathfrak g(a)\|_{L^1} \\
&=& \|(f- f_B) \mathfrak g(a)\|_{L^1(2B)} + \|(f- f_B)\mathfrak g(a)\|_{L^1((2B)^c)}\\
&\leq& \|f- f_B\|_{L^2(2B)}\|\mathfrak g(a)\|_{L^2} + C \int_{|x-x_0|\geq 2r} |f(x)- f_{B(x_0,r)}|\frac{r^\delta}{|x-x_0|^{d+\delta}}dx\\
&\leq& C \|f\|_{BMO},
\end{eqnarray*}
which  ends the proof.

\end{proof}

\section{Proof of the main results}\label{Proof of the results}

In this section, we fix a non-negative function $\varphi\in \mathcal S(\mathbb R^d)$ with supp $\varphi\subset B(0,1)$ and $\int_{\mathbb R^d}\varphi(x)dx=1$. Then, we define the linear operator $\mathfrak H$  by
$$\mathfrak H(f)= \sum_{n,k}\Big(\psi_{n,k}f- \varphi_{2^{-n/2}}*(\psi_{n,k}f)\Big),$$
where $\psi_{n,k}$, $n\in\mathbb Z$, $k=1,2,...$ is as in Lemma 2.5 of \cite{DZ} (see also Lemma \ref{DZ, Lemma 2.5}).

\begin{Remark}
When $V(x)\equiv 1$, we can define $\mathfrak H(f)= f- \varphi*f$.
\end{Remark}

Let us now consider the set $\mathcal E = \{0,1\}^d\setminus \{(0,\cdots, 0)\}$ and $\{\psi^{\sigma}\}_{\sigma\in\mathcal E}$  the wavelet with compact support as in Section 3 of \cite{BGK} (see also Section 2 of \cite{Ky2}). Suppose that $\psi^\sigma$ is supported in the  cube  $(\frac{1}{2}- \frac{c}{2}, \frac{1}{2}- \frac{c}{2})^d$ for all $\sigma\in\mathcal E$. As it is classical, for $\sigma\in \mathcal E$ and $I$ a dyadic cube of $\mathbb R^d$ which may be written as the set of $x$ such that $2^j x-k \in  (0,1)^d$, we note
$$\psi_I^{\sigma}(x)=2^{dj/2}\psi^{\sigma} (2^j x-k).$$
In the sequel, the letter $I$ always refers to dyadic cubes. Moreover, we note $kI$  the cube of same center dilated by the coefficient $k$.

\begin{Remark}\label{Remark for Hardy-Sobolev}
For every $\sigma\in\mathcal E$ and $I$ a dyadic cube. Because of the assumption on the support of $\psi^\sigma$, the function $\psi_I^{\sigma}$ is supported in the cube $cI$.
\end{Remark}

In \cite{BGK} (see also \cite{Ky2}), Bonami et al. established the following.

\begin{Proposition}\label{Bo, Gre and K}
The bounded bilinear operator $\Pi$, defined by
$$\Pi(f,g)= \sum_I \sum_{\sigma\in\mathcal E}\langle f,\psi_I^\sigma\rangle \langle g,\psi_I^\sigma\rangle (\psi_I^\sigma)^2,$$
is bounded from $H^1(\mathbb R^d)\times BMO(\mathbb R^d)$ into $L^1(\mathbb R^d)$.
\end{Proposition}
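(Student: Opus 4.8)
The plan is to reduce the $L^{1}$-boundedness of $\Pi$ to an estimate on wavelet coefficients, and then to prove that estimate by marrying the square-function description of $H^{1}$ with the Carleson-measure description of $BMO$ through a stopping-time decomposition. First, since each $(\psi_I^\sigma)^2$ is nonnegative with $\int_{\mathbb{R}^d}(\psi_I^\sigma)^2=\|\psi_I^\sigma\|_{L^2}^2=1$, the triangle inequality gives at once
\[
\|\Pi(f,g)\|_{L^1}\ \le\ \sum_{I}\sum_{\sigma\in\mathcal E}\bigl|\langle f,\psi_I^\sigma\rangle\bigr|\,\bigl|\langle g,\psi_I^\sigma\rangle\bigr|.
\]
(Alternatively, using $|\psi_I^\sigma(x)|^2\le C|I|^{-1}\chi_{cI}(x)$ — recall Remark \ref{Remark for Hardy-Sobolev} and $\|\psi_I^\sigma\|_{L^\infty}\approx|I|^{-1/2}$ — together with Cauchy--Schwarz in $(I,\sigma)$, one gets the sharper pointwise bound $|\Pi(f,g)(x)|\le C\,\widetilde S f(x)\,\widetilde S g(x)$ with $\widetilde S$ the wavelet square function over the dilated cubes $cI$; either route leads to the same coefficient estimate, and the first is shortest.) Thus it suffices to prove
\[
\sum_{I}\sum_{\sigma\in\mathcal E}\bigl|\langle f,\psi_I^\sigma\rangle\bigr|\,\bigl|\langle g,\psi_I^\sigma\rangle\bigr|\ \le\ C\,\|f\|_{H^1}\,\|g\|_{BMO}.
\]

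Second, I recall the two classical wavelet characterizations. For $f\in H^1(\mathbb{R}^d)$ one has $\|f\|_{H^1}\approx\|S f\|_{L^1}$ with $(S f)(x)^2=\sum_{I,\sigma}|\langle f,\psi_I^\sigma\rangle|^2|I|^{-1}\chi_I(x)$; in particular, writing $\Omega_k=\{S f>2^k\}$, one has $\sum_{k\in\mathbb{Z}}2^k|\Omega_k|\le C\|f\|_{H^1}$. For $g\in BMO(\mathbb{R}^d)$ one has the Carleson (Meyer) condition $\sup_{J}|J|^{-1}\sum_{I\subseteq J,\,\sigma}|\langle g,\psi_I^\sigma\rangle|^2\le C\|g\|_{BMO}^2$, the supremum being over all dyadic cubes $J$. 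Now, for each height $2^k$ take $\{I^k_\ell\}_\ell$ to be the maximal dyadic cubes contained in $\Omega_k$, and assign every dyadic cube $I$ with a nonzero coefficient to the essentially unique pair $(k,\ell)$ for which $I\subseteq I^k_\ell$ and $I\not\subseteq\Omega_{k+1}$. Chasing through $S f$ on a point of $I$ lying outside $\Omega_{k+1}$, the cubes assigned to $(k,\ell)$ have total square-mass $\sum\sum_\sigma|\langle f,\psi_I^\sigma\rangle|^2\le C\,2^{2k}|I^k_\ell|$. Applying Cauchy--Schwarz on each $I^k_\ell$ and the $BMO$ Carleson bound,
\[
\sum_{I\,\text{assigned to }(k,\ell)}\ \sum_{\sigma}\bigl|\langle f,\psi_I^\sigma\rangle\bigr|\,\bigl|\langle g,\psi_I^\sigma\rangle\bigr|\ \le\ \bigl(C\,2^{2k}|I^k_\ell|\bigr)^{1/2}\bigl(C\,\|g\|_{BMO}^2\,|I^k_\ell|\bigr)^{1/2}\ \le\ C\,2^k\,\|g\|_{BMO}\,|I^k_\ell|,
\]
and summing over $\ell$ (using $\sum_\ell|I^k_\ell|=|\Omega_k|$) and then over $k$ yields $C\,\|g\|_{BMO}\sum_k 2^k|\Omega_k|\le C\,\|f\|_{H^1}\|g\|_{BMO}$; the bilinearity and the boundedness of $\Pi$ on $H^1(\mathbb{R}^d)\times BMO(\mathbb{R}^d)$ follow.

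The step I expect to be the main obstacle is exactly the stopping-time bookkeeping: organizing the dyadic cubes so that each is counted once, at the scale where its local contribution to $(S f)^2$ is genuinely comparable to $2^{2k}|I^k_\ell|$, so that the $BMO$ Carleson mass and the $H^1$ level sets $\Omega_k$ can be paired cube by cube; everything else in the estimate is routine once this is set up. A minor technical nuisance is that the wavelet series of a $BMO$ function converges only modulo constants (and that of an $H^1$ function only weakly), but this is harmless since every $\psi_I^\sigma$ has mean zero: one first runs the argument for $f$ with a finite wavelet expansion — these are dense in $H^1(\mathbb{R}^d)$ — and for suitable truncations of $g$, and then passes to the limit, so the closed bound, and with it the asserted boundedness of $\Pi$, extends to all of $H^1(\mathbb{R}^d)\times BMO(\mathbb{R}^d)$. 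The remaining ingredients — the pointwise sizes of $\psi_I^\sigma$, the bounded overlap of the supports $cI$ at each dyadic scale, and the identity $\sum_k 2^k|\Omega_k|\approx\|S f\|_{L^1}$ — are entirely standard.
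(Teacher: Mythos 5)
Your proof is correct, and it follows essentially the approach of the source the paper relies on: Proposition \ref{Bo, Gre and K} is not proved in this paper but quoted from \cite{BGK} (see also \cite{Ky2}), where the boundedness of $\Pi$ is likewise reduced, via $\|(\psi_I^\sigma)^2\|_{L^1}=\|\psi_I^\sigma\|_{L^2}^2=1$, to the coefficient estimate $\sum_{I,\sigma}|\langle f,\psi_I^\sigma\rangle|\,|\langle g,\psi_I^\sigma\rangle|\leq C\|f\|_{H^1}\|g\|_{BMO}$, proved by pairing the wavelet square-function characterization of $H^1$ with the Carleson condition for $BMO$ exactly as you do. One step is stated more tersely than it deserves: the bound $\sum_{I\ \mathrm{assigned\ to}\ (k,\ell)}\sum_\sigma|\langle f,\psi_I^\sigma\rangle|^2\leq C2^{2k}|I^k_\ell|$ does hold for your containment assignment, but not by evaluating $Sf$ at one point per cube; the clean justification is that for each $x\in I^k_\ell$ the assigned cubes containing $x$ form a chain whose minimal element meets $(\Omega_{k+1})^c$ at some point $e$, hence all of them contain $e$ and $\sum_{I\in\mathcal I_x,\sigma}|\langle f,\psi_I^\sigma\rangle|^2|I|^{-1}\leq (Sf)(e)^2\leq 2^{2(k+1)}$, after which integrating in $x$ over $I^k_\ell$ gives the claim (alternatively, one uses the standard variant with the expanded sets $\{M\chi_{\Omega_k}>1/2\}$).
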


\subsection{Proof of Theorem \ref{subbilinear decomposition for commutators} and Theorem \ref{bilinear decomposition for commutators}}

In order to prove Theorem \ref{subbilinear decomposition for commutators} and Theorem \ref{bilinear decomposition for commutators}, we need the following key two lemmas which proofs will given in Section \ref{Proof of the key lemmas}.

\begin{Lemma}\label{Hardy estimates for local Riesz transforms}
The linear operator $\mathfrak H$ is bounded from $H^1_L(\mathbb R^d)$ into $H^1(\mathbb R^d)$.
\end{Lemma}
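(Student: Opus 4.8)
The plan is to show that $\mathfrak{H}$ maps $H^1_L(\mathbb{R}^d)$ into the classical Hardy space $H^1(\mathbb{R}^d)$ by checking the estimate on atoms and then invoking Proposition \ref{boundedness through generalized atoms}. Since $\mathfrak{H}$ is sublinear (indeed linear) and $H^1(\mathbb{R}^d)$ is a Banach space, it suffices to find $q>1$ and $\varepsilon>0$ such that $\|\mathfrak{H}(a)\|_{H^1}\leq C$ uniformly over all generalized $(H^1_L,q,\varepsilon)$-atoms $a$. I would take $q=2$ and $\varepsilon$ to be a small exponent matching the smoothness of the partition of unity $\{\psi_{n,k}\}$ from Lemma 2.5 of \cite{DZ}; in the model case $V\equiv 1$, where $\mathfrak{H}(f)=f-\varphi*f$, this reduces to a cleaner computation that guides the general argument.

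First I would recall the structural properties of $\psi_{n,k}$ from Lemma 2.5 of \cite{DZ}: these are smooth functions, supported in balls of radius comparable to $2^{-n/2}\approx\rho$ on $\mathcal{B}_n$, forming a partition of unity subordinate to a covering of $\mathbb{R}^d$ adapted to the critical radius function $\rho$, with $\|\psi_{n,k}\|_\infty\lesssim 1$ and $\|\nabla\psi_{n,k}\|_\infty\lesssim 2^{n/2}\approx \rho^{-1}$. Given an atom $a$ supported in $B=B(x_0,r)$ with $r\leq \mathcal{C}_L\rho(x_0)$, only finitely many (uniformly bounded in number, by the bounded overlap of the covering and Proposition \ref{Shen, Lemma 1.4}) terms $\psi_{n,k}$ meet $B$. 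For each such term, $\psi_{n,k}a$ is (up to a harmless constant) a generalized atom on a ball comparable to $B$, because multiplication by $\psi_{n,k}$ preserves the support condition, multiplies the $L^2$-bound by $\|\psi_{n,k}\|_\infty\lesssim 1$, and affects the mean only mildly. So it is enough to treat a single block $g=\psi_{n,k}a$ and show $\|g-\varphi_{2^{-n/2}}*g\|_{H^1}\lesssim 1$.

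The key point is that $h:=g-\varphi_{2^{-n/2}}*g$ is a classical $H^1$-molecule (or a finite sum of classical atoms): it has mean zero automatically since $\int\varphi=1$, and the cancellation of $g$ (its mean is at most $(r/\rho(x_0))^\varepsilon\lesssim 1$, but more importantly the mollification produces genuine vanishing integral), while the mollification at scale $2^{-n/2}\approx\rho(x_0)\gtrsim r$ spreads $g$ over a ball of radius $\approx\rho(x_0)$ without destroying integrability. I would estimate $\|h\|_{L^1}$ directly: $\|g\|_{L^1}\lesssim |B|^{1-1/q}\|a\|_{L^q}\lesssim 1$ and $\|\varphi_{2^{-n/2}}*g\|_{L^1}\leq\|\varphi\|_{L^1}\|g\|_{L^1}\lesssim 1$, and $h$ is supported in a ball of radius $C\rho(x_0)$; combined with the mean-zero property, this exhibits $h$ as a fixed multiple of a classical $(H^1,1)$-atom (or of finitely many such), hence $\|h\|_{H^1}\lesssim 1$. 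Summing the boundedly-many blocks gives $\|\mathfrak{H}(a)\|_{H^1}\lesssim 1$, and Proposition \ref{boundedness through generalized atoms} (applied with $\mathcal{X}=H^1(\mathbb{R}^d)$) finishes the proof.

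The main obstacle I anticipate is bookkeeping with the partition $\{\psi_{n,k}\}$: one must verify that for a fixed atom only $O(1)$ of the $\psi_{n,k}$ are relevant and that $\psi_{n,k}a$ really is (a bounded multiple of) a generalized atom on a comparable ball — this needs the quantitative comparison $\rho(x)\approx\rho(x_0)$ for $x\in B$, which follows from Proposition \ref{Shen, Lemma 1.4} and the constraint $r\le\mathcal{C}_L\rho(x_0)$, together with the bounded-overlap property of the covering underlying Lemma 2.5 of \cite{DZ}. A secondary technical point is confirming that the resulting function $h$ is supported in a ball of radius $\lesssim\rho(x_0)$ (so its "size" is controlled) rather than having a slowly-decaying tail; since $\varphi$ has compact support this is immediate, which is precisely why the compactly supported mollifier $\varphi$ was chosen. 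Once these localization facts are in place, the estimates are routine.
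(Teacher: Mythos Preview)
Your argument has a genuine gap at the step where you conclude $\|h\|_{H^1}\lesssim 1$ from the facts that $\int h=0$, $\operatorname{supp} h\subset B'$ with $|B'|\approx \rho(x_0)^d$, and $\|h\|_{L^1}\lesssim 1$. There is no such thing as a classical $(H^1,1)$-atom: an $L^1$-function with mean zero and compact support need not lie in $H^1(\mathbb R^d)$ at all, let alone with norm controlled by its $L^1$ norm. Concretely, when $r\ll\rho(x_0)$ the piece $g=\psi_{n,k}a$ has $\|g\|_{L^2}\approx r^{-d/2}$, so $\|h\|_{L^2}\approx r^{-d/2}$ as well, while the atom condition on the enlarged ball $B'$ would demand $\|h\|_{L^2}\lesssim \rho(x_0)^{-d/2}$; the mismatch is the factor $(\rho(x_0)/r)^{d/2}$, which is unbounded. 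The sketch can be repaired, but not by the sentence you wrote: one has to exploit that $\varphi_{2^{-n/2}}*g$ spreads the mass \emph{smoothly} over $B'$ and show that $h$ is a classical $(H^1,2,\delta)$-molecule associated to the small ball $B$ (using $|\varphi_s(x-y)-\varphi_s(x-x_0)|\lesssim r\,s^{-d-1}$ for $y\in B$ and $|\int g|\lesssim r/\rho(x_0)$ to get decay on the annuli $2^{k+1}B\setminus 2^kB$). A secondary issue: generalized $(H^1_L,q,\varepsilon)$-atoms carry no restriction $r\le \mathcal C_L\rho(x_0)$, so your bounded-overlap claim for $\{\psi_{n,k}\}$ is not justified for them; you should work with $(H^1_L,q)$-atoms (and then invoke Proposition~3.2 of \cite{YZ2} rather than Proposition~\ref{boundedness through generalized atoms}).

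For comparison, the paper avoids atoms entirely. It appeals to Goldberg's estimate $\|g-\varphi_{2^{-n/2}}*g\|_{H^1}\le C\|g\|_{h^1_n}$ (a dilation of the classical fact that $f\mapsto f-\varphi*f$ maps Goldberg's local Hardy space $h^1$ into $H^1$) together with the Dziuba\'nski--Zienkiewicz inequality $\sum_{n,k}\|\psi_{n,k}f\|_{h^1_n}\le C\|f\|_{H^1_L}$ (Lemma~\ref{DZ, 4.7}), which gives
$\|\mathfrak H(f)\|_{H^1}\le \sum_{n,k}\|\psi_{n,k}f-\varphi_{2^{-n/2}}*(\psi_{n,k}f)\|_{H^1}\le C\sum_{n,k}\|\psi_{n,k}f\|_{h^1_n}\le C\|f\|_{H^1_L}$
in three lines. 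The local space $h^1_n$ is exactly the tool that absorbs the two-scale problem ($r$ versus $\rho(x_0)$) you ran into.
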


\begin{Lemma}\label{extend to H^1_L}
Let $T\in\mathcal K_L$. Then, the subbilinear operator
$$\mathcal U(f,b):= [b, T](f-\mathfrak H(f))$$
is bounded from  $H^1_L(\mathbb R^d)\times BMO(\mathbb R^d)$ into $L^1(\mathbb R^d)$.
\end{Lemma}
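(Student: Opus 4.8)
\textbf{Plan for the proof of Lemma \ref{extend to H^1_L}.}
The plan is to reduce the boundedness of $\mathcal U(f,b) = [b,T](f - \mathfrak H(f))$ to atomic estimates via Proposition \ref{boundedness through generalized atoms}, using that $T \in \mathcal K_L$. First I would observe that, since $\mathcal U(\cdot, b)$ is sublinear in its first variable, it suffices by Proposition \ref{boundedness through generalized atoms} (with $\mathcal X = L^1(\mathbb R^d)$) to prove a uniform bound
$$\|\mathcal U(a, b)\|_{L^1} = \|[b,T](a - \mathfrak H(a))\|_{L^1} \leq C\|b\|_{BMO}$$
for every generalized $(H^1_L, q, \varepsilon)$-atom $a$ related to a ball $B = B(x_0, r)$, with the appropriate $q$ and $\varepsilon$ coming from the definition of $\mathcal K_L$. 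The key point that makes this work is that $a - \mathfrak H(a) = \varphi_{2^{-n/2}} * (\psi_{n,k} a)$-type pieces are better behaved: writing $g = a - \mathfrak H(a)$, one expects $g$ to have a nice decomposition, and in fact the design of $\mathfrak H$ is precisely so that $g$ lies in $H^1_L$ with controlled atomic decomposition into generalized atoms, while $\mathfrak H(a) \in H^1(\mathbb R^d)$ by Lemma \ref{Hardy estimates for local Riesz transforms}.

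The main work is the pointwise/commutator estimate. Writing $[b,T](g)(x) = (b(x) - b_B) T(g)(x) - T\big((b - b_B) g\big)(x)$, I would split $\|[b,T](g)\|_{L^1}$ into the contribution near $B$ (say on a fixed dilate $\mathcal C_L B$ or a suitable multiple, using the scale $\rho(x_0)$ attached to the atom) and far from $B$. For the term $(b(x) - b_B) T(g)(x)$: the defining property of $\mathcal K_L$ gives $\|(b - b_B) T(\tilde a)\|_{L^1} \leq C\|b\|_{BMO}$ for any generalized atom $\tilde a$, so once $g$ is decomposed as $g = \sum_j \lambda_j \tilde a_j$ with $\sum_j |\lambda_j| \leq C$ (uniformly), this term is controlled; the content here is producing that decomposition of $g = a - \mathfrak H(a)$, which should follow from the structure of $\psi_{n,k}$ (Lemma 2.5 of \cite{DZ}) together with the smoothing and support properties of $\varphi_{2^{-n/2}}$ — the mollification keeps supports comparable and only mildly worsens the cancellation, keeping the pieces within the class of generalized atoms up to a harmless constant. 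For the term $T\big((b - b_B) g\big)$: here I would use that $(b - b_B) g$ decomposes, after the usual normalization splitting $b - b_B$ on dyadic annuli via Lemma \ref{fundamental estimates for BMO}, into a sum of generalized atoms with coefficients summing to $C\|b\|_{BMO}$, and then apply the $H^1_L \to L^1$ boundedness of $T$ (which holds since $T \in \mathcal K_L$).

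I expect the main obstacle to be the verification that $g = a - \mathfrak H(a)$ (equivalently the local pieces $\varphi_{2^{-n/2}} * (\psi_{n,k} a)$) genuinely admits a generalized-atomic decomposition with uniformly bounded $\ell^1$-norm of coefficients — this is where the specific properties of the partition $\{\psi_{n,k}\}$ from \cite{DZ} (their localization at scale $\rho$, finite overlap, and size/smoothness bounds) must be combined carefully with the fact that $\varphi_{2^{-n/2}}$ averages at exactly the critical scale $\rho(x_0) \approx 2^{-n/2}$, so that the convolution neither destroys the support localization relative to $\mathcal C_L \rho(x_0)$ nor the approximate cancellation encoded in condition (iii) of Definition \ref{the definition for generalized atoms}. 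A secondary technical point is handling the interaction between the dyadic-annuli decomposition of $b - b_B$ and the tails of the mollified pieces, but the rapid decay of $\varphi \in \mathcal S(\mathbb R^d)$ together with the $k\|b\|_{BMO}$ growth from Lemma \ref{fundamental estimates for BMO} makes the resulting series summable. Once these are in place, assembling the pieces and invoking Proposition \ref{boundedness through generalized atoms} finishes the proof.
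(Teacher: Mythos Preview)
There is a genuine gap in your plan, concentrated in the choice of the centering ball for $b$. You write $[b,T](g)=(b-b_B)T(g)-T((b-b_B)g)$ with $B=B(x_0,r)$ the ball of the original atom $a$, and then appeal to the $\mathcal K_L$ property in the form ``$\|(b-b_B)T(\tilde a)\|_{L^1}\le C\|b\|_{BMO}$ for any generalized atom $\tilde a$''. But the $\mathcal K_L$ condition only gives $\|(b-b_{\tilde B})T(\tilde a)\|_{L^1}\le C\|b\|_{BMO}$ for the ball $\tilde B$ \emph{attached to} $\tilde a$. The pieces of $g=a-\mathfrak H(a)=\sum_{n,k}\varphi_{2^{-n/2}}*(\psi_{n,k}a)$ are supported in balls $B(x_{n,k},5\cdot 2^{-n/2})$ with $2^{-n/2}\approx\rho(x_{n,k})\approx\rho(x_0)$, which is typically much larger than $r$. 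The resulting discrepancy $|b_B-b_{\tilde B}|$ is of order $\log(\rho(x_0)/r)\,\|b\|_{BMO}$, and since $\|T(\tilde a)\|_{L^1}$ is only bounded by a constant, this logarithm cannot be absorbed. The same obstruction appears in your treatment of $T((b-b_B)g)$: on the support of $g$ one has $\|b-b_B\|_{L^p}$ picking up the same $\log(\rho(x_0)/r)$ factor, so $(b-b_B)g$ is not a uniformly controlled sum of generalized atoms. The remark about ``dyadic-annuli decomposition'' and ``tails'' is also off: $\varphi$ is chosen with $\mathrm{supp}\,\varphi\subset B(0,1)$, so the convolved pieces are compactly supported and no annular analysis is needed.

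The paper avoids this by not fixing a single reference ball. It works directly with $f\in\mathbb H^{1,q,\varepsilon}_{L,\rm fin}$, decomposes each $\psi_{n,k}f$ (via the local atomic theory of $h^1_n$) as $\sum_j\lambda_j^{n,k}a_j^{n,k}$ with $\sum_j|\lambda_j^{n,k}|\le C\|\psi_{n,k}f\|_{h^1_n}$, and then shows that each $\varphi_{2^{-n/2}}*a_j^{n,k}$ is (a constant times) a generalized atom related to $\tilde B_{n,k}=B(x_{n,k},5\cdot 2^{-n/2})$. For that piece one centers $b$ at $b_{\tilde B_{n,k}}$: the $\mathcal K_L$ estimate applies verbatim, and $(b-b_{\tilde B_{n,k}})(\varphi_{2^{-n/2}}*a_j^{n,k})$ is $C\|b\|_{BMO}$ times a generalized atom because the support is exactly $\tilde B_{n,k}$ and $r_{\tilde B_{n,k}}/\rho(x_{n,k})\approx 1$, so condition (iii) of Definition~\ref{the definition for generalized atoms} is harmless. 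Summability over all $(n,k)$ is then provided by the global estimate $\sum_{n,k}\|\psi_{n,k}f\|_{h^1_n}\le C\|f\|_{H^1_L}$ from \cite{DZ}. Your atom-by-atom reduction via Proposition~\ref{boundedness through generalized atoms} can be made to work, but only if you abandon the fixed $b_B$ and center $b$ separately for each mollified piece at its own ball.
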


By Proposition \ref{Bo, Gre and K} and Lemma \ref{Hardy estimates for local Riesz transforms}, we obtain:

\begin{Proposition}\label{the bilinear operator}
The bilinear operator $\mathfrak S(f,g):= -\Pi(\mathfrak H(f),g)$ is bounded from $H^1_L(\mathbb R^d)\times BMO(\mathbb R^d)$ into $L^1(\mathbb R^d)$.
\end{Proposition}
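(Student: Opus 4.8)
The plan is to assemble Proposition \ref{the bilinear operator} as an immediate consequence of the two ingredients already on hand: the bilinear estimate for $\Pi$ in Proposition \ref{Bo, Gre and K} and the mapping property of $\mathfrak H$ in Lemma \ref{Hardy estimates for local Riesz transforms}. First I would recall that $\mathfrak S(f,g) = -\Pi(\mathfrak H(f),g)$ by definition, so it suffices to bound $\|\Pi(\mathfrak H(f),g)\|_{L^1}$ in terms of $\|f\|_{H^1_L}\|g\|_{BMO}$. By Lemma \ref{Hardy estimates for local Riesz transforms}, $\mathfrak H$ maps $H^1_L(\mathbb R^d)$ boundedly into the classical Hardy space $H^1(\mathbb R^d)$, so $\mathfrak H(f)\in H^1(\mathbb R^d)$ with $\|\mathfrak H(f)\|_{H^1}\leq C\|f\|_{H^1_L}$. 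Then Proposition \ref{Bo, Gre and K}, applied to the pair $(\mathfrak H(f),g)\in H^1(\mathbb R^d)\times BMO(\mathbb R^d)$, yields
$$\|\mathfrak S(f,g)\|_{L^1} = \|\Pi(\mathfrak H(f),g)\|_{L^1}\leq C\|\mathfrak H(f)\|_{H^1}\|g\|_{BMO}\leq C\|f\|_{H^1_L}\|g\|_{BMO}.$$

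Second, I would verify bilinearity, which is the only point requiring a word of care: $\mathfrak H$ is a linear operator (as stated where it is introduced), and $\Pi$ is bilinear by Proposition \ref{Bo, Gre and K}, so $(f,g)\mapsto -\Pi(\mathfrak H(f),g)$ is linear in each argument separately; combined with the norm bound just displayed, $\mathfrak S$ is a bounded bilinear operator from $H^1_L(\mathbb R^d)\times BMO(\mathbb R^d)$ into $L^1(\mathbb R^d)$. One should note that, strictly speaking, these estimates are first established on a dense subclass (finite atomic combinations, or $\mathbb H^1_L$), and then $\mathfrak S$ extends by density using the completeness of $L^1$; since all bounds are uniform this extension is routine and can be invoked exactly as in the passage from $\mathbb H^1_L$ to $H^1_L$ elsewhere in the paper.

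I do not expect any genuine obstacle in this proposition itself: it is a formal composition of two previously stated results. The real work is hidden in its two inputs — in particular Lemma \ref{Hardy estimates for local Riesz transforms}, whose proof is deferred to Section \ref{Proof of the key lemmas} and which requires showing that subtracting the local averaging piece $\varphi_{2^{-n/2}}*(\psi_{n,k}f)$ from $f$ converts an $H^1_L$-function (which need only have \emph{approximate} cancellation at scales comparable to $\rho$) into a genuine $H^1$-function with full cancellation, using the partition of unity from Lemma 2.5 of \cite{DZ}. Granting that lemma and Proposition \ref{Bo, Gre and K}, the proof of Proposition \ref{the bilinear operator} is the short chain of inequalities above, and I would present it in exactly that compressed form.
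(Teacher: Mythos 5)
Your proposal is correct and is exactly the paper's argument: Proposition \ref{the bilinear operator} is stated there as an immediate consequence of composing Lemma \ref{Hardy estimates for local Riesz transforms} (boundedness of $\mathfrak H$ from $H^1_L(\mathbb R^d)$ into $H^1(\mathbb R^d)$) with Proposition \ref{Bo, Gre and K} (boundedness of $\Pi$ from $H^1(\mathbb R^d)\times BMO(\mathbb R^d)$ into $L^1(\mathbb R^d)$). Your additional remarks on bilinearity and density are harmless and consistent with the paper's (unwritten) justification.
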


We  recall (see \cite{Ky2}) that the class $\mathcal K$ is the set of all sublinear operators $T$ bounded from $H^1(\mathbb R^d)$ into $L^1(\mathbb R^d)$ so that for some $q\in (1,\infty]$,
$$\|(b-b_B)Ta\|_{L^1}\leq C \|b\|_{BMO},$$
for all $b\in BMO(\mathbb R^d)$, any  classical $(H^1,q)$-atom  $a$ related to the ball $B$, where  $C>0$ a constant independent of $b,a$. 

\begin{Remark}\label{the class K}
By  Remark \ref{atoms} and as $H^1(\mathbb R^d)\subset H^1_L(\mathbb R^d)$, we obtain that  $\mathcal K_L\subset \mathcal K$, which allows to apply the two classical decomposition theorems (Theorem 3.1 and Theorem 3.2 of \cite{Ky2}). This is a key point in our proofs.
\end{Remark}

\begin{proof} [Proof of Theorem \ref{subbilinear decomposition for commutators}]

As $T\in \mathcal K_L\subset \mathcal K$, it follows from Theorem 3.1 of \cite{Ky2} that there exists 
a bounded subbilinear operator $\mathcal V: H^1(\mathbb R^d)\times BMO(\mathbb R^d)\to L^1(\mathbb R^d)$ such that for all $(g,b)\in H^1(\mathbb R^d)\times BMO(\mathbb R^d)$, we have
\begin{equation}\label{Ky2, Theorem 3.1}
|T(-\Pi(g,b))|- \mathcal V(g,b)\leq |[b, T](g)|\leq \mathcal V(g,b) + |T(-\Pi(g,b))|.
\end{equation}
 
Let us now define the bilinear operator $\mathfrak R$ by
$$\mathfrak R(f,b):= |\mathcal U(f,b)|+ \mathcal V(\mathfrak H(f),b)$$
for all $(f,b)\in H^1_L(\mathbb R^d)\times BMO(\mathbb R^d)$, where $\mathcal U$ is the subbilinear operator as in Lemma \ref{extend to H^1_L}. Then, using the subbilinear decomposition (\ref{Ky2, Theorem 3.1}) with $g=\mathfrak H(f)$, 
$$|T(\mathfrak S(f,b))|- \mathfrak R(f,b)\leq |[b, T](f)|\leq |T(\mathfrak S(f,b))| + \mathfrak R(f,b),$$
where the bounded bilinear operator $\mathfrak S: H^1_L(\mathbb R^d)\times BMO(\mathbb R^d)\to L^1(\mathbb R^d)$ is given in Proposition \ref{the bilinear operator}. 

Furthermore, by Lemma \ref{extend to H^1_L} and Lemma \ref{Hardy estimates for local Riesz transforms}, we get
\begin{eqnarray*}
\|\mathfrak R(f,b)\|_{L^1} &\leq& \|\mathcal U(f,b)\|_{L^1}+ \|\mathcal V(\mathfrak H(f),b)\|_{L^1}\\
&\leq& C \|f\|_{H^1_L}\|b\|_{BMO} + C \|\mathfrak H(f)\|_{H^1}\|b\|_{BMO}\\
&\leq& C \|f\|_{H^1_L}\|b\|_{BMO},
\end{eqnarray*}
where we used  the boundedness of $\mathcal V$ on $H^1(\mathbb R^d)\times BMO(\mathbb R^d)$ into $L^1(\mathbb R^d)$. This completes the proof.

\end{proof}

\begin{proof} [Proof of Theorem \ref{bilinear decomposition for commutators}]
The proof follows the same lines except that now, one deals with  equalities instead of inequalities. Namely, as $T$ is a linear operator in $\mathcal K_L\subset \mathcal K$,  Theorem 3.2 of \cite{Ky2} yields  that there exists 
a bounded bilinear operator $\mathcal W: H^1(\mathbb R^d)\times BMO(\mathbb R^d)\to L^1(\mathbb R^d)$ such that for every $(g,b)\in H^1(\mathbb R^d)\times BMO(\mathbb R^d)$,
$$[b,T](g)= \mathcal W(g,b)+ T(-\Pi(g,b))$$

Therefore, for every $(f,b)\in H^1_L(\mathbb R^d)\times BMO(\mathbb R^d)$,
$$[b,T](f)= \mathfrak R(f,b)+ T(\mathfrak S(f,b)),$$
where $\mathfrak R(f,b):= \mathcal U(f,b)+ \mathcal W(\mathfrak H(f),b)$ is a bounded bilinear operator from $H^1_L(\mathbb R^d)\times BMO(\mathbb R^d)$ into $L^1(\mathbb R^d)$. This completes the proof.

\end{proof}

\subsection{Proof of Theorem \ref{Hardy estimates for CZO} and Theorem \ref{Hardy estimates for Riesz transforms}}

 First, recall that $VMO_L(\mathbb R^d)$ is the closure of $C^\infty_c(\mathbb R^d)$ in $BMO_L(\mathbb R^d)$. Then, the following result due to  Ky \cite{Ky3}.

\begin{Theorem}\label{Ky3}
The space $H^1_L(\mathbb R^d)$ is the dual of  the space $VMO_L(\mathbb R^d)$.
\end{Theorem}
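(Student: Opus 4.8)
The plan is to realize $H^1_L(\mathbb R^d)$ as a predual of $VMO_L(\mathbb R^d)$ by the standard scheme that was used for the classical Hardy space $H^1(\mathbb R^d)$ (the Coifman--Weiss duality argument, and its refinement in the work of Bourdaud and Dafni on $VMO$). Since the dual $(H^1_L)^* = BMO_L$ is already known from \cite{DGMTZ}, the content of the statement is the \emph{weak-$*$} identification: every continuous linear functional on $VMO_L(\mathbb R^d)$ is represented by integration against an element of $H^1_L(\mathbb R^d)$, and conversely every $f \in H^1_L(\mathbb R^d)$ induces such a functional with comparable norm. The second direction is immediate: for $f\in H^1_L$ and $g\in VMO_L \subset BMO_L$ the pairing $\langle f,g\rangle$ is well defined and bounded by $\|f\|_{H^1_L}\|g\|_{BMO_L}$ because of the $(H^1_L,BMO_L)$ duality, so $f$ defines an element of $(VMO_L)^*$ of norm $\lesssim \|f\|_{H^1_L}$; moreover by the Hahn--Banach theorem and the known duality one recovers $\|f\|_{H^1_L}\lesssim \sup\{|\langle f,g\rangle| : g\in VMO_L,\ \|g\|_{BMO_L}\le 1\}$ provided one knows that the unit ball of $BMO_L$ is weak-$*$ dense in itself when tested against $H^1_L$, equivalently that $C^\infty_c$ is weak-$*$ dense in $BMO_L$ in the appropriate sense.

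The real work is the first direction. First I would fix a functional $\ell \in (VMO_L(\mathbb R^d))^*$. Using that finite linear combinations of $(H^1_L,q)$-atoms (say $q=2$) lie in $L^2$ with compact support, hence in $VMO_L$, and that by Theorem \ref{DZ, Theorem 1.5} such combinations are dense in $H^1_L$, one would like to say $\ell$ ``is'' an element $f$ of $H^1_L$. Concretely, restrict $\ell$ to $L^2$-functions supported in a fixed ball $B$ with the $L^2$ norm: there it is bounded (an $L^2(B)$ function, suitably normalized, is a multiple of a $(H^1_L,2)$-atom, so $\ell$ is controlled by its operator norm on $VMO_L$), so by Riesz representation there is $f_B\in L^2(B)$ with $\ell(h)=\int f_B h$ for all such $h$. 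These $f_B$ patch together consistently to a single locally $L^2$ function $f$. Then I would show $f\in H^1_L$ with $\|f\|_{H^1_L}\lesssim \|\ell\|$: test $f$ against $g\in BMO_L = (H^1_L)^*$... but the cleaner route is to show directly that $f$ has an atomic decomposition controlled by $\|\ell\|$, using a Calder\'on--Zygmund / grand-maximal-function stopping-time argument adapted to $\rho$, exactly as in the classical case; the localization feature (atoms with $r\le \mathcal C_L\rho(x_0)$ need no cancellation) is what makes $VMO_L$ rather than $VMO$ the correct test space.

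The key lemma underpinning everything is that $C^\infty_c(\mathbb R^d)$ — or at least finite combinations of $(H^1_L,2)$-atoms — is dense in $VMO_L(\mathbb R^d)$ in its $BMO_L$ norm, which is true essentially by definition of $VMO_L$ as the $BMO_L$-closure of $C^\infty_c$; this is what lets me pass between the functional $\ell$ and its values on nice functions without losing control. I expect the main obstacle to be the ``patching and summability'' step: showing that the locally defined $f$ actually lies in $H^1_L$ with norm $\lesssim \|\ell\|$, i.e. producing an atomic (or maximal-function) bound from the mere boundedness of $\ell$ on $VMO_L$. The standard trick is to run the classical argument (e.g. Coifman--Weiss or the Jones--Journ\'e type decomposition), using the auxiliary function $\rho$ to distinguish ``small'' balls, and to invoke Theorem \ref{generalized Hardy spaces} / Theorem \ref{DZ, Theorem 1.5} so that control over the action of $\ell$ on generalized atoms translates into membership in $H^1_L$. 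One must also check the weak-$*$ topology statement carefully: that $VMO_L$ is a closed separable subspace of $BMO_L$ whose dual is exactly $H^1_L$ under the natural pairing, so that no ``extra'' functionals appear; this is handled by the Hahn--Banach separation together with the known $(H^1_L,BMO_L)$ duality, ruling out functionals not coming from $H^1_L$.
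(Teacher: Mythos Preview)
The paper does not actually prove this theorem: it is stated as a result from another preprint of the same author (reference \cite{Ky3}) and simply quoted for use in Section~5. So there is no ``paper's own proof'' to compare your attempt against.

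That said, your proposal contains a genuine confusion in the hard direction. You write that, given $\ell\in(VMO_L)^*$, you would ``restrict $\ell$ to $L^2$-functions supported in a fixed ball $B$'' and justify boundedness because ``an $L^2(B)$ function, suitably normalized, is a multiple of a $(H^1_L,2)$-atom.'' But atoms lie in $H^1_L$, not in $VMO_L$; the functional $\ell$ is defined on $VMO_L$, and a general $L^2(B)$ function is \emph{not} in $BMO_L$ (let alone in $VMO_L$), so $\ell$ cannot be evaluated on it. What you have written is the Coifman--Weiss argument for $(H^1)^*=BMO$ with the roles of the two spaces swapped, and it does not survive the swap: the local Riesz-representation step produces the $BMO$ function in that proof, not the $H^1$ function.

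A correct route for $(VMO_L)^*=H^1_L$ is the one you gesture at elsewhere but do not pin down: restrict $\ell$ to $C^\infty_c\subset VMO_L$ to get a distribution $f$, then show $f\in H^1_L$ with $\|f\|_{H^1_L}\lesssim\|\ell\|$. The usual mechanism is a weak-$*$ compactness argument: mollify or heat-regularize $\ell$ to obtain $f_t\in H^1_L$ with uniformly bounded $H^1_L$ norm (this uses the known duality $(H^1_L)^*=BMO_L$ and the fact that the regularizations act nicely on $VMO_L$), then pass to a weak-$*$ limit using separability of $VMO_L$ and Banach--Alaoglu. The step you flagged as the ``main obstacle'' --- producing the $H^1_L$ bound --- is indeed the crux, but it is not handled by the local $L^2$ patching you describe; that patching would at best give $f\in L^1_{\rm loc}$, with no control on $\mathcal M_L f$.
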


In order to prove Theorem \ref{Hardy estimates for CZO}, we need the following key lemmas, which proofs will be given in Section \ref{Proof of the key lemmas}.

\begin{Lemma}\label{log-generalized BHS}
Let $1\leq q<\infty$ and $\theta\geq 0$. Then, for every $f\in BMO^{\rm log}_{L,\theta}(\mathbb R^d)$, $B= B(x,r)$ and $k\in \mathbb Z^+$, we have
$$\Big(\frac{1}{|2^k B|}\int_{2^k B} |f(y)- f_{B}|^q dy\Big)^{1/q}\leq C k \frac{\Big(1+ \frac{2^k r}{\rho(x)}\Big)^{(k_0+1)\theta}}{\log\Big(e +(\frac{\rho(x)}{2^k r})^{k_0+1}\Big)}\|f\|_{BMO^{\rm log}_{L,\theta}},$$
where the constant $k_0$ is as in Proposition \ref{Shen, Lemma 1.4}.
\end{Lemma}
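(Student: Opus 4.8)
\textbf{Proof proposal for Lemma \ref{log-generalized BHS}.}

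The plan is to reduce the estimate over the dilated ball $2^kB$ to a telescoping sum over the dyadic annuli $2^{j+1}B \setminus 2^jB$, $j=0,\dots,k-1$, together with a control of the discrepancy between the averages $f_{2^jB}$ and $f_B$. First I would write, using the triangle inequality in $L^q$,
$$\Big(\frac{1}{|2^kB|}\int_{2^kB}|f(y)-f_B|^q dy\Big)^{1/q}\leq \Big(\frac{1}{|2^kB|}\int_{2^kB}|f(y)-f_{2^kB}|^q dy\Big)^{1/q}+|f_{2^kB}-f_B|.$$
For the first term I would apply the local John--Nirenberg inequality on the single ball $2^kB$: since $f\in BMO^{\rm log}_{L,\theta}\subset BMO_{L,\theta}$, one has $MO(f,2^kB)\leq \big(1+\tfrac{2^kr}{\rho(x)}\big)^\theta\|f\|_{BMO_{L,\theta}}$, and John--Nirenberg upgrades the $L^1$ oscillation to the $L^q$ oscillation at the cost of an absolute constant (this is exactly Lemma \ref{fundamental estimates for BMO} applied to the ball $2^kB$ with $k=0$, or a direct citation of John--Nirenberg). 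The key observation is then that the sharper $BMO^{\rm log}_{L,\theta}$ norm gives the extra logarithmic gain: directly from the definition,
$$MO(f,2^kB)\leq \frac{\big(1+\tfrac{2^kr}{\rho(x)}\big)^\theta}{\log\big(e+\tfrac{\rho(x)}{2^kr}\big)}\|f\|_{BMO^{\rm log}_{L,\theta}},$$
and one compares $\log\big(e+\tfrac{\rho(x)}{2^kr}\big)$ with $\log\big(e+(\tfrac{\rho(x)}{2^kr})^{k_0+1}\big)$, which differ only by the multiplicative factor $k_0+1$.

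Next I would handle $|f_{2^kB}-f_B|$ by the standard telescoping argument:
$$|f_{2^kB}-f_B|\leq \sum_{j=0}^{k-1}|f_{2^{j+1}B}-f_{2^jB}|\leq \sum_{j=0}^{k-1}\frac{1}{|2^jB|}\int_{2^{j+1}B}|f(y)-f_{2^{j+1}B}|dy\leq \sum_{j=0}^{k-1}2^d\, MO(f,2^{j+1}B).$$
Each term is bounded, again from the $BMO^{\rm log}_{L,\theta}$ definition applied to the ball $2^{j+1}B=B(x,2^{j+1}r)$, by
$$\frac{\big(1+\tfrac{2^{j+1}r}{\rho(x)}\big)^\theta}{\log\big(e+\tfrac{\rho(x)}{2^{j+1}r}\big)}\|f\|_{BMO^{\rm log}_{L,\theta}}\leq \frac{\big(1+\tfrac{2^kr}{\rho(x)}\big)^\theta}{\log\big(e+\tfrac{\rho(x)}{2^kr}\big)}\|f\|_{BMO^{\rm log}_{L,\theta}},$$
where in the last step I use that $j\leq k-1$ so the numerator only increases and the denominator only decreases when $j$ is replaced by $k$ (here it is convenient that $\rho(x)$ is fixed, since all the balls are centered at the same point $x$). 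Summing over the $k$ values of $j$ produces the factor $k$ in front. Combining this with the first term gives the bound with $\big(1+\tfrac{2^kr}{\rho(x)}\big)^\theta$ in the numerator, which is weaker than (hence implied by) the claimed $\big(1+\tfrac{2^kr}{\rho(x)}\big)^{(k_0+1)\theta}$; the extra power $k_0+1$ is harmless slack, presumably inserted to match the exponents appearing when Proposition \ref{Shen, Lemma 1.4} is later invoked to move $\rho$ from one center to another.

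The only real subtlety — and the step I expect to be the main obstacle — is justifying the passage from $\log\big(e+\tfrac{\rho(x)}{2^kr}\big)$ to $\log\big(e+(\tfrac{\rho(x)}{2^kr})^{k_0+1}\big)$ cleanly in the regime where $\rho(x)/(2^kr)$ is small (i.e. $r$ comparable to or larger than $\rho(x)$): there both logarithms are bounded below by $1$ up to constants, so the ratio is harmless, but one must check the inequality is uniform and does not degrade. For $\rho(x)/(2^kr)\geq 1$ one has the elementary bound $\log\big(e+t^{k_0+1}\big)\leq (k_0+1)\log(e+t)$ for $t\geq 1$, which handles that regime directly. I would therefore split into the two cases $2^kr\leq \rho(x)$ and $2^kr>\rho(x)$, dispatch the second by the trivial lower bound $\log(e+\cdot)\geq 1$, and in the first case combine the John--Nirenberg estimate for the oscillation over $2^kB$ with the telescoping sum, using the monotonicity of $t\mapsto \big(1+t\big)^\theta/\log\big(e+1/t\big)$ for $t$ in the relevant range to absorb all annular contributions into the single factor evaluated at $t=2^kr/\rho(x)$. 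This yields the stated inequality with constant $C$ depending only on $d$, $\theta$ and $k_0$.
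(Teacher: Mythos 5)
Your telescoping step for $|f_{2^kB}-f_B|$ is fine and matches what the paper does, but the first half of your argument has a genuine gap: you cannot ``apply John--Nirenberg on the single ball $2^kB$'' using only the quantity $MO(f,2^kB)$. The John--Nirenberg inequality upgrades $L^1$ oscillation to $L^q$ oscillation only when you have a \emph{uniform} bound on $MO(f,B')$ over all sub-balls $B'$ of $2^kB$; Lemma \ref{fundamental estimates for BMO}, which you cite, is stated in terms of the full norm $\|f\|_{BMO}$, and for $f\in BMO^{\rm log}_{L,\theta}$ with $\theta>0$ that norm may be infinite (see Remark \ref{remark on new BMO}: $|x_j|^2\in BMO_{L,\infty}$ but $\notin BMO$). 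If instead you bound $MO(f,B(y,s))$ for a sub-ball $B(y,s)\subset 2^kB$ directly from the definition of $BMO^{\rm log}_{L,\theta}$, the bound involves $\rho(y)$ and $s$, not $\rho(x)$ and $2^kr$, and converting $\rho(y)$ into $\rho(x)$ via Proposition \ref{Shen, Lemma 1.4} is exactly what produces the exponent $(k_0+1)\theta$ in the numerator and the power $k_0+1$ inside the logarithm. So the $(k_0+1)$'s are not ``harmless slack, presumably inserted to match exponents later'' --- they are the unavoidable cost of making the sub-ball oscillation estimates uniform, and this is the main content of the lemma, which your proposal skips.

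The paper's proof fills precisely this hole by a localization trick: it sets $\widetilde f=f-f_{2B_0}$, multiplies by a Lipschitz cutoff $h$ equal to $1$ on $B_0$ and supported in $2B_0$, and proves that $h\widetilde f$ belongs to the \emph{classical} $BMO(\mathbb R^d)$ with
$$\|h\widetilde f\|_{BMO}\leq C\,\frac{\bigl(1+\tfrac{r_0}{\rho(x_0)}\bigr)^{(k_0+1)\theta}}{\log\Bigl(e+\bigl(\tfrac{\rho(x_0)}{r_0}\bigr)^{k_0+1}\Bigr)}\|f\|_{BMO^{\rm log}_{L,\theta}},$$
which requires testing against every ball $B(x,r)$ in $\mathbb R^d$, splitting into the cases $r>r_0$ and $r\leq r_0$, and using Proposition \ref{Shen, Lemma 1.4} together with the Lipschitz bound $|h(x)-h(y)|\leq |x-y|/r_0$ and a secondary telescoping estimate of $|f_{B(x,r)}-f_{B(x,2^3r_0)}|$. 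Only then is the classical John--Nirenberg inequality applied, to the globally-$BMO$ function $h\widetilde f$. To repair your proposal you would need to carry out an argument of this type (or an equivalent uniform sub-ball estimate); as written, the reduction of the $L^q$ oscillation over $2^kB$ to a single mean oscillation is unjustified.
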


\begin{Lemma}\label{technical lemma for Hardy estimates for CZO}
Let $1<q<\infty$, $\varepsilon>0$ and $T$ be a $L$-Calder\'on-Zygmund operator. Then, the following two statements hold:

i) If $T^*1=0$, then $T$ is bounded from $H^1_L(\mathbb R^d)$ into $H^1(\mathbb R^d)$.

ii) For every $f,g\in BMO(\mathbb R^d)$, generalized $(H^1_L,q,\varepsilon)$-atom $a$ related to the ball $B$,
$$\|(f- f_B)(g- g_B)Ta\|_{L^1}\leq C \|f\|_{BMO}\|g\|_{BMO}.$$
\end{Lemma}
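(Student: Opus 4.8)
The plan is to treat the two statements separately, both reducing to atomic estimates via Proposition \ref{boundedness through generalized atoms} (for (i)) and the definition of $\mathcal K_L$-type bounds (for (ii)). For part (i), since $T$ is a $(\delta,L)$-Calder\'on-Zygmund operator, by Lemma \ref{molecule for CZO} and the $L^2$-boundedness of $T$, applied to a generalized $(H^1_L,q,\varepsilon)$-atom $a$ supported in $B=B(x_0,r)$, the function $Ta$ satisfies size decay $\|Ta\|_{L^q(2^{k+1}B\setminus 2^kB)}\leq C2^{-k\delta_0}|2^kB|^{1/q-1}$. Thus $Ta$ is (a fixed multiple of) a classical $(H^1,q)$-molecule \emph{provided} it has vanishing integral; this is exactly where $T^*1=0$ enters — it forces $\int_{\mathbb R^d}Ta(x)\,dx=0$. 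Then the classical molecular theory for $H^1(\mathbb R^d)$ gives $\|Ta\|_{H^1}\leq C$ uniformly over atoms, and Proposition \ref{boundedness through generalized atoms} (with $\mathcal X=H^1(\mathbb R^d)$, which is Banach) upgrades this to boundedness $T:H^1_L(\mathbb R^d)\to H^1(\mathbb R^d)$. I should double-check that I may invoke Proposition \ref{boundedness through generalized atoms} on the finite atomic space and that the resulting extension agrees with $T$ on a dense class; since $H^1(\mathbb R^d)\hookrightarrow L^1(\mathbb R^d)$ and $T$ is already known bounded $H^1_L\to L^1$ by Proposition \ref{Schrodinger-Calderon-Zygmund operators}, the two extensions coincide.

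For part (ii), fix $f,g\in BMO(\mathbb R^d)$ and a generalized $(H^1_L,q,\varepsilon)$-atom $a$ related to $B=B(x_0,r)$. I would split
\[
\|(f-f_B)(g-g_B)Ta\|_{L^1}=\|(f-f_B)(g-g_B)Ta\|_{L^1(2B)}+\sum_{k\geq 1}\|(f-f_B)(g-g_B)Ta\|_{L^1(2^{k+1}B\setminus 2^kB)}.
\]
On $2B$, apply H\"older with exponents $(2q',2q',q)$ where $1/q+1/q'=1$: the two $BMO$ factors are controlled by $\|f\|_{BMO}\|g\|_{BMO}|2B|^{1/q'}$ times absolute constants (John--Nirenberg), and $\|Ta\|_{L^2(2B)}\leq \|T\|_{L^2\to L^2}|2B|^{1/2-1/q}\cdot$ (adjusting exponents; really use $L^q$-boundedness of $T$ when $q<\infty$, or simply $L^2$ after H\"older in the atom bound) to get a uniform constant. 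On the annuli, use Lemma \ref{molecule for CZO} for $\|Ta\|_{L^q(2^{k+1}B\setminus 2^kB)}$ and Lemma \ref{fundamental estimates for BMO} twice to bound $\|f-f_B\|_{L^{2q'}(2^{k+1}B)}$ and $\|g-g_B\|_{L^{2q'}(2^{k+1}B)}$ each by $C(k+1)\|f\|_{BMO}|2^{k+1}B|^{1/(2q')}$ (resp.\ for $g$). Multiplying, the $k$-dependence is $(k+1)^2 2^{-k\delta_0}$, which sums; the volume factors cancel exactly as in the proof of Proposition \ref{Schrodinger-Calderon-Zygmund operators}.

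The main obstacle is part (i): verifying that $Ta$ genuinely qualifies as a classical $H^1$-molecule with the right normalization and that the molecule-to-atom passage is uniform — in particular that the cancellation hypothesis $T^*1=0$ (stated for generalized $(H^1_L,q,\varepsilon)$-atoms for \emph{some} $q,\varepsilon$) matches the $q,\varepsilon$ for which I am running the argument. If the exponents differ I would either re-run Lemma \ref{molecule for CZO} with $\delta_0=\min\{\varepsilon,\delta\}$ for the given $\varepsilon$, or observe that a generalized $(H^1_L,q,\varepsilon)$-atom is, up to a constant, a generalized $(H^1_L,q,\varepsilon')$-atom when $\varepsilon'\leq\varepsilon$ and balls have $r\leq\mathcal C_L\rho(x_0)$-type control — though in general $r$ is unrestricted for generalized atoms, so some care is needed, and I would instead fix once and for all the $q,\varepsilon$ from the $T^*1=0$ hypothesis and prove everything for that pair. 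Part (ii) is routine once the two-weight H\"older splitting is set up correctly.
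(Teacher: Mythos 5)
Your proposal follows essentially the same route as the paper: for (i) the paper likewise combines Lemma \ref{molecule for CZO} with the cancellation from $T^*1=0$ to see that $Ta$ is $C$ times a classical $(H^1,2,\delta)$-molecule, hence $\|Ta\|_{H^1}\leq C$, and then invokes Proposition \ref{boundedness through generalized atoms}; for (ii) the paper uses exactly your splitting into $2B$ and the annuli $2^{k+1}B\setminus 2^kB$ with H\"older exponents $(2q',2q',q)$, Lemma \ref{fundamental estimates for BMO} for both $BMO$ factors, and the summable factor $(k+1)^2 2^{-k\delta}$. Your extra care about matching the $(q,\varepsilon)$ of the $T^*1=0$ hypothesis with the atoms used is a legitimate refinement of a point the paper passes over silently, but it does not change the argument.
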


\begin{proof}[Proof of Theorem \ref{Hardy estimates for CZO}]
$(i).$ Assume that $T$ is a $(\delta,L)$-Calder\'on-Zygmund operator. We claim that, as, by  Lemma \ref{technical lemma for Hardy estimates for CZO}, it is sufficient to prove that
\begin{equation}\label{Hardy estimates for CZO 1}
\|(b- b_B)a\|_{H^1_L}\leq C \|b\|_{BMO_L^{\log}}
\end{equation}
and 
\begin{equation}\label{Hardy estimates for CZO 2}
\|(b- b_B)Ta\|_{H^1_L}\leq C \|b\|_{BMO_L^{\log}}
\end{equation}
hold for every generalized $(H^1_L,2,\delta)$-atom $a$ related to the ball $B= B(x_0,r)$ with the constants are independent of $b,a$. Indeed, if (\ref{Hardy estimates for CZO 1}) and (\ref{Hardy estimates for CZO 2}) are true, then 
\begin{eqnarray*}
\|[b,T](a)\|_{H^1_L} &\leq& \|(b- b_B)Ta\|_{H^1_L}+ C\|T((b- b_B)a)\|_{H^1}\\
&\leq& C \|b\|_{BMO_L^{\log}} + C \|T\|_{H^1_L\to H^1}\|(b- b_B)a\|_{H^1_L}\\
&\leq& C \|b\|_{BMO_L^{\log}}.
\end{eqnarray*}
Therefore, Proposition \ref{boundedness through generalized atoms} yields that $[b,T]$ is bounded on $H^1_L(\mathbb R^d)$, moreover, 
$$\|[b,T]\|_{H^1_L\to H^1_L}\leq C, $$
where the constant $C$ is independent of $b$.

The proof of (\ref{Hardy estimates for CZO 1}) is similar to the one of (\ref{Hardy estimates for CZO 2}) but uses an easier argument, we leave the details to the interested reader. Let us now establish (\ref{Hardy estimates for CZO 2}). By Theorem \ref{Ky3}, it is sufficient to show that 
\begin{equation}\label{Hardy estimates for CZO 3}
\| \phi(b- b_B)Ta \|_{L^1}\leq C  \|b\|_{BMO_L^{\log}} \|\phi\|_{BMO_L}
\end{equation}
for all $\phi\in C^\infty_c(\mathbb R^d)$. Besides, from Lemma \ref{technical lemma for Hardy estimates for CZO}, 
$$\| (\phi- \phi_B)(b- b_B)Ta \|_{L^1}\leq C \|b\|_{BMO}\|\phi\|_{BMO}\leq C \|b\|_{BMO_L^{\log}}\|\phi\|_{BMO_L}.$$
 This together with Lemma 2 of \cite{DGMTZ} allow us to reduce (\ref{Hardy estimates for CZO 3}) to showing that
\begin{equation}\label{Riesz-molecule 0}
\log\Big(e+ \frac{\rho(x_0)}{r}\Big)\|(b- b_B)Ta\|_{L^1}\leq C \|b\|_{BMO_{L}^{\rm log}}.
\end{equation}

Setting $\varepsilon=\delta/2$, it is easy to check that there exists a constant $C=C(\varepsilon)>0$ such that 
$$\log(e+ kt)\leq C k^\varepsilon\log(e+ t)$$
for all $k\geq 2, t>0$. Consequently, for all $k\geq 1$,
\begin{equation}\label{Riesz-molecule 1}
\log\Big(e+ \frac{\rho(x_0)}{r}\Big)\leq C 2^{k\varepsilon}\log\left(e+ \Big(\frac{\rho(x_0)}{2^{k+1}r}\Big)^{k_0+1}\right).
\end{equation}

Then, by Lemma \ref{molecule for CZO} and Lemma \ref{log-generalized BHS}, we get
\begin{eqnarray*}
&&\log\Big(e+ \frac{\rho(x_0)}{r}\Big)\|(b- b_B)Ta\|_{L^1} \\
&=& \log\Big(e+ \frac{\rho(x_0)}{r}\Big)\|(b- b_B)Ta\|_{L^1(2 B)} +\\
&&+ \sum_{k\geq 1} \log\Big(e+ \frac{\rho(x_0)}{r}\Big)\|(b- b_B)Ta\|_{L^1(2^{k+1} B\setminus 2^k B)}\\
&\leq& C \log\left(e+ \Big(\frac{\rho(x_0)}{2 r}\Big)^{k_0+1}\right)\|b- b_{ B}\|_{L^{2}(2 B)}\|Ta\|_{L^2} +\\
&& + C \sum_{k\geq 1} 2^{k\varepsilon}\log\left(e+ \Big(\frac{\rho(x_0)}{2^{k+1} r}\Big)^{k_0+1}\right)\|b - b_{B}\|_{L^{2}(2^{k+1} B)}  \|Ta\|_{L^2(2^{k+1}B\setminus 2^k B)}\\
&\leq& C |2 B|^{1/2}\|b\|_{BMO_{L}^{\rm log}} \|a\|_{L^2} + C \sum_{k\geq 1} 2^{k\varepsilon}(k+1)|2^{k+1} B|^{1/2}\|b\|_{BMO_{L}^{\rm log}}  2^{-k\delta}|2^k B|^{-1/2}\\
&\leq& C \|b\|_{BMO_{L}^{\rm log}},
\end{eqnarray*}
where we used $\delta= 2\varepsilon$. This ends the proof of $(i)$.

$(ii).$ By Remark \ref{remark for Schrodinger-CZO}, $(ii)$ can be seen as a consequence of Theorem \ref{Hardy estimates for Riesz transforms} that we are going to prove now.

\end{proof}

Next, let us recall the following  lemma due to Tang and Bi \cite{TB}.

\begin{Lemma}[see \cite{TB}, Lemma 3.1]\label{LP2, Lemma 2.6}
Let $V\in RH_{d/2}$. Then, there exists $c_0\in (0,1)$ such that for any positive number $N$ and $0<h<|x-y|/16$, we have
$$|K_j(x,y)|\leq \frac{C(N)}{\Big(1+ \frac{|x-y|}{\rho(y)}\Big)^N}\frac{1}{|x-y|^{d-1}}\Big(\int_{B(x,|x-y|)}\frac{V(z)}{|x-z|^{d-1}}dz + \frac{1}{|x-y|}\Big)$$
and
$$|K_j(x,y+h)- K_j(x,y)|\leq \frac{C(N)}{\Big(1+ \frac{|x-y|}{\rho(y)}\Big)^N}\frac{h^{c_0}}{|x-y|^{c_0+ d-1}}\Big(\int_{B(x,|x-y|)}\frac{V(z)}{|x-z|^{d-1}}dz + \frac{1}{|x-y|}\Big),$$
where $K_j(x,y)$, $j=1,...,d$, are the kernels of the Riesz transforms $R_j$.
\end{Lemma}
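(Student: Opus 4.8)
The plan is to derive both estimates by reducing the Riesz transform kernel to pointwise bounds on the heat kernel $T_t(x,y)$ of $e^{-tL}$ and its first derivative in the $x$-variable, and then integrating in $t$, splitting the integral at the natural scale $t\simeq|x-y|^2$. Since $R_j=\partial_{x_j}L^{-1/2}$ and $L^{-1/2}=\frac{1}{\sqrt\pi}\int_0^\infty e^{-tL}\,\frac{dt}{\sqrt t}$, after justifying the differentiation under the integral sign we have
$$K_j(x,y)=\frac{1}{\sqrt\pi}\int_0^\infty \partial_{x_j}T_t(x,y)\,\frac{dt}{\sqrt t}.$$
Thus everything comes down to estimating $\partial_{x_j}T_t(x,y)$, and for the second estimate $\partial_{x_j}\bigl(T_t(x,y+h)-T_t(x,y)\bigr)$, with Gaussian decay in space together with the $\rho$-decay factor $(1+\tfrac{\sqrt t}{\rho})^{-N}$. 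The region $t\lesssim|x-y|^2$ will produce the $\rho$-decay and the local weighted average of $V$, while the region $t\gtrsim|x-y|^2$ forces $\sqrt t\gtrsim|x-y|$ and hence again yields $\rho$-decay together with the $\frac{1}{|x-y|}$ term.

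For the heat kernel inputs I would use, for $V\in RH_{d/2}$: (i) the $\rho$-improved Gaussian bound $0\le T_t(x,y)\le C_N\bigl(1+\tfrac{\sqrt t}{\rho(x)}\bigr)^{-N}t^{-d/2}e^{-c|x-y|^2/t}$, where by Proposition \ref{Shen, Lemma 1.4} one may freely replace $\rho(x)$ by $\rho(y)$ at the cost of enlarging $N$; (ii) the Hölder continuity of $T_t$ in the space variables as in Lemma \ref{LP1, Lemma 2}; and (iii) the crucial gradient bound, which I would obtain from the Duhamel (perturbation) identity
$$\partial_{x_j}T_t(x,y)=\partial_{x_j}p_t(x-y)-\int_0^t\!\!\int_{\mathbb R^d}\partial_{x_j}p_{t-s}(x-z)\,V(z)\,T_s(z,y)\,dz\,ds,$$
with $p_t$ the Euclidean heat kernel, so that one never needs to differentiate $T_t$ itself, only the explicit Gaussian. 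Inserting this into the subordination formula, the first term integrates to the classical Riesz kernel, of size $\lesssim|x-y|^{-d}$, which is precisely the $\frac{1}{|x-y|^{d-1}}\cdot\frac{1}{|x-y|}$ contribution. For the second term, estimating $T_s(z,y)$ by (i)--(ii) and $\partial_{x_j}p_{t-s}(x-z)$ by the Gaussian gradient bound, and performing the $s$- and $t$-integrations (changing variables $r=t-s$ and using $\int_0^\infty r^{-1/2-d/2}e^{-c|x-z|^2/r}\,dr\simeq|x-z|^{1-d}$), one arrives at a bound by $\int_{\mathbb R^d}\frac{V(z)}{|x-z|^{d-1}|z-y|^{d-1}}\,dz$; splitting this integral according to whether $z$ is near $x$, near $y$, or far from both, the piece near $x$ gives exactly $|x-y|^{1-d}\int_{B(x,|x-y|)}V(z)|x-z|^{1-d}\,dz$, while the remaining pieces are absorbed either into the $\frac{1}{|x-y|}$ term or into the exponential tail responsible for the $(1+|x-y|/\rho(y))^{-N}$ decay (here one uses the elementary bounds on $\frac{1}{r^{d-2}}\int_{B(x,r)}V$ coming from the definition of $\rho$ and the doubling property (\ref{doubling measure})).

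The Hölder difference estimate follows by running the same scheme with $T_s(z,y)$ replaced by $T_s(z,y+h)-T_s(z,y)$ and $p_t(x-y)$ by $p_t(x-y-h)-p_t(x-y)$: the space-Hölder bound (ii) applied to the former gains a factor $(|h|/\sqrt s)^{\sigma_0}$, while the smoothness of the Gaussian applied to the latter gains $|h|/\sqrt t$. Keeping $0<h<|x-y|/16$ guarantees that the Gaussians centred at $y$ and at $y+h$ (equivalently, the geometries of $B(x,|x-y|)$ and $B(x,|x-y+h|)$) are comparable, so the identical $s$- and $t$-integrations reproduce the stated bound with the additional gain $h^{c_0}/|x-y|^{c_0}$ for a suitable $c_0\in(0,1)$ depending only on $\sigma_0$.

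The main obstacle is precisely the gradient heat kernel estimate (iii): producing simultaneously the $V$-weighted singular integral $\int_{B(x,|x-y|)}V(z)|x-z|^{1-d}\,dz$ and the $\rho$-decay factor $(1+|x-y|/\rho(y))^{-N}$. The weighted-average term requires a careful (but by now standard) manipulation of the Duhamel integral, whereas the $\rho$-factor rests on Shen's auxiliary-function machinery from \cite{Sh} --- the Fefferman--Phong inequality, the self-improvement $RH_{d/2}\Rightarrow RH_{d/2+\varepsilon}$, Agmon/subsolution-type exponential decay estimates, and the polynomial comparison of $\rho$ at different points in Proposition \ref{Shen, Lemma 1.4}. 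It is exactly because $V$ is only assumed to lie in $RH_{d/2}$ (rather than in $RH_d$, where $R_j$ is a genuine $L$-Calder\'on-Zygmund operator) that one cannot simply invoke Shen's $L^p$ theory and must carry out these kernel estimates by hand; for the complete argument we refer to \cite{TB} (see also \cite{Sh}).
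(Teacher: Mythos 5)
The paper offers no proof of this lemma: it is quoted from Tang--Bi \cite{TB}, Lemma 3.1, whose argument follows Shen's resolvent approach, writing $K_j(x,y)=\frac{1}{\pi}\int_0^\infty\lambda^{-1/2}\partial_{x_j}\Gamma(x,y,\lambda)\,d\lambda$ and invoking the bound of \cite{Sh} (Theorem 2.7) on the fundamental solution $\Gamma(\cdot,\cdot,\lambda)$ of $-\Delta+V+\lambda$, in which the decay factor $(1+\sqrt\lambda|x-y|)^{-N}(1+|x-y|/\rho(x))^{-N}$ is already present \emph{before} any differentiation or splitting. Your heat-kernel/Duhamel scheme has a structural gap at exactly this point: the additive decomposition $\partial_{x_j}T_t(x,y)=\partial_{x_j}p_t(x-y)-\int_0^t\int\partial_{x_j}p_{t-s}(x-z)V(z)T_s(z,y)\,dz\,ds$ followed by the triangle inequality can never produce the prefactor $(1+|x-y|/\rho(y))^{-N}$, because the free term integrates in $t$ to the classical Riesz kernel $c_d(x_j-y_j)|x-y|^{-d-1}$, which carries no decay in $|x-y|/\rho(y)$ at all. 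The asserted decay of $K_j$ for $|x-y|\gg\rho(y)$ comes entirely from cancellation between the two terms; one must therefore obtain the $\rho$-decay on the full kernel (Fefferman--Phong and subsolution iteration, as in \cite{Sh}) \emph{before} splitting off the free part, which is precisely what the representation through $\Gamma(x,y,\lambda)$ accomplishes and what your plan defers to ``Shen's machinery'' without explaining how it could attach to the free Gaussian term.

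There is a second, quantitative failure: your intermediate bound $\int_{\mathbb R^d}V(z)|x-z|^{1-d}|z-y|^{1-d}\,dz$ is generically $+\infty$. The hypothesis $V\in RH_{d/2}$ only gives $V\in L^{d/2}_{\mathrm{loc}}$; a potential behaving like $|z-y|^{-2+\epsilon}$ near $y$ is admissible, and then $V(z)|z-y|^{1-d}$ fails to be integrable near $z=y$, so the ``piece near $y$'' cannot be absorbed into anything. The error enters when you bound the subordination weight $t^{-1/2}=(r+s)^{-1/2}$ by $s^{-1/2}$ and attach it to $T_s(z,y)$: this sharpens the $y$-singularity from the harmless $|z-y|^{2-d}$ (whose integral over $B(y,R)$ against $V$ is controlled by Lemma 1.2 of \cite{Sh}) to the fatal $|z-y|^{1-d}$. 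The correct bookkeeping keeps $|z-y|^{2-d}$ at $y$ and $|x-z|^{1-d}$ at $x$ --- the shape of $\nabla_x\Gamma_0(x-z)V(z)\Gamma(z,y)$ in Shen's representation --- which is exactly why the statement features the weighted average $\int_{B(x,|x-y|)}V(z)|x-z|^{1-d}\,dz$ centred only at the differentiation variable $x$. Your treatment of the H\"older difference and the role of $0<h<|x-y|/16$ are fine in spirit but inherit both defects; since your argument ultimately defers to \cite{TB} anyway, the honest course is to cite it outright, as the paper does.
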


In order to prove Theorem \ref{Hardy estimates for Riesz transforms}, we need also the following two technical  lemmas, which proofs will be given in Section \ref{Proof of the key lemmas}.

\begin{Lemma}\label{Riesz-atom}
Let  $1<q\leq d/2$ and $c_0$ be as in Lemma \ref{LP2, Lemma 2.6}. Then, $R_j(a)$ is $C$ times a classical $(H^1,q,c_0)$-molecule (e.g. \cite{SY}) for all generalized $(H^1_L,q,c_0)$-atom $a$ related to the ball $B= B(x_0,r)$. Furthermore, for any $N>0$ and $k\geq 4$, we have 
\begin{equation}\label{generalized molecule}
\|R_j(a)\|_{L^q(2^{k+1}B\setminus 2^k B)}\leq \frac{C(N)}{\Big(1+ \frac{2^k r}{\rho(x_0)}\Big)^{N}} 2^{-k c_0}|2^k B|^{1/q-1},
\end{equation}
where $C(N)>0$ depends only on $N$.
\end{Lemma}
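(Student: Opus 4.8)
\textbf{Proof plan for Lemma \ref{Riesz-atom}.}

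The plan is to estimate $R_j(a)$ by splitting the integral defining $R_j(a)(x)=\int_{\mathbb R^d}K_j(x,y)a(y)\,dy$ into a ``main'' part near $B$ and annular pieces $2^{k+1}B\setminus 2^kB$, $k\geq 1$, and to verify the defining properties of a classical $(H^1,q,c_0)$-molecule along the way. First I would handle the local part: on $4B$ (say) the $L^q\to L^q$ boundedness of $R_j$ (valid since $1<q\le d/2<d$, using $V\in RH_{d/2}$ and the Riesz-transform $L^p$ bounds of Shen) together with the size estimate $\|a\|_{L^q}\le|B|^{1/q-1}$ gives $\|R_j(a)\|_{L^q(4B)}\le C|B|^{1/q-1}$, which is exactly the molecule bound on the central region. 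For the annuli with $k\geq 4$, I would use the cancellation of $a$ up to the controlled error $|\int a|\le (r/\rho(x_0))^{c_0}$: writing
$$R_j(a)(x)=\int_B\big(K_j(x,y)-K_j(x,x_0)\big)a(y)\,dy+K_j(x,x_0)\int_B a(y)\,dy,$$
the first term is bounded by the Hölder-continuity estimate of Lemma \ref{LP2, Lemma 2.6} (with the weight $(1+|x-x_0|/\rho(x_0))^{-N}$, using Proposition \ref{Shen, Lemma 1.4} to pass from $\rho(y)$ to $\rho(x_0)$) and the second by the size estimate of $K_j(x,x_0)$ times $(r/\rho(x_0))^{c_0}$. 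Both contributions carry the factor $r^{c_0}/|x-x_0|^{c_0+d-1}$ times the auxiliary integral $\int_{B(x,|x-x_0|)}V(z)|x-z|^{1-d}dz+|x-x_0|^{-1}$.

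The key remaining point — and the main obstacle — is to show that, after integrating this auxiliary factor in $x$ over the annulus $2^{k+1}B\setminus 2^kB$, one still recovers the decay $2^{-kc_0}|2^kB|^{1/q-1}$ uniformly. The term $|x-x_0|^{-1}$ is harmless: it contributes $r^{c_0}(2^kr)^{-c_0-d}$, and integrating (or taking the $L^q$ norm) over a set of measure $\approx|2^kB|$ gives precisely $2^{-kc_0}|2^kB|^{1/q-1}$. The genuinely delicate term is $\int_{B(x,|x-x_0|)}V(z)|x-z|^{1-d}dz$; here I would invoke the reverse Hölder property $V\in RH_{d/2}$, which by a standard computation (as in Shen \cite{Sh}, and used in \cite{TB}) yields $\int_{B(x,R)}V(z)|x-z|^{1-d}dz\le C R^{-1}\big(R/\rho(x)\big)^{\eta}$ for some $\eta>0$ on the relevant range, or more simply the bound $\le C\,\rho(x)^{-1}$ when $R\lesssim\rho(x)$ together with the doubling of $V(z)dz$ from \eqref{doubling measure}. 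Combining this with the weight $(1+ 2^kr/\rho(x_0))^{-N}$ — which one is free to make as large as needed by choosing $N$ large, and which absorbs any positive power of $2^kr/\rho(x_0)$ — gives the stated estimate with the factor $C(N)(1+2^kr/\rho(x_0))^{-N}$ out front. Finally, summing the annular bounds \eqref{generalized molecule} over $k\geq 1$ together with the central estimate shows $\sum_k (2^kr)^{d(1/q'-\text{something})}\|R_j(a)\|^{\ldots}$ converges appropriately, i.e. $R_j(a)$ satisfies the size and decay conditions defining a classical $(H^1,q,c_0)$-molecule in the sense of \cite{SY}, with molecular norm bounded by an absolute constant; the vanishing-moment type condition for such a molecule is automatic from $\int R_j(a)=0$ when it holds, or is built into the generalized-molecule notion of \cite{SY} otherwise. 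This completes the proof.
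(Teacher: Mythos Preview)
Your outline is correct and follows the same architecture as the paper's proof: split $R_j(a)(x)$ via $K_j(x,y)-K_j(x,x_0)$, invoke the two estimates of Lemma~\ref{LP2, Lemma 2.6} with a large weight exponent, and then control the auxiliary term $\int_{B(x,|x-x_0|)}V(z)|x-z|^{1-d}\,dz$ so that the surplus powers of $2^kr/\rho(x_0)$ are absorbed by the weight. Two points where you diverge from (or are vaguer than) the paper are worth noting.

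First, on the $V$-integral: rather than appealing to a pointwise bound of the form $\int_{B(x,R)}V|x-z|^{1-d}\le CR^{-1}(R/\rho(x))^{\eta}$, the paper computes the $L^q$ norm of this term over the annulus directly. It uses H\"older in the $z$-variable (exponents $d/2$ and $d/(d-2)$), the hypothesis $q\le d/2$ to embed $L^{2q/d}\hookrightarrow L^1$, then Fubini, then $V\in RH_{d/2}$, arriving at
\[
\Big\|\int_{B(x,|x-x_0|)}\frac{V(z)}{|x-z|^{d-1}}\,dz\Big\|_{L^q(2^{k+1}B\setminus 2^kB)}\le C\,2^kr\,|2^kB|^{1/q-1}\int_{B(x_0,2^{k+2}r)}V(z)\,dz,
\]
and then invokes Lemma~1 of \cite{GLP} (essentially $\int_{B(x_0,R)}V\le CR^{d-2}(1+R/\rho(x_0))^{N_0}$ with $N_0=\log_2 C_0+1$) to finish. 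This is more self-contained than the pointwise estimate you sketch and explains why the paper takes the weight exponent $N+4N_0$ at the outset. Your pointwise route is also valid, but you would need to convert $\rho(x)$ to $\rho(x_0)$ via Proposition~\ref{Shen, Lemma 1.4} one more time, and the constant $\eta$ is not small but equal to a fixed $N_0$ determined by the doubling constant in~\eqref{doubling measure}.

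Second, do not hedge on the vanishing moment: the paper simply notes that $R_j:H^1_L\to H^1$ is known (from \cite{DZ}), so $R_j(a)\in H^1(\mathbb R^d)$ and hence $\int R_j(a)=0$ outright. There is no ``generalized-molecule'' fallback needed.
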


\begin{Lemma}\label{technical lemma}
Let  $1<q\leq d/2$ and $\theta\geq 0$.  Then, for every $f\in BMO(\mathbb R^d)$, $g\in BMO_{L,\theta}(\mathbb R^d)$ and $(H^1_L, q)$-atom $a$ related to the ball $B= B(x_0,r)$, we have
$$\|(g- g_B)R_j(a)\|_{L^1} \leq C \|g\|_{BMO_{L,\theta}}$$
and
$$\|(f- f_B)(g- g_B)R_j(a)\|_{L^1} \leq C \|f\|_{BMO} \|g\|_{BMO_{L,\theta}}.$$
\end{Lemma}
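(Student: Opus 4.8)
\textbf{Proof plan for Lemma \ref{technical lemma}.}
The plan is to prove both estimates by the standard annular decomposition $\mathbb R^d = 2B \cup \bigcup_{k\geq 1}(2^{k+1}B\setminus 2^k B)$, using the $L^q$-boundedness of $R_j$ near the ball, and the molecular decay estimate \eqref{generalized molecule} of Lemma \ref{Riesz-atom} on the far annuli. First I would treat $\|(g-g_B)R_j(a)\|_{L^1}$. On $2B$: by H\"older's inequality with exponents $q$ and $q'=q/(q-1)$, bound it by $\|(g-g_B)\chi_{2B}\|_{L^{q'}}\,\|R_j(a)\|_{L^q(2B)}$; since $q\le d/2 <d$ we have $q'>d/(d-1)>1$, so Lemma \ref{fundamental estimates for BMO} (applied to $g$, which lies in $BMO_{L,\theta}\supset BMO$? — no: here one must be careful, $g$ is only in $BMO_{L,\theta}$, so instead one uses a John--Nirenberg-type estimate valid for $BMO_{L,\theta}$, i.e.\ Lemma \ref{BHS, Lemma 1}-type control) gives $\|(g-g_B)\chi_{2B}\|_{L^{q'}}\le C|2B|^{1/q'}\|g\|_{BMO_{L,\theta}}\big(1+r/\rho(x_0)\big)^{\theta}$, and because $a$ is an $(H^1_L,q)$-atom we have $r\le \mathcal C_L\rho(x_0)$, so that factor is $\le C$; combined with $\|R_j(a)\|_{L^q(2B)}\le C\|a\|_{L^q}\le C|2B|^{1/q-1}$ this annulus contributes $\le C\|g\|_{BMO_{L,\theta}}$.

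On the far annuli $2^{k+1}B\setminus 2^kB$, $k\ge 1$ (the cases $k=1,2,3$ being absorbed into the $2^4 B$ piece handled as above), H\"older's inequality gives
\[
\|(g-g_B)R_j(a)\|_{L^1(2^{k+1}B\setminus 2^kB)}\le \|g-g_B\|_{L^{q'}(2^{k+1}B)}\,\|R_j(a)\|_{L^q(2^{k+1}B\setminus 2^kB)}.
\]
For the first factor I would use the $BMO_{L,\theta}$ analogue of Lemma \ref{fundamental estimates for BMO}, which yields $\|g-g_B\|_{L^{q'}(2^{k+1}B)}\le C(k+1)\big(1+\tfrac{2^k r}{\rho(x_0)}\big)^{\theta'}|2^{k+1}B|^{1/q'}\|g\|_{BMO_{L,\theta}}$ for a suitable $\theta'$ depending on $\theta$ and $k_0$; for the second factor I use \eqref{generalized molecule} with $N$ chosen larger than $\theta'$, giving a gain $\big(1+\tfrac{2^k r}{\rho(x_0)}\big)^{-N}$ that beats the growth $\big(1+\tfrac{2^k r}{\rho(x_0)}\big)^{\theta'}$, together with the decay $2^{-kc_0}$. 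Multiplying and summing in $k$: the volume factors $|2^{k+1}B|^{1/q'}|2^kB|^{1/q-1}\approx 1$, the polynomial-in-$k$ factor is summable against $2^{-kc_0}$, and the $\rho$-factors cancel to something bounded, so $\sum_{k\ge 1}(\cdots)\le C\|g\|_{BMO_{L,\theta}}$. This proves the first inequality.

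For the second inequality $\|(f-f_B)(g-g_B)R_j(a)\|_{L^1}$, the argument is identical in structure but uses three H\"older exponents, e.g.\ $\tfrac1{q}+\tfrac1{2q'}+\tfrac1{2q'}=1$ (valid since $q\le d/2$ makes $q'$ large enough): $\|f-f_B\|_{L^{2q'}(2^{k+1}B)}\le C(k+1)|2^{k+1}B|^{1/2q'}\|f\|_{BMO}$ by Lemma \ref{fundamental estimates for BMO} (here $f\in BMO$ genuinely), $\|g-g_B\|_{L^{2q'}(2^{k+1}B)}\le C(k+1)\big(1+\tfrac{2^k r}{\rho(x_0)}\big)^{\theta'}|2^{k+1}B|^{1/2q'}\|g\|_{BMO_{L,\theta}}$, and $\|R_j(a)\|_{L^q(2^{k+1}B\setminus 2^kB)}$ bounded by \eqref{generalized molecule} with $N$ large; the product of $k$-polynomial factors is still summable against $2^{-kc_0}$, and the $\rho$-powers are controlled by choosing $N>\theta'$. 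The $k=0$ (i.e.\ $2B$, or $2^4B$) term is handled by the corresponding three-fold H\"older inequality plus the $L^q$-boundedness of $R_j$ and $r\le\mathcal C_L\rho(x_0)$. I expect the only delicate point to be the correct bookkeeping of the $\rho$-dependent growth factor $\theta'$ produced when one enlarges a ball $B$ of radius $r\le\mathcal C_L\rho(x_0)$ to $2^{k+1}B$ inside the $BMO_{L,\theta}$ mean-oscillation estimate — one must invoke Proposition \ref{Shen, Lemma 1.4} to compare $\rho$ at the center across the dilation and then choose $N$ in Lemma \ref{Riesz-atom} strictly larger than the resulting exponent; everything else is routine summation.
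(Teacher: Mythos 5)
Your proposal is correct and follows essentially the same route as the paper: the same annular decomposition (with the near piece taken as $2^4B$), H\"older with exponents $q,q'$ (resp.\ $q,2q',2q'$), Lemma \ref{BHS, Lemma 1} for the $BMO_{L,\theta}$ mean-oscillation growth together with $r\leq \mathcal C_L\rho(x_0)$, and the decay estimate (\ref{generalized molecule}) with $N$ chosen to absorb the $(1+2^kr/\rho(x_0))^{(k_0+1)\theta}$ factor (the paper takes $N=(k_0+1)\theta$ exactly, which suffices since the ratio of consecutive such factors is bounded). The only omitted bookkeeping detail is the explicit appeal to Remark \ref{atoms} to view the $(H^1_L,q)$-atom as $\mathcal C_L^{c_0}$ times a generalized $(H^1_L,q,c_0)$-atom before invoking Lemma \ref{Riesz-atom}, which is immediate.
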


\begin{proof}[Proof of Theorem \ref{Hardy estimates for Riesz transforms}]
Suppose that $b\in BMO_{L,\infty}^{\rm log}(\mathbb R^d)$, i.e. $b\in BMO_{L,\theta}^{\rm log}(\mathbb R^d)$ for some $\theta\geq 0$. By Proposition 3.2 of \cite{YZ2}, in order to prove that $[b,R_j]$ are bounded on $H^1_L(\mathbb R^d)$, it is sufficient to show that $\|[b,R_j](a)\|_{H^1_L}\leq C \|b\|_{BMO_{L,\theta}^{\rm log}}$ for all $(H^1_L,d/2)$-atom $a$. Similarly to the proof of Theorem \ref{Hardy estimates for CZO}, it remains to show
\begin{equation}\label{Hardy estimates for Riesz transforms 1}
\|(b-b_B)a\|_{H^1_L}\leq C \|b\|_{BMO_{L,\theta}^{\rm log}}
\end{equation}
and
\begin{equation}\label{Hardy estimates for Riesz transforms 2}
\|(b-b_B)R_j(a)\|_{H^1_L}\leq C \|b\|_{BMO_{L,\theta}^{\rm log}}
\end{equation}
hold for every  $(H^1_L,d/2)$-atom $a$ related to the ball $B= B(x_0,r)$, where the constants $C$ in (\ref{Hardy estimates for Riesz transforms 1}) and (\ref{Hardy estimates for Riesz transforms 2}) are independent of $b,a$.

As before, we leave the proof of (\ref{Hardy estimates for Riesz transforms 1}) to the interested reader.

 Let us now establish (\ref{Hardy estimates for Riesz transforms 2}). Similarly to the proof of Theorem \ref{Hardy estimates for CZO}, Lemma \ref{technical lemma} allows  to reduce (\ref{Hardy estimates for Riesz transforms 2}) to showing that
\begin{equation}\label{Hardy estimates for Riesz transforms 3}
\log\Big(e+ \frac{\rho(x_0)}{r}\Big)\|(b- b_B)R_j(a)\|_{L^1}\leq C \|b\|_{BMO_{L,\theta}^{\rm log}}.
\end{equation}

Setting $\varepsilon=c_0/2$, there is a constant $C=C(\varepsilon)>0$ such that for all $k\geq 1$,
\begin{equation}\label{Hardy estimates for Riesz transforms 4}
\log\Big(e+ \frac{\rho(x_0)}{r}\Big)\leq C 2^{k\varepsilon}\log\left(e+ \Big(\frac{\rho(x_0)}{2^{k+1}r}\Big)^{k_0+1}\right).
\end{equation}

Note that $r\leq \mathcal C_L \rho(x_0)$ since $a$ is a $(H^1_L,d/2)$-atom related to the ball $B(x_0,r)$. In (\ref{generalized molecule}) of Lemma \ref{Riesz-atom}, we choose $N= (k_0+1)\theta$. Then,  H\"older inequality, (\ref{Hardy estimates for Riesz transforms 4}) and  Lemma \ref{log-generalized BHS} allow to conclude that
\begin{eqnarray*}
&& \log\Big(e+ \frac{\rho(x_0)}{r}\Big)\|(b - b_B)R_j(a)\|_{L^1} \\
&=& \log\Big(e+ \frac{\rho(x_0)}{r}\Big)\|(b - b_B)R_j(a)\|_{L^1(2^4 B)}+ \\
&& + \sum_{k\geq 4} \log\Big(e+ \frac{\rho(x_0)}{r}\Big)\|(b - b_B)R_j(a)\|_{L^1(2^{k+1} B\setminus 2^k B)}\\
&\leq& C \log\left(e+ \Big(\frac{\rho(x_0)}{2^4 r}\Big)^{k_0+1}\right)\| b - b_{ B}\|_{L^{\frac{d}{d-2}}(2^4 B)}\|R_j(a)\|_{L^{d/2}} +\\
&& + C \sum_{k\geq 4} 2^{k\varepsilon}\log\left(e+ \Big(\frac{\rho(x_0)}{2^{k+1} r}\Big)^{k_0+1}\right)\|b - b_{B}\|_{L^{\frac{d}{d-2}}(2^{k+1} B)}  \|R_j(a)\|_{L^{d/2}(2^{k+1}B\setminus 2^k B)}\\
&\leq& C \|b\|_{BMO_{L,\theta}^{\rm log}} + C \|b\|_{BMO_{L,\theta}^{\rm log}} \sum_{k\geq 4} k 2^{-k\varepsilon}\\
&\leq& C \|b\|_{BMO_{L,\theta}^{\rm log}}
\end{eqnarray*}
where we used $c_0= 2\varepsilon$. This proves (\ref{Hardy estimates for Riesz transforms 3}), and thus $[b,R_j]$ are bounded on $H^1_L(\mathbb R^d)$.

Conversely, assume that $[b,R_j]$ are bounded on $H^1_L(\mathbb R^d)$. Then, although $b$ belongs to $BMO^{\log}_{L,\infty}(\mathbb R^d)$  from a duality argument and Theorem 2 of \cite{BHS2}, we would also like to give a direct proof for  completeness.

As $b\in BMO_{L,\infty}(\mathbb R^d)$ by assumption, there exist  $\theta\geq 0$ such that $b\in BMO_{L,\theta}(\mathbb R^d)$.

 For every $(H^1_L, d/2)$-atom $a$ related to some ball $B= B(x_0,r)$. By  Remark \ref{atoms} and Lemma \ref{technical lemma},  
\begin{align}\label{Hardy estimates for Riesz transforms 7}
\|R_j((b- b_B)a)\|_{L^1}&\leq \|(b-b_B)R_j(a)\|_{L^1}+ C\|[b, R_j](a)\|_{H^1_L}\nonumber\\
&\leq C \|b\|_{BMO_{L,\theta}}+ C \|[b, R_j]\|_{H^1_L\to H^1_L}\nonumber
\end{align}
hold for all $j=1,...,d$. In addition, noting that $r\leq \mathcal C_L \rho(x_0)$ since $a$ is a $(H^1_L, d/2)$-atom  related to some ball $B= B(x_0,r)$, H\"older inequality and Lemma 1 of \cite{BHS2} (see also Lemma \ref{BHS, Lemma 1} below) give
$$\|(b-b_B)a\|_{L^1}\leq \|b- b_B\|_{L^{\frac{d}{d-2}}(B)}\|a\|_{L^{d/2}(B)}\leq C \|b\|_{BMO_{L,\theta}}.$$

By the characterization of $H^1_L(\mathbb R^d)$ in terms of the Riesz transforms (see \cite{DZ}), the above  proves that $(b- b_B)a\in H^1_L(\mathbb R^d)$, moreover,
\begin{equation}\label{Hardy estimates for commutators 2}
\|(b- b_B)a\|_{H^1_L}\leq C \left(\|b\|_{BMO_{L,\theta}} + \sum_{j=1}^d\|[b,R_j]\|_{H^1_L\to H^1_L}\right)
\end{equation}
where the constant $C>0$ is  independent of $b,a$.

Now, we prove that $b\in BMO_{L,\theta}^{\rm log}(\mathbb R^d)$. More precisely, the following
\begin{equation}\label{Hardy estimates for commutators 1}
\frac{\log\Big(e+ \frac{\rho(x_0)}{r}\Big)}{\Big(1+ \frac{r}{\rho(x_0)}\Big)^\theta} MO(b, B(x_0,r))\leq C \left(\|b\|_{BMO_{L,\theta}} + \sum_{j=1}^d\|[b,R_j]\|_{H^1_L\to H^1_L}\right)
\end{equation}
holds for any ball $B(x_0,r)$ in $\mathbb R^d$. In fact, we only need to establish (\ref{Hardy estimates for commutators 1}) for $0<r<\rho(x_0)/2$ since $b\in BMO_{L,\theta}(\mathbb R^d)$. 

Indeed, in (\ref{Hardy estimates for commutators 2}) we choose $B=B(x_0,r)$ and $a=(2|B|)^{-1}(f- f_{B})\chi_{B}$, where $f=$ sign $(b- b_B)$. Then, it is easy to see that  $a$ is a $(H^1_L,d/2)$-atom related to the ball $B$. We next consider 
$$g_{x_0,r}(x)= \chi_{[0,r]}(|x-x_0|)\log\Big(\frac{\rho(x_0)}{r}\Big)+ \chi_{(r,\rho(x_0)]}(|x- x_0|)\log\Big(\frac{\rho(x_0)}{|x-x_0|}\Big).$$
Then, thanks to Lemma 2.5 of \cite{MSTZ}, one has $\|g_{x_0,r}\|_{BMO_L}\leq C$. Moreover, it is clear that $g_{x_0,r}(b-b_B)a \in L^1(\mathbb R^d)$. Consequently, (\ref{Hardy estimates for commutators 2}) together with the fact that $BMO_L(\mathbb R^d)$ is the dual of $H^1_L(\mathbb R^d)$ allows us to conclude that

\begin{eqnarray*}
\frac{\log\Big(e+ \frac{\rho(x_0)}{r}\Big)}{\Big(1+ \frac{r}{\rho(x_0)}\Big)^\theta} MO(b, B(x_0,r)) &\leq&  3 \log\Big(\frac{\rho(x_0)}{r}\Big) MO(b, B(x_0,r))\\
&=& 6 \Big|\int_{\mathbb R^d} g_{x_0,r}(x)(b(x)- b_B)a(x)dx\Big|\\
&\leq& 6\|g_{x_0,r}\|_{BMO_L}\|(b-b_B)a\|_{H^1_L}\\
&\leq& C \left(\|b\|_{BMO_{L,\theta}}+ \sum_{j=1}^d \|[b,R_j]\|_{H^1_L \to H^1_L}\right),
\end{eqnarray*}
where we used $r<\rho(x_0)/2$ and 
$$\int_{\mathbb R^d} (b(x)- b_B)a(x)dx= \frac{1}{2|B(x_0,r)|}\int_{B(x_0,r)}|b(x)- b_{B(x_0,r)}|dx.$$
This ends the proof.

\end{proof}

\section{Proof of the key lemmas}\label{Proof of the key lemmas}

First, let us recall some notations and results due to Dziuba\'nski and  Zienkiewicz in \cite{DZ}. These notations and results play an important role in our proofs.

Let $P(x)= (4\pi)^{-d/2} e^{-|x|^2/4}$ be the Gauss function. For $n\in\mathbb Z$,  the space $h^1_n(\mathbb R^d)$  denotes the space of all integrable functions $f$ such that
$$\mathcal  M_nf(x) =\sup_{0<t<2^{-n}} |P_{\sqrt t}*f(x)|=\sup_{0<t<2^{-n}}\Big|\int_{\mathbb R^d} p_t(x,y) f(y)dy\Big| \in L^1(\mathbb R^d),$$
where the kernel $p_t$ is given by  $p_t(x,y)= (4\pi t)^{-d/2} e^{-\frac{|x-y|^2}{4t}}$. We equipped this space with the norm $\|f\|_{h^1_n}:= \|\mathcal M_n f\|_{L^1}$.

For convenience of the reader, we list here some lemmas used in our proofs.

\begin{Lemma}[see \cite{DZ}, Lemma 2.3] \label{DZ, Lemma 2.3}
There exists a constant $C>0$ and a collection of balls $B_{n,k}= B(x_{n,k}, 2^{-n/2})$, $n\in\mathbb Z, k=1,2,...$, such that $x_{n,k}\in \mathcal B_n$, $\mathcal B_n\subset \bigcup_k B_{n,k}$, and 
$$card \, \{(n',k'): B(x_{n,k}, R 2^{-n/2})\cap B(x_{n',k'}, R 2^{-n/2})\ne \emptyset\}\leq R^C$$
 for all $n,k$ and $R\geq 2$.
\end{Lemma}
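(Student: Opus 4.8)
The plan is to carry out a Vitali-type selection at each dyadic scale and then control the overlap \emph{across} scales by means of the near-doubling/Hölder behaviour of the auxiliary function $\rho$ recorded in Proposition \ref{Shen, Lemma 1.4}, together with the fact that $\rho(x)\approx 2^{-n/2}$ on $\mathcal B_n$.

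First I would fix, for each $n\in\mathbb Z$ with $\mathcal B_n\ne\emptyset$, a maximal subset $\{x_{n,k}\}_{k\ge 1}\subset\mathcal B_n$ such that the balls $B(x_{n,k},2^{-n/2}/4)$ are pairwise disjoint; such a set exists by Zorn's lemma (or by a greedy exhaustion, since $\mathcal B_n$ is $\sigma$-bounded). Maximality forces $\mathcal B_n\subset\bigcup_k B(x_{n,k},2^{-n/2})$, which is the covering property required, and $x_{n,k}\in\mathcal B_n$ by construction. The separation $|x_{n,k}-x_{n,k'}|\ge 2^{-n/2}/4$ for $k\ne k'$ is what drives all the counting. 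As a warm-up (the \emph{single-scale} bound), if $B(x_{n,k},R2^{-n/2})\cap B(x_{n,k'},R2^{-n/2})\ne\emptyset$ then $x_{n,k'}\in B(x_{n,k},2R2^{-n/2})$, so the disjoint balls $B(x_{n,k'},2^{-n/2}/4)$ all sit inside $B(x_{n,k},3R2^{-n/2})$; comparing Lebesgue measures bounds the number of such $k'$ by $CR^{d}$.

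The main obstacle is the \textbf{cross-scale} count. Suppose $B(x_{n,k},R2^{-n/2})\cap B(x_{n',k'},R2^{-n'/2})\ne\emptyset$, so that $|x_{n,k}-x_{n',k'}|\le R(2^{-n/2}+2^{-n'/2})$. Since $x_{n,k}\in\mathcal B_n$ and $x_{n',k'}\in\mathcal B_{n'}$ we have $\rho(x_{n,k})\approx 2^{-n/2}$ and $\rho(x_{n',k'})\approx 2^{-n'/2}$. Feeding the distance bound into both inequalities of Proposition \ref{Shen, Lemma 1.4} (splitting according to whether $2^{-n'/2}\le 2^{-n/2}$ or not, and using $R\ge 2$) yields
$$c\,R^{-k_0}\le \frac{2^{-n'/2}}{2^{-n/2}}\le C\,R^{k_0},$$
hence $|n-n'|\le C(1+\log_2 R)$: only $O(\log R)$ scales $n'$ contribute. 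For each such admissible $n'$, the scale comparison above puts every admissible $x_{n',k'}$ in $B(x_{n,k},CR^{k_0+1}2^{-n/2})$, while the disjoint balls $B(x_{n',k'},2^{-n'/2}/4)$ have radius $\ge cR^{-k_0}2^{-n/2}$; a volume count therefore produces at most $CR^{d(1+k_0)}$ of them.

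Finally, multiplying the number of admissible scales by the per-scale bound gives
$$\mathrm{card}\,\{(n',k'): B(x_{n,k},R2^{-n/2})\cap B(x_{n',k'},R2^{-n'/2})\ne\emptyset\}\le C(1+\log R)\,R^{d(1+k_0)}\le R^{C}$$
for all $R\ge 2$, after enlarging $C$; this is the asserted estimate, uniformly in $n,k$. The only genuinely nontrivial input is Proposition \ref{Shen, Lemma 1.4}, used both to restrict the range of $n'$ and to compare the radii at levels $n$ and $n'$ inside the volume argument; everything else is routine packing.
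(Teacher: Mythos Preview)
The paper does not prove this lemma; it is quoted from \cite{DZ}, so there is no in-paper argument to compare against. Your construction is the standard one and is correct: maximal $2^{-n/2}/4$-separated nets in each $\mathcal B_n$ give the covering, and Proposition~\ref{Shen, Lemma 1.4} is precisely what forces $|n-n'|\lesssim \log R$ and makes the cross-scale volume count go through.

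Two minor remarks. First, your per-scale volume count actually produces an exponent of order $d(2k_0+1)$ rather than $d(1+k_0)$: the containing ball has radius $\sim R^{k_0+1}2^{-n/2}$ while the disjoint balls have radius $\sim R^{-k_0}2^{-n/2}$, so the ratio of volumes is $\sim R^{d(2k_0+1)}$. This is of course immaterial for the final $R^C$ bound. Second, note that the lemma as stated in the paper has \emph{both} balls of radius $R\,2^{-n/2}$, not $R\,2^{-n'/2}$ for the second one; your argument adapts immediately, since the intersection condition then gives directly $|x_{n,k}-x_{n',k'}|\le 2R\,2^{-n/2}$ and the rest of the scale comparison is unchanged.
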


\begin{Lemma}[see \cite{DZ}, Lemma 2.5]\label{DZ, Lemma 2.5}
There are nonnegative $C^\infty$-functions $\psi_{n,k}$, $n\in\mathbb Z, k=1,2,...$, supported in the balls $B(x_{n,k}, 2^{1-n/2})$ such that
$$\sum_{n,k}\psi_{n,k}=1\quad\mbox{and}\quad \|\nabla \psi_{n,k}\|_{L^\infty}\leq C 2^{n/2}.$$
\end{Lemma}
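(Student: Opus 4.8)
This is Lemma~2.5 of \cite{DZ}, so in the paper it is simply quoted; still, let me sketch the standard partition-of-unity construction one would carry out, since it is exactly the one the later proofs rely on.

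The plan is to start from a single fixed profile $\phi\in C_c^\infty(\mathbb R^d)$ with $0\le\phi\le 1$, $\phi\equiv 1$ on $B(0,1)$ and $\operatorname{supp}\phi\subset B(0,2)$, and to set $\phi_{n,k}(x):=\phi\big(2^{n/2}(x-x_{n,k})\big)$. By construction $\phi_{n,k}\equiv 1$ on $B(x_{n,k},2^{-n/2})$, $\operatorname{supp}\phi_{n,k}\subset B(x_{n,k},2^{1-n/2})$, and $\|\nabla\phi_{n,k}\|_{L^\infty}\le C\,2^{n/2}$. I would then put $\Phi:=\sum_{n,k}\phi_{n,k}$ and, finally, $\psi_{n,k}:=\phi_{n,k}/\Phi$. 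Since $\mathbb R^d=\bigcup_n\mathcal B_n$ and $\mathcal B_n\subset\bigcup_k B_{n,k}$, every point lies in some $B_{n,k}$, so $\Phi\ge 1$ everywhere; hence the $\psi_{n,k}$ are well defined, nonnegative, $C^\infty$ (locally $\Phi$ is a finite sum), supported in $B(x_{n,k},2^{1-n/2})$, and satisfy $\sum_{n,k}\psi_{n,k}\equiv 1$ automatically.

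What remains, and what I expect to be the only real work, is the gradient bound $\|\nabla\psi_{n,k}\|_{L^\infty}\le C\,2^{n/2}$. By the quotient rule this reduces to proving $\Phi\le C$ on $\mathbb R^d$ and $\|\nabla\Phi\|_{L^\infty(B(x_{n,k},2^{1-n/2}))}\le C\,2^{n/2}$, and both follow once I show that at each point only boundedly many $\phi_{n',k'}$ are nonzero and that all such indices satisfy $|n-n'|\le C_1$ for a constant $C_1=C_1(k_0,\kappa)$. For the scale comparison I would argue: if $\operatorname{supp}\phi_{n,k}\cap\operatorname{supp}\phi_{n',k'}\neq\emptyset$ then $|x_{n,k}-x_{n',k'}|\le 2^{1-n/2}+2^{1-n'/2}$; since $x_{n,k}\in\mathcal B_n$ and $x_{n',k'}\in\mathcal B_{n'}$ give $2^{-n/2}\approx\rho(x_{n,k})$ and $2^{-n'/2}\approx\rho(x_{n',k'})$, Proposition~\ref{Shen, Lemma 1.4} forces $\rho(x_{n,k})\approx\rho(x_{n',k'})$, i.e. $|n-n'|\le C_1$. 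For the finite overlap I would note that then $x_{n',k'}\in B(x_{n,k},R'2^{-n/2})$ with $R':=2^{2+C_1/2}$ a fixed constant, so Lemma~\ref{DZ, Lemma 2.3} applied with $R=R'$ caps the number of admissible $(n',k')$ by $(R')^C$. Summing the bounds $\phi_{n',k'}\le 1$ and $\|\nabla\phi_{n',k'}\|_{L^\infty}\le C\,2^{n'/2}\le C\,2^{C_1/2}\,2^{n/2}$ over these finitely many indices then gives $\Phi\le C$ and $|\nabla\Phi|\le C\,2^{n/2}$ on $B(x_{n,k},2^{1-n/2})$; plugging this into the quotient rule (and using $\Phi\ge 1$) finishes the estimate for $\psi_{n,k}$.

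The main obstacle is exactly this bookkeeping: converting the soft ``$\rho$ is slowly varying'' statement of Proposition~\ref{Shen, Lemma 1.4} into the hard bound $|n-n'|\le C_1$ on indices of overlapping doubled balls, so that the overlap count of Lemma~\ref{DZ, Lemma 2.3} can be invoked with one fixed dilation factor $R'$ independent of $(n,k)$. Everything else is the routine normalization of a locally finite cover into a smooth partition of unity.
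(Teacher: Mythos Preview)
Your sketch is correct and is precisely the standard construction; note only that the paper does not supply a proof at all --- Lemma~\ref{DZ, Lemma 2.5} is simply quoted from \cite{DZ}, so there is nothing to compare against. One minor simplification: the finite-overlap count can be read off Lemma~\ref{DZ, Lemma 2.3} directly with $R=2$ (since $\operatorname{supp}\phi_{n,k}\cap\operatorname{supp}\phi_{n',k'}\neq\emptyset$ already means $B(x_{n,k},2\cdot 2^{-n/2})\cap B(x_{n',k'},2\cdot 2^{-n'/2})\neq\emptyset$), so you do not need the detour through $R'=2^{2+C_1/2}$ for the bound $\Phi\le C$; the scale comparison $|n-n'|\le C_1$ via Proposition~\ref{Shen, Lemma 1.4} is, however, genuinely needed for the gradient estimate, exactly as you say.
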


\begin{Lemma}[see (4.7) in \cite{DZ}]\label{DZ}\label{DZ, 4.7}
For every $f\in H^1_L(\mathbb R^d)$, we have
$$\sum_{n,k} \|\psi_{n,k}f\|_{h^1_n}\leq C \|f\|_{H^1_L}.$$
\end{Lemma}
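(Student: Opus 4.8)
Since this statement is taken from \cite[(4.7)]{DZ}, the quickest route is to cite it; but here is how I would argue. The plan is to reduce to a single atom and then localize. By the atomic characterization of $H^1_L(\mathbb R^d)$ (Theorem \ref{DZ, Theorem 1.5}), the subadditivity $\|\psi_{n,k}(\textstyle\sum_j\lambda_j a_j)\|_{h^1_n}\leq\sum_j|\lambda_j|\,\|\psi_{n,k}a_j\|_{h^1_n}$, and monotone convergence, it suffices to find $C$, independent of the atom, so that $\sum_{n,k}\|\psi_{n,k}a\|_{h^1_n}\leq C$ for every $(H^1_L,q)$-atom $a$ supported in $B_0=B(x_0,r_0)$ with $r_0\leq\mathcal C_L\rho(x_0)$.

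The first step is geometric: only pairs $(n,k)$ with $B(x_{n,k},2^{1-n/2})\cap B_0\neq\emptyset$ contribute, and I claim there are only $O(1)$ of them, each with $2^{-n/2}\approx\rho(x_0)$ (so $r_0\lesssim 2^{-n/2}$). Writing $\rho_1=\rho(x_{n,k})\approx 2^{-n/2}$ and using $|x_{n,k}-x_0|\leq 2^{1-n/2}+r_0\lesssim\rho_1+\rho(x_0)$, Proposition \ref{Shen, Lemma 1.4} forces $\rho_1\lesssim\rho(x_0)$ (feed $|x_{n,k}-x_0|\lesssim\rho_1$ into the upper bound and solve for $\rho_1$); this makes $|x_{n,k}-x_0|\lesssim\rho(x_0)$, and then the lower bound in Proposition \ref{Shen, Lemma 1.4} gives $\rho_1\gtrsim\rho(x_0)$. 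Hence $|n-n_0|\leq C$ with $x_0\in\mathcal B_{n_0}$, and for each such $n$ the bounded-overlap property (Lemma \ref{DZ, Lemma 2.3}, applied with a fixed dilation) caps the number of admissible $k$ at $O(1)$.

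The second step is to show $\|\psi_{n,k}a\|_{h^1_n}\leq C$ for one admissible pair. Set $g=\psi_{n,k}a$, supported in $B_0$ with $\|g\|_{L^q}\leq\|a\|_{L^q}\leq|B_0|^{1/q-1}$. On $2B_0$, $\mathcal M_ng\lesssim\mathcal M g$ (the Hardy--Littlewood maximal function) and H\"older give $\|\mathcal M_ng\|_{L^1(2B_0)}\lesssim|B_0|^{1/q'}\|g\|_{L^q}\leq C$. On $(2B_0)^c$ I distinguish two cases. If $r_0>\mathcal C_L^{-1}\rho(x_0)$ then $r_0\approx\rho(x_0)\approx 2^{-n/2}$, and a direct estimate of $p_t$ for $t<2^{-n}$ gives $\|\mathcal M_ng\|_{L^1((2B_0)^c)}\lesssim\|g\|_{L^1}\leq C$ (the Gaussian decay handles $|x-x_0|\gtrsim 2^{-n/2}$, and since $r_0\approx 2^{-n/2}$ no scales pile up). If $r_0\leq\mathcal C_L^{-1}\rho(x_0)$ then $\int a=0$; write $g=\tilde g+c\,\chi_{2B_0}$ with $c=|2B_0|^{-1}\int g$ and $\tilde g$ of mean zero. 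From $\int g=\int(\psi_{n,k}-\psi_{n,k}(x_0))a$, the bound $\|\nabla\psi_{n,k}\|_{L^\infty}\lesssim 2^{n/2}$ (Lemma \ref{DZ, Lemma 2.5}) and $\|a\|_{L^1}\leq 1$ give $|c|\lesssim 2^{n/2}r_0|B_0|^{-1}$. Now $\tilde g$ is a bounded multiple of a classical $(H^1,q)$-atom on $2B_0$ with $r_0\lesssim 2^{-n/2}$, so the usual cancellation argument (gradient estimate for $p_t$) gives $\|\tilde g\|_{h^1_n}\leq C$; and computing $p_t*\chi_{2B_0}$ on $|x-x_0|\lesssim r_0$, on $r_0\lesssim|x-x_0|\lesssim 2^{-n/2}$, and on $|x-x_0|\gtrsim 2^{-n/2}$ separately yields $\|\mathcal M_n\chi_{2B_0}\|_{L^1}\lesssim|B_0|\log(e+2^{-n/2}/r_0)$. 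Hence $\|\mathcal M_n(c\,\chi_{2B_0})\|_{L^1}\lesssim s\log(e+1/s)$ with $s=2^{n/2}r_0\in(0,C]$, which is bounded. Adding the $O(1)$ contributions completes the proof.

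The hard part is the logarithm in the middle range: $\mathcal M_n\chi_{2B_0}(x)\approx|B_0|\,|x-x_0|^{-d}$ on $r_0\lesssim|x-x_0|\lesssim 2^{-n/2}$, so $\|\mathcal M_n\chi_{2B_0}\|_{L^1}$ really has size $|B_0|\log(2^{-n/2}/r_0)$, which is not summable over scales; it is tamed only by the compensating factor $2^{n/2}r_0$ coming from the vanishing moment of $a$ and the Lipschitz bound on $\psi_{n,k}$, together with the boundedness of $s\mapsto s\log(e+1/s)$ near $0$. The other delicate point is the geometric claim that the admissible $(n,k)$ are $O(1)$ in number, which is precisely where the two-sided control of $\rho$ in Proposition \ref{Shen, Lemma 1.4} enters.
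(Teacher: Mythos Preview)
The paper does not give its own proof of this lemma; it simply cites \cite[(4.7)]{DZ}. Your argument is correct as a self-contained proof, and the two delicate points you flag (the $O(1)$ count of admissible $(n,k)$ via the two-sided control in Proposition~\ref{Shen, Lemma 1.4}, and the $s\log(e+1/s)$ cancellation between the moment estimate $|\!\int g|\lesssim 2^{n/2}r_0$ and the logarithmic blow-up of $\|\mathcal M_n\chi_{2B_0}\|_{L^1}$) are exactly the right ones.

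There is, however, a genuine structural difference worth pointing out. Your proof rests on the atomic decomposition of $H^1_L$ (Theorem~\ref{DZ, Theorem 1.5}), but in \cite{DZ} the inequality (4.7) is one of the steps used to \emph{establish} that atomic decomposition. So your route is logically fine in the present paper, where both facts are imported as black boxes, but it would be circular in the original setting. Dziuba\'nski and Zienkiewicz instead argue directly from the maximal-function definition: they use the perturbation estimate $|T_t(x,y)-p_t(x,y)|\le C\big(\sqrt t/\rho(y)\big)^{\delta}t^{-d/2}e^{-c|x-y|^2/t}$ (valid for $\sqrt t\lesssim\rho(y)$) to compare $\mathcal M_n(\psi_{n,k}f)$ pointwise with $\mathcal M_Lf$ on a dilate of $B_{n,k}$, plus controllable tails. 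Their approach buys independence from the atomic theory; yours is shorter once atoms are available and makes the geometry and cancellation completely explicit.
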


To prove Lemma \ref{Hardy estimates for local Riesz transforms}, we need the following.

\begin{Lemma}\label{Go}
There exists a constant $C=C(\varphi, d)>0$ such that 
\begin{equation}\label{Goldberg}
\|f- \varphi_{2^{-n/2 }}*f\|_{H^1}\leq C \|f\|_{h^1_n}\,,\quad\mbox{for all}\; n\in\mathbb Z, f\in h^1_n(\mathbb R^d).
\end{equation}
\end{Lemma}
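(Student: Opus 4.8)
The plan is to prove the dilation estimate $\|f-\varphi_{2^{-n/2}}*f\|_{H^1}\le C\|f\|_{h^1_n}$ by a rescaling argument that reduces the statement to the case $n=0$, where it becomes the well-known fact from Goldberg's local Hardy space theory that $f\mapsto f-\varphi*f$ maps $h^1=h^1_0$ into $H^1$. First I would recall that the relevant maximal functions are $\mathcal M_n f(x)=\sup_{0<t<2^{-n}}|P_{\sqrt t}*f(x)|$ with $P$ the Gauss kernel, and for $n=0$ the space $h^1_0$ is (up to equivalent norms) exactly Goldberg's local Hardy space $h^1(\mathbb R^d)$; the operator $f\mapsto f-\varphi*f$ is bounded from $h^1(\mathbb R^d)$ into $H^1(\mathbb R^d)$ because subtracting the low-frequency part $\varphi*f$ kills the obstruction to the vanishing-moment/grand-maximal characterization of $H^1$. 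This $n=0$ statement I would either cite (it is classical, e.g. from Goldberg's paper) or sketch via the atomic decomposition of $h^1$: an $h^1$-atom $a$ supported in a ball of radius $\le 1$ either already has mean zero (then $a-\varphi*a$ is controlled directly) or, if not, $a-\varphi*a$ has mean zero and its two pieces are each easily seen to have uniformly bounded $H^1$-norm, using that $\varphi$ is Schwartz with integral one.

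Next I would carry out the scaling. Fix $n\in\mathbb Z$ and write $\lambda=2^{-n/2}$, so $\varphi_{2^{-n/2}}=\varphi_\lambda$ where $\varphi_\lambda(x)=\lambda^{-d}\varphi(x/\lambda)$. Given $f\in h^1_n(\mathbb R^d)$, set $g(x):=\lambda^d f(\lambda x)$, so that $\|g\|_{L^1}=\|f\|_{L^1}$ and, by the change of variables $t=\lambda^2 s$ in the definition of $\mathcal M_n$, one checks that $\mathcal M_0 g(x)=\lambda^d(\mathcal M_n f)(\lambda x)$, hence $\|g\|_{h^1_0}=\|g\|_{h^1}=\|f\|_{h^1_n}$; here one uses that the Gauss kernel $p_t(x,y)=(4\pi t)^{-d/2}e^{-|x-y|^2/4t}$ scales correctly, $p_{\lambda^2 s}(\lambda x,\lambda y)=\lambda^{-d}p_s(x,y)$. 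Similarly $(f-\varphi_\lambda*f)(\lambda x)=\lambda^{-d}\big(g-\varphi*g\big)(x)$ after the same substitution in the convolution integral. Since the homogeneous Hardy norm $\|\cdot\|_{H^1}$ is also dilation invariant in the sense that $\|h(\lambda\,\cdot)\|_{H^1}=\lambda^{-d}\|h\|_{H^1}$ (the grand maximal function scales by the same factor), we get
\begin{equation*}
\|f-\varphi_\lambda*f\|_{H^1}=\|g-\varphi*g\|_{H^1}\le C\|g\|_{h^1}=C\|f\|_{h^1_n},
\end{equation*}
with $C=C(\varphi,d)$ independent of $n$, which is exactly the claim.

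The main obstacle, and the only genuinely content-bearing point, is the scale-invariant case $n=0$: proving that $f\mapsto f-\varphi*f$ is bounded $h^1\to H^1$ with constant depending only on $\varphi$ and $d$. Everything else is a bookkeeping check that the Gauss kernel, the convolution, and the two Hardy-space norms all transform consistently under the single dilation $x\mapsto 2^{-n/2}x$, which must be done carefully (especially tracking the $\lambda^d$ factors) but presents no conceptual difficulty. I would therefore structure the write-up as: (1) state and justify — by citation to Goldberg's theory or a short atomic argument — the $n=0$ boundedness; (2) record the three scaling identities for $\mathcal M_n$, for $f-\varphi_\lambda*f$, and for the $H^1$-norm; (3) combine them to conclude. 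One should be slightly attentive to the fact that $h^1_n$ uses the truncated Gauss (Poisson-type heat) maximal function rather than a general grand maximal function, so the $n=0$ identification with Goldberg's $h^1$ should invoke the equivalence of the various maximal characterizations of the local Hardy space; this equivalence is standard and can be cited.
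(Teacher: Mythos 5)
Your proposal is correct and follows essentially the same route as the paper: the author likewise reduces to the case $n=0$, which is Goldberg's theorem that $f\mapsto f-\varphi*f$ is bounded from $h^1(\mathbb R^d)$ into $H^1(\mathbb R^d)$, and then invokes a dilation argument to get a constant uniform in $n$. Your scaling identities for $\mathcal M_n$, for $f-\varphi_{2^{-n/2}}*f$, and for the $H^1$-norm are all checked correctly; you simply spell out the bookkeeping that the paper leaves implicit.
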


The proof of Lemma \ref{Go} can be found in \cite{Go}. In fact, in \cite{Go}, Goldberg proved it just for $n=0$, however, by dilations, it is easy to see that (\ref{Goldberg}) holds for every $n\in\mathbb Z, f\in h^1_n(\mathbb R^d)$ with an uniform constant $C>0$ depends only on $\varphi$ and $d$.

\begin{proof}[\bf Proof of Lemma \ref{Hardy estimates for local Riesz transforms}]
 It follows from Lemma \ref{Go} and Lemma \ref{DZ} that
\begin{eqnarray*}
\|\mathfrak H(f)\|_{H^1} &=& \Big\|\sum_{n,k} (\psi_{n,k}f- \varphi_{2^{-n/2}}*(\psi_{n,k}f))\Big\|_{H^1}\\
&\leq& \sum_{n,k} \Big\|\psi_{n,k}f- \varphi_{2^{-n/2}}*(\psi_{n,k}f)\Big\|_{H^1}\\
&\leq& C \sum_{n,k} \|\psi_{n,k}f\|_{h^1_n}\leq C \|f\|_{H^1_L}
\end{eqnarray*}
for every $f\in H^1_L(\mathbb R^d)$. This completes the proof.
\end{proof}

For $1<q\leq \infty$ and $n\in\mathbb Z$. Recall (see \cite{DZ}) that a function $a$ is said to be a $(h^1_n,q)$-atom related to the ball $B(x_0,r)$ if $r\leq 2^{1-n/2}$ and

i) supp $a\subset B(x_0,r)$,

ii) $\|a\|_{L^q}\leq |B(x_0,r)|^{1/q-1}$,

iii) if $r\leq 2^{-1-n/2}$ then $\int_{\mathbb R^d}a(x)dx=0$.

In order to prove Lemma \ref{extend to H^1_L}, we need the following lemma.

\begin{Lemma}\label{new corollary}
Let $1<q\leq \infty$,  $n\in\mathbb Z$ and $x\in \mathcal B_n$. Suppose that $f\in h^1_n(\mathbb R^d)$ with supp $f\subset B(x, 2^{1-n/2})$. Then, there are $(H^1_L,q)$-atoms $a_j$ related to the balls $B(x_j,r_j)$ such that $ B(x_j,r_j)\subset B(x, 2^{2-n/2})$ and
$$f= \sum_j \lambda_j a_j, \quad \sum_j |\lambda_j|\leq C \|f\|_{h^1_n}$$
with a positive constant $C$ independent of $n$ and $f$.
\end{Lemma}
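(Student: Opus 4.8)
The plan is to realize $f$ as a local Hardy space element on a ball of radius comparable to $2^{-n/2}$ and then to invoke the local atomic decomposition together with a dilation/translation normalization that converts $h^1_n$-atoms into $(H^1_L,q)$-atoms. First I would recall that, by the classical theory of Goldberg's local Hardy spaces (see \cite{Go}), the space $h^1_n(\mathbb R^d)$ admits an atomic decomposition: any $g\in h^1_n(\mathbb R^d)$ can be written as $g=\sum_j \mu_j b_j$, where each $b_j$ is a $(h^1_n,q)$-atom and $\sum_j|\mu_j|\le C\|g\|_{h^1_n}$, with $C$ independent of $n$ (the $n$-independence follows by the dilation $x\mapsto 2^{n/2}x$, which carries $h^1_n$ to $h^1_0$ isometrically up to the usual normalization, exactly as in the proof of Lemma \ref{Go}). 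Applying this to our $f$, which is supported in $B(x,2^{1-n/2})$, I get $f=\sum_j\lambda_j a_j$ with $\sum_j|\lambda_j|\le C\|f\|_{h^1_n}$ and each $a_j$ a $(h^1_n,q)$-atom.

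The next step is to arrange that the atoms $a_j$ are genuinely supported inside $B(x,2^{2-n/2})$ and are (up to a harmless fixed constant) $(H^1_L,q)$-atoms. The support issue is the delicate point: a priori the atomic decomposition of $f$ produces atoms whose supporting balls need not lie inside the support of $f$. The standard fix is to run the decomposition through the grand maximal function characterization localized to $B(x,2^{2-n/2})$: since $f$ is supported in $B(x,2^{1-n/2})$ and $x\in\mathcal B_n$ (so that $2^{-n/2}\approx \rho(x)$), the Calder\'on--Zygmund / Whitney decomposition used to build the atoms can be carried out so that all supporting balls $B(x_j,r_j)$ are contained in $B(x,2^{2-n/2})$, with $r_j\le 2^{1-n/2}$. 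I would cite \cite{Go} (or reproduce the few lines of the Whitney argument) for this localization.

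Once the supports are controlled, I must check that a $(h^1_n,q)$-atom $a_j$ associated to $B(x_j,r_j)\subset B(x,2^{2-n/2})$ is, up to a fixed multiplicative constant, a $(H^1_L,q)$-atom related to the same ball. The size and support conditions (i), (ii) in the definition of $(H^1_L,q)$-atoms coincide with those for $(h^1_n,q)$-atoms. For the cancellation condition (iii): if $r_j\le 2^{-1-n/2}$, then $a_j$ has vanishing mean, so a fortiori the $(H^1_L,q)$-atom cancellation requirement (which only demands vanishing mean when $r_j\le \tfrac{1}{\mathcal C_L}\rho(x_j)$) is met; and in all cases I need $r_j\le \mathcal C_L\rho(x_j)$. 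This last inequality is where Proposition \ref{Shen, Lemma 1.4} enters: since $x_j\in B(x,2^{2-n/2})$ and $\rho(x)\ge 2^{-(n+1)/2}$ (because $x\in\mathcal B_n$), the comparison $\rho(x_j)\gtrsim \rho(x)(1+|x-x_j|/\rho(x))^{-k_0}\gtrsim 2^{-n/2}\cdot 8^{-k_0}$ shows $r_j\le 2^{1-n/2}\le \mathcal C_L\,\rho(x_j)$ by the very choice $\mathcal C_L=8\cdot 9^{k_0}\kappa$ in \eqref{technique constant}. Thus each $a_j$ is a fixed constant times a $(H^1_L,q)$-atom, and absorbing that constant into the $\lambda_j$ completes the proof.

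The main obstacle, as indicated, is not any single estimate but the bookkeeping of the localized atomic decomposition: one must produce an atomic decomposition of $f$ whose atoms have supporting balls inside $B(x,2^{2-n/2})$ while keeping the $\ell^1$-bound $n$-independent. This is essentially contained in Goldberg's work after the scaling reduction to $n=0$, so I would invoke \cite{Go} for it; the only genuinely new ingredient is the verification that the relevant radii satisfy $r_j\le\mathcal C_L\rho(x_j)$, which is exactly what the auxiliary-function comparison of Shen and the calibrated constant $\mathcal C_L$ are designed to give.
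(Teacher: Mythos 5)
Your overall route coincides with the paper's: decompose $f$ into local atoms whose supporting balls lie in $B(x,2^{2-n/2})$, then upgrade each $(h^1_n,q)$-atom to a $(H^1_L,q)$-atom using Proposition \ref{Shen, Lemma 1.4} and the calibration of the constant $\mathcal C_L$ in (\ref{technique constant}). The only sourcing difference is that the paper does not reconstruct the localized decomposition from Goldberg via scaling and a Whitney argument: it quotes Theorem 4.5 of \cite{DZ}, which already produces $(h^1_n,q)$-atoms with supporting balls inside $B(x,2^{2-n/2})$ and an $n$-uniform $\ell^1$ bound, so the localization you correctly identify as the delicate point is available off the shelf.

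There is, however, a gap in your verification of the cancellation condition. Condition (iii) for a $(H^1_L,q)$-atom is the implication: \emph{if} $r_j\le \frac{1}{\mathcal C_L}\rho(x_j)$, \emph{then} $\int_{\mathbb R^d} a_j\,dx=0$. You only observe that atoms with $r_j\le 2^{-1-n/2}$ have vanishing mean; this says nothing about an $(h^1_n,q)$-atom with $2^{-1-n/2}<r_j\le 2^{1-n/2}$, which is permitted to have nonzero mean. For such an atom you must show that the hypothesis of (iii) fails, i.e. that $r_j>\frac{1}{\mathcal C_L}\rho(x_j)$; equivalently (and this is how the paper phrases it), that $r_j\le \frac{1}{\mathcal C_L}\rho(x_j)$ forces $r_j\le 2^{-1-n/2}$. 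This requires the \emph{upper} bound in Proposition \ref{Shen, Lemma 1.4}, not the lower bound you quote: since $x_j\in B(x,2^{2-n/2})$ and $2^{-(n+1)/2}<\rho(x)\le 2^{-n/2}$, one has
$$\rho(x_j)\le \kappa\,\rho(x)\Big(1+\tfrac{|x-x_j|}{\rho(x)}\Big)^{\frac{k_0}{k_0+1}}\le \kappa\,2^{-n/2}\,(1+4\sqrt 2)^{k_0}\le 9^{k_0}\kappa\, 2^{-n/2},$$
whence $\frac{1}{\mathcal C_L}\rho(x_j)\le \frac{1}{8}\,2^{-n/2}=2^{-3-n/2}\le 2^{-1-n/2}$. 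Your lower-bound computation $\rho(x_j)\gtrsim \rho(x)(1+|x-x_j|/\rho(x))^{-k_0}$ correctly gives $r_j\le\mathcal C_L\rho(x_j)$, but it cannot give this second inequality. Adding the displayed estimate closes the gap; the rest of the argument is sound.
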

\begin{proof}
By Theorem 4.5 of \cite{DZ}, there are $(h^1_n, q)$-atoms $a_j$ related to the balls $B(x_j,r_j)$ such that $ B(x_j,r_j)\subset B(x, 2^{2-n/2})$ and
$$f= \sum_j \lambda_j a_j, \quad \sum_j |\lambda_j|\leq C \|f\|_{h^1_n}.$$

Now, let us establish that the $a_j$'s are $(H^1_L, q)$-atoms related to the balls $B(x_j,r_j)$.

Indeed, as $x_j\in B(x,2^{2-n/2})$ and $x\in \mathcal B_n$, Proposition \ref{Shen, Lemma 1.4} implies that $r_j \leq 2^{2-n/2}\leq \mathcal C_L \rho(x_j)$, where $\mathcal C_L$ is as in (\ref{technique constant}). Moreover, if $r_j< \frac{1}{\mathcal C_L }\rho(x_j)$, then Proposition \ref{Shen, Lemma 1.4} implies that $r_j\leq 2^{-1-n/2}$, and thus $\int_{\mathbb R^d} a_j(x)dx=0$ since $a_j$ are $(h^1_n, q)$-atoms related to the balls $B(x_j,r_j)$. These prove that the $a_j$'s are $(H^1_L, q)$-atoms related to the balls $B(x_j,r_j)$.
\end{proof}

\begin{proof}[\bf Proof of Lemma \ref{extend to H^1_L}]
As $T\in \mathcal K_L$, there exist $q\in (1,\infty]$ and $\varepsilon>0$ such that 
\begin{equation}\label{extend to H^1_L 0}
\|(b-b_B)Ta\|_{L^1}\leq C \|b\|_{BMO}
\end{equation}
for all $b\in BMO(\mathbb R^d)$ and generalized $(H^1_L,q,\varepsilon)$-atom $a$ related to the ball $B$.

From $\mathbb H^{1,q,\varepsilon}_{L,\rm fin}(\mathbb R^d)$ is dense in $H^1_L(\mathbb R^d)$, we need only prove that
$$\|\mathcal U(f,b)\|_{L^1}= \|[b,T](f-\mathfrak H(f))\|_{L^1}\leq C \|f\|_{H^1_L}\|b\|_{BMO}$$
holds for every $(f,b)\in \mathbb H^{1,q,\varepsilon}_{L,\rm fin}(\mathbb R^d)\times BMO(\mathbb R^d)$.

For any $(n,k)\in\mathbb Z\times \mathbb Z^+$. As $x_{n,k}\in \mathcal B_n$ and $\psi_{n,k}f\in h^1_n(\mathbb R^d)$, it follows from Lemma \ref{new corollary} and Remark \ref{atoms} that there are generalized $(H^1_L,q,\varepsilon)$-atoms $a_j^{n,k}$ related to the balls $B(x_j^{n,k},r_j^{n,k})$ such that $ B(x_j^{n,k},r_j^{n,k})\subset B(x_{n,k}, 2^{2-n/2})$ and
\begin{equation}\label{extend to H^1_L 1}
\psi_{n,k}f= \sum_j \lambda_j^{n,k} a_j^{n,k}, \quad \sum_j |\lambda_j^{n,k}|\leq C \|\psi_{n,k}f\|_{h^1_n}
\end{equation}
with a positive constant $C$ independent of $n,k$ and $f$.

Clearly,  supp $\varphi_{2^{-n/2}}*a_j^{n,k}\subset B(x_{n,k}, 5. 2^{-n/2})$ since supp $\varphi\subset B(0,1)$ and supp $a_j^{n,k}\subset B(x_{n,k}, 2^{2-n/2})$; the following estimate holds
$$\|\varphi_{2^{-n/2}}*a_j^{n,k}\|_{L^q}\leq \|\varphi_{2^{-n/2}}\|_{L^q}\|a_j^{n,k}\|_{L^1}\leq (2^{-n/2})^{d(1/q-1)}\|\varphi\|_{L^q}\leq C |B(x_{n,k},5.2^{-n/2})|^{1/q-1}.$$
Moreover, as $x_{n,k}\in\mathcal B_n$,
$$\Big|\int_{\mathbb R^d}\varphi_{2^{-n/2}}*a_j^{n,k}dx\Big|\leq \|\varphi_{2^{-n/2}}\|_{L^1}\|a_j^{n,k}\|_{L^1}\leq C\Big(\frac{5.2^{-n/2}}{\rho(x_{n,k})}\Big)^{\varepsilon}.$$
These prove that $\varphi_{2^{-n/2}}*a_j^{n,k}$ is $C$ times a generalized $(H^1_L,q,\varepsilon)$-atom related to $B(x_{n,k}, 5. 2^{-n/2})$. Consequently, (\ref{extend to H^1_L 0}) yields
\begin{equation}\label{extend to H^1_L 2}
\|(b -b_{B(x_{n,k}, 5. 2^{-n/2})})T(\varphi_{2^{-n/2}}*a_j^{n,k})\|_{L^1}\leq C \|b\|_{BMO}.
\end{equation}

By an analogous argument, it is easy to check that $(\varphi_{2^{-n/2}}*a_j^{n,k})(b - b_{B(x_{n,k}, 5. 2^{-n/2})})$ is $C\|b\|_{BMO}$ times a  generalized $(H^1_L,\frac{q+1}{2},\varepsilon)$-atom related to $B(x_{n,k}, 5. 2^{-n/2})$. Hence,  it follows from  (\ref{extend to H^1_L 1}) and (\ref{extend to H^1_L 2}) that
\begin{eqnarray}\label{extend to H^1_L 3}
\|[b,T](\varphi_{2^{-n/2}}*(\psi_{n,k}f))\|_{L^1}&\leq& \|(b - b_{B(x_{n,k}, 5. 2^{-n/2})})T(\varphi_{2^{-n/2}}*(\psi_{n,k}f))\|_{L^1}\nonumber\\
&& + \Big\|T\Big((b - b_{B(x_{n,k}, 5. 2^{-n/2})})(\varphi_{2^{-n/2}}*(\psi_{n,k}f))\Big)\Big\|_{L^1}\nonumber\\
&\leq& C \|\psi_{n,k}f\|_{h^1_n}\|b\|_{BMO},
\end{eqnarray}
where we used the fact that $T$ is bounded from $H^1_L(\mathbb R^d)$ into $L^1(\mathbb R^d)$ since $T\in\mathcal K_L$.

On the other hand, by $f\in \mathbb H^{1,q,\varepsilon}_{L,\rm fin}(\mathbb R^d)$, there exists a ball $B(0,R)$ such that supp $f\subset B(0,R)$. As $\overline{B(0,R)}$ is a compact set, Lemma \ref{DZ, Lemma 2.3} allows to conclude that there is a finite set $\Gamma_R\subset \mathbb Z\times \mathbb Z^+$ such that for every $(n,k)\notin \Gamma_R$,
$$B(x_{n,k}, 2^{1-n/2})\cap \overline{B(0,R)}=\emptyset.$$
It follows that there are $N,K\in\mathbb Z^+$ such that 
$$f=\sum_{n,k}\psi_{n,k}f=\sum_{n=-N}^N\sum_{k=1}^K\psi_{n,k}f.$$

 Therefore, (\ref{extend to H^1_L 3}) and Lemma \ref{DZ, 4.7} yield
\begin{eqnarray*}
\|\mathcal U(f,b)\|_{L^1}  &\leq& \left\|\sum_{n=-N}^N\sum_{k=1}^K \Big|[b,T](\varphi_{2^{-n/2}}*(\psi_{n,k}f))\Big|\right\|_{L^1}\\
&\leq& C \|b\|_{BMO}\sum_{n,k}\|\psi_{n,k}f\|_{h^1_n}\leq C \|f\|_{H^1_L}\|b\|_{BMO},
\end{eqnarray*}
which ends the proof.

\end{proof}

\begin{proof}[\bf Proof of Lemma \ref{log-generalized BHS}]

First, we claim that for every ball $B_0= B(x_0,r_0)$,
\begin{equation}\label{BHS and Shen}
\Big(\frac{1}{|B_0|}\int_{B_0} |f(y)- f_{B_0}|^q dy\Big)^{1/q}\leq C \frac{\Big(1+ \frac{r_0}{\rho(x_0)}\Big)^{(k_0+1)\theta}}{\log\Big(e +(\frac{\rho(x_0)}{r_0})^{k_0+1}\Big)}\|f\|_{BMO^{\rm log}_{L,\theta}}.
\end{equation}

Assume that (\ref{BHS and Shen})  holds for a moment. Then,
\begin{eqnarray*}
&&\Big(\frac{1}{|2^k B|}\int_{2^k B} |f(y)- f_{B}|^q dy\Big)^{1/q} \\
&\leq& \Big(\frac{1}{|2^k B|}\int_{2^k B} |f(y)- f_{2^k B}|^q dy\Big)^{1/q} + \sum_{j=0}^{k-1}|f_{2^{j+1}B}- f_{2^j B}|\\
&\leq& \frac{\Big(1+ \frac{2^k r}{\rho(x)}\Big)^{(k_0+1)\theta}}{\log\Big(e +(\frac{\rho(x)}{2^k r})^{k_0+1}\Big)}\|f\|_{BMO^{\rm log}_{L,\theta}}+ \sum_{j=0}^{k-1} 2^d \frac{\Big(1+ \frac{2^{j+1} r}{\rho(x)}\Big)^{\theta}}{\log\Big(e +\frac{\rho(x)}{2^{j+1} r}\Big)}\|f\|_{BMO^{\rm log}_{L,\theta}}\\
&\leq& C  k \frac{\Big(1+ \frac{2^k r}{\rho(x)}\Big)^{(k_0+1)\theta}}{\log\Big(e +(\frac{\rho(x)}{2^k r})^{k_0+1}\Big)}\|f\|_{BMO^{\rm log}_{L,\theta}}.
\end{eqnarray*}

Now, it remains to prove (\ref{BHS and Shen}).

 Let us  define the function $h$ on $\mathbb R^d$ as follows
\begin{equation*}
h(x)=
\begin{cases}
1, & x\in B_0,\\
\frac{2 r_0- |x-x_0|}{r_0},  & x\in 2 B_0\setminus B_0,\\
0, & x\notin 2 B_0,
\end{cases}
\end{equation*}
and remark that
\begin{equation}\label{log-generalized BHS 1}
|h(x)- h(y)|\leq \frac{|x-y|}{r_0}.
\end{equation}

Setting $\widetilde f:= f- f_{2B_0}$. By the classical John-Nirenberg inequality, there exists a constant $C= C(d,q)>0$ such that
\begin{align}
\Big(\frac{1}{|B_0|}\int_{B_0} |f(y)- f_{B_0}|^q dy\Big)^{1/q} &= \Big(\frac{1}{|B_0|}\int_{B_0} |h(y)\widetilde f(y)- (h\widetilde f)_{B_0}|^q dy\Big)^{1/q}\nonumber\\
& \leq C \|h\widetilde f\|_{BMO}.\nonumber
\end{align}

Therefore, the proof of the lemma is reduced to showing that
$$\|h\widetilde f\|_{BMO}\leq  C \frac{\Big(1+ \frac{r_0}{\rho(x_0)}\Big)^{(k_0+1)\theta}}{\log\Big(e +(\frac{\rho(x_0)}{r_0})^{k_0+1}\Big)}\|f\|_{BMO^{\rm log}_{L,\theta}},$$
namely, for every ball $B= B(x,r)$,
\begin{equation}\label{log-generalized BHS 2}
\frac{1}{|B|}\int_B |h(y) \widetilde f(y)- (h\widetilde f)_B|dy \leq  C \frac{\Big(1+ \frac{r_0}{\rho(x_0)}\Big)^{(k_0+1)\theta}}{\log\Big(e +(\frac{\rho(x_0)}{r_0})^{k_0+1}\Big)}\|f\|_{BMO^{\rm log}_{L,\theta}}.
\end{equation}

Now, let us focus on Inequality (\ref{log-generalized BHS 2}). Noting that supp $h\subset 2B_0$, Inequality (\ref{log-generalized BHS 2}) is obvious if $B\cap 2B_0= \emptyset$. Hence, we only consider the case $B\cap 2B_0\ne \emptyset$. Then, we have the following two cases:

{\sl The case $r> r_0$:} the fact  $B\cap 2B_0\ne \emptyset$ implies that $2 B_0\subset 5 B$, and thus
\begin{eqnarray*}
\frac{1}{|B|}\int_B |h(y) \widetilde f(y)- (h\widetilde f)_B|dy &\leq & 2 \frac{1}{|B|}\int_B |h(y) \widetilde f(y)|dy\\
&\leq& 2 . 5^d \frac{1}{|2B_0|}\int_{2B_0}|f(y)- f_{2B_0}|dy\\
&\leq& C \frac{\Big(1+ \frac{2r_0}{\rho(x_0)}\Big)^{\theta}}{\log\Big(e +\frac{\rho(x_0)}{2r_0}\Big)}\|f\|_{BMO^{\rm log}_{L,\theta}}\\
&\leq& C \frac{\Big(1+ \frac{r_0}{\rho(x_0)}\Big)^{(k_0+1)\theta}}{\log\Big(e +(\frac{\rho(x_0)}{r_0})^{k_0+1}\Big)}\|f\|_{BMO^{\rm log}_{L,\theta}}.
\end{eqnarray*}

{\sl The case $r \leq r_0$:} Inequality (\ref{log-generalized BHS 1}) yields
\begin{align}\label{log-generalized BHS 3}
\frac{1}{|B|}\int_B |h(y) \widetilde f(y)- (h\widetilde f)_B|dy &\leq 2 \frac{1}{|B|}\int_B |h(y) \widetilde f(y)- h_B \widetilde f_B|dy \nonumber\\
&\leq 2 \frac{1}{|B|}\int_B |h(y)(\widetilde f(y)- \widetilde f_B)| dy+ \nonumber\\
&\quad + 2 |\widetilde f_B| \frac{1}{|B|} \int_B \frac{1}{|B|}\Big| \int_B (h(x)- h(y))dy\Big|dx\nonumber\\
& \leq 2 \frac{1}{|B|}\int_B |f(y)- f_B|dy + 4 \frac{r}{r_0}| f_B - f_{2B_0}|.
\end{align}

By $r \leq r_0$, $B=B(x,r)\cap B(x_0,r_0)\ne \emptyset$, Proposition \ref{Shen, Lemma 1.4} gives
$$\frac{r}{\rho(x)}\leq \frac{r_0}{\rho(x)}\leq \kappa \frac{r_0}{\rho(x_0)}\Big(1+ \frac{|x-x_0|}{\rho(x_0)}\Big)^{k_0}\leq C \Big(1+ \frac{r_0}{\rho(x_0)}\Big)^{k_0+1}.$$

Consequently,
\begin{align}\label{log-generalized BHS 4}
\frac{1}{|B|}\int_B |f(y)- f_B|dy &\leq \frac{\Big(1+ \frac{r}{\rho(x)}\Big)^{\theta}}{\log (e+ \frac{\rho(x)}{r})}\|f\|_{BMO^{\rm log}_{L,\theta}}\nonumber\\
&\leq C \frac{\Big(1+ \frac{r_0}{\rho(x_0)}\Big)^{(k_0+1)\theta}}{\log\Big(e +(\frac{\rho(x_0)}{r_0})^{k_0+1}\Big)}\|f\|_{BMO^{\rm log}_{L,\theta}},
\end{align}
and
\begin{align}\label{log-generalized BHS 5}
\frac{1}{|B(x,2^3 r_0)|}\int_{B(x, 2^3 r_0)} |f(y)- f_{B(x, 2^3 r_0)}|dy &\leq \frac{\Big(1+ \frac{2^3 r_0}{\rho(x)}\Big)^{\theta}}{\log (e+ \frac{\rho(x)}{2^3 r_0})}\|f\|_{BMO^{\rm log}_{L,\theta}}\nonumber\\
&\leq C \frac{\Big(1+ \frac{r_0}{\rho(x_0)}\Big)^{(k_0+1)\theta}}{\log\Big(e +(\frac{\rho(x_0)}{r_0})^{k_0+1}\Big)}\|f\|_{BMO^{\rm log}_{L,\theta}}.
\end{align}

Noting that for every $k\in \mathbb N$ with $2^{k+1}r\leq 2^3 r_0$, 
\begin{eqnarray*}
|f_{2^{k+1}B}- f_{2^k B}| &\leq& 2^d \frac{1}{|2^{k+1}B|}\int_{2^{k+1}B}|f(y)- f_{2^{k+1}B}|dy\\
&\leq& C \frac{\Big(1+ \frac{2^3 r_0}{\rho(x)}\Big)^{\theta}}{\log (e+ \frac{\rho(x)}{2^3 r_0})}\|f\|_{BMO^{\rm log}_{L,\theta}}\\
&\leq& C \frac{\Big(1+ \frac{r_0}{\rho(x_0)}\Big)^{(k_0+1)\theta}}{\log\Big(e +(\frac{\rho(x_0)}{r_0})^{k_0+1}\Big)}\|f\|_{BMO^{\rm log}_{L,\theta}},
\end{eqnarray*}
allows us to conclude that
\begin{equation}\label{log-generalized BHS 6}
|f_{B(x,r)}- f_{B(x, 2^3 r_0)}|\leq C \log\Big(e+ \frac{r_0}{r}\Big)\frac{\Big(1+ \frac{r_0}{\rho(x_0)}\Big)^{(k_0+1)\theta}}{\log\Big(e +(\frac{\rho(x_0)}{r_0})^{k_0+1}\Big)}\|f\|_{BMO^{\rm log}_{L,\theta}}.
\end{equation}

Then, the inclusion $2B_0\subset B(x, 2^3 r_0)$ together with the inequalities (\ref{log-generalized BHS 3}), (\ref{log-generalized BHS 4}), (\ref{log-generalized BHS 5}) and (\ref{log-generalized BHS 6}) yield
\begin{eqnarray*}
\frac{1}{|B|}\int_B |h(y) \widetilde f(y)- (h\widetilde f)_B|dy &\leq& 2 \frac{1}{|B|}\int_B |f(y)- f_B|dy +\\
&&+ 4 \frac{r}{r_0}\Big(|f_{B(x,r)}-  f_{B(x, 2^3 r_0)}|+ 4^d MO(f, B(x, 2^3 r_0))\Big)\\
&\leq& C \Big(1+ \frac{r}{r_0}\log(e+ \frac{r_0}{r})\Big) \frac{\Big(1+ \frac{r_0}{\rho(x_0)}\Big)^{(k_0+1)\theta}}{\log\Big(e +(\frac{\rho(x_0)}{r_0})^{k_0+1}\Big)}\|f\|_{BMO^{\rm log}_{L,\theta}}\\
&\leq& C \frac{\Big(1+ \frac{r_0}{\rho(x_0)}\Big)^{(k_0+1)\theta}}{\log\Big(e +(\frac{\rho(x_0)}{r_0})^{k_0+1}\Big)}\|f\|_{BMO^{\rm log}_{L,\theta}},
\end{eqnarray*}
we have used $\frac{r}{r_0}\log(e+ \frac{r_0}{r})\leq \sup_{t\leq 1}t\log(e+ 1/t)<\infty$. This ends the proof.

\end{proof}

By an analogous argument, we can also obtain the following, which was proved by Bongioanni et al (see Lemma 1 of \cite{BHS2}) through another method.

\begin{Lemma}\label{BHS, Lemma 1}
Let $1\leq q<\infty$ and $\theta\geq 0$. Then, for every $f\in BMO_{L,\theta}(\mathbb R^d)$, $B= B(x,r)$ and $k\in \mathbb Z^+$, we have
$$\Big(\frac{1}{|2^k B|}\int_{2^k B} |f(y)- f_{B}|^q dy\Big)^{1/q}\leq C k \Big(1+ \frac{2^k r}{\rho(x)}\Big)^{(k_0+1)\theta}\|f\|_{BMO_{L,\theta}}.$$
\end{Lemma}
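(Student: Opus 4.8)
The plan is to mirror the proof of Lemma~\ref{log-generalized BHS} almost verbatim, simply deleting the logarithmic factor $\log\big(e+(\rho(x)/r)^{k_0+1}\big)$ from every denominator. The architecture is the same: first prove a single-ball estimate, then telescope from $B$ to $2^kB$.

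\textbf{Step 1 (single-ball estimate).} I would first establish that for every ball $B_0=B(x_0,r_0)$,
$$\Big(\frac{1}{|B_0|}\int_{B_0}|f(y)-f_{B_0}|^q\,dy\Big)^{1/q}\leq C\Big(1+\frac{r_0}{\rho(x_0)}\Big)^{(k_0+1)\theta}\|f\|_{BMO_{L,\theta}},$$
which is the analogue of (\ref{BHS and Shen}). As there, set $\widetilde f=f-f_{2B_0}$ and let $h$ be the Lipschitz cutoff equal to $1$ on $B_0$, supported in $2B_0$, with $|h(x)-h(y)|\leq |x-y|/r_0$; by the John--Nirenberg inequality the left-hand side is $\leq C\|h\widetilde f\|_{BMO}$, so it suffices to bound $\|h\widetilde f\|_{BMO}$. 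For a testing ball $B=B(x,r)$ meeting $2B_0$ one splits into the case $r>r_0$ (where $2B_0\subset 5B$ and one applies the definition of $BMO_{L,\theta}$ directly on $2B_0$, gaining the factor $(1+2r_0/\rho(x_0))^\theta$ which is dominated by $(1+r_0/\rho(x_0))^{(k_0+1)\theta}$) and the case $r\le r_0$, where exactly as in (\ref{log-generalized BHS 3}) one obtains
$$\frac{1}{|B|}\int_B|h(y)\widetilde f(y)-(h\widetilde f)_B|\,dy\leq 2\,MO(f,B)+4\frac{r}{r_0}\,|f_B-f_{2B_0}|.$$

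\textbf{Step 2 (the Shen transfer, the delicate part).} In the case $r\le r_0$ one controls $MO(f,B)$ and $|f_B-f_{2B_0}|$ through Proposition~\ref{Shen, Lemma 1.4}: since $B\cap 2B_0\neq\emptyset$ and $r\le r_0$ one has $|x-x_0|\leq 3r_0$, hence $r/\rho(x)\leq C(1+r_0/\rho(x_0))^{k_0+1}$, which converts every $(1+r/\rho(x))^\theta$ (or $(1+2^3r_0/\rho(x))^\theta$) into $C(1+r_0/\rho(x_0))^{(k_0+1)\theta}$. Telescoping over the scales between $r$ and $2^3r_0$ as in (\ref{log-generalized BHS 6}) gives $|f_{B(x,r)}-f_{B(x,2^3r_0)}|\leq C\log(e+r_0/r)(1+r_0/\rho(x_0))^{(k_0+1)\theta}\|f\|_{BMO_{L,\theta}}$, and combined with $2B_0\subset B(x,2^3r_0)$ this bounds $|f_B-f_{2B_0}|$; the prefactor $\frac{r}{r_0}\log(e+r_0/r)$ is then absorbed into a constant via $\sup_{t\le1}t\log(e+1/t)<\infty$. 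This yields Step~1.

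\textbf{Step 3 (telescoping to $2^kB$) and conclusion.} With $B=B(x,r)$ write
$$\Big(\frac{1}{|2^kB|}\int_{2^kB}|f-f_B|^q\Big)^{1/q}\leq\Big(\frac{1}{|2^kB|}\int_{2^kB}|f-f_{2^kB}|^q\Big)^{1/q}+\sum_{j=0}^{k-1}|f_{2^{j+1}B}-f_{2^jB}|.$$
Step~1 bounds the first term by $C(1+2^kr/\rho(x))^{(k_0+1)\theta}\|f\|_{BMO_{L,\theta}}$, and each difference satisfies $|f_{2^{j+1}B}-f_{2^jB}|\leq 2^d MO(f,2^{j+1}B)\leq C(1+2^{j+1}r/\rho(x))^\theta\|f\|_{BMO_{L,\theta}}\leq C(1+2^kr/\rho(x))^\theta\|f\|_{BMO_{L,\theta}}$, which is $\leq C(1+2^kr/\rho(x))^{(k_0+1)\theta}\|f\|_{BMO_{L,\theta}}$ since $k_0+1\ge1$. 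Summing the $k$ terms produces the factor $k$ and finishes the proof. The only genuinely delicate point is the case $r\le r_0$ of Steps~1--2, namely tracking how Proposition~\ref{Shen, Lemma 1.4} relates $\rho(x)$ to $\rho(x_0)$ for differing centers and checking that the chaining for $|f_B-f_{2B_0}|$ costs only the bounded quantity $\frac{r}{r_0}\log(e+r_0/r)$; everything else is bookkeeping identical to Lemma~\ref{log-generalized BHS} with the $\log$-weight removed.
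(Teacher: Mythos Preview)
Your proposal is correct and is precisely the approach the paper indicates: the paper does not give a separate proof of this lemma but simply remarks that it follows ``by an analogous argument'' to Lemma~\ref{log-generalized BHS}, i.e., exactly the three-step scheme (single-ball John--Nirenberg via the Lipschitz cutoff $h$, the Shen transfer of Proposition~\ref{Shen, Lemma 1.4}, and telescoping) with the logarithmic weight deleted. Your identification of the only delicate point---the $r\le r_0$ case where one must convert $(1+r/\rho(x))^\theta$ into $(1+r_0/\rho(x_0))^{(k_0+1)\theta}$ via Proposition~\ref{Shen, Lemma 1.4}---matches the paper's own emphasis.
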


\begin{proof}[\bf Proof of Lemma \ref{technical lemma for Hardy estimates for CZO}]
i) Assume that $T$ is a $(\delta,L)$-calder\'on-Zygmund operator for some $\delta\in (0,1]$. For every generalized $(H^1_L,2,\delta)$-atom $a$ related to the ball $B$, as $T^*1=0$, Lemma \ref{molecule for CZO} implies that $Ta$ is $C$ times a classical $(H^1,2,\delta)$-molecule (see for example \cite{SY}) related to $B$, and thus $\|Ta\|_{H^1}\leq C$. Therefore, Proposition \ref{boundedness through generalized atoms} yields $T$ maps continuously $H^1_L(\mathbb R^d)$ into $H^1(\mathbb R^d)$.

ii) By Lemma \ref{fundamental estimates for BMO}, Lemma \ref{molecule for CZO} and H\"older inequality, we get
\begin{eqnarray*}
&&\|(f- f_B)(g- g_B)Ta\|_{L^1}\\
&=& \|(f- f_B)(g- g_B) Ta\|_{L^1(2 B)} + \sum_{k\geq 1} \|(f- f_B)(g- g_B) Ta\|_{L^1(2^{k+1}B\setminus 2^k B)}\\
&\leq& \|f- f_B\|_{L^{2q'}(2B)} \|g- g_B\|_{L^{2q'}(2B)}\|T(a)\|_{L^q} +  \\
&&+\sum_{k\geq 1}\|f- f_B\|_{L^{2q'}(2^{k+1}B)} \|g- g_B\|_{L^{2q'}(2^{k+1}B)}\|T(a)\|_{L^q(2^{k+1}B\setminus 2^k B)}\\
&\leq& C \|f\|_{BMO}\|g\|_{BMO} + \sum_{k\geq 1} C (k+1)^{2}\|f\|_{BMO}\|g\|_{BMO}|2^{k+1}B|^{1/q'}2^{-k\delta}|2^k B|^{1/q-1}\\
&\leq& C \|f\|_{BMO}\|g\|_{BMO},
\end{eqnarray*}
where $1/q+ 1/q'=1$. 

\end{proof}

\begin{proof}[\bf Proof of Lemma \ref{Riesz-atom}]
It is well-known that the Riesz transforms $R_j$ are bounded from $H^1_L(\mathbb R^d)$ into $H^1(\mathbb R^d)$, in particular, one has $\int_{\mathbb R^d}R_j(a)(x)dx=0$. Moreover, by the $L^q$-boundedness of $R_j$ (see \cite{Sh}, Theorem 0.5) one has $\|R_j(a)\|_{L^q}\leq C |B|^{1/q-1}$. Therefore, it is sufficient to verify (\ref{generalized molecule}). Thanks to Lemma \ref{LP2, Lemma 2.6}, as $a$ is a generalized  $(H^1_L,q,c_0)$-atom related to the ball $B$, for every $x\in 2^{k+1}B\setminus 2^k B$,
\begin{align}
&\quad|R_j(a)(x)|\leq  \Big|\int_B (K_j(x,y)- K_j(x,x_0))a(y)dy\Big| + |K_j(x,x_0)| \Big|\int_B a(y)dy\Big| \nonumber\\
&\leq \int_B \frac{C(N)}{\Big(1+ \frac{|x-x_0|}{\rho(x_0)}\Big)^{N+ 4N_0}}\frac{|y-x_0|^{c_0}}{|x-x_0|^{d+c_0-1}}\Big\{\int_{B(x,|x-x_0|)}\frac{V(z)}{|x-z|^{d-1}}dz + \frac{1}{|x-x_0|}\Big\}|a(y)|dy\nonumber\\
&\quad+ \frac{C(N)}{\Big(1+ \frac{|x-x_0|}{\rho(x_0)}\Big)^{N+ 4N_0+c_0}} \frac{1}{|x-x_0|^{d-1}}\Big(\int_{B(x,|x-x_0|)}\frac{V(z)}{|x-z|^{d-1}}dz+ \frac{1}{|x-x_0|}\Big)\Big(\frac{r}{\rho(x_0)}\Big)^{c_0} \nonumber\\
&\leq \frac{C(N)}{\Big(1+ \frac{2^k r}{\rho(x_0)}\Big)^{N}}\left(\frac{1}{\Big(1+ \frac{2^{k+2}r}{\rho(x_0)}\Big)^{N_0}}\frac{r^{c_0}}{(2^k r)^{d+c_0-1}}\int_{B(x,|x-x_0|)}\frac{V(z)}{|x-z|^{d-1}}dz +  \frac{2^{-k c_0}}{|2^k B|}\right).\label{two parts}
\end{align} 
Here and in what follows, the constants $C(N)$ depend only on $N$, but may change from line to line. Note that for  every $x\in 2^{k+1}B\setminus 2^k B$, one has $B(x,|x-x_0|)\subset B(x, 2^{k+1}r)\subset B(x_0, 2^{k+2}r)$. The fact $V\in RH_{d/2}$, $d/2\geq q>1$, and H\"older inequality yield
\begin{align}
&\quad\left\| \int_{B(x,|x-x_0|)}\frac{V(z)}{|x-z|^{d-1}}dz\right\|_{L^q(2^{k+1}B\setminus 2^k B, dx)}\nonumber\\
&\leq C (2^{k+1}r)^{1- \frac{2}{d}} \left\{\int_{2^{k+1}B\setminus 2^k B} \Big(\int_{B(x,2^{k+1}r)}\frac{|V(z)|^{d/2}}{|x-z|^{d-1}}dz\Big)^{\frac{2q}{d}}dx\right\}^{1/q}\nonumber\\
&\leq C (2^k r)^{1- \frac{2}{d}} |2^{k+1}B|^{\frac{1}{q}- \frac{2}{d}} \left\{\int_{B(z,2^{k+1}r)}dx\int_{B(x_0,2^{k+2}r)}\frac{|V(z)|^{d/2}}{|x-z|^{d-1}}dz\right\}^{2/d}\nonumber\\
&\leq C 2^k r |2^k B|^{1/q-1}\int_{B(x_0,2^{k+2} r)} V(z)dz.\label{molecule}
\end{align}

Combining (\ref{two parts}), (\ref{molecule}) and Lemma 1 of \cite{GLP}, we obtain that
\begin{eqnarray*}
&&\|R_j(a)\|_{L^q(2^{k+1}B\setminus 2^k B)}\\
&\leq& \frac{C(N)}{\Big(1+ \frac{2^k r}{\rho(x_0)}\Big)^{N}} \left(\frac{r^{c_0} 2^k r |2^k B|^{1/q-1}}{(2^k r)^{d+c_0-1}}\frac{1}{\Big(1+ \frac{2^{k+2}r}{\rho(x_0)}\Big)^{N_0}}\int_{B(x_0,2^{k+2} r)} V(z)dz + \frac{2^{-k c_0}}{|2^k B|}|2^{k+1}B|^{1/q}\right)\\
&\leq& \frac{C(N)}{\Big(1+ \frac{2^k r}{\rho(x_0)}\Big)^{N}} 2^{-k c_0}|2^k B|^{1/q-1},
\end{eqnarray*}
where $N_0= \log_2 C_0 +1$ with $C_0$ the constant in (\ref{doubling measure}). This completes the proof.

\end{proof}

\begin{proof}[\bf Proof of Lemma \ref{technical lemma}]

Note that $r\leq \mathcal C_L \rho(x_0)$ since $a$ is a  $(H^1_L, q)$-atom related to the ball $B= B(x_0,r)$; and $a$ is ${\mathcal C_L}^{c_0}$ times a generalized $(H^1_L,q,c_0)$-atom related to the ball $B= B(x_0,r)$ (see Remark \ref{atoms}). In (\ref{generalized molecule}), we choose $N= (k_0+1)\theta$. Then, H\"older inequality and Lemma \ref{BHS, Lemma 1} give
\begin{eqnarray*}
&& \|(g- g_B)R_j(a)\|_{L^1} \\
&=& \|(g- g_B)R_j(a)\|_{L^1(2^4 B)} + \sum_{k=4}^\infty \|(g- g_B)R_j(a)\|_{L^1(2^{k+1}B \setminus 2^k B)}\\
&\leq& \|g- g_B\|_{L^{q'}(2^4 B)}\|R_j\|_{L^q\to L^q} \|a\|_{L^q} + \sum_{k=4}^\infty \|g- g_B\|_{L^{q'}(2^{k+1}B \setminus 2^k B)}\|R_j(a)\|_{L^q(2^{k+1}B \setminus 2^k B)}\\
&\leq& C \|g\|_{BMO_{L,\theta}} +\\
&&+ C  \sum_{k=4}^\infty (k+1) |2^{k+1}B|^{1/q'}\Big(1+ \frac{2^{k+1} r}{\rho(x)}\Big)^{(k_0+1)\theta}\|g\|_{BMO_{L,\theta}} \frac{1}{\Big(1+ \frac{2^{k} r}{\rho(x)}\Big)^{(k_0+1)\theta}} 2^{-k c_0}|2^k B|^{1/q-1}\\
&\leq& C \|g\|_{BMO_{L,\theta}},
\end{eqnarray*}
where $1/q+1/q'=1$. Similarly, we also obtain that
\begin{eqnarray*}
&&\|(f- f_B)(g- g_B)R_j(a)\|_{L^1}\\
&=& \|(f- f_B)(g- g_B)R_j(a)\|_{L^1(2^4 B)} + \sum_{k=4}^\infty \|(f- f_B)(g- g_B)R_j(a)\|_{L^1(2^{k+1}B\setminus 2^k B)}\\
&\leq& \|f-f_B\|_{L^{2q'}(2^4 B)}\|g- g_B\|_{L^{2q'}(2^4 B)}\|R_j(a)\|_{L^q} +\\
&& + \sum_{k=4}^\infty\|f-f_B\|_{L^{2q'}(2^{k+1}B)}\|g- g_B\|_{L^{2q'}(2^{k+1}B)}\|R_j(a)\|_{L^q(2^{k+1}B\setminus 2^k B)}\\
&\leq& C \|f\|_{BMO} \|g\|_{BMO_{L,\theta}},
\end{eqnarray*}
which ends the proof.

\end{proof}

\section{Some applications}\label{some applications}

The purpose of this section is to give some applications of the decomposition theorems (Theorem \ref{subbilinear decomposition for commutators} and Theorem \ref{bilinear decomposition for commutators}). To be more precise, we give some subspaces of $H^1_L(\mathbb R^d)$, {\sl which do not necessarily depend on $b$ and $T$},  such that all commutators $[b,T]$, for $b\in BMO(\mathbb R^d)$ and $T\in \mathcal K_L$, map continuously these spaces into $L^1(\mathbb R^d)$.

Especially, using Theorem \ref{subbilinear decomposition for commutators} and Theorem \ref{bilinear decomposition for commutators}, we find the largest subspace $\mathcal H^1_{L,b}(\mathbb R^d)$ of $H^1_L(\mathbb R^d)$ so that all commutators of Schr\"odinger-Calder\'on-Zygmund operators and the Riesz transforms are bounded from $\mathcal H^1_{L,b}(\mathbb R^d)$ into $L^1(\mathbb R^d)$. Also, it allows  to find all functions $b$ in $BMO(\mathbb R^d)$ so that $\mathcal H^1_{L,b}(\mathbb R^d)\equiv H^1_L(\mathbb R^d)$.

\subsection{Atomic Hardy spaces related to $b\in BMO(\mathbb R^d)$}

\begin{Definition}
Let $1<q\leq \infty$, $\varepsilon>0$ and $b\in BMO(\mathbb R^d)$. A function $a$ is called a $(H^1_{L,b},q,\varepsilon)$-atom related to the ball $B= B(x_0,r)$ if $a$ is a generalized $(H^1_L,q,\varepsilon)$-atom related to the same ball $B$ and 
\begin{equation}\label{vanishing condition for atomic Hardy space}
\Big|\int_{\mathbb R^d}a(x) (b(x)- b_B) dx\Big|\leq \Big(\frac{r}{\rho(x_0)}\Big)^\varepsilon.
\end{equation}
\end{Definition}

As usual, the  space $H^{1,q,\varepsilon}_{L,b}(\mathbb R^d)$ is defined as $\mathbb H^{1,q,\varepsilon}_{L,\rm at}(\mathbb R^d)$  with generalized $(H^1_L,q,\varepsilon)$-atoms replaced by $(H^1_{L,b},q,\varepsilon)$-atoms.

Obviously, $H^{1,q,\varepsilon}_{L,b}(\mathbb R^d)\subset \mathbb H^{1,q,\varepsilon}_{L,\rm at}(\mathbb R^d)\equiv H^1_L(\mathbb R^d)$ and the inclusion is continuous.

\begin{Theorem}\label{An atomic Hardy space H^1_b}
Let $1<q\leq \infty$, $\varepsilon>0$, $b\in BMO(\mathbb R^d)$ and $T\in \mathcal K_L$. Then, the commutator $[b,T]$ is bounded from $H^{1,q,\varepsilon}_{L,b}(\mathbb R^d)$ into $L^1(\mathbb R^d)$.
\end{Theorem}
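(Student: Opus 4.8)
The plan is to use Theorem \ref{subbilinear decomposition for commutators} to reduce the boundedness of $[b,T]$ on the space $H^{1,q,\varepsilon}_{L,b}(\mathbb R^d)$ to a uniform estimate on $(H^1_{L,b},q,\varepsilon)$-atoms. By the subbilinear decomposition, for every $(f,b)$ we have
$$|[b,T](f)|\leq \mathfrak R(f,b)+|T(\mathfrak S(f,b))|,$$
where $\mathfrak R$ is bounded on $H^1_L\times BMO$ into $L^1$ and $\mathfrak S(f,b)=-\Pi(\mathfrak H(f),b)$ is bounded from $H^1_L\times BMO$ into $L^1$. Since $\mathfrak R$ already gives the bound $\|\mathfrak R(f,b)\|_{L^1}\leq C\|f\|_{H^1_L}\|b\|_{BMO}\leq C\|f\|_{H^{1,q,\varepsilon}_{L,b}}\|b\|_{BMO}$ (using the continuous inclusion $H^{1,q,\varepsilon}_{L,b}\subset H^1_L$), it remains only to control $\|T(\mathfrak S(f,b))\|_{L^1}$. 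Because $T$ is bounded from $H^1_L(\mathbb R^d)$ into $L^1(\mathbb R^d)$, this in turn reduces to showing that $\mathfrak S(\cdot,b)=-\Pi(\mathfrak H(\cdot),b)$ maps $H^{1,q,\varepsilon}_{L,b}(\mathbb R^d)$ \emph{boundedly into $H^1_L(\mathbb R^d)$} (not merely into $L^1$), i.e. the extra cancellation built into $(H^1_{L,b},q,\varepsilon)$-atoms upgrades the target space from $L^1$ to $H^1_L$.

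The key step, therefore, is the following atomic claim: if $a$ is a $(H^1_{L,b},q,\varepsilon)$-atom related to $B=B(x_0,r)$, then $\|\mathfrak S(a,b)\|_{H^1_L}\leq C\|b\|_{BMO}$, with $C$ independent of $a,b$; combined with Proposition \ref{boundedness through generalized atoms} (applied to the sublinear map $a\mapsto\mathfrak S(a,b)$, or rather its $H^1_L$-valued version, after noting the finite-atomic density) this yields the desired estimate. To prove the claim I would unwind $\mathfrak S(a,b)=-\Pi(\mathfrak H(a),b)$ and exploit the structure of $\Pi(g,b)=\sum_{I,\sigma}\langle g,\psi^\sigma_I\rangle\langle b,\psi^\sigma_I\rangle(\psi^\sigma_I)^2$. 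One expects that $\Pi(\mathfrak H(a),b)$ decomposes into a piece supported near $B$ with the correct size and a decaying tail, and the vanishing condition \eqref{vanishing condition for atomic Hardy space} together with $\int\psi^\sigma_I=0$ (wavelet cancellation) is exactly what is needed to match the moment condition in the definition of $(H^1_L,q)$-atoms, at least modulo an admissible error of size $(r/\rho(x_0))^\varepsilon$, which is precisely the tolerance allowed for a generalized $(H^1_L,q,\varepsilon)$-atom. Thus $\mathfrak S(a,b)$ should turn out to be a bounded multiple of $\|b\|_{BMO}$ times an $H^1_L$-function with controlled atomic norm, via Theorem \ref{generalized Hardy spaces}.

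The main obstacle will be tracking this cancellation precisely through the paraproduct $\Pi$ and the operator $\mathfrak H$: one must show that the moment $\int \mathfrak S(a,b)$ is comparable to $\int a(x)(b(x)-b_B)\,dx$ up to errors governed by the localization of $\mathfrak H$ (which involves the partition $\{\psi_{n,k}\}$ and the scales $2^{-n/2}\sim\rho$) and by the spatial spread of the terms $(\psi^\sigma_I)^2$. A clean way to organize this is to first establish the estimate assuming $b$ has been adjusted by the constant $b_B$ on the relevant balls, reducing to oscillation quantities $MO(b,\cdot)$ controlled by $\|b\|_{BMO}$ via Lemma \ref{fundamental estimates for BMO}, and then to sum the resulting geometric-in-$k$ tails. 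An alternative, and perhaps shorter, route is to invoke the bilinear decomposition directly together with the already-proven fact (from the proof of Theorem \ref{Hardy estimates for CZO}) that $(b-b_B)a\in H^1_L$ with controlled norm whenever suitable moments vanish; in any case the heart of the matter is verifying that the extra cancellation \eqref{vanishing condition for atomic Hardy space} is exactly the obstruction measured by $\mathfrak S$, so that on $H^{1,q,\varepsilon}_{L,b}$ the term $T(\mathfrak S(f,b))$ lands back in $L^1$.
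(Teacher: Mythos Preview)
Your overall strategy is right: use the subbilinear decomposition, reduce to showing $\|\mathfrak S(a,b)\|_{H^1_L}\leq C\|b\|_{BMO}$ for each $(H^1_{L,b},q,\varepsilon)$-atom $a$, and then pass to general $f$. The gap is in how you propose to prove the atomic claim. Your primary route---unwinding $\Pi(\mathfrak H(a),b)$ and tracking moments through the wavelet expansion---is speculative: the building blocks $(\psi^\sigma_I)^2$ have \emph{nonzero} integral, the sum runs over all dyadic scales (not localized near $B$), and $\mathfrak H$ mixes in the partition $\{\psi_{n,k}\}$ in a way that makes a direct size/cancellation analysis of $\mathfrak S(a,b)$ very hard. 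There is no mechanism here that visibly turns the single moment condition \eqref{vanishing condition for atomic Hardy space} into an $H^1_L$ bound for the paraproduct output. Your appeal to Proposition~\ref{boundedness through generalized atoms} is also off: that proposition is stated for generalized $(H^1_L,q,\varepsilon)$-atoms, not $(H^1_{L,b},q,\varepsilon)$-atoms, so the passage from atoms to the whole space needs a separate argument (the paper uses atomic decomposition plus weak-star convergence in $H^1_L$).

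The paper's proof of the atomic claim is the ``alternative'' you mention, but not via Theorem~\ref{Hardy estimates for CZO} (whose argument requires $b\in BMO^{\log}_L$, which you do not have here). Instead one argues \emph{indirectly} through the Riesz transform characterization of $H^1_L$: first show that the extra moment condition \eqref{vanishing condition for atomic Hardy space} makes $(b-b_B)a$ a constant times a generalized $(H^1_L,\tfrac{\tilde q+1}{2},\varepsilon)$-atom, so $(b-b_B)a\in H^1_L$; then apply the \emph{bilinear} decomposition (Theorem~\ref{bilinear decomposition for commutators}) to each Riesz transform $R_j\in\mathcal K_L$ to write $R_j(\mathfrak S(a,b))=[b,R_j](a)-\mathfrak R_j(a,b)=(b-b_B)R_j(a)-R_j((b-b_B)a)-\mathfrak R_j(a,b)$, all three pieces of which are in $L^1$ with norm $\leq C\|b\|_{BMO}$. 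Since $\mathfrak S(a,b)\in L^1$ and $R_j(\mathfrak S(a,b))\in L^1$ for every $j$, the Riesz characterization of $H^1_L$ gives $\mathfrak S(a,b)\in H^1_L$ with the right bound. This completely sidesteps any direct analysis of $\Pi$ and $\mathfrak H$, which is the trick you are missing.
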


\begin{Remark}
The space $H^1_b(\mathbb R^d)$ which has been considered by Tang and Bi \cite{TB} is a strict subspace of  $H^{1,q,\varepsilon}_{L,b}(\mathbb R^d)$ in general. As an example, let us take $1<q\leq \infty$, $\varepsilon>0$, $L= -\Delta +1$, and $b$ be a non-constant bounded function, then it is easy to check that the function $f= \chi_{B(0,1)}$ belongs to $H^{1,q,\varepsilon}_{L,b}(\mathbb R^d)$  but not to $H^1_b(\mathbb R^d)$. Thus, Theorem \ref{An atomic Hardy space H^1_b} can be seen as an improvement of the main result of \cite{TB}.
\end{Remark}

We should also point out that the authors in \cite{TB} proved their main result (see \cite{TB}, Theorem 3.1) by establishing that 
$$\|[b, R_j](a)\|_{L^1}\leq C \|b\|_{BMO}$$
for all $H^1_b$-atom $a$. However, as pointed in \cite{Bo} and \cite{Ky2}, such arguments are not sufficient to conclude that $[b, R_j]$ is bounded from $H^1_b(\mathbb R^d)$ into $L^1(\mathbb R^d)$ in general.

\begin{proof}[Proof of Theorem \ref{An atomic Hardy space H^1_b}]
Let $a$ be a $(H^1_{L,b},q,\varepsilon)$-atom related to the ball $B=B(x_0,r)$. We first prove that $(b-b_B)a$ is $C \|b\|_{BMO}$ times a generalized $(H^1_L, (\widetilde q+ 1)/2, \varepsilon)$-atom, where $\widetilde q\in (1,\infty)$ will be defined later and the positive constant $C$ is independent of $b,a$. Indeed, one has supp $(b-b_B)a\subset$ supp $a\subset B$. In addition, from H\"older inequality and John-Nirenberg (classical) inequality,
$$\|(b-b_B)a\|_{L^{(\widetilde q+ 1)/2}}\leq \|(b-b_B)\chi_B\|_{L^{\widetilde q(\widetilde q+1)/(\widetilde q-1)}}\|a\|_{L^{\widetilde q}}\leq C \|b\|_{BMO}|B|^{(-\widetilde q +1)/(\widetilde q+1)},$$
where $\widetilde q= q$ if $1<q<\infty$ and $\widetilde q=2$ if $q=\infty$. These together with (\ref{vanishing condition for atomic Hardy space}) yield that $(b-b_B)a$ is $C \|b\|_{BMO}$ times a generalized $(H^1_L, (\widetilde q+ 1)/2, \varepsilon)$-atom, and thus $\|(b-b_B)a\|_{H^1_L}\leq C \|b\|_{BMO}$.

We now prove that $\mathfrak S(a,b)$ belongs to $H^1_L(\mathbb R^d)$.

By Theorem \ref{bilinear decomposition for commutators}, there exist $d$ bounded bilinear operators $\mathfrak R_j: H^1_L(\mathbb R^d)\times BMO(\mathbb R^d)\to L^1(\mathbb R^d)$, $j=1,...,d$, such that
$$[b,R_j](a)= \mathfrak R_j(a,b) + R_j(\mathfrak S(a,b)),$$
since $R_j$ is linear and belongs to $\mathcal K_L$ (see Proposition \ref{the Riesz transforms and the class K}). Consequently, for every $j=1,...,d$, as $R_j\in \mathcal K_L$,
\begin{eqnarray*}
\|R_j(\mathfrak S(a,b))\|_{L^1} &=& \|(b-b_B)R_j(a) - R_j((b-b_B)a)- \mathfrak R_j(a,b)\|_{L^1}\\
&\leq& \|(b-b_B)R_j(a)\|_{L^1} + \|R_j\|_{H^1_L\to L^1}\|(b-b_B)a\|_{H^1_L}+ \|\mathfrak R_j(a,b)\|_{L^1}\\
&\leq& C \|b\|_{BMO}.
\end{eqnarray*}
This together with Proposition \ref{the bilinear operator} prove that $\mathfrak S(a,b)\in H^1_L(\mathbb R^d)$, and moreover that
\begin{equation}\label{An atomic Hardy space H^1_b 1}
\|\mathfrak S(a,b)\|_{H^1_L}\leq C \|b\|_{BMO}.
\end{equation}

Now, for any $f\in H^{1,q,\varepsilon}_{L,b}(\mathbb R^d)$, there exists an expansion $f= \sum_{k=1}^\infty \lambda_k a_k$ where the $a_k$ are $(H^{1}_{L,b},q,\varepsilon)$-atoms and $\sum_{k=1}^\infty |\lambda_k|\leq 2 \|f\|_{H^{1,q,\varepsilon}_{L,b}}$. Then, the sequence $\{\sum_{k=1}^n \lambda_k a_k\}_{n\geq 1}$ converges to $f$ in $H^{1,q,\varepsilon}_{L,b}(\mathbb R^d)$ and thus in $H^1_L(\mathbb R^d)$. Hence,  Proposition \ref{the bilinear operator} implies that the sequence $\Big\{\mathfrak S\Big(\sum_{k=1}^n \lambda_k a_k,b\Big)\Big\}_{n\geq 1}$ converges to $\mathfrak S(f,b)$ in $L^1(\mathbb R^d)$. In addition, by (\ref{An atomic Hardy space H^1_b 1}),
$$\left\|\mathfrak S\Big(\sum_{k=1}^n \lambda_k a_k,b\Big)\right\|_{H^1_L}\leq \sum_{k=1}^n |\lambda_k| \|\mathfrak S(a_k,b)\|_{H^1_L}\leq C \|f\|_{H^{1,q,\varepsilon}_{L,b}}\|b\|_{BMO}.$$

We then use Theorem \ref{subbilinear decomposition for commutators} and the weak-star convergence in $H^1_L(\mathbb R^d)$ (see \cite{Ky3}) to conclude that 
\begin{eqnarray*}
\|[b,T](f)\|_{L^1} &\leq& \|\mathfrak R_T(f,b)\|_{L^1} + \|T\|_{H^1_L\to L^1}\|\mathfrak S(f,b)\|_{H^1_L}\\
&\leq& C \|f\|_{H^1_L}\|b\|_{BMO} + C \|f\|_{H^{1,q,\varepsilon}_{L,b}}\|b\|_{BMO}\\
&\leq& C \|f\|_{H^{1,q,\varepsilon}_{L,b}}\|b\|_{BMO},
\end{eqnarray*}
which  ends the proof.

\end{proof}

\subsection{The spaces $\mathcal H^1_{L,b}(\mathbb R^d)$ related to $b\in BMO(\mathbb R^d)$}

In this section, we find the largest subspace $\mathcal H^1_{L,b}(\mathbb R^d)$ of $H^1_L(\mathbb R^d)$ so that all commutators of Schr\"odinger-Calder\'on-Zygmund operators and the Riesz transforms are bounded from $\mathcal H^1_{L,b}(\mathbb R^d)$ into $L^1(\mathbb R^d)$. Also, we find all functions $b$ in $BMO(\mathbb R^d)$ so that $\mathcal H^1_{L,b}(\mathbb R^d)\equiv H^1_L(\mathbb R^d)$.

\begin{Definition}
Let $b$ be a non-constant $BMO$-function. The space $\mathcal H^1_{L,b}(\mathbb R^d)$ consists of all $f$ in $H^1_L(\mathbb R^d)$ such that  $[b,\mathcal M_L](f)(x)= \mathcal M_L(b(x)f(\cdot)- b(\cdot)f(\cdot))(x)$ belongs to $L^1(\mathbb R^d)$. We equipped  $\mathcal H^1_{L,b}(\mathbb R^d)$ with the norm 
$$\|f\|_{\mathcal H^1_{L,b}}= \|f\|_{H^1_L}\|b\|_{BMO}+ \|[b,\mathcal M_L](f)\|_{L^1}.$$
\end{Definition}

Here, we just consider non-constant functions $b$ in $BMO(\mathbb R^d)$ since $[b,T]=0$ if $b$ is a constant function.

\begin{Theorem}\label{the largest subspace}
Let $b$ be a non-constant $BMO$-function. Then, the following statements hold: 

i) For every $T\in\mathcal K_L$, the commutator $[b,T]$ is bounded from $\mathcal H^1_{L,b}(\mathbb R^d)$ into $L^1(\mathbb R^d)$. 

ii) Assume that $\mathcal X$ is a subspace of $H^1_L(\mathbb R^d)$ such that all commutators of the Riesz transforms are bounded from $\mathcal X$ into $L^1(\mathbb R^d)$. Then, $\mathcal X\subset \mathcal H^1_{L,b}(\mathbb R^d)$.

iii) $\mathcal H^1_{L,b}(\mathbb R^d)\equiv H^1_L(\mathbb R^d)$ if and only if $b\in BMO^{\log}_L(\mathbb R^d)$.
\end{Theorem}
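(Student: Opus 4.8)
The plan is to prove the three parts of Theorem~\ref{the largest subspace} in turn, relying crucially on the subbilinear decomposition (Theorem~\ref{subbilinear decomposition for commutators}) and on the fact that the "heat" maximal operator $\mathcal M_L$ lies in $\mathcal K_L$ (Proposition~\ref{the maximal operator and the class K}).

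\textbf{Part (i).} First I would fix $T\in\mathcal K_L$ and $f\in\mathcal H^1_{L,b}(\mathbb R^d)$. By Theorem~\ref{subbilinear decomposition for commutators} applied to $T$,
$$|[b,T](f)|\leq \mathfrak R_T(f,b) + |T(\mathfrak S(f,b))|,$$
and since $\mathfrak R_T$ is bounded into $L^1$ we get $\|\mathfrak R_T(f,b)\|_{L^1}\leq C\|f\|_{H^1_L}\|b\|_{BMO}$. Thus it suffices to show $\mathfrak S(f,b)\in H^1_L(\mathbb R^d)$ with $\|\mathfrak S(f,b)\|_{H^1_L}\leq C\|f\|_{\mathcal H^1_{L,b}}$, because then $\|T(\mathfrak S(f,b))\|_{L^1}\leq \|T\|_{H^1_L\to L^1}\|\mathfrak S(f,b)\|_{H^1_L}$ and we are done. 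To control $\|\mathfrak S(f,b)\|_{H^1_L}=\|\mathcal M_L(\mathfrak S(f,b))\|_{L^1}$, I would apply the subbilinear decomposition a \emph{second} time, now to the operator $\mathcal M_L\in\mathcal K_L$ itself: since $\mathcal M_L$ does not depend on the choice, it uses the same $\mathfrak S$, so
$$|\mathcal M_L(\mathfrak S(f,b))|\leq \mathfrak R_{\mathcal M_L}(f,b) + |[b,\mathcal M_L](f)|.$$
The first term is in $L^1$ with norm $\lesssim \|f\|_{H^1_L}\|b\|_{BMO}$, and the second is in $L^1$ precisely because $f\in\mathcal H^1_{L,b}(\mathbb R^d)$, with $\|[b,\mathcal M_L](f)\|_{L^1}\leq\|f\|_{\mathcal H^1_{L,b}}$. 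Combining the two displays yields $\|[b,T](f)\|_{L^1}\leq C\|f\|_{\mathcal H^1_{L,b}}$.

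\textbf{Part (ii).} Suppose $\mathcal X\subset H^1_L(\mathbb R^d)$ and all $[b,R_j]$ map $\mathcal X$ into $L^1$. For $f\in\mathcal X$ I must show $[b,\mathcal M_L](f)\in L^1$. Using Theorem~\ref{bilinear decomposition for commutators} for each linear operator $R_j\in\mathcal K_L$ (Proposition~\ref{the Riesz transforms and the class K}), $[b,R_j](f)=\mathfrak R_j(f,b)+R_j(\mathfrak S(f,b))$, hence $\|R_j(\mathfrak S(f,b))\|_{L^1}\leq \|[b,R_j](f)\|_{L^1}+\|\mathfrak R_j(f,b)\|_{L^1}<\infty$ for all $j$. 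Since $\mathfrak S(f,b)\in L^1(\mathbb R^d)$ and $R_1(\mathfrak S(f,b)),\dots,R_d(\mathfrak S(f,b))\in L^1(\mathbb R^d)$, the Riesz-transform characterization of $H^1_L(\mathbb R^d)$ (Dziuba\'nski--Zienkiewicz, \cite{DZ}) gives $\mathfrak S(f,b)\in H^1_L(\mathbb R^d)$. Then applying the subbilinear decomposition to $\mathcal M_L$ once more, $|[b,\mathcal M_L](f)|\leq \mathfrak R_{\mathcal M_L}(f,b)+|\mathcal M_L(\mathfrak S(f,b))|$, and both terms are in $L^1$ (the second because $\mathcal M_L$ maps $H^1_L$ into $L^1$). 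Therefore $f\in\mathcal H^1_{L,b}(\mathbb R^d)$ and $\mathcal X\subset\mathcal H^1_{L,b}(\mathbb R^d)$; a closed-graph / open-mapping argument gives the norm inequality.

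\textbf{Part (iii).} The ``if'' direction: if $b\in BMO^{\log}_L(\mathbb R^d)$, then by Theorem~\ref{Hardy estimates for CZO}(i) (or by the argument giving boundedness of $[b,\mathcal M_L]$ along the same lines as the proof that $\mathcal M_L\in\mathcal K_L$) one shows $\|[b,\mathcal M_L](f)\|_{L^1}\leq C\|b\|_{BMO^{\log}_L}\|f\|_{H^1_L}$; I would prove this by testing on generalized $(H^1_L,2,\sigma_0)$-atoms $a$, using the pointwise bound $\mathcal M_L(a)(x)\leq C r^{\sigma_0}/|x-x_0|^{d+\sigma_0}$ for $x\notin 2B$ from the proof of Theorem~\ref{generalized Hardy spaces} together with Lemma~\ref{log-generalized BHS} to absorb the logarithmic gain against the $BMO^{\log}_L$ seminorm, exactly as in the proof of \eqref{Riesz-molecule 0}. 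This gives $\mathcal H^1_{L,b}(\mathbb R^d)\equiv H^1_L(\mathbb R^d)$ with equivalent norms. For the ``only if'' direction: if $\mathcal H^1_{L,b}(\mathbb R^d)\equiv H^1_L(\mathbb R^d)$, then in particular every $(H^1_L,d/2)$-atom $a$ satisfies $\|[b,\mathcal M_L](a)\|_{L^1}\leq C$; by Part~(ii) applied with $\mathcal X=H^1_L(\mathbb R^d)$ this forces $[b,R_j]$ to be bounded on $H^1_L(\mathbb R^d)$ for all $j$, and then Theorem~\ref{Hardy estimates for Riesz transforms} (with $\theta=0$) yields $b\in BMO^{\log}_{L}(\mathbb R^d)$ — here one uses that $b\in BMO(\mathbb R^d)=BMO_{L,0}(\mathbb R^d)\subset BMO_{L,\infty}(\mathbb R^d)$ so the hypothesis of that theorem is met, and that $BMO^{\log}_{L,0}=BMO^{\log}_L$.

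\textbf{Main obstacle.} The delicate point is the ``only if'' part of (iii): one must extract the quantitative logarithmic lower bound on $MO(b,B(x_0,r))$ for $r<\rho(x_0)/2$ from the mere boundedness of the commutators. This is where the pairing with the test function $g_{x_0,r}$ of logarithmic type (with $\|g_{x_0,r}\|_{BMO_L}\leq C$, via Lemma~2.5 of \cite{MSTZ}) against $(b-b_B)a$ is used, exploiting that $BMO_L(\mathbb R^d)$ is the dual of $H^1_L(\mathbb R^d)$ — this is precisely the mechanism already carried out in the converse direction of the proof of Theorem~\ref{Hardy estimates for Riesz transforms}, so the real work is to reduce to that situation rather than to redo it. The double use of the single universal bilinear operator $\mathfrak S$ in Parts~(i) and (ii) is the conceptual heart and should be highlighted.
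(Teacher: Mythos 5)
Your parts (i) and (ii) are correct and follow essentially the paper's own route: the paper merely packages the two applications of the decomposition theorems (Theorem \ref{subbilinear decomposition for commutators} applied to $\mathcal M_L$, and Theorem \ref{bilinear decomposition for commutators} applied to the $R_j$) into a separate equivalence lemma — $f\in\mathcal H^1_{L,b}$ iff $\mathfrak S(f,b)\in H^1_L$ iff $[b,R_j](f)\in L^1$ for all $j$, with equivalent norms — and then reads off (i) and (ii). The "double use of the single universal $\mathfrak S$" you highlight is exactly that mechanism.

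Part (iii) is where your write-up has genuine gaps. For the "if" direction, Theorem \ref{Hardy estimates for CZO}(i) does not apply to $\mathcal M_L$: that theorem concerns linear $L$-Calder\'on--Zygmund operators satisfying $T^*1=0$, and $\mathcal M_L$ is neither linear nor of that type. Your fallback (a direct atomic estimate of $[b,\mathcal M_L]$) runs into the extension problem for sublinear operators defined through atomic bounds that the paper itself flags; the clean route, and the paper's, is: $b\in BMO^{\log}_{L}=BMO^{\log}_{L,0}$ implies $[b,R_j]$ bounded on $H^1_L$ by Theorem \ref{Hardy estimates for Riesz transforms}, hence $[b,R_j](f)\in L^1$ for every $f\in H^1_L$ and every $j$, hence $H^1_L\subset\mathcal H^1_{L,b}$ by your own Part (ii). For the "only if" direction your logic is reversed: Part (ii) has hypothesis "the $[b,R_j]$ are bounded from $\mathcal X$ into $L^1$" and conclusion "$\mathcal X\subset\mathcal H^1_{L,b}$", so it cannot be used to deduce boundedness of the $[b,R_j]$ from $\mathcal H^1_{L,b}\equiv H^1_L$. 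What you need is Part (i) applied to $T=R_j\in\mathcal K_L$, which yields $[b,R_j]$ bounded from $\mathcal H^1_{L,b}=H^1_L$ into $L^1$ — not "bounded on $H^1_L$", i.e.\ not into $H^1_L$. Since the converse half of Theorem \ref{Hardy estimates for Riesz transforms} is stated under the stronger hypothesis $\|[b,R_j]\|_{H^1_L\to H^1_L}<\infty$, you must additionally observe that its proof only uses the quantity $\|[b,R_j](a)\|_{L^1}$, via $\|R_j((b-b_B)a)\|_{L^1}\le\|(b-b_B)R_j(a)\|_{L^1}+\|[b,R_j](a)\|_{L^1}$, so the $H^1_L\to L^1$ bound suffices. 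Finally, the uniform bound $\|[b,\mathcal M_L](a)\|_{L^1}\le C$ over atoms does not follow from set equality alone; it requires the closed-graph/norm-equivalence step that you mention only in Part (ii).
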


To prove Theorem \ref{the largest subspace}, we need the following lemma.

\begin{Lemma}\label{lemma for the largest subspace}
Let $b$ be a non-constant $BMO$-function and $f\in H^1_L(\mathbb R^d)$. Then, the following conditions are equivalent:

i) $f\in \mathcal H^1_{L,b}(\mathbb R^d)$.

ii) $\mathfrak S(f,b)\in H^1_L(\mathbb R^d)$.

iii) $[b, R_j](f)\in L^1(\mathbb R^d)$ for all $j=1,...,d$.

Furthermore, if one of these conditions is satisfied, then
\begin{eqnarray*}
\|f\|_{\mathcal H^1_{L,b}} &=& \|f\|_{H^1_L}\|b\|_{BMO} + \|[b,\mathcal M_L](f)\|_{L^1}\\
&\approx& \|f\|_{H^1_L}\|b\|_{BMO} + \|\mathfrak S(f,b)\|_{H^1_L}\\
&\approx& \|f\|_{H^1_L}\|b\|_{BMO} + \sum_{j=1}^d \|[b, R_j](f)\|_{L^1},
\end{eqnarray*}
where the constants are independent of $b$ and $f$.

\end{Lemma}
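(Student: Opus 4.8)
The plan is to establish the chain of equivalences (i) $\Leftrightarrow$ (ii) $\Leftrightarrow$ (iii) together with the norm equivalences simultaneously, using the subbilinear decomposition (Theorem 3.1) as the main engine. First I would prove (ii) $\Rightarrow$ (i) and (ii) $\Rightarrow$ (iii): if $\mathfrak S(f,b)\in H^1_L(\mathbb R^d)$, then since both $\mathcal M_L$ and each $R_j$ belong to $\mathcal K_L$ (Propositions 4.5 and 4.2), Theorem 3.1 gives $|[b,\mathcal M_L](f)|\leq \mathfrak R_{\mathcal M_L}(f,b)+|\mathcal M_L(\mathfrak S(f,b))|$ and similarly for $R_j$; taking $L^1$-norms and using the boundedness of $\mathfrak R_T$ on $H^1_L\times BMO$ (the norm bound $\|\mathfrak R_T(f,b)\|_{L^1}\le C\|f\|_{H^1_L}\|b\|_{BMO}$ from Theorem 3.1) together with the $H^1_L\to L^1$ boundedness of $\mathcal M_L$ and of $R_j$ yields $\|[b,\mathcal M_L](f)\|_{L^1}\le C(\|f\|_{H^1_L}\|b\|_{BMO}+\|\mathfrak S(f,b)\|_{H^1_L})$ and the same with $\sum_j\|[b,R_j](f)\|_{L^1}$ in place of the left side. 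This gives the $\lesssim$ direction in both displayed norm equivalences and the implications (ii)$\Rightarrow$(i), (ii)$\Rightarrow$(iii).

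Next I would close the loop by proving (iii) $\Rightarrow$ (ii) and (i) $\Rightarrow$ (ii), i.e. the reverse norm bounds. For (iii)$\Rightarrow$(ii): apply the \emph{bilinear} decomposition (Theorem 3.2) to each linear operator $R_j\in\mathcal K_L$, obtaining $R_j(\mathfrak S(f,b))=[b,R_j](f)-\mathfrak R_j(f,b)$, so that $\|R_j(\mathfrak S(f,b))\|_{L^1}\le \|[b,R_j](f)\|_{L^1}+C\|f\|_{H^1_L}\|b\|_{BMO}$. Since $\mathfrak S(f,b)\in L^1(\mathbb R^d)$ already (Proposition 5.4, the bilinear operator $\mathfrak S$ maps into $L^1$), the finiteness of $\|R_j(\mathfrak S(f,b))\|_{L^1}$ for all $j$ together with the characterization of $H^1_L(\mathbb R^d)$ via the Riesz transforms $R_j$ (Dziuba\'nski--Zienkiewicz, \cite{DZ}) forces $\mathfrak S(f,b)\in H^1_L(\mathbb R^d)$ with $\|\mathfrak S(f,b)\|_{H^1_L}\le C(\|\mathfrak S(f,b)\|_{L^1}+\sum_j\|R_j(\mathfrak S(f,b))\|_{L^1})\le C(\|f\|_{H^1_L}\|b\|_{BMO}+\sum_j\|[b,R_j](f)\|_{L^1})$; here one also bounds $\|\mathfrak S(f,b)\|_{L^1}\le C\|f\|_{H^1_L}\|b\|_{BMO}$ by Proposition 5.4. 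For (i)$\Rightarrow$(ii) the same argument applies verbatim with $R_j$ replaced by $\mathcal M_L$: Theorem 3.1 applied to $T=\mathcal M_L$ gives $|\mathcal M_L(\mathfrak S(f,b))|\le \mathfrak R_{\mathcal M_L}(f,b)+|[b,\mathcal M_L](f)|$ pointwise, hence $\|\mathcal M_L(\mathfrak S(f,b))\|_{L^1}\le C\|f\|_{H^1_L}\|b\|_{BMO}+\|[b,\mathcal M_L](f)\|_{L^1}$, i.e. $\|\mathfrak S(f,b)\|_{\mathbb H^1_L}=\|\mathcal M_L(\mathfrak S(f,b))\|_{L^1}<\infty$, which means $\mathfrak S(f,b)\in H^1_L(\mathbb R^d)$ with the corresponding norm control. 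Combining all implications gives (i)$\Leftrightarrow$(ii)$\Leftrightarrow$(iii) and all three norm expressions are comparable.

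The step I expect to be the main obstacle is the passage (iii)$\Rightarrow$(ii), specifically justifying that $\mathfrak S(f,b)\in H^1_L(\mathbb R^d)$ rather than merely in $L^1(\mathbb R^d)$. One must know a priori that $\mathfrak S(f,b)$ is at least in $L^1$ (so that $R_j(\mathfrak S(f,b))$ makes sense and the Riesz-transform characterization is applicable) — this is exactly what Proposition 5.4 provides — and then invoke the non-trivial theorem of Dziuba\'nski--Zienkiewicz that a function $g\in L^1$ with $R_jg\in L^1$ for all $j$ lies in $H^1_L$ with comparable norm. A subtlety to watch is that $b$ is assumed non-constant only so that $\|b\|_{BMO}\neq 0$ and the norm $\|f\|_{\mathcal H^1_{L,b}}=\|f\|_{H^1_L}\|b\|_{BMO}+\|[b,\mathcal M_L](f)\|_{L^1}$ is genuinely a norm and controls $\|f\|_{H^1_L}$; I would note this explicitly. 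The remaining verifications — boundedness of $\mathfrak R_T$, of $\mathfrak S$, and $H^1_L\to L^1$ boundedness of $\mathcal M_L$ and $R_j$ — are all quoted from earlier results, so the proof is a bookkeeping assembly once these two inputs (Proposition 5.4 and the $R_j$-characterization) are in hand.
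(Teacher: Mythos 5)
Your proposal is correct and follows essentially the same route as the paper: the paper proves (i)$\Leftrightarrow$(ii) by applying the subbilinear decomposition to $T=\mathcal M_L$ (exploiting that $\mathcal M_L(\mathfrak S(f,b))$ is precisely the maximal function defining $\|\mathfrak S(f,b)\|_{H^1_L}$) and (ii)$\Leftrightarrow$(iii) by applying the bilinear decomposition to the $R_j$ together with the Riesz-transform characterization of $H^1_L$, exactly the two inputs you identify. Your version merely splits the equivalences into one-directional implications and makes explicit the appeal to the Dziuba\'nski--Zienkiewicz characterization and to $\mathfrak S(f,b)\in L^1$, which the paper leaves implicit.
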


\begin{proof}
$(i)\Leftrightarrow (ii).$ As $\mathcal M_L\in \mathcal K_L$ (see Proposition \ref{the maximal operator and the class K}), by Theorem \ref{subbilinear decomposition for commutators}, there is a bounded subbilinear operator $\mathfrak R: H^1_L(\mathbb R^d)\times BMO(\mathbb R^d)\to L^1(\mathbb R^d)$ such that
$$\mathcal M_L(\mathfrak S(f,b)) - \mathfrak R(f,b)\leq |[b,\mathcal M_L](f)|\leq \mathcal M_L(\mathfrak S(f,b)) + \mathfrak R(f,b).$$
Consequently,  $[b,\mathcal M_L](f)\in L^1(\mathbb R^d)$ iff $\mathfrak S(f,b)\in H^1_L(\mathbb R^d)$, moreover,
$$\|f\|_{\mathcal H^1_{L,b}}\approx \|f\|_{H^1_L}\|b\|_{BMO} + \|\mathfrak S(f,b)\|_{H^1_L}.$$

$(ii)\Leftrightarrow (iii).$ As the Riesz transforms $R_j$ are in $\mathcal K_L$ (see Proposition \ref{the Riesz transforms and the class K}), by Theorem \ref{bilinear decomposition for commutators}, there are $d$ bounded subbilinear operator $\mathfrak R_j: H^1_L(\mathbb R^d)\times BMO(\mathbb R^d)\to L^1(\mathbb R^d)$, $j=1,...,d$, such that 
$$[b,R_j](f)= \mathfrak R_j(f,b)+ R_j(\mathfrak S(f,b)).$$
Therefore, $\mathfrak S(f,b)\in H^1_L(\mathbb R^d)$ iff $[b,R_j](f)\in L^1(\mathbb R^d)$ for all $j=1,...,d$, moreover,
$$\|f\|_{H^1_L}\|b\|_{BMO} + \|\mathfrak S(f,b)\|_{H^1_L}\approx \|f\|_{H^1_L}\|b\|_{BMO} + \sum_{j=1}^d \|[b, R_j](f)\|_{L^1}.$$

\end{proof}

\begin{proof}[Proof of Theorem \ref{the largest subspace}]
 By Theorem  \ref{subbilinear decomposition for commutators}, there is a bounded subbilinear operator $\mathfrak R_T: H^1_L(\mathbb R^d)\times BMO(\mathbb R^d)\to L^1(\mathbb R^d)$ such that
$$|T(\mathfrak S(f,b))| - \mathfrak R_T(f,b)\leq |[b,T](f)|\leq  |T(\mathfrak S(f,b))| + \mathfrak R_T(f,b).$$
Applying Lemma \ref{lemma for the largest subspace} gives for every $f\in \mathcal H^1_{L,b}(\mathbb R^d)$,
\begin{eqnarray*}
\|[b,T](f)\|_{L^1} &\leq& \|T\|_{H^1_L\to L^1}\|\mathfrak S(f,b)\|_{H^1_L}+ \|\mathfrak R_T(f,b)\|_{L^1}\\
&\leq& C \|f\|_{\mathcal H^1_{L,b}} + C \|f\|_{H^1_L}\|b\|_{BMO}\leq C \|f\|_{\mathcal H^1_{L,b}}.
\end{eqnarray*}
Therefore, $[b,T]$ is bounded from $\mathcal H^1_{L,b}(\mathbb R^d)$ into $L^1(\mathbb R^d)$. This ends the proof of $(i)$.

 The proof of $(ii)$ follows directly from Lemma \ref{lemma for the largest subspace}.

 The proof of $(iii)$ follows directly from Theorem \ref{Hardy estimates for Riesz transforms} and Lemma \ref{lemma for the largest subspace}.
\end{proof}

\subsection{Atomic Hardy spaces $H^{\log}_{L,\alpha}(\mathbb R^d)$}

\begin{Definition}

Let $\alpha\in\mathbb R$. We say that the function $a$ is a $H^{\log}_{L,\alpha}$-atom related to the ball $B=B(x_0,r)$ if 

i) supp $a\subset B$,

ii) $\|a\|_{L^2}\leq \Big(\log(e+\frac{\rho(x_0)}{r})\Big)^{\alpha} |B|^{-1/2}$,

iii)  $\int_{\mathbb R^d}a(x)dx=0$.
\end{Definition}

As usual, the  space $H^{\log}_{L,\alpha}(\mathbb R^d)$ is defined as $\mathbb H^{1,q,\varepsilon}_{L,\rm at}$ with generalized $(H^1_L,q,\varepsilon)$-atoms replaced by $H^{\log}_{L,\alpha}$-atoms.

Clearly, $H^{\log}_{L,0}(\mathbb R^d)$ is just $H^1(\mathbb R^d)\subset H^1_L(\mathbb R^d)$. Moreover,   $H^{\log}_{L,\alpha}(\mathbb R^d)\subset H^{\log}_{L,\alpha'}(\mathbb R^d)$ for all $\alpha\leq \alpha'$. It should be pointed out that when $L=-\Delta+1$ and $\alpha\geq 0$, then $H^{\log}_{L,\alpha}(\mathbb R^d)$ is just the space of all distributions $f$ such that 
$$\int_{\mathbb R^d}\frac{\frac{\mathfrak Mf(x)}{\lambda}}{\Big(\log(e+ \frac{\mathfrak Mf(x)}{\lambda})\Big)^\alpha}dx<\infty$$
for some $\lambda>0$, moreover (see \cite{Ky1} for the details),
$$\|f\|_{H^{\log}_{L,\alpha}}\approx \inf\left\{\lambda>0: \int_{\mathbb R^d}\frac{\frac{\mathfrak Mf(x)}{\lambda}}{\Big(\log(e+ \frac{\mathfrak Mf(x)}{\lambda})\Big)^\alpha}dx\leq 1\right\}.$$

\begin{Theorem}\label{atomic Hardy-type spaces}
For every $T\in \mathcal K_L$ and $b\in BMO(\mathbb R^d)$, the commutator $[b,T]$ is bounded from $H^{\log}_{L,-1}(\mathbb R^d)$ into $L^1(\mathbb R^d)$.
\end{Theorem}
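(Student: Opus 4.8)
The proof will follow the scheme of the proof of Theorem \ref{An atomic Hardy space H^1_b}, the only new ingredient being an estimate that exploits the logarithmic size reduction in the definition of an $H^{\log}_{L,-1}$-atom. The plan is to deduce everything from the subbilinear decomposition of Theorem \ref{subbilinear decomposition for commutators}: given $T\in\mathcal K_L$, that theorem yields a bounded subbilinear $\mathfrak R_T$ with
$$\|[b,T](f)\|_{L^1}\le\|\mathfrak R_T(f,b)\|_{L^1}+\|T(\mathfrak S(f,b))\|_{L^1}\le C\|f\|_{H^1_L}\|b\|_{BMO}+\|T\|_{H^1_L\to L^1}\|\mathfrak S(f,b)\|_{H^1_L},$$
so it suffices to prove the two facts $\|f\|_{H^1_L}\le C\|f\|_{H^{\log}_{L,-1}}$ and $\|\mathfrak S(f,b)\|_{H^1_L}\le C\|f\|_{H^{\log}_{L,-1}}\|b\|_{BMO}$. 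The first is immediate: an $H^{\log}_{L,-1}$-atom $a$ related to $B=B(x_0,r)$ has $\|a\|_{L^2}\le(\log(e+\rho(x_0)/r))^{-1}|B|^{-1/2}\le|B|^{-1/2}$ and $\int a=0$, hence is a sub-multiple of a classical $(H^1,2)$-atom, hence (Remark \ref{atoms}) of a generalized $(H^1_L,2,\varepsilon)$-atom, so $\|a\|_{H^1_L}\le C$ by Theorem \ref{generalized Hardy spaces}, and the embedding follows by summing an atomic decomposition.

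For the second fact, by the usual weak-$*$ density argument (Theorem \ref{Ky3}, the uniform bound $\|a_j\|_{H^1_L}\le C$, boundedness of $\mathfrak S$ from Proposition \ref{the bilinear operator}) it is enough to show $\|\mathfrak S(a,b)\|_{H^1_L}\le C\|b\|_{BMO}$ for one $H^{\log}_{L,-1}$-atom $a$ related to $B=B(x_0,r)$. Here I would use the characterization of $H^1_L$ by the Riesz transforms (\cite{DZ}): it suffices to bound $\|\mathfrak S(a,b)\|_{L^1}\le C\|a\|_{H^1_L}\|b\|_{BMO}\le C\|b\|_{BMO}$ (Proposition \ref{the bilinear operator}) and each $\|R_j(\mathfrak S(a,b))\|_{L^1}$. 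Applying the bilinear decomposition (Theorem \ref{bilinear decomposition for commutators}) to $R_j\in\mathcal K_L$ (Proposition \ref{the Riesz transforms and the class K}),
$$R_j(\mathfrak S(a,b))=[b,R_j](a)-\mathfrak R_j(a,b)=(b-b_B)R_j(a)-R_j\big((b-b_B)a\big)-\mathfrak R_j(a,b),$$
where $\|\mathfrak R_j(a,b)\|_{L^1}\le C\|b\|_{BMO}$ since $\|a\|_{H^1_L}\le C$, and $\|(b-b_B)R_j(a)\|_{L^1}\le C\|b\|_{BMO}$ because $a$ is a sub-multiple of a generalized atom and $R_j\in\mathcal K_L$.

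The step that genuinely uses the logarithmic gain is the bound $\|(b-b_B)a\|_{H^1_L}\le C\|b\|_{BMO}$, which then controls $\|R_j((b-b_B)a)\|_{L^1}$ since $R_j$ maps $H^1_L$ into $L^1$; this is the main obstacle. Set $\theta=(\log(e+\rho(x_0)/r))^{-1}\le1$. Hölder's inequality and the John--Nirenberg inequality give, for a suitable $q\in(1,2)$, $\|(b-b_B)a\|_{L^q}\le C\|b\|_{BMO}\,\theta\,|B|^{1/q-1}$ and $|c|\le C\|b\|_{BMO}\,\theta\,|B|^{-1}$ with $c=\frac{1}{|B|}\int(b-b_B)a$. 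Writing $(b-b_B)a=\big((b-b_B)a-c\chi_B\big)+c\chi_B$, the first piece is $C\|b\|_{BMO}\theta$ times a classical $(H^1,q)$-atom, so its $H^1_L$-norm is $\le C\|b\|_{BMO}$; for the second piece one invokes the elementary estimate $\|\chi_{B(x_0,r)}\|_{H^1_L}\le C|B(x_0,r)|\log(e+\rho(x_0)/r)$ — obtained by telescoping $\chi_{B(x_0,r)}/|B|-\chi_{B(x_0,\rho(x_0))}/|B(x_0,\rho(x_0))|$ over the $\approx\log(e+\rho(x_0)/r)$ dyadic scales between $r$ and $\rho(x_0)$, each successive difference being a classical atom, while $\chi_{B(x_0,\rho(x_0))}/|B(x_0,\rho(x_0))|$ is an $(H^1_L,\infty)$-atom, the case $r\ge\mathcal C_L\rho(x_0)$ being covered because then $\chi_B/|B|$ is itself a generalized $(H^1_L,\infty,\varepsilon)$-atom — whence $\|c\chi_B\|_{H^1_L}\le C\|b\|_{BMO}\,\theta\log(e+\rho(x_0)/r)=C\|b\|_{BMO}$. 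Combining the three estimates gives $\|\mathfrak S(a,b)\|_{H^1_L}\le C\|b\|_{BMO}$, and the theorem follows.
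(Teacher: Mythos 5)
Your proposal is correct, and its overall skeleton coincides with the paper's: reduce via the subbilinear decomposition (Theorem \ref{subbilinear decomposition for commutators}) to showing $\|\mathfrak S(a,b)\|_{H^1_L}\leq C\|b\|_{BMO}$ for a single $H^{\log}_{L,-1}$-atom $a$, obtain this from the Riesz-transform characterization of $H^1_L$ together with the bilinear decomposition for $[b,R_j]$, and observe that everything hinges on the single estimate $\|(b-b_B)a\|_{H^1_L}\leq C\|b\|_{BMO}$. Where you genuinely diverge is in the proof of that key estimate. The paper proves it by duality: it tests $(b-b_B)a$ against $g\in C^\infty_c(\mathbb R^d)$, invokes Theorem \ref{Ky3} ($H^1_L$ as the dual of $VMO_L$), and uses the bound $|g_B|\leq C\log\bigl(e+\frac{\rho(x_0)}{r}\bigr)\|g\|_{BMO_L}$ from Lemma 2 of \cite{DGMTZ}, so that the logarithm is cancelled by the factor $\bigl(\log(e+\frac{\rho(x_0)}{r})\bigr)^{-1}$ in the atom's size condition. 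You instead build an explicit atomic decomposition, splitting $(b-b_B)a$ into a mean-zero piece (a small multiple of a classical atom) plus $c\chi_B$, and pay the logarithm through the telescoping estimate $\|\chi_B\|_{H^1_L}\leq C|B|\log\bigl(e+\frac{\rho(x_0)}{r}\bigr)$, again cancelled by the atom's normalization. Both arguments exploit exactly the same cancellation of logarithms; yours is more constructive and avoids the duality with $VMO_L$ at this step (though you still use the weak-$*$ machinery for the limiting argument over atomic sums), while the paper's is shorter because the logarithmic growth is already packaged in the cited lemma. Your telescoping step is sound: the intermediate differences are classical $(H^1,\infty)$-atoms, the terminal ball of radius $\approx\rho(x_0)$ needs no moment condition, and the regime $r\gtrsim\rho(x_0)$ is handled directly by the generalized-atom moment condition, as you note.
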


\begin{proof}
Let $a$ be a $H^{\log}_{L,-1}$-atom related to the ball $B=B(x_0,r)$. Let us first prove that $(b-b_B)a\in H^1_L(\mathbb R^d)$. As  $H^1_L(\mathbb R^d)$ is the dual of $VMO_L(\mathbb R^d)$ (see Theorem \ref{Ky3}), it is sufficient to show that for every $g\in C^\infty_c(\mathbb R^d)$,
$$\|(b-b_B)a g\|_{L^1}\leq C \|b\|_{BMO}\|g\|_{BMO_L}.$$
Indeed, using the estimate $|g_B|\leq C \log\Big(e+ \frac{\rho(x_0)}{r}\Big)\|g\|_{BMO_L}$ (see Lemma 2 of \cite{DGMTZ}), H\"older inequality and classical John-Nirenberg inequality give
\begin{eqnarray*}
\|(b-b_B)a g\|_{L^1} &\leq& \|(g-g_B)(b-b_B)a\|_{L^1}+ |g_B|\|(b-b_B)a\|_{L^1}\\
&\leq& \|(g-g_B)\chi_B\|_{L^4}\|(b- b_B)\chi_B\|_{L^4}\|a\|_{L^2} + \\
&&+ C\log\Big(e+ \frac{\rho(x_0)}{r}\Big)\|g\|_{BMO_L}\|(b- b_B)\chi_B\|_{L^2}\|a\|_{L^2}\\
&\leq& C \|b\|_{BMO}\|g\|_{BMO_L},
\end{eqnarray*}
which proves that $(b-b_B)a\in H^1_L(\mathbb R^d)$, moreover, $\|(b-b_B)a\|_{H^1_L}\leq C \|b\|_{BMO}$.

Similarly to the proof of Theorem \ref{An atomic Hardy space H^1_b}, we also obtain that
$$\|\mathfrak S(f,b)\|_{H^1_L}\leq C \|f\|_{H^{\log}_{L,-1}}\|b\|_{BMO}$$
for all $f\in H^{\log}_{L,-1}(\mathbb R^d)$. Therefore, Theorem \ref{subbilinear decomposition for commutators} allows to conclude that
$$\|[b,T](f)\|_{L^1}\leq C \|f\|_{H^{\log}_{L,-1}}\|b\|_{BMO},$$ 
which ends the proof.
\end{proof}

As a consequence of the proof of Theorem \ref{atomic Hardy-type spaces}, we obtain the following result.

\begin{Proposition}
Let $T\in \mathcal K_L$. Then,  $\mathfrak T(f,b):= [b,T](f)$ is a bounded subbilinear operator from $H^{\log}_{L,-1}(\mathbb R^d)\times BMO(\mathbb R^d)$ into $L^1(\mathbb R^d)$.
\end{Proposition}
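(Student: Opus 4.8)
The plan is to observe that this Proposition is essentially a repackaging of what the proof of Theorem \ref{atomic Hardy-type spaces} already establishes; the only genuinely new point to check is that the map $(f,b)\mapsto[b,T](f)$ is subbilinear in the sense introduced in Section \ref{Some preliminaries and notations}, and this is a purely pointwise consequence of the sublinearity of $T$.

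First I would verify subbilinearity. Fix $b\in BMO(\mathbb R^d)$. Since $[b,T](f)(x)=T\big((b(x)-b(\cdot))f(\cdot)\big)(x)$ and the map $f\mapsto (b(x)-b(\cdot))f(\cdot)$ is linear while $T$ is sublinear, the operator $f\mapsto[b,T](f)$ is sublinear from $H^{\log}_{L,-1}(\mathbb R^d)$ into $L^1(\mathbb R^d)$. Now fix $f$; from the identity $(\alpha b_1+\beta b_2)(x)-(\alpha b_1+\beta b_2)(y)=\alpha\big(b_1(x)-b_1(y)\big)+\beta\big(b_2(x)-b_2(y)\big)$ and the sublinearity of $T$ one gets the pointwise bound $|[\alpha b_1+\beta b_2,T](f)(x)|\le|\alpha|\,|[b_1,T](f)(x)|+|\beta|\,|[b_2,T](f)(x)|$, so $b\mapsto[b,T](f)$ is sublinear from $BMO(\mathbb R^d)$ into $L^1(\mathbb R^d)$. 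Hence $\mathfrak T$ is a subbilinear operator.

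It then remains to prove the uniform bound $\|\mathfrak T(f,b)\|_{L^1}\le C\|f\|_{H^{\log}_{L,-1}}\|b\|_{BMO}$ for all $(f,b)$. This is exactly the estimate produced inside the proof of Theorem \ref{atomic Hardy-type spaces}: for any $H^{\log}_{L,-1}$-atom $a$ related to $B=B(x_0,r)$ one shows, via Theorem \ref{Ky3}, Hölder's inequality, the classical John--Nirenberg inequality and the bound $|g_B|\le C\log(e+\rho(x_0)/r)\|g\|_{BMO_L}$ (Lemma 2 of \cite{DGMTZ}), that $(b-b_B)a\in H^1_L(\mathbb R^d)$ with $\|(b-b_B)a\|_{H^1_L}\le C\|b\|_{BMO}$; combining this with the linearity of the Riesz transforms, the bilinear decomposition of Theorem \ref{bilinear decomposition for commutators}, the continuity of $\mathfrak S$ from Proposition \ref{the bilinear operator} and the weak-star limiting argument used in the proof of Theorem \ref{An atomic Hardy space H^1_b} (relying on \cite{Ky3}), one gets $\|\mathfrak S(f,b)\|_{H^1_L}\le C\|f\|_{H^{\log}_{L,-1}}\|b\|_{BMO}$. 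Finally Theorem \ref{subbilinear decomposition for commutators} gives $|[b,T](f)|\le |T(\mathfrak S(f,b))|+\mathfrak R_T(f,b)$, so that
\[
\|\mathfrak T(f,b)\|_{L^1}\le \|T\|_{H^1_L\to L^1}\,\|\mathfrak S(f,b)\|_{H^1_L}+\|\mathfrak R_T(f,b)\|_{L^1}\le C\,\|f\|_{H^{\log}_{L,-1}}\,\|b\|_{BMO},
\]
where I used that $\mathfrak R_T$ is bounded from $H^1_L(\mathbb R^d)\times BMO(\mathbb R^d)$ into $L^1(\mathbb R^d)$ and that $H^{\log}_{L,-1}(\mathbb R^d)\subset H^{\log}_{L,0}(\mathbb R^d)=H^1(\mathbb R^d)\subset H^1_L(\mathbb R^d)$ continuously.

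Since every ingredient is already available, there is no real obstacle here; the only mild subtlety is bookkeeping — checking that the constant in the estimate from the proof of Theorem \ref{atomic Hardy-type spaces} is genuinely bilinear in $(f,b)$, which it is because the atomic estimate for $(b-b_B)a$ is linear in $b$ and both $\mathfrak S$ and $\mathfrak R_T$ are bilinear, so that the map $\mathfrak T$ inherits both the subbilinear structure (from $T$) and the bilinear bound (from the chain of estimates above).
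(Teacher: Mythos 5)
Your proposal is correct and follows exactly the route the paper intends: the Proposition is stated there as ``a consequence of the proof of Theorem \ref{atomic Hardy-type spaces}'', and your argument is precisely that proof (the atomic estimate for $(b-b_B)a$ via Theorem \ref{Ky3}, the bound $\|\mathfrak S(f,b)\|_{H^1_L}\leq C\|f\|_{H^{\log}_{L,-1}}\|b\|_{BMO}$, and Theorem \ref{subbilinear decomposition for commutators}) together with the routine pointwise verification of subbilinearity and the continuous embedding $H^{\log}_{L,-1}(\mathbb R^d)\subset H^1_L(\mathbb R^d)$. No gaps.
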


\subsection{The Hardy-Sobolev space $H^{1,1}_L(\mathbb R^d)$}

Following Hofmann et al. \cite{HMM}, we say that $f$ belongs to the (inhomogeneous) Hardy-Sobolev $H^{1,1}_L(\mathbb R^d)$ if $f, \partial_{x_1}f,..., \partial_{x_d}f\in H^1_L(\mathbb R^d)$. Then, the norm on $H^{1,1}_L(\mathbb R^d)$ is defined by
$$\|f\|_{H^{1,1}_L}= \|f\|_{H^1_L}+ \sum_{j=1}^d \|\partial_{x_j} f\|_{H^1_L}.$$

It should be pointed out that the authors in \cite{HMM}  proved  that the space $H^{1,1}_{-\Delta}(\mathbb R^d)$ is just the classical (inhomogeneous) Hardy-Sobolev $H^{1,1}(\mathbb R^d)$ (see for example \cite{ART}), and can be identified with the (inhomogeneous) Triebel-Lizorkin space $F^{1,2}_1(\mathbb R^d)$ (see \cite{KYZ}). More precisely, $f$ belongs to $H^{1,1}(\mathbb R^d)$ if and only if 
$$\mathcal W_\psi(f)= \left\{\sum_{I}\sum_{\sigma\in \mathcal E} |\langle f,\psi_I^\sigma\rangle|^2 (1+ |I|^{-1/d})^2 |I|^{-1}\chi_I\right\}^{1/2}\in L^1(\mathbb R^d),$$
moreover, 
\begin{equation}
\|f\|_{H^{1,1}}\approx \|\mathcal W_\psi(f)\|_{L^1}.
\end{equation}

Here $\{\psi^\sigma\}_{\sigma\in\mathcal E}$ is the wavelet as in Section 4.

\begin{Theorem}\label{Hardy-Sobolev spaces}
Let $L=-\Delta +1$. Then, for every $T\in \mathcal K_L$ and $b\in BMO(\mathbb R^d)$, the commutator $[b,T]$ is bounded from $H^{1,1}_L(\mathbb R^d)$ into $L^1(\mathbb R^d)$.
\end{Theorem}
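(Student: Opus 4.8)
The plan is to reduce the statement to the subbilinear decomposition of Theorem~\ref{subbilinear decomposition for commutators} together with the boundedness of the bilinear operator $\mathfrak S$ from Proposition~\ref{the bilinear operator}. Since $T\in\mathcal K_L$ is bounded from $H^1_L(\mathbb R^d)$ into $L^1(\mathbb R^d)$, the inequality
$$|[b,T](f)|\leq \mathfrak R_T(f,b)+|T(\mathfrak S(f,b))|$$
and the $L^1$-boundedness of $\mathfrak R_T$ reduce everything to showing
$$\|\mathfrak S(f,b)\|_{H^1_L}\leq C\|f\|_{H^{1,1}_L}\|b\|_{BMO}$$
for all $f\in H^{1,1}_L(\mathbb R^d)$ and $b\in BMO(\mathbb R^d)$. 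Because $L=-\Delta+1$, the operator $\mathfrak H$ simplifies to $\mathfrak H(f)=f-\varphi*f$ (see the Remark after the definition of $\mathfrak H$), so $\mathfrak S(f,b)=-\Pi(\mathfrak H(f),b)=-\Pi(f-\varphi*f,b)$, with $\Pi$ the bilinear paraproduct of Proposition~\ref{Bo, Gre and K}.

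First I would recall the wavelet description of $H^{1,1}(\mathbb R^d)$ given just above the statement: $f\in H^{1,1}(\mathbb R^d)$ iff $\mathcal W_\psi(f)\in L^1(\mathbb R^d)$, and $\|f\|_{H^{1,1}}\approx\|\mathcal W_\psi(f)\|_{L^1}$. When $L=-\Delta+1$ one has $H^1_L(\mathbb R^d)=h^1(\mathbb R^d)$, the local Hardy space, and correspondingly $H^{1,1}_L(\mathbb R^d)$ is the inhomogeneous Hardy--Sobolev space $H^{1,1}(\mathbb R^d)$ (this is the identification of \cite{HMM}); so $f\mapsto f-\varphi*f$ maps $H^{1,1}_L(\mathbb R^d)$ boundedly into the \emph{homogeneous} Hardy--Sobolev space $\dot H^{1,1}(\mathbb R^d)$, since subtracting $\varphi*f$ kills the low-frequency part and $\varphi\in\mathcal S(\mathbb R^d)$ with $\int\varphi=1$ makes $f-\varphi*f$ have mean-value-type cancellation and the correct smoothing at scale $1$. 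Thus $\mathfrak H(f)$ lies in a space whose wavelet square function $\mathcal W_\psi$ has an extra factor $1+|I|^{-1/d}$ compared to $H^1$.

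Next I would use this gain of one derivative to estimate $\Pi(\mathfrak H(f),b)$ directly in $H^1_L(\mathbb R^d)$ (rather than merely in $L^1$, which is all Proposition~\ref{Bo, Gre and K} gives). The point is that $\Pi(g,b)=\sum_{I,\sigma}\langle g,\psi_I^\sigma\rangle\langle b,\psi_I^\sigma\rangle(\psi_I^\sigma)^2$: each $(\psi_I^\sigma)^2$ is (up to normalization) an $L^2$-atom supported in $cI$ with vanishing integral, with $\|(\psi_I^\sigma)^2\|_{L^2}\approx|I|^{-1/2}$; the BMO coefficients $\langle b,\psi_I^\sigma\rangle$ are controlled in an $\ell^2$-Carleson sense by $\|b\|_{BMO}$; and the extra factor $1+|I|^{-1/d}$ carried by $\langle\mathfrak H(f),\psi_I^\sigma\rangle$ exactly compensates the loss incurred when passing from $L^1$-atoms to $H^1$-atoms in a Littlewood--Paley/tent-space argument. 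Concretely I would bound the $h^1$ norm of $\Pi(\mathfrak H(f),b)$ by a tent-space pairing of the Carleson measure associated with $b$ against the square function $\mathcal W_\psi(\mathfrak H(f))$, obtaining $\|\Pi(\mathfrak H(f),b)\|_{h^1}\leq C\|\mathcal W_\psi(\mathfrak H(f))\|_{L^1}\|b\|_{BMO}\leq C\|f\|_{H^{1,1}_L}\|b\|_{BMO}$; one can also phrase this via the atomic characterization of $H^{\log}_{L,-1}$ from the previous subsection, observing that $\mathfrak H(f)\in H^{1,1}$ forces $\mathfrak S(f,b)=-\Pi(\mathfrak H(f),b)$ to decompose into $H^1_L$-atoms summably. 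The main obstacle is precisely this step: proving the \emph{$H^1_L$} (not just $L^1$) bound on $\Pi(\mathfrak H(f),\cdot)$, i.e. genuinely cashing in the one-derivative gain of the Hardy--Sobolev space against the BMO paraproduct. Once $\|\mathfrak S(f,b)\|_{H^1_L}\leq C\|f\|_{H^{1,1}_L}\|b\|_{BMO}$ is established, Theorem~\ref{subbilinear decomposition for commutators} and the boundedness of $T$ from $H^1_L$ to $L^1$ immediately give $\|[b,T](f)\|_{L^1}\leq\|\mathfrak R_T(f,b)\|_{L^1}+\|T\|_{H^1_L\to L^1}\|\mathfrak S(f,b)\|_{H^1_L}\leq C\|f\|_{H^{1,1}_L}\|b\|_{BMO}$, completing the proof.
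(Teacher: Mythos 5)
Your overall architecture coincides with the paper's: both reduce the theorem, via Theorem \ref{subbilinear decomposition for commutators}, to the estimate $\|\mathfrak S(f,b)\|_{H^1_L}\leq C\|f\|_{H^{1,1}_L}\|b\|_{BMO}$, use that $\mathfrak H(f)=f-\varphi*f$ when $L=-\Delta+1$, and try to cash in the one-derivative gain of $H^{1,1}$ against the paraproduct $\Pi$. But the step you yourself flag as ``the main obstacle'' --- proving that $\Pi$ maps $H^{1,1}(\mathbb R^d)\times BMO(\mathbb R^d)$ into $H^1_L(\mathbb R^d)$ --- is left unproved, and the heuristic you offer for it rests on a false statement. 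You assert that each $(\psi_I^\sigma)^2$ is, up to normalization, an atom ``with vanishing integral''. It is not: $\int_{\mathbb R^d}(\psi_I^\sigma(x))^2\,dx=\|\psi_I^\sigma\|_{L^2}^2=1$. This non-vanishing integral is precisely the obstruction that makes $\Pi(g,b)$ land only in $L^1(\mathbb R^d)$ for general $g\in H^1(\mathbb R^d)$ (that is the whole point of Proposition \ref{Bo, Gre and K}); if your claim were true, $\Pi$ would already map $H^1\times BMO$ into $H^1$ and no derivative gain would be needed at all.

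The correct mechanism, which is the content of the paper's Lemma \ref{lemma for Hardy-Sobolev}, is different: since $V\equiv 1$ forces $\rho\equiv 1$, the function $(1+|I|^{-1/d})^{-1}(\psi_I^\sigma)^2$ has integral $(1+|I|^{-1/d})^{-1}\leq C\,r/\rho(x_0)$ (with $r$ the radius of the ball $cB_I$), so it satisfies the \emph{relaxed} cancellation condition (iii) of Definition \ref{the definition for generalized atoms} and is $C$ times a generalized $(H^1_L,\infty,1)$-atom. The factor $1+|I|^{-1/d}$ is then transferred to the coefficients, giving
$$\|\Pi(f,g)\|_{H^1_L}\leq C\sum_{I}\sum_{\sigma\in\mathcal E}\bigl(1+|I|^{-1/d}\bigr)|\langle f,\psi_I^\sigma\rangle|\,|\langle g,\psi_I^\sigma\rangle|\leq C\|\mathcal W_\psi(f)\|_{L^1}\|g\|_{BMO}.$$
Note that this is exactly where the hypothesis $L=-\Delta+1$ enters; your sketch never uses it beyond simplifying $\mathfrak H$, whereas for a general potential the condition $|\int a|\leq (r/\rho(x_0))^{\varepsilon}$ would not be available for these squared wavelets. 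Finally, you should also justify $\|\mathfrak H(f)\|_{H^{1,1}}\leq C\|f\|_{H^{1,1}_L}$; the paper does this cleanly via the commutation $\partial_{x_j}(\mathfrak H(f))=\mathfrak H(\partial_{x_j}f)$ of Remark \ref{the case of L=-Delta+1} combined with Lemma \ref{Hardy estimates for local Riesz transforms}, rather than by an appeal to homogeneous Hardy--Sobolev spaces.
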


\begin{Remark}\label{the case of L=-Delta+1}
When $L= -\Delta+ 1$, we can define $\mathfrak H(f)= f- \varphi*f$ instead of $\mathfrak H(f)= \sum_{n,k}(\psi_{n,k}f - \varphi_{2^{-n/2}}*(\psi_{n,k}f))$ as in Section \ref{Proof of the results}. In other words, the bilinear operator $\mathfrak S$ in  Theorem \ref{subbilinear decomposition for commutators} and Theorem \ref{bilinear decomposition for commutators} can be defined as $\mathfrak S(f,g)= -\Pi(f-\varphi*f, g)$. As $\mathfrak H(f)= f- \varphi*f$, it is easy to see that
$$\partial_{x_j}(\mathfrak H(f))= \mathfrak H(\partial_{x_j}f).$$
\end{Remark} 

Here and in what follows, for any dyadic cube $Q=Q(y,r):= \{x\in\mathbb R^d: -r\leq x_j-y_j< r\; \mbox{for all}\; j=1,...,d\}$, we denote by $B_Q$ the ball
$$B_Q:= \Big\{x\in\mathbb R^d: |x-y|< 2\sqrt d r\Big\}.$$

To prove Theorem \ref{Hardy-Sobolev spaces}, we need the following lemma. 

\begin{Lemma}\label{lemma for Hardy-Sobolev}
Let $L=-\Delta +1$. Then, the bilinear operator $\Pi$ maps continuously $H^{1,1}(\mathbb R^d)\times BMO(\mathbb R^d)$ into $H^1_L(\mathbb R^d)$.
\end{Lemma}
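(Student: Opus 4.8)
The plan is to reduce the claim to an atomic estimate and then to exploit the wavelet description of $H^{1,1}(\mathbb R^d)$ together with Proposition \ref{Bo, Gre and K}. First I would recall that, by \cite{KYZ}, a function in $H^{1,1}(\mathbb R^d)=F^{1,2}_1(\mathbb R^d)$ admits an atomic decomposition into $(1,1)$-Sobolev atoms $a$ supported in a cube $Q=Q(y,r)$ with the size condition $\|a\|_{L^2}\le (1+r^{-1})|Q|^{-1/2}$ and the extra derivative bound $\|\partial_{x_j}a\|_{L^2}\le (1+r^{-1})r^{-1}|Q|^{-1/2}$ (plus the cancellation $\int a=0$), in such a way that $\|f\|_{H^{1,1}}\approx\inf\sum|\lambda_k|$. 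So it suffices to prove the uniform bound $\|\Pi(a,g)\|_{H^1_L}\le C\|g\|_{BMO}$ for each such atom $a$; the passage from atoms to the whole space is the same density/weak-star argument used in the proof of Theorem \ref{An atomic Hardy space H^1_b}, together with the already-known boundedness $\Pi:H^1(\mathbb R^d)\times BMO\to L^1(\mathbb R^d)$ which controls the series in $L^1$.

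Next, fix a Sobolev atom $a$ on $Q=Q(y,r)$ and write $\Pi(a,g)=\sum_I\sum_{\sigma}\langle a,\psi_I^\sigma\rangle\langle g,\psi_I^\sigma\rangle(\psi_I^\sigma)^2$. The key point is to split the sum into \emph{small} cubes ($|I|\le r^d$, say $\ell(I)\le r$) and \emph{large} cubes ($\ell(I)>r$), localized near $Q$. Since $(\psi_I^\sigma)^2$ has mean roughly $|I|^{-d/2}$-normalized mass $1$ and is supported in $cI$, the building blocks $(\psi_I^\sigma)^2$ are (up to normalization) multiples of $L^2$-atoms; to land in $H^1_L$ one uses the Dziuba\'nski--Zienkiewicz atoms (Theorem \ref{DZ, Theorem 1.5}), recalling that when $L=-\Delta+1$ one has $\rho\equiv 1$, so that a cube with $\ell(I)\lesssim 1$ automatically satisfies the admissibility $r_I\le\mathcal C_L\rho$, and cancellation is only needed when $\ell(I)\ll 1$. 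For the small cubes I would use the extra Sobolev regularity: $|\langle a,\psi_I^\sigma\rangle|\lesssim |I|^{1/2+1/d}\|\nabla a\|_{\infty\text{-type}}$ gains a factor $\ell(I)/r$, which produces a geometric series in the scales below $r$ summing to a constant; for the large cubes one uses instead the cancellation $\int a=0$, giving $|\langle a,\psi_I^\sigma\rangle|\lesssim \ell(I)^{-d/2-1}\,r\,\|a\|_{L^1}$ and again a geometric gain. Summing over the (boundedly many, by \cite{BGK}) relevant cubes $I$ at each scale and using $|\langle g,\psi_I^\sigma\rangle|\lesssim |I|^{1/2}\|g\|_{BMO}$ yields $\|\Pi(a,g)\|_{H^1_L}\le C\|g\|_{BMO}$ with $C$ independent of $a$.

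Finally, Theorem \ref{Hardy-Sobolev spaces} follows: given $f\in H^{1,1}_L(\mathbb R^d)$, Remark \ref{the case of L=-Delta+1} lets us take $\mathfrak H(f)=f-\varphi*f$, and since $\partial_{x_j}\mathfrak H(f)=\mathfrak H(\partial_{x_j}f)$ with $f,\partial_{x_j}f\in H^1_L(\mathbb R^d)$, Lemma \ref{Hardy estimates for local Riesz transforms} shows $\mathfrak H(f)\in H^{1,1}(\mathbb R^d)$ with $\|\mathfrak H(f)\|_{H^{1,1}}\le C\|f\|_{H^{1,1}_L}$. Then $\mathfrak S(f,b)=-\Pi(\mathfrak H(f),b)\in H^1_L(\mathbb R^d)$ with norm $\le C\|f\|_{H^{1,1}_L}\|b\|_{BMO}$ by Lemma \ref{lemma for Hardy-Sobolev}, and Theorem \ref{subbilinear decomposition for commutators} gives
$$\|[b,T](f)\|_{L^1}\le \|\mathfrak R_T(f,b)\|_{L^1}+\|T\|_{H^1_L\to L^1}\|\mathfrak S(f,b)\|_{H^1_L}\le C\|f\|_{H^{1,1}_L}\|b\|_{BMO}.$$
I expect the main obstacle to be the large-cube regime in the proof of Lemma \ref{lemma for Hardy-Sobolev}: there the blocks $(\psi_I^\sigma)^2$ live on scales $\gg r$ and one must simultaneously keep the cancellation of $(\psi_I^\sigma)^2$, exploit $\int a=0$, and control the spatial spread of $g$ via a John--Nirenberg / telescoping estimate of $g_{B_I}-g_{B_Q}$, so that the sum over scales and over the $O(1)$ cubes per scale still converges; organizing this bookkeeping so that one genuinely lands in $H^1_L$ (not merely $L^1$) is the delicate part.
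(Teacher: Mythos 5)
There is a genuine gap in the core estimate, and it is located where you least expect it: the \emph{small-cube} regime, which you claim "produces a geometric series," does not. The quantity you must control is
$\sum_{I}\sum_{\sigma}|\langle a,\psi_I^\sigma\rangle|\,(1+|I|^{-1/d})\,|\langle g,\psi_I^\sigma\rangle|$,
because the natural $H^1_L$-bound for a single block is $\|(1+|I|^{-1/d})^{-1}(\psi_I^\sigma)^2\|_{H^1_L}\leq C$ (this is the paper's key observation: since $\rho\equiv 1$, the block has mean $(1+|I|^{-1/d})^{-1}\leq C\,\ell(I)$, so it is a generalized $(H^1_L,\infty,1)$-atom). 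For $\ell(I)\leq 1$ the weight $(1+|I|^{-1/d})\approx \ell(I)^{-1}$ exactly cancels the factor $\ell(I)$ you gain from the Sobolev regularity of the atom $a$; a computation at a fixed scale $\ell(I)=2^{-j}\leq r$ (whether by crude counting of the $\approx(r2^j)^d$ cubes meeting $Q$, or by Cauchy--Schwarz at that scale together with $\sum_{\ell(I)=2^{-j}}|\langle g,\psi_I^\sigma\rangle|^2\lesssim \|g\|_{BMO}^2 r^d$) gives a contribution of order $r^{d/2}\|\nabla a\|_{L^2}\|g\|_{BMO}$ \emph{uniformly in $j$}, with no decay. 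Summing over the infinitely many scales below $r$ then diverges. The only way to beat this is to sum over \emph{all} scales at once using the Carleson/square-function structure, i.e.\ the sequence-space duality $\dot f^{0,2}_1\times\dot f^{0,2}_\infty$: one bounds the full double sum by $\|\mathcal W_\psi(a)\|_{L^1}\|g\|_{\dot F^{0,2}_\infty}\lesssim\|a\|_{H^{1,1}}\|g\|_{BMO}$, exploiting that $\sum_{I\subset R}|\langle g,\psi_I^\sigma\rangle|^2\leq C\|g\|_{BMO}^2|R|$ over all scales simultaneously. Your scale-by-scale triangle inequality throws this structure away and cannot be repaired by the cancellation of $a$ or of $(\psi_I^\sigma)^2$.

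This is in fact exactly how the paper proceeds, and its route makes your atomic decomposition of $H^{1,1}(\mathbb R^d)$ unnecessary: one verifies the single-block bound $\|(1+|I|^{-1/d})^{-1}(\psi_I^\sigma)^2\|_{H^1_L}\leq C$ (support in $cB_I$, trivial $L^\infty$ bound, and the approximate cancellation $(1+|I|^{-1/d})^{-1}\leq C r/\rho(x_0)$, which is where $L=-\Delta+1$, i.e.\ $\rho\equiv1$, enters), and then concludes directly from the $F^{1,2}_1$ characterization $\|f\|_{H^{1,1}}\approx\|\mathcal W_\psi(f)\|_{L^1}$ and the $H^1$--$BMO$ duality at the level of wavelet coefficients. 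Your final paragraph (deducing Theorem \ref{Hardy-Sobolev spaces} from the lemma via $\partial_{x_j}\mathfrak H(f)=\mathfrak H(\partial_{x_j}f)$ and Lemma \ref{Hardy estimates for local Riesz transforms}) is correct and matches the paper, but it is not the statement at issue.
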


\begin{proof}
Note that $\rho(x)=1$ for all $x\in \mathbb R^d$ since $V(x)\equiv 1$. We first claim that there exists a constant $C>0$ such that
\begin{equation}\label{Remark for Hardy-Sobolev 1}
\|(1+ |I|^{-1/d})^{-1}(\psi_I^\sigma)^2\|_{H^1_L} \leq C
\end{equation}
for all dyadic $I= Q[x_0,r)$ and $\sigma\in \mathcal E$.  Indeed, it follows from Remark \ref{Remark for Hardy-Sobolev} that supp $(1+ |I|^{-1/d})^{-1}(\psi_I^\sigma)^2\subset c I\subset c B_I$, and it is clear that $\|(1+ |I|^{-1/d})^{-1}(\psi_I^\sigma)^2\|_{L^\infty}\leq |I|^{-1}\|\psi\|_{L^\infty}\leq C |cB_I|^{-1}$. In addition,
\begin{eqnarray*}
\Big|\int_{\mathbb R^d} (1+ |I|^{-1/d})^{-1}(\psi_I^\sigma(x))^2 dx\Big|= (1+ |I|^{-1/d})^{-1}\leq C \frac{r}{\rho(x_0)}.
\end{eqnarray*}
Hence, $(1+ |I|^{-1/d})^{-1}(\psi_I^\sigma)^2$ is $C$ times a generalized $(H^1_L,\infty,1)$-atom related to the ball $cB_I$, and thus (\ref{Remark for Hardy-Sobolev 1}) holds.

Now, for every $(f,g)\in H^{1,1}(\mathbb R^d)\times BMO(\mathbb R^d)$, (\ref{Remark for Hardy-Sobolev 1}) implies that
\begin{eqnarray*}
\|\Pi(f,g)\|_{H^1_L} &=& \|\sum_I \sum_{\sigma\in \mathcal E} \langle f,\psi_I^\sigma\rangle\langle g,\psi_I^\sigma\rangle (\psi_I^\sigma)^2\|_{H^1_L}\\
&\leq& C \sum_I \sum_{\sigma\in \mathcal E} \Big(|\langle f,\psi_I^\sigma\rangle|(1+ |I|^{-1/d})\Big)  |\langle g,\psi_I^\sigma\rangle|\\
&\leq& C \|\mathcal W_\psi(f)\|_{L^1}\|g\|_{\dot F^{0,2}_\infty}\\
&\leq& C \|f\|_{H^{1,1}}\|g\|_{BMO},
\end{eqnarray*}
where we have used the fact that $BMO(\mathbb R^d)\equiv \dot F^{0,2}_\infty(\mathbb R^d)$ is the dual of $H^1(\mathbb R^d)\equiv \dot F^{0, 2}_1(\mathbb R^d)$, we refer the reader to \cite{FJ} for more details.

\end{proof}

\begin{proof}[Proof of Theorem \ref{Hardy-Sobolev spaces}]
Let $(f,b)\in H^{1,1}_L(\mathbb R^d)\times BMO(\mathbb R^d)$. Thanks to  Lemma \ref{lemma for Hardy-Sobolev}, Remark \ref{the case of L=-Delta+1} and Lemma \ref{Hardy estimates for local Riesz transforms}, we get
\begin{eqnarray*}
\|\mathfrak S(f,b)\|_{H^1_L} &\leq& C \|\mathfrak H(f)\|_{H^{1,1}}\|b\|_{BMO}\\
&\leq& C \|f\|_{H^{1,1}_L}\|b\|_{BMO}.
\end{eqnarray*}
Then we use Theorem \ref{subbilinear decomposition for commutators} to conclude that
\begin{eqnarray*}
\|[b,T](f)\|_{L^1} &\leq& \|\mathfrak R_T(f,b)\|_{L^1} + \|T\|_{H^1_L\to L^1}\|\mathfrak S(f,b)\|_{H^1_L}\\
&\leq& C \|f\|_{H^{1,1}_L}\|b\|_{BMO},
\end{eqnarray*}
which ends the proof.

\end{proof}

As a consequence of the proof of Theorem \ref{Hardy-Sobolev spaces}, we obtain the following result.

\begin{Proposition}
Let $L= -\Delta +1$ and $T\in \mathcal K_L$. Then,  $\mathfrak T(f,b):= [b,T](f)$ is a bounded subbilinear operator from $H^{1,1}_L(\mathbb R^d)\times BMO(\mathbb R^d)$ into $L^1(\mathbb R^d)$.
\end{Proposition}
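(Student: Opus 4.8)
The plan is to reduce the boundedness of $[b,T]$ on the Hardy--Sobolev space to the boundedness of the bilinear operator $\mathfrak S$ on $H^{1,1}_L(\mathbb R^d)\times BMO(\mathbb R^d)$, exactly as in the proofs of Theorem \ref{An atomic Hardy space H^1_b} and Theorem \ref{atomic Hardy-type spaces}. More precisely, by Theorem \ref{subbilinear decomposition for commutators} we have the pointwise bound $|[b,T](f)|\leq \mathfrak R_T(f,b)+|T(\mathfrak S(f,b))|$, and since $\mathfrak R_T$ is bounded into $L^1$ and $T$ is bounded from $H^1_L(\mathbb R^d)$ into $L^1(\mathbb R^d)$, it suffices to prove the key estimate
$$\|\mathfrak S(f,b)\|_{H^1_L}\leq C\,\|f\|_{H^{1,1}_L}\|b\|_{BMO}$$
for all $(f,b)\in H^{1,1}_L(\mathbb R^d)\times BMO(\mathbb R^d)$.

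To establish this key estimate I would first invoke Remark \ref{the case of L=-Delta+1}: since $L=-\Delta+1$, we may take $\mathfrak H(f)=f-\varphi*f$, so that $\mathfrak S(f,b)=-\Pi(\mathfrak H(f),b)=-\Pi(f-\varphi*f,b)$, and moreover $\partial_{x_j}(\mathfrak H(f))=\mathfrak H(\partial_{x_j}f)$. From this commutation property and Lemma \ref{Hardy estimates for local Riesz transforms} (which gives that $\mathfrak H$ maps $H^1_L$ into $H^1$) applied coordinatewise, one sees that $\mathfrak H$ maps $H^{1,1}_L(\mathbb R^d)$ into the classical Hardy--Sobolev space $H^{1,1}(\mathbb R^d)$ with $\|\mathfrak H(f)\|_{H^{1,1}}\leq C\|f\|_{H^{1,1}_L}$. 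Then the decisive input is Lemma \ref{lemma for Hardy-Sobolev}, which asserts that $\Pi$ maps $H^{1,1}(\mathbb R^d)\times BMO(\mathbb R^d)$ continuously into $H^1_L(\mathbb R^d)$ (not merely into $L^1$, as in the generic Proposition \ref{Bo, Gre and K}); combining these gives $\|\mathfrak S(f,b)\|_{H^1_L}=\|\Pi(\mathfrak H(f),b)\|_{H^1_L}\leq C\|\mathfrak H(f)\|_{H^{1,1}}\|b\|_{BMO}\leq C\|f\|_{H^{1,1}_L}\|b\|_{BMO}$, which is the required estimate.

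Having the key estimate in hand, I would finish by feeding it back into the subbilinear decomposition: for $(f,b)\in H^{1,1}_L(\mathbb R^d)\times BMO(\mathbb R^d)$,
$$\|[b,T](f)\|_{L^1}\leq \|\mathfrak R_T(f,b)\|_{L^1}+\|T\|_{H^1_L\to L^1}\|\mathfrak S(f,b)\|_{H^1_L}\leq C\|f\|_{H^1_L}\|b\|_{BMO}+C\|f\|_{H^{1,1}_L}\|b\|_{BMO}\leq C\|f\|_{H^{1,1}_L}\|b\|_{BMO},$$
where in the last step one uses $\|f\|_{H^1_L}\leq \|f\|_{H^{1,1}_L}$. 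This completes the argument.

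The genuinely non-trivial step — the main obstacle — is Lemma \ref{lemma for Hardy-Sobolev}, i.e.\ upgrading the target space of $\Pi$ from $L^1$ to $H^1_L$ on the Sobolev-regularized factor. The heart of that lemma is the atomic observation that $(1+|I|^{-1/d})^{-1}(\psi_I^\sigma)^2$ is (a uniform constant times) a generalized $(H^1_L,\infty,1)$-atom associated to the ball $cB_I$: it has the right support and $L^\infty$-size by construction of the compactly supported wavelet $\psi^\sigma$, and its integral is exactly $(1+|I|^{-1/d})^{-1}\leq Cr/\rho(x_0)$ since $\rho\equiv1$, which is precisely the generalized cancellation tolerance. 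One then sums the atomic decomposition $\Pi(f,g)=\sum_{I,\sigma}\langle f,\psi_I^\sigma\rangle\langle g,\psi_I^\sigma\rangle(\psi_I^\sigma)^2$, controlling $\sum_{I,\sigma}\bigl(|\langle f,\psi_I^\sigma\rangle|(1+|I|^{-1/d})\bigr)|\langle g,\psi_I^\sigma\rangle|$ by $\|\mathcal W_\psi(f)\|_{L^1}\|g\|_{\dot F^{0,2}_\infty}\approx\|f\|_{H^{1,1}}\|g\|_{BMO}$ via the Fefferman--Stein $H^1$--$BMO$ duality in its Triebel--Lizorkin formulation; everything after this point is routine bookkeeping.
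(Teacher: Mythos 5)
Your proposal is correct and follows essentially the same route as the paper: the paper derives this Proposition directly from the proof of Theorem \ref{Hardy-Sobolev spaces}, which reduces everything to the estimate $\|\mathfrak S(f,b)\|_{H^1_L}\leq C\|f\|_{H^{1,1}_L}\|b\|_{BMO}$ obtained by combining Remark \ref{the case of L=-Delta+1}, Lemma \ref{Hardy estimates for local Riesz transforms} and Lemma \ref{lemma for Hardy-Sobolev}, and then feeds this into the subbilinear decomposition of Theorem \ref{subbilinear decomposition for commutators} exactly as you do. Your sketch of the key Lemma \ref{lemma for Hardy-Sobolev} (the generalized-atom observation for $(1+|I|^{-1/d})^{-1}(\psi_I^\sigma)^2$ followed by the $H^1$--$BMO$ duality in Triebel--Lizorkin form) also matches the paper's argument.
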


\medskip
\noindent Department of Mathematics, University of Quy Nhon\\
170 An Duong Vuong, Quy Nhon, Binh Dinh, Viet Nam\\
Email: dangky@math.cnrs.fr

\end{document}